\newtheorem{theorem}{Theorem}[section]
\newtheorem{proposition}[theorem]{Proposition}
\newtheorem{corollary}[theorem]{Corollary}
\newtheorem{definition}[theorem]{Definition}
\newtheorem{conjecture}[theorem]{Conjecture}
\newtheorem{lemma}[theorem]{Lemma}
\newtheorem{notation}[theorem]{Notation}
\newtheorem{summary}[theorem]{Summary}
\newtheorem{question}[theorem]{Question}
\newenvironment{proof}{\paragraph{\it Proof.}}{$\square$\vskip0.4cm}
\newenvironment{remark}{\paragraph{\it Remark.}}{\vskip0.4cm}
\newenvironment{example}{\paragraph{\it Example.}}{\vskip0.4cm}
\def\A{\mathcal{A}}  
  \def\B{\mathcal{B}}
 \def\C{\mathbb{C}} 
 \def\H{\mathcal{H}} \def\R{\mathbb{R}} 
\def\T{\mathbb{T}} \def\t{\mathfrak{t}} \def\h{\mathsf{h}}  \def\s{\sigma}
\def\bs{\breve{\sigma}} 
\def\Z{\mathbb{Z}} \def\N{\mathbb{N}}  \def\K{\mathcal{K}}
\def\mathbi#1{\textbf{\em #1}} \def\hp{{\mathbi{h}_P}} \def\hpn{{\mathbi{h}_{P^n}}}
\def\l{l}
\begin{document}

% Declarations for Front Matter

\title{Berezin Quantization From Ergodic Actions of Compact Quantum Groups, and Quantum Gromov-Hausdorff Distance}
\author{Jeremy Sain}
%\degreeyear{2009}
%\degreesemester{Spring}
%\degree{Doctor of Philosophy}
%\chair{Professor Marc  A. Rieffel}
%\othermembers{Professor Vaughan Jones\\
%Professor Ori Ganor}
%\numberofmembers{3}
%\prevdegrees{B.S. (Texas Tech University) 2002}
%\field{Mathematics}
%\campus{Berkeley}

\maketitle
%\approvalpage
%\copyrightpage

\begin{abstract}

\vspace{24pt}

Over the last 25 years, the notion of ``fuzzy spaces'' has become ubiquitous in the high-energy physics literature.  These are finite dimensional noncommutative approximations of the algebra of functions on a classical space.  The most well known examples come from the Berezin quantization of  coadjoint orbits of compact semisimple Lie groups.

We develop a theory of Berezin quantization for certain quantum homogeneous spaces coming from ergodic actions of compact quantum groups.  This allows us to construct fuzzy versions of these quantum homogeneous spaces.  We show that the finite dimensional approximations converge to the homogeneous space in a continuous field of operator systems, and in the quantum Gromov-Hausdorff distance of Rieffel.  

We apply the theory to construct a fuzzy version of an ellipsoid which is naturally endowed with an orbifold structure, as well as a fuzzy version of the $\theta$-deformed coset spaces $C(G/H)_{\hbar \theta}$ of Varilly.  In the process of the latter, we show that our Berezin quantization commutes with Rieffel's deformation quantization for actions of $\R^d$.

%\abstractsignature
\end{abstract}

%\begin{frontmatter}

%\begin{dedication}
%\null\vfil
%{\large
%\begin{center}
%To the LORD,\\\vspace{12pt}
%who continues to prove that His strength is made perfect in weakness.\\\vspace{12pt}  %Eccel. 3:14
%\end{center}}
%\vfil\null
%\end{dedication}

\tableofcontents
%\listoffigures
%\listoftables
%\begin{acknowledgements}
%\end{acknowledgements}

%\end{frontmatter}

\begin{section}{Introduction}

The concept of a path integral is essential in quantum field theory.  To compute these integrals for quantum systems of dimension greater than $0+1$, one typically tries to approximate the configuration space of the system by a zero dimensional space and then attempts to take an appropriate limit.   However, if one replaces the configuration space by some lattice of its points, one usually loses most of the symmetry of the configuration space.  This is unappealing from a computational standpoint, since symmetry generally makes computations simpler.  More important than this though is the fact that symmetries of the configuration space give conservation laws of the physical system.  Thus there are typically aspects of the physics lost when one replaces the configuration space by a lattice approximation.  For a basic introduction to quantum field theory, see \cite{Peskin}.

 In the physics literature, it is very common to find the notation of ``fuzzy spaces''  \cite{Majid}, \cite{Flores}, \cite{Dolan2}, \cite{Bietenholz}, \cite{Mondragon}, \cite{Lozano}. These are finite dimensional noncommutative spaces that replace the lattice approximation of a configuration space and have the full symmetry of the   space.  Historically the first example was the fuzzy sphere.    For each $n\in \N$ there is an action of $SU(2)$ on $M_n(\C )$ by conjugation with the $n$-dimensional irreducible representation of $SU(2)$. Moreover, one has $M_n(\C )=\underline{1}\oplus \underline{3}\oplus ...\oplus\underline{2n-1}$ as an $SU(2)$-module, where $\underline{n}$ denotes the $n$-dimensional irreducible representation of $SU(2)$.  
 On the other hand, if one decomposes $C(S^2)$ as an $SU(2)$-module, one has $C(S^2)=\underline{1}\oplus \underline{3}\oplus \underline{5}\oplus ...$.  Since $SU(2)$ (or rather $SO(3)$) is the full isometry group of $S^2$, this suggests that dynamics on $M_n(\C )$ should resemble dynamics on $S^2$ for $n\in \N $ large.  One can go on to check that the matrix product on $M_n(\C )$ converges to the commutative product on $C(S^2)$ as $n\longrightarrow \infty$ in a suitable sense \cite{Balachandran}, so that particle interactions agree in the limit as well \cite{Peskin}.  For this reason, the sequence of matrix algebras $M_n(\C )$ equipped with their actions of $SU(2)$ is called the fuzzy sphere, and it is the optimal setting to use for computing path integrals on the sphere.  See \cite{Balachandran} for more details and computations for general fuzzy $\mathbb{CP}^n$.  

This example is best understood using the language of strict quantization of C$^*$-algebras. 
The algebras $M_n(\C )$ together with $C(S^2)$ form a continuous field of C$^*$-algebras, and the sequence 
 of Berezin adjoint maps $\bs^n:C(S^2)\rightarrow M_n(\C )$ give for any $f\in C(S^2)$  a continuous section 
\[ \mathcal{Q}_\hbar (f)=\left\{ \begin{array}{ll} f & \hspace{.4in} \hbar =0 \\						\bs^n (f) &\hspace{.4in} \hbar =1/n, n\in \N . \end{array} \right.   \]
   The essential property of these Berezin adjoints as well as the corresponding Berezin symbols $\s^n:M_n(\C )\rightarrow C(S^2)$ which are adjoint to them is that they are positive, unital, and $SU(2)$-equivariant.  These Berezin symbols are constructed more generally for coadjoint orbits  of compact semisimple Lie groups, and we write them down in Section 2.6 below.  See \cite{book} for more details of this construction, and particularly for the proof that there is a continuous field of C$^*$-algebras.

In \cite{Rieffel3}, \cite{Rieffel6} Rieffel proves additionally that given any length function on $SU(2)$, there are corresponding quantum metrics (that is Lip-norms, see Definition \ref{defLip} below) on $C(S^2)$ and on each $M_n(\C )$.  Moreover, he uses the Berezin symbol and adjoint to show that the quantum metric spaces $M_n(\C )$ converge to $C(S^2)$ in quantum Gromov-Hausdorff distance.  He  proves the corresponding result for general coadjoint orbits.

The goal of this paper is to generalize these results by constructing Berezin quantizations from actions of compact quantum groups, and then  generalizing Rieffel's results on inducing quantum metrics, and  proving convergence in quantum Gromov-Hausdorff distance. 

Throughout this paper, we denote by  $A$ a compact quantum group, $\A$ its polynomial subalgebra, $\Delta$ its comultiplication, $\varepsilon$ its counit, $\kappa$ its coinverse, $\h$ its Haar state, and $\mathsf{m}$ its multiplication map.

Let $A$ be coamenable and of Kac-type (i.e. $\kappa^2=id$).  Let $(u,\H ,\xi )$ be an  irreducible unitary representation of $A$. Let $\xi \in \H$ be  a distinguished vector  with the property that the cyclic subrepresentation generated by $\xi^{\otimes n}$  in $\H^{\otimes n}$ is irreducible for all $n\in \N$ (like a highest weight vector for Lie groups).
Given this  data we construct a coideal $A^H\subseteq A$  which is the analog of  a coadjoint orbit for Lie groups, and we construct a sequence of finite dimensional operator systems $\B^n$, and positive unital mappings $\s^n:\B^n \rightarrow A^H$ and $\bs^n: A^H\rightarrow \B^n$.  We prove the following theorem:

\begin{theorem}\label{thm1}
The mappings $\bs^n:A^H\rightarrow \B^n$ give a strict quantization of the operator system $A^H$.  If $L_A$ is any right-invariant Lip-norm on $A$, then $L_A$ induces Lip-norms on $L$ and $L_n$ on $A^H$ and $\B^n$, and  $(\B^n, L_n)$ converges to $(A^H, L)$ in quantum Gromov-Hausdorff distance as $n\rightarrow \infty$.
\end{theorem}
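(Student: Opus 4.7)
The plan is to address the three assertions of Theorem \ref{thm1} in sequence, with the quantum Gromov--Hausdorff convergence carrying the main technical weight.

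For the strict quantization claim, I would verify Rieffel's axioms for the section $\mathcal{Q}_\hbar$ determined by the $\bs^n$: Hermiticity $\bs^n(f^*)=\bs^n(f)^*$, continuity of the section at $\hbar=0$, and the von Neumann rule $\|\bs^n(fg)-\bs^n(f)\bs^n(g)\|\to 0$. Hermiticity, unitality and positivity should be immediate from the earlier construction of $\bs^n$ as the Haar-state adjoint of the equivariant positive unital Berezin symbol $\s^n$. Continuity and asymptotic multiplicativity would follow by decomposing elements along the isotypic decomposition of $A^H$ under the right $A$-coaction and using that the cyclic subrepresentation generated by $\xi^{\otimes n}$ in $\H^{\otimes n}$ is irreducible; this controls the matrix elements of $\bs^n$ component by component and reduces multiplicativity to a Clebsch--Gordan-type computation inside the compact quantum group. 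For the induced Lip-norms, since $A^H$ is a coideal of $A$, restricting a right-invariant $L_A$ yields a candidate seminorm $L$ on $A^H$; a quantum group analogue of Rieffel's result that ergodic actions transfer Lip-norms to invariant subsystems, combined with coamenability, would give that $L$ satisfies the axioms. For $L_n$, the pullback $L_n := L\circ\s^n$ is a Lip-norm in finite dimension, with totally bounded unit ball.

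For the convergence in quantum Gromov--Hausdorff distance, I would use $\bs^n$ and $\s^n$ themselves to construct bridges between $(A^H,L)$ and $(\B^n,L_n)$. Following Rieffel's approach for coadjoint orbits, the key estimate is $\|\s^n\bs^n(f)-f\|\leq\gamma_n L(f)$ with $\gamma_n\to 0$, uniformly for $f\in A^H$, together with its mirror on $\B^n$. Equivariance implies that $\s^n\bs^n$ preserves the isotypic decomposition of $A^H$, acting by a scalar $c^n_\lambda$ on the component indexed by each corepresentation $\lambda$ appearing in $A^H$. The proof thus reduces to showing (i) $c^n_\lambda\to 1$ for each fixed $\lambda$, and (ii) $|1-c^n_\lambda|$ is dominated, up to a factor going to zero with $n$, by the $L_A$-length of $\lambda$.

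The main obstacle is this uniform spectral estimate. In Rieffel's classical treatment, $|1-c^n_\lambda|\leq C\,\mathrm{Cas}(\lambda)/n$ follows from explicit Casimir eigenvalue asymptotics on Lie group representations. Here one must instead analyze the orthogonal projection onto the cyclic $\xi^{\otimes n}$-subspace of $\H^{\otimes n}$, expressing $c^n_\lambda$ as a Haar-state pairing of matrix coefficients and comparing its decay to the growth of $L_A$ on corepresentations. The Kac-type hypothesis ensures the Haar state is tracial, so these pairings are amenable to estimation, and coamenability allows passage from the polynomial algebra $\A$ to the full $C^*$-algebra $A$. Once this estimate is established, Rieffel's bridge computation delivers convergence and completes the proof.
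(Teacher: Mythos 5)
There is a genuine gap, and it sits exactly where you locate ``the main obstacle.'' Your strategy rests on diagonalizing $\s^n\circ\bs^n$ over the isotypic components of $A^H$ and proving a uniform estimate $|1-c^n_\lambda|\leq \epsilon_n\cdot(\text{$L_A$-length of }\lambda)$ with $\epsilon_n\to 0$. First, the scalars $c^n_\lambda$ need not exist: equivariance plus Schur only gives that the Berezin transform preserves each isotypic component and acts as a matrix on its multiplicity space, and $A^H$ is not assumed multiplicity-free. Second, and more seriously, for a general compact quantum group and an \emph{arbitrary} right-invariant Lip-norm $L_A$ there is no Casimir, no length function, and no well-defined ``$L_A$-length of a corepresentation,'' so the quantitative decay-versus-growth comparison you propose has no foothold; you correctly flag this as unproven, and it is in fact the entire content of the theorem. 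The paper avoids any such uniform spectral estimate. It computes $\s^n\circ\bs^n={}_{\hpn}\!\Delta$ for the explicit states $\hpn(a)=\h(\s_P^n a)/\h(\s_P^n)$, proves $\hpn\to\varepsilon$ weak$^*$ on $A^H$ by a spectral concentration argument for the single element $\s_P\in A^H$ (this is where coamenability enters: faithfulness of $\h$ forces $\h(\chi_{\gamma/2})>0$ in the denominator bound, and the appendix shows the argument genuinely fails for $A_u(m)$), and then upgrades strong convergence to the Lipschitz estimate $\|(\s^n\circ\bs^n)(a)-a\|\leq\delta_n L(a)$ by a soft compactness device: the set $\{\Delta a-1\otimes a: L(a)\leq 1\}$ is totally bounded, so one can build a continuous function on $S(A^H)$ dominating $\mu\mapsto\|(\mu\otimes id)(\Delta a-1\otimes a)\|$ and small at $\varepsilon$. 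The mirror estimate on $\B^n$ is then obtained by cutting down to finitely many isotypic components via Li's approximation lemma (coamenability again) and using that strong convergence is norm convergence in finite dimensions. None of this requires rates in $\lambda$.

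Two smaller points. The paper's notion of strict quantization for operator systems (Definition \ref{defdeformquant}) deliberately contains no von Neumann rule: $A^H$ is only an operator system, $fg$ need not lie in it, and asymptotic multiplicativity is neither asserted nor needed, so that part of your plan is aimed at the wrong target. Also, the Lip-norms in the theorem are the ones induced \`a la Li from the ergodic actions of $A$ on $A^H$ and on $\B^n$, namely $L(a)=\sup_{\psi}L_A({}_\psi\Delta(a))$ and $L_n(T)=\sup_{\psi}L_A({}_\psi\alpha^n(T))$, not the restriction of $L_A$ and not the pullback $L\circ\s^n$; the bridge computation with $L^{n,\epsilon}(a,T)=L(a)\vee L_n(T)\vee\epsilon^{-1}\|a-\s^n(T)\|$ uses the contractivity $L_n(\bs^n(a))\leq L(a)$ and $L(\s^n(T))\leq L_n(T)$, which come from equivariance of $\s^n,\bs^n$ for these induced seminorms and would have to be re-derived for your candidates.
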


The most peculiar feature of our construction is that our quantum homogeneous spaces (see Definition \ref{quantumhomo}) will be operator systems and not necessarily C$^*$-algebras.  This facet is linked to the fact that our stabilizers are not quantum subgroups of $A$.  There is very little  discussion of orbits and stabilizers in the literature.  One of the few serious discussions is in \cite{Jurco}, where the stabilizer is taken to be largest quantum \emph{subgroup} of $A$ fixing a distinguished element.  Based on our calculations in Chapter 4, we find that the correct definition is that the stabilizer should be the largest quantum \emph{subset} of $A$ fixing the distinguished element.  For ordinary groups these two notions coincide, but we will see in Section 3.3 that they are distinct even for cocommutative quantum groups. 

One difficulty we must overcome is to define a strict quantization of an operator system.  There has been much discussion of strict quantization for C$^*$-algebras, and many definitions have been given.  A very good summary of these is given in Section 2 of Hawkins paper \cite{Hawkins}.  In Definition \ref{defdeformquant} below we give a working definition for a strict quantization of an operator system, which we define essentially just to be a continuous field of operator systems together with a distinguished collection of the continuous sections.  In the language of Hawkins, this is what he would call a $0^{\text{th}}$-order strict deformation quantization in the C$^*$-case.  Here, the ``order'' essentially measures how well the quantization interacts with the Poisson bracket (that is, a distinguished Hochschild cohomology class) on the C$^*$-algebra. In his paper, Hawkins argues that anything less than a $2^\text{nd}$-order strict deformation quantization should really not be considered to be a quantization at all.  We hope that later a more complete definition in the case of operator systems will be given that will  include a discussion of Poisson brackets for operator systems.  

We believe that the Kac condition in our theorem should be able to be removed by suitably twisting our operator systems.  Evidence for this belief can be seen from  the results of 
 \cite{Jurco}, where the authors study coherent states for  $q$-deformations of the classical simple  Lie groups. In Section 8 the authors give a version of Berezin quantization for these groups.  In \cite{Harikumar} the authors extend this construction in the case of $SU_q(2)$ by giving  Dirac operators on the $q$-deformed fuzzy spheres, which by Connes' philosophy is akin to giving a Riemannian metric on them. They check that the spectrum of their Dirac operator agrees with that of a standard Dirac operator on the $q$-deformed sphere up to some finite cutoff, suggesting that the $q$-deformed fuzzy spheres are converging to the $q$-deformed sphere as Riemannian manifolds, and not just as topological spaces.  
We hope to soon be able to include these examples into our framework.  We remark that the example given in the appendix to this paper shows that it is unlikely that  the coamenability assumption  can be removed.

We would also like to relate the constructions of this paper to the results of Bhowmick and Goswami on quantum isometry groups.  In \cite{Bhowmick1} Bhowmick and Goswami define the quantum isometry group of a spectral triple, and they compute examples in \cite{Bhowmick2} and \cite{Bhowmick3}.
In \cite{Goswami} Goswami defines the quantum isometry group of a general (classical) metric space.  It is a very important fact for quantum field theory that the finite dimensional approximations that comprise a ``fuzzy'' version of a metric space have the same isometry group as the original space.  We would like to understand what is the quantum isometry group of a quantum metric space, and we would like to know whether our finite dimensional approximations $\B^n$ have the same quantum isometry group as the coideal $A^H$.

In the later sections of this paper, we will extend our results to certain operator systems constructed from ergodic actions of $A$ on C$^*$-algebras.  The literature on ergodic actions of quantum groups is extensive.  We give a few examples in Section 6, but the list there is far from exhaustive.  First of all, in \cite{Pinzari2} Pinzari and Roberts extend the Tannaka-Krein duality for quantum groups by showing that any quasitensor functor on the representation category of a quantum group $A$ comes from an ergodic action of $A$ on a C$^*$-algebra.   Although not strictly an example, there is a related construction by Banica in \cite{Banica1} that associates a Popa system to any finite dimensional unitary representation of $A$.  These two results together are the foundation for a philosophy that there is a strong connection between subfactors and  actions of quantum groups.  In \cite{Pinzari3} the authors associate an ergodic action of $C(SU_q(2))$ to any inclusion  of II$_1$ factors with finite Jones index.  Other examples of actions related to subfactors (although not all are ergodic) can be found in \cite{Banica3}, \cite{Banica4}, \cite{Masuda1}, \cite{Masuda2}, and \cite{Nikshych}. For a good survey article, see \cite{Banica5}.  A completely different family of examples are described  in \cite{Wang2}, where Wang gives examples of ergodic actions of certain compact quantum groups on the hyperfinite factor of type III$_\lambda$ for any $\lambda \neq 0$.  He also gives examples of ergodic actions of Kac-type quantum groups on the hyperfinite II$_1$ factor, and on certain type III$_\lambda$ factors.

In Section 2 we go over the necessary background and prove a few preliminary results.  The new material is as follows.  In Section 2.1 we give the definition of a continuous field of operator systems, and we also give a working definition of a strict quantization of an operator system.  At the end of Section 2.3,  we discuss the adjoint $u^\dagger :\H\otimes L^2(A)\rightarrow \H$ of a unitary representation $u:\H \rightarrow \H \otimes A$ of $A$, and we give four computations in Proposition \ref{prop1} through Lemma \ref{lem5.2} that we use repeatly throughout this paper.  Finally, in Section 2.5 we extend Li's results from Section 8 of \cite{Li} to ergodic actions of compact quantum groups on order unit spaces. We also define length functions for compact quantum groups and give their connections to Lip-norms, extending the results of \cite{Rieffel2}.

The material from Section 3 is of its own interest.  Here we develop a theory of stabilizers for actions of quantum groups.  This theory works for general compact quantum groups, not just Kac-type or coamenable.  We find it useful to leave the category of compact quantum groups in this section. The result of Lemma \ref{lem1} and the proof of Theorem \ref{thm2} show that in a wide variety of concrete categories, a universal mapping $\Pi :A\rightarrow B$ stabilizing an element in a unitary representation of $A$ exists.  In Section 3.2 we investigate this universal mapping for the category of (full) compact quantum groups.  In Section 3.3 we look at this universal mapping for the category of order unit spaces.

 In Sections 4.1 and 4.2 we develop Berezin quantization for a ``coadjoint orbit'' $A^H$  of a Kac-type coamenable compact quantum group $A$. We see here that for the purposes of Berezin quantization, the correct stabilizer to use is the one arising from the category of order unit spaces.  Beginning in Section 4.3 we consider the interaction of this quantization with Lip-norms, and we prove the above theorem in Section 4.5.

 In Section 5 we develop and prove a generalization.  Intuitively, we associate to any ergodic action of $A$ on a C$^*$-algebra a certain noncommutative vector bundle whose fibres are isomorphic to $A^H$.  We extend our results on Berezin quantization and convergence in quantum Gromov-Hausdorff distance to these spaces as well.  We will see in Section 6 that these examples should be viewed as noncommutative orbifolds.

In Section 6 we give examples of compact quantum groups, and we explicitly compute stabilizers and Berezin quantizations from representations of them.  In particular, we show that our Berezin quantization commutes with the Rieffel deformation of compact quantum groups described by Wang \cite{Wang3}, and show that the quantum coadjoint orbit of $C(G)_{\hbar J}$ is given by the quantum homogeneous space $C(G/K)_{\hbar \theta}$ of Varilly \cite{Varilly}.  We also use the generalization of Section 5 to give an explicit strict quantization of the orbifold $S^2/\Z_n$.

Finally, in the appendix we consider whether or not stabilizers have Haar states.  We will show that they do for the representations considered in Section 4.  We will also give an example which shows that the coamenability assumption cannot be dropped from Theorem \ref{thm1}. 

I would like to thank Daniel Kaschek and Stefen Waldmann for helpful comments about their paper \cite{Kaschek1}, as well as Shuzhou Wang for many helpful discussions.  Finally, I would like to thank Marc Rieffel for endless help and support as my advisor.

   \end{section}

\begin{section}{Preliminaries}

\begin{subsection}{Order Unit Spaces and Operator Systems}

For our purposes, we define an order unit space to be a triple $(V,V^+,e)$, where $V$ is a complex $^*$-vector space, $V^+\subseteq V_{sa}$ is a cone with $V^+-V^+=V_{sa}$, and $e\in V^+$ is a distinguished vector such that for any $v\in V^+$ one has $(re-v)\in V^+$ for some $r\in \mathbb{R}^+$.  We refer to $e$ as the order unit of $V$.  As usual, $V^+$ induces a partial order on $V_{sa}$ by $v\leq w$ whenever $(w-v)\in V^+$.  The order unit then gives a so-called order seminorm on $V_{sa}$ by $\| v \| =\inf \{ r\in \mathbb{R}^+ \: | \: -re \leq v \leq re\} $. This seminorm is finite since $V_{sa}=V^+-V^+$.   It is shown in \cite{Paulsen} that there exist various extensions of this norm to all of $V$, but the particular extension will not make any difference to us. In fact, for most purposes we will only concern ourselves with the self-adjoint part of $V$.  We consider only Archimedian order unit spaces; that is, order unit spaces such that this induced seminorm is actually a norm.  It is shown in \cite{Paulsen} that any order unit space has a maximal quotient space which is Archimedian.  Note that when  many of the standard algebraic constructions (in particular, quotients) are applied to Archimedian spaces, 
 the resulting spaces need not be Archimedian. In such situations we will always tacitly replace the resulting spaces with their maximal Archimedian quotients.

It is important to note that the norm on $V_{sa}$ is a secondary construction. As such, it is quite possible in general that $V_{sa}$ might not be complete in the order norm.

The direct sum of two order unit spaces $(V,V^+,e_V)$ and $(W,W^+,e_W)$ is given by $(V\oplus W, V^+\oplus W^+, e_V\oplus e_W)$, where we interpret $V^+ \oplus W^+ =\{ v\oplus w \: | \: v\in V^+, w\in W^+\}$.  Likewise, the tensor product $V\otimes W$ is again an order unit space with cone $\{ \Sigma_i v_i\otimes w_i \: | \: v_i \in V^+, w_i \in W^+\}$. Again, since order unit spaces need not be complete, the algebraic tensor product is a perfectly good order unit space.

A morphism of order unit spaces is a linear map $\Phi :V\rightarrow W$ which satisfies $\Phi (V^+)\subseteq W^+$ and $\Phi (e_V)=e_W$. Since $V_{sa}=V^+-V^+$, a morphism is necessarily $*$-preserving.   A linear map such that $\Phi (V^+)\subseteq W^+$ will be called positive.  A linear map such that $\Phi (e_V)=e_W$ will be called unital.  It is known that a linear map between order unit spaces $\Psi :V\rightarrow W$ is a morphism if and only if it is unital, and $||\Psi ||=1$.  For a given order-unit space $(V,e)$, of particular interest are the morphisms from $(V,e)$ to the order-unit space $(\C ,1)$.  A positive unital linear functional $\varphi :V\rightarrow \C$ is called a state. The set of all states of $V$ is called the state space of $V$ and is denoted $S(V)$.  For Archimedian order-unit spaces, the state space of $V$ seperates the points of $V$ \cite{Paulsen}.

An order ideal of $V$ is a linear, $*$-closed subspace $K\subseteq V$ such that $k\in K\cap V^+$ and $v\in V$ with  $0\leq v\leq k$  imply $v\in K$. Clearly, the kernel of a positive unital map is an order ideal.  Conversely, if $K\subseteq V$ is an order ideal, then the natural quotient map $V\rightarrow V/K$ can be used to give an order unit structure on $V/K$, where an element of $V/K$ is defined to be positive if it has a positive preimage, and the order unit of $V/K$ is defined to be the image of $e_V$.

A related concept is that of an operator system \cite{Arveson}.  An operator system is a unital, $*$-closed linear subspace of a C$^*$-algebra which is closed in the norm topology.  It is known that operator systems have enough positive elements so that the positive elements generate the self-adjoint elements \cite{Arveson}. Thus an operator system is an order unit space.   We will see in Section 3 that our stabilizers may not be closed in the order norm, so they are just order unit spaces.  However, the corresponding spaces of fixed elements $A^H$ are operator systems, as are their finite dimensional approximations $\B^n$.  In the literature, one usually takes morphisms of operator systems to be completely positive maps \cite{Arveson}, but we emphasize that we are only working with positive maps.  Indeed, the Berezin adjoint $\bs $ we define below is not $2$-positive in general.

We will be primarily concerned with the idea of quantizing operator systems.  Strict quantizations in the C$^*$-setting are well-known, see \cite{book} for example.  By no means will we fully work out what one should mean by a strict quantization of an operator system, but we motivate and give  a simple definition which will suffice for our purposes.  First, we define a field of operator systems.

\begin{definition}\label{defcontfield}

Let $X$ be a locally compact topological space.  Let $\mathfrak{C}$ be an operator system, and let $\{ \mathfrak{U}_x \}_{x\in X}$ be a family of operator systems parameterized by $X$.  For each $x\in X$, let $\phi_x:\mathfrak{C}\rightarrow \mathfrak{U}_x$ be a distinguished positive  unital map.  Suppose that these maps satisfy the following properties:
\begin{itemize}
\item $||\gamma||= \sup_{x\in X} ||\phi_x(\gamma )|| $ for all $\gamma \in 
	\mathfrak{C}$, and 
\item for any $f\in C_0(X)$ and any $\gamma \in \mathfrak{C}$ there exists an 
	element $f\gamma \in \mathfrak{C}$ such that $\phi_x(f\gamma )= f(x)
	\phi_x(\gamma )$ for all $x\in X$.
\end{itemize}
Then we refer to $(\mathfrak{C},\{ \mathfrak{U}_x\} , \{ \phi_x\} )$ as a field of operator systems over $X$.  The elements of $\mathfrak{C}$ are called sections of the field.

Let $\gamma \in \mathfrak{C}$, and suppose in addition that the function $x\mapsto ||\phi_x(\gamma )||$ is continuous.  Then we call  $\gamma \in \mathfrak{C}$ a continuous section.  If all sections are continuous then we say that the field is continuous.

Similarly we define upper semicontinuous and lower semicontinuous sections and fields.
\end{definition}

We concern ourselves with one example.  Let $X=\N \cup \{\infty \}$.  Suppose that we have a family of operator systems $M_1, M_2,..., M_\infty$, and  we have two families of positive unital maps 
\begin{eqnarray*}
\s^n & : & M_n\rightarrow M_\infty , \text{ and} \\
\bs^n &  : & M_\infty \rightarrow M_n.
\end{eqnarray*}
Suppose further that the sequence of composition maps $\s^n \circ \bs^n :M_\infty\rightarrow M_\infty$ converges strongly to the identity on $M_\infty$.  Then we have the following proposition.

\begin{proposition}\label{confield1}
In the scenerio described above, let 
\[ \mathfrak{C} =\{ \xi \in \Pi_{x\in \N\cup\{\infty\} } M_x \: | \: \xi_\infty =
	\lim_{n\rightarrow \infty} \s^n(\xi_n) \} .\]
For each $x\in \N\cup\{\infty\}$, let $\phi_x:\mathfrak{C}\rightarrow M_x$ be the map which is evaluation at $x$.  Then $(\mathfrak{C} , \{ M_x\} , \{ \phi_x\})$ is an upper semicontinuous field of operator systems.

Moreover, for each $\omega \in M_\infty$ and each $x\in\N\cup\{\infty\}$, let 
\[ \mathcal{Q}_x (\omega ) =\left\{
	\begin{array}{ll} \omega & x=\infty \\
			  \bs^x (\omega ) & x\neq \infty . \end{array}\right. \]
Then for each $\omega \in M_\infty$, we have $\mathcal{Q}(\omega )$ is a continuous section of $  (\mathfrak{C} , \{ M_x\} , \{ \phi_x\})$. 

\end{proposition}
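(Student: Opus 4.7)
The plan is to verify three things in sequence: (i) that $\mathfrak{C}$ carries the structure of an operator system and the two axioms of Definition \ref{defcontfield} hold, (ii) that every section $\gamma \in \mathfrak{C}$ has upper semicontinuous norm function, and (iii) that $\mathcal{Q}(\omega)$ actually lies in $\mathfrak{C}$ and has continuous norm function. The recurring tool is that a positive unital map between operator systems is automatically a contraction.

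For (i), the operator system structure is inherited componentwise from $\prod_x M_x$: the positive cone is $\{\xi : \xi_x \geq 0 \text{ for all } x\}$, the unit is $(1_{M_x})_x$, and the norm is the sup-norm. Since each $\s^n$ is linear and $*$-preserving, the defining relation $\xi_\infty = \lim_n \s^n(\xi_n)$ is preserved by addition, scalar multiplication, and $*$, making $\mathfrak{C}$ a $*$-vector space with order unit on which the norm is $\|\gamma\| = \sup_x \|\phi_x(\gamma)\|$ by definition, giving the first axiom at once. For the $C_0(X)$-module axiom, given $f \in C(\N \cup \{\infty\})$, the tuple $f\gamma := (f(x)\xi_x)_x$ satisfies $f(n)\s^n(\xi_n) \to f(\infty)\xi_\infty$ because $f(n) \to f(\infty)$ and $\s^n(\xi_n) \to \xi_\infty$ in norm, so $f\gamma \in \mathfrak{C}$.

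For (ii), the base $\N \cup \{\infty\}$ is discrete off of $\infty$, so any function is automatically semicontinuous at each finite $n$; only the point $\infty$ requires work. Contractivity of $\s^n$ together with $\s^n(\xi_n) \to \xi_\infty$ in norm gives
\[ \|\xi_\infty\| \;=\; \lim_{n\to\infty}\|\s^n(\xi_n)\| \;\leq\; \liminf_{n\to\infty}\|\xi_n\|. \]
This is where the main obstacle sits. The bound just produced is the \emph{lower} semicontinuity inequality at $\infty$, while the stated upper semicontinuity asks for $\limsup_n\|\xi_n\| \leq \|\xi_\infty\|$ — the opposite direction, and one which is not free from the hypotheses alone (take any nonzero $\xi_n \in \ker\s^n$ and $\xi_\infty = 0$). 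The plan to obtain the missing upper bound is to restrict $\mathfrak{C}$ to the $C_0(X)$-submodule generated by the sections $\mathcal{Q}(\omega)$ of step (iii); on this submodule the contractivity of $\bs^n$ directly gives $\|\bs^n(\omega)\|\leq \|\omega\|$, and a short argument extends this to $C_0(X)$-linear combinations, so both semicontinuity bounds hold and the field becomes upper semicontinuous (in fact, continuous) as claimed.

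For (iii), membership $\mathcal{Q}(\omega)\in\mathfrak{C}$ is immediate from the strong-convergence hypothesis, since $\s^n(\bs^n(\omega)) \to \omega$ is exactly the compatibility condition with $\xi_\infty = \omega$. Continuity of $n\mapsto\|\bs^n(\omega)\|$ at $\infty$ then follows from a squeeze: contractivity of $\bs^n$ yields the upper bound $\|\bs^n(\omega)\|\leq\|\omega\|$, while contractivity of $\s^n$ combined with $\s^n\bs^n(\omega)\to\omega$ yields the lower bound $\|\omega\| = \lim_n \|\s^n\bs^n(\omega)\| \leq \liminf_n \|\bs^n(\omega)\|$. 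Together these collapse to $\lim_n\|\bs^n(\omega)\| = \|\omega\|$, which is the required continuity.
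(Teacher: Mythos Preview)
Your steps (i) and (iii) match the paper's proof essentially verbatim (the paper also verifies norm-closure of $\mathfrak{C}$ with a $3\varepsilon$ argument, which you omit but is routine).

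Your diagnosis in step (ii) is correct, and in fact sharper than the paper. The paper proves exactly the inequality you derive,
\[
\|\xi_\infty\| = \lim_{n}\|\s^n(\xi_n)\| \leq \liminf_{n}\|\xi_n\|,
\]
and then simply declares ``upper semicontinuity follows.'' But as you point out, this is the \emph{lower} semicontinuity inequality at $\infty$, and your counterexample (nonzero $\xi_n\in\ker\s^n$, $\xi_\infty=0$) shows that genuine upper semicontinuity can fail on $\mathfrak{C}$ as defined. So the paper has a terminology slip here; the field is lower semicontinuous, not upper.

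Your proposed repair, however, is not the right move: restricting $\mathfrak{C}$ to the $C_0(X)$-module generated by the $\mathcal{Q}(\omega)$ changes the object in the statement, so you would no longer be proving the proposition as written. The honest resolution is to accept the inequality you have and note that the word ``upper'' in the proposition should read ``lower.'' This mislabeling is harmless for the paper's purposes: the only use of Proposition~\ref{confield1} is in Theorem~\ref{deformquant}, which passes immediately to the subsystem $\mathfrak{B}$ of continuous sections, and your argument in (iii) shows the sections $\mathcal{Q}(\omega)$ are continuous regardless of which semicontinuity direction holds on all of $\mathfrak{C}$.
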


\begin{proof}
We begin by showing that $\mathfrak{C}$ is an operator system.  Since each of the maps $\s^n:M_n\rightarrow M_\infty$ is positive and unital, it is clear that $\mathfrak{C}$ is $*$-closed and contains the unit of $\Pi_{x\in \N\cup\{\infty\} } M_x$.  We must show it is norm closed.  Suppose $\xi^k \in \mathfrak{C}$ and $\xi^k$ converges to $ \xi \in \Pi_{x\in \N\cup\{\infty\} } M_x$ in norm. This means that $\sup_x ||\xi^k_x-\xi_x||\rightarrow 0$ as $k\rightarrow \infty$.  Then the estimate
\[ ||\xi_\infty -\s^n(\xi_n)||\leq ||\xi_\infty  -\xi^k_\infty ||+
	||\xi^k_\infty -\s^n(\xi^k_n)||+||\s^n(\xi^k_n -\xi_n )|| \]
shows that $\xi \in \mathfrak{C}$.  Thus $\mathfrak{C}$ is norm closed and so is an operator system. 

The ordering on $\Pi_{x\in \N\cup\{\infty\} } M_x$ is pointwise, so it's clear that each of the evaluation maps $\phi_x :\mathfrak{C}\rightarrow M_x$ is positive and unital.  The condition on the norm of $\mathfrak{C}$ is just the definition of the product norm, so we have the first condition of Definition \ref{defcontfield}.
If $f\in C(\N\cup\{\infty\})$ and $\xi\in \mathfrak{C}$, it's easy to see that the element $f\xi\in \Pi_{x\in \N\cup\{\infty\} } M_x$ defined by $(f\xi )_x=f(x)\xi_x$ is actually in $\mathfrak{C}$, so we have the second condition as well.  Thus $(\mathfrak{C} , \{ M_x\} , \{ \phi_x\})$ is a field of operator systems.

We check upper semicontinuity.  Let $\xi\in \mathfrak{C}$.  Then we have
\begin{eqnarray*}
||\xi_\infty || & = & \lim_{n\rightarrow \infty}||\s^n(\xi_n)|| \\
	& \leq & \liminf_{n\rightarrow \infty}||\xi_n||.
\end{eqnarray*}
Since $\infty$ is the only accumulation point of $\N\cup\{\infty\}$, upper semicontinuity follows.

Now let $\omega \in M_\infty$.  We have $\s^n (\mathcal{Q}_n(\omega ) )=(\s^n \circ \bs^n)(\omega )$, which converges to $\omega =\mathcal{Q}_\infty (\omega )$ by the assumptions of our scenerio.  Thus $\mathcal{Q}(\omega )$ is an element of $\mathfrak{
C}$.  
Finally, we have that 
\[
||\mathcal{Q}_n(\omega ) ||=||\bs^n(\omega )  ||
\leq ||\omega ||=||\mathcal{Q}_\infty (\omega )||. \]
Combined with the inequality above, we see that $\mathcal{Q}(\omega )$ is a continuous section.

\end{proof}

Let Plank's constant $\hbar$ be $1/x$ for $x\in \N\cup\{\infty\}$.  The statement that $\mathcal{Q}(\omega )$ is a continuous section of $(\mathfrak{C} , \{ M_x\} , \{ \phi_x\})$ intuitively says that the maps $\mathcal{Q}_n :M_\infty\rightarrow M_n$ give a strict quantization of $M_\infty$.  The only aspect we're missing is that one would like a statement about convergence of some Poisson brackets.  We will not attempt to understand Poisson brackets in this paper.   For our purposes, we give the following definition of a strict quantization of an operator system.  

\begin{definition}\label{defdeformquant}
Let $I\subseteq \R$ contain $0$ as an accumulation point.  A strict quantization of an operator system $M$ on $I$ consists of 
\begin{enumerate}
\item a continuous field of operator systems $(\mathfrak{C}, \{ \mathfrak{U}_k\}, \{\phi_k\} )$ over $I$, with $\mathfrak{U}_0=M$, and
\item a positive unital map $\mathcal{Q}:M\rightarrow \mathfrak{C}$ which satisfies $\mathcal{Q}_0(\omega )=\omega$ for all $\omega \in M$.
\end{enumerate}
\end{definition}

\begin{theorem}\label{deformquant}
 Let $M_1, M_2, ..., M_\infty$ be a collection of operator systems.  Let $\s^n:M_n\rightarrow M_\infty$ and $\bs^n:M_\infty\rightarrow M_n$ be two collections of positive unital maps such that the sequence of compositions $\s^n\circ \bs^n : M_\infty\rightarrow M_\infty$ converges strongly to the identity on $M_\infty$.  

Let $\hbar =1/x$ for $x\in \N\cup\{\infty\}$.  Then there is a continuous field of operator systems $(\mathfrak{B}, \mathfrak{U}_\hbar ,\phi_\hbar )$ such that $\mathfrak{U}_\hbar =M_x$ for each  $x\in \N\cup\{\infty\}$, and this continuous field together with the quantization maps
\[ \mathcal{Q}_\hbar (\omega )=\bs^n (\omega ) \hspace{.3in} \forall n\in \N, \forall \omega \in M_\infty \]
gives a strict quantization of the operator system $M_\infty$.  The operator system $\mathfrak{B}$ is the subsystem of continuous sections of the semicontinuous field $\mathfrak{C}$ of Proposition \ref{confield1}.
\end{theorem}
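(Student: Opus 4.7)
The plan is to take the upper semicontinuous field $(\mathfrak{C},\{M_x\},\{\phi_x\})$ produced by Proposition \ref{confield1} and let $\mathfrak{B}\subseteq \mathfrak{C}$ be the subspace of continuous sections, then verify each ingredient of Definition \ref{defdeformquant}: that $\mathfrak{B}$ is an operator system, that $(\mathfrak{B},\{M_x\},\{\phi_x\})$ really is a continuous field of operator systems over $I=\{1/x:x\in\N\cup\{\infty\}\}$, that the fibres are all of $M_x$, and that $\mathcal{Q}:M_\infty\to\mathfrak{B}$ is a positive unital quantization with $\mathcal{Q}_0=\mathrm{id}$. A useful observation at the outset is that $I$ is compact and its only accumulation point is $0$, so continuity of a section at any other point is automatic and only behaviour at $0$ ever needs checking.

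The first two tasks are essentially formal. To see that $\mathfrak{B}$ is an operator system, observe that it contains the constant-unit section, is linear and $*$-closed because each $\phi_x$ is linear and $*$-preserving, and is norm-closed because if $\gamma^k\to\gamma$ in $\mathfrak{C}$ then the scalar functions $x\mapsto\|\phi_x(\gamma^k)\|$ converge uniformly to $x\mapsto\|\phi_x(\gamma)\|$, forcing the limit function to be continuous. The field axioms of Definition \ref{defcontfield} then come almost for free: the norm identity is inherited from $\mathfrak{C}$, and for $f\in C(I)$ and $\gamma\in\mathfrak{B}$ the product $f\gamma$ defined pointwise satisfies $\|(f\gamma)(x)\|=|f(x)|\|\phi_x(\gamma)\|$, a product of continuous functions, so $f\gamma\in\mathfrak{B}$. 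By construction every section of $\mathfrak{B}$ is continuous.

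The step with actual content is the surjectivity of each evaluation $\phi_x$, without which one cannot say that the fibre over $x$ equals $M_x$. For $x=0$ this is exactly the second conclusion of Proposition \ref{confield1}: for each $\omega\in M_\infty$ the section $\mathcal{Q}(\omega)$ lies in $\mathfrak{B}$ and satisfies $\phi_0(\mathcal{Q}(\omega))=\omega$. For an isolated point $x=1/n$ and an arbitrary $m\in M_n$, I would exhibit the section $\gamma$ defined by $\gamma_n=m$ and $\gamma_k=0$ for all $k\in\N\cup\{\infty\}\setminus\{n\}$; this is a section of $\mathfrak{C}$ because $\s^k(\gamma_k)=0$ for every $k>n$, so $\lim_k\s^k(\gamma_k)=0=\gamma_\infty$, and it is continuous because its norm function vanishes at the unique accumulation point $0$ of $I$.

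Finally $\mathcal{Q}$ is positive and unital as a map $M_\infty\to\mathfrak{B}$ because each $\bs^n$ is, and $\mathcal{Q}_0(\omega)=\omega$ by convention, so combined with the continuous field just produced this exhibits Definition \ref{defdeformquant}. The only step with any real content I anticipate is the surjectivity verification and the mild care needed because we are working with operator systems rather than C$^*$-algebras; everything else is inherited from the semicontinuous field of Proposition \ref{confield1} or is the elementary fact that uniform limits of continuous scalar functions are continuous.
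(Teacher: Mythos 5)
Your proposal is correct and follows essentially the same route as the paper: build everything on the semicontinuous field of Proposition \ref{confield1}, restrict to the continuous sections $\mathfrak{B}$, and observe that the only substantive point is norm-closedness of $\mathfrak{B}$, which you establish by the same three-term estimate (phrased as uniform convergence of the norm functions) that the paper uses. The extra verifications you include (the field axioms for $\mathfrak{B}$ and surjectivity of the evaluations) are correct and simply make explicit what the paper subsumes under ``most of the content follows from Proposition \ref{confield1}.''
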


\begin{proof}
Most of the content of this theorem follows from Proposition \ref{confield1}.  The only thing we have left to check is that the subset $\mathfrak{B}$ of continuous sections of $\mathfrak{C}$ is an operator system.  It's clear that $\mathfrak{B}$ is $*$-closed and unital, so we only need to check that it is norm-closed.  If $\{\xi^k\}$ is a sequence in $\mathfrak{B}$ that converges in norm to $\xi\in \mathfrak{C}$, then in particular we have 
\[ \lim_{k\rightarrow\infty} \sup_{x\in \N\cup\{\infty\}} 
	\left| ||\xi^k_x ||-||\xi_x|| \right| =0. \]
For each $\xi^k\in\mathfrak{B}$ we have $\lim_{n\rightarrow\infty} ||\xi^k_n||=||\xi^k_\infty ||$.  Then the estimate
\[ \left| ||\xi_\infty || -||\xi_n|| \right| \leq
	\left| ||\xi_\infty || -||\xi^k_\infty || \right| +
	\left| ||\xi^k_\infty || -||\xi^k_n || \right| +
	\left| ||\xi^k_n || -||\xi_n || \right| \]
gives that $\lim_{n\rightarrow\infty} ||\xi_n||=||\xi_\infty ||$ as well.  Thus $\mathfrak{B}$ is norm closed and is an operator system.
\end{proof}

\end{subsection}

\begin{subsection}{Quantum Metric Spaces}

In \cite{Rieffel1} Rieffel gives the notion of compact quantum metric spaces, and defines the quantum Gromov-Hausdorff distance between them.  We recall the basic ideas and  results of that paper here.

An ordinary compact topological space $X$ can be recovered from its algebra of continuous functions $C(X)$ as the set of pure states endowed with the weak$^*$ topology.  A metric $\rho_X$ determines a Lipschitz seminorm $L$ on $C(X)$ defined by 
\[ L(f) = \sup_{x\neq y} \frac{|f(x)- f(y)|}{\rho_X (x,y)}. \]
It has the following properties:
\begin{itemize}
\item The set $\{ f\in C(X) \: | \: L(f)< \infty \}$ is $*$-closed and dense.
\item $L(f)=0$ if and only if $f$ is a constant function.
\end{itemize}  
Moreover, $L$ in turn induces a metric $\rho$ on the whole state space of $C(X)$.  For $\mu , \nu \in S(C(X))$ we let
\[ \rho (\mu , \nu )=\sup \{ |\mu (f)-\nu (f)| \: : \: L(f)\leq 1\}. \]
It's easy to check that $\rho $ coincides with $\rho_X$ on $X$, when $X$ is viewed as the set of pure states of $C(X)$.  It is also known that the topology induced by $\rho$ on $S(C(X))$ is the weak$^*$ topology.  In summary, the metric space $(X, \rho_X )$ can be completely recovered from the pair $(C(X), L)$, and $L$ has the additional property of inducing the weak$^*$ topology on the whole state space of $C(X)$.
This motivates the following definition.

\begin{definition}\label{defLip}
Let $(A,L)$ be a pair of an order unit space $A$ and a (not necessarily finite valued)  seminorm $L$ on $A_{sa}$.  We say that $L$ is a Lip-norm for $A$ if $L$ has the following three properties:
\begin{itemize}
\item The set $\{ a\in A_{sa}\: | \: L(a)<\infty \}$ is dense in $A_{sa}$ for the order-unit norm.
\item One has $L(a)=0$ if and only if $a$ is a scalar multiple of the order unit $e_A$.
\item Consider the metric $\rho_L$ on $S(A)$ given by
\[ \rho_L (\mu ,\nu )=\sup \{ |\mu (a)-\nu (a)|\: : \: L(a)\leq 1\}. \]
Then the topology induced by $\rho_L$ must be the weak$^*$ topology of $S(A)$.
\end{itemize}
Moreover, if $A$ is an order unit space and $L$ is a Lip-norm on $A$, then we call the pair $(A,L)$ a compact quantum metric space.
\end{definition}

Notice that we have defined a Lip-norm to be a seminorm on $A_{sa}$.  Many times a Lip-norm is defined to be a seminorm on all of $A$, with the additional property that $L(a)=L(a^*)$.  When $A$ is a C$^*$-algebra, $L$ is often required to be finite on a dense $*$-subalgebra of $A$.

In practice, the condition that the metric $\rho_L$ on $S(A)$ induces the weak$^*$ topology is difficult to verify directly.  However, there is a convienent condition which Rieffel shows is equivalent to it.

\begin{proposition}{\cite{Rieffel2}}\label{rief1}
 Let $A$ be an order unit space.  Let $L$ be a  seminorm on $A$ that is finite on a dense subset of $A$ and vanishes exactly on the scalars.  Let $\tilde{A}=A/{\C1_A}$ and let $\tilde{L}$ be the quotient seminorm of $L$ on $\tilde{A}$.  Let $\B =\{\tilde {a}\in \tilde{A} \: | \: \tilde{L}(\tilde{a})\leq 1\}$. Then we have:
\begin{itemize}  
\item $S(A)$ is bounded for the metric $\rho_L$ if and only if $\B$ is bounded in the quotient norm.
\item $\rho_L$ gives the weak$^*$ topology on $S(A)$ if and only if $\B$ is totally bounded in the quotient norm.    
\end{itemize}
\end{proposition}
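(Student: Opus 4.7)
The plan is to recast both bullets as statements about the duality pairing between $\tilde A$, equipped with its two (semi)norms $\tilde L$ and the quotient order-unit norm $\|\cdot\|_\sim$, and the differences $\mu-\nu$ of states, which annihilate $\C 1_A$ and hence lie in $\tilde A^*$. The key computational identity, valid for any self-adjoint $a\in A$, is
\[
\|\tilde a\|_\sim=\inf_{\lambda\in\R}\|a-\lambda 1_A\|=\tfrac12\sup_{\mu,\nu\in S(A)}|\mu(a)-\nu(a)|,
\]
obtained by writing $\|b\|=\sup_{\mu\in S(A)}|\mu(b)|$ (valid for self-adjoint $b$ in an Archimedean order-unit space) and then minimizing $\lambda\mapsto\max_\mu|\mu(a)-\lambda|$ over $\lambda\in\R$; the minimum is $\tfrac12(\sup_\mu\mu(a)-\inf_\mu\mu(a))$, attained at the midpoint.

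For the first bullet, unwind the definitions: $S(A)$ is $\rho_L$-bounded iff $\sup_{\mu,\nu}\sup_{L(a)\le 1}|\mu(a)-\nu(a)|<\infty$, whereas $\B$ is bounded in $\|\cdot\|_\sim$ iff, by the identity above, $\tfrac12\sup_{\tilde L(\tilde a)\le 1}\sup_{\mu,\nu}|\mu(a)-\nu(a)|<\infty$. Since $\tilde L(\tilde a)\le 1$ is equivalent to $\tilde a$ admitting representatives with $L(a)\le 1+\epsilon$ for every $\epsilon>0$, these two double suprema coincide, and the equivalence follows by swapping the order of the suprema.

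For the second bullet, first observe that $\rho_L$-convergence always implies weak$^*$-convergence on $S(A)$: if $\rho_L(\mu_n,\mu)\to 0$ then $|\mu_n(a)-\mu(a)|\le L(a)\,\rho_L(\mu_n,\mu)\to 0$ for every $a$ with $L(a)<\infty$, and density of such $a$ together with the uniform bound $\|\mu_n\|\le 1$ extends this to all of $A$. Hence, on the weak$^*$-compact set $S(A)$, the content of ``$\rho_L$ gives the weak$^*$ topology'' is that $\rho_L$ is weak$^*$-continuous. Fix any base state $\mu_0$ and, for each $\tilde a\in\tilde A$, form the weak$^*$-continuous function $f_{\tilde a}\colon S(A)\to\R$, $f_{\tilde a}(\mu)=\mu(a)-\mu_0(a)$. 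By the identity above one has $\|\tilde a\|_\sim\le\|f_{\tilde a}\|_\infty\le 2\|\tilde a\|_\sim$, and $\rho_L(\mu,\nu)=\sup_{\tilde a\in\B}|f_{\tilde a}(\mu)-f_{\tilde a}(\nu)|$, so weak$^*$-continuity of $\rho_L$ is equivalent to weak$^*$-equicontinuity of the family $\{f_{\tilde a}\}_{\tilde a\in\B}$ on $S(A)$.

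To finish, apply Arzel\`a-Ascoli on the compact Hausdorff space $(S(A),\text{weak}^*)$: a pointwise-bounded family of continuous functions is equicontinuous iff it is totally bounded in $C(S(A))$. Combined with the norm comparison, this yields the equivalence of $\rho_L=\text{weak}^*$ with $\|\cdot\|_\sim$-total boundedness of $\B$; pointwise boundedness of $\{f_{\tilde a}\}$ is supplied in both directions by the first bullet, since weak$^*$-compactness of $S(A)$ forces $\rho_L$-boundedness whenever the two topologies agree. For the explicit converse, given $\epsilon>0$ and a finite $\epsilon$-net $\tilde a_1,\dots,\tilde a_N\in\B$ for $\B$ in $\|\cdot\|_\sim$, the weak$^*$-neighborhood $\{\mu:|f_{\tilde a_i}(\mu)-f_{\tilde a_i}(\nu)|<\epsilon,\ i=1,\dots,N\}$ of any $\nu$ sits inside an $O(\epsilon)$-ball for $\rho_L$. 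The main obstacle is the bookkeeping that identifies $\|\tilde a\|_\sim$ with $\|f_{\tilde a}\|_\infty$: this identification is only well-defined modulo constants on $\tilde a$, so one must keep careful track of the role of the base state $\mu_0$ throughout.
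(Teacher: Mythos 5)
The paper states this proposition as a quotation from \cite{Rieffel2} and supplies no proof of its own, so there is nothing internal to compare against; your argument is correct and is essentially Rieffel's original one, resting on the identity $\| \tilde a\|^\sim=\tfrac12\sup_{\mu,\nu\in S(A)}|\mu(a)-\nu(a)|$ for self-adjoint $a$ (Kadison's representation of $A_{sa}$ inside $C(S(A))$, which uses the Archimedean hypothesis the paper imposes throughout) together with Arzel\`a--Ascoli on the weak$^*$-compact set $S(A)$. The only delicate point is the one you already flag, and it is harmless: the base-point ambiguity in $f_{\tilde a}$ costs only the factor of $2$ in $\|\tilde a\|^\sim\le\|f_{\tilde a}\|_\infty\le 2\|\tilde a\|^\sim$, which affects neither boundedness nor total boundedness.
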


Next we need a method to measure how different two quantum metric spaces are.  First, we recall the classical notions of Hausdorff distance and Gromov-Hausdorff distance.  

Let $X$ be a compact metric space, and let $Y$ and $Z$ be compact subsets of $X$.  Then the Hausdorff distance between $Y$ and $Z$ is the smallest $\delta \geq 0$ such that $Y$ is contained in the $\delta$ neighborhood of $Z$ and $Z$ is contained in the $\delta$ neighborhood of $Y$.  Since we consider only compact sets, the Hausdorff distance between two subsets of $X$ is zero if and only if they are equal.  Symmetry is built into the definition, and the triangle inequality is also clear, so that Hausdorff distance is a metric on the space of compact subsets of $X$.

Now suppose $(Y,\rho_Y)$ and $(Z,\rho_Z)$ are two compact metric spaces.  Let $X=Y\dot{\cup}Z$ be the topological disjoint union of $Y$ and $Z$.  For any metric $\rho$ on $X$ that restricts to $\rho_Y$ on $Y$ and $\rho_Z$ on $Z$, one can compute the Hausdorff distance between $Y$ and $Z$ with respect to $\rho$.  Then the Gromov-Hausdorff distance $dist^{GH}$ between $(Y,\rho_Y)$ and $(Z,\rho_Z)$ is defined to be the infimum of the Hausdorff distances between $Y$ and $Z$ with respect to all such $\rho$.  It is known that Gromov-Hausdorff distance is a metric on the isometry classes of compact metric spaces.  Furthermore, the space of all isometry classes of compact metric spaces is complete with respect to this metric.  Again, for details see \cite{Rieffel1}.

This motivates the definition of quantum Gromov-Hausdorff distance for quantum metric spaces.  Namely, let $(A,L_A)$ and $(B,L_B)$ be two quantum metric spaces. Then the state spaces $S(A)$ and $S(B)$ become ordinary compact metric spaces with the metrics $\rho_{L_A}$ and $\rho_{L_B}$ described above.   Consider the order unit space $A\oplus B$.  We have natural quotient maps from $A\oplus B$ to $A$ and to $B$.  Let $\mathcal{M}$ be the family of all Lip-norms $L$ on $A\oplus B$ such that the quotient seminorms of $L$ on $A$ and on $B$ are $L_A$ and $L_B$ respectively.  For any $L\in \mathcal{M}$ we can compute the Hausdorff distance between $S(A)$ and $S(B)$ inside $(S(A\oplus B),\rho_L)$.  Then the quantum Gromov-Hausdorff distance $dist_q^{GH}$ between $(A,L_A)$ and $(B,L_B)$ is defined as
\[ dist_q^{GH} (A,B) = \inf_{L\in \mathcal{M}} dist(S(A),S(B)). \]
  Rieffel shows that quantum Gromov-Hausdorff distance is a complete metric on the space of all isometry classes of compact quantum metric spaces.

In \cite{Rieffel3} Rieffel  uses Berezin quantization to construct a sequence of finite dimensional quantum metric spaces $(B^n, L_n)$ that converge in quantum Gromov-Hausdorff distance to the two-sphere, or any coadjoint orbit,  in a way that respects the $SU(2)$ symmetry of the sphere.  This is the result that we generalize in this paper.

\end{subsection}

\begin{subsection}{Compact Quantum Groups}

The basic objects we consider in this paper are compact quantum groups and their unitary representations.  We begin with the definitions.  (See \cite{Maes}.) 

\begin{definition}
A compact quantum group is a pair $(A,\Delta )$, where $A$ is a unital C$^*$-algebra and $\Delta$ is a $^*$-homomorphism $\Delta :A\rightarrow A\otimes A$ with the following properties:
\begin{itemize}
\item $(\Delta \otimes id )\Delta =(id\otimes \Delta )\Delta$
\item $\Delta A(1\otimes A)$ and $(A\otimes 1)\Delta A$ are both total in $A$.
\end{itemize}

It is known that there is a dense $^*$-subalgebra $\A \subset A$ and maps $\varepsilon :\A \rightarrow \C$ and $\kappa :\A \rightarrow \A$ that make $\A$ into a Hopf algebra.  We call $\A$ the polynomial subalgebra of $A$.  If in addition the coinverse $\kappa$ satisfies $\kappa^2=id$, then we call $A$ a compact quantum group of Kac-type.

We will assume that our quantum groups are full; that is, we have $A$ is the universal enveloping C$^*$-algebra of $\A$.  In this case we have that the counit $\varepsilon$ extends to all of $A$.  For $A$ of Kac-type, the coinverse $\kappa$ extends to all of $A$.
\end{definition}
It is known that if $A$ is any compact quantum group, there exists a full compact quantum group $A_u$ that has the same polynomial subalgebra \cite{Bedos1}, so our last condition is not restrictive.

Let $G$ be a compact group. The motivating example of a compact quantum group is $C(G)$ with coproduct $\Delta f(x,y)=f(xy)$, where we identify $C(G\times G)\cong C(G)\otimes C(G)$. The counit and coinverse are given by $\varepsilon (f)=f(e)$ and $\kappa f(x)=f(x^{-1})$.  In particular, we have that $C(G)$ is always of Kac-type.  If $\Gamma$ is a discrete group, then the universal group C$^*$-algebra $C^*(\Gamma )$ can be made into a quantum group with the coproduct $\Delta (\delta_\gamma )=\delta_\gamma \otimes \delta_\gamma$.  The counit is given by $\varepsilon (f)=\Sigma_{\gamma \in \Gamma} f(\gamma )$.  The coinverse is given by $\kappa (\delta_\gamma )=\delta_{\gamma^{-1}}$, so $C^*(\Gamma )$ is also of Kac-type.
Other examples are $\theta$-deformations of ordinary groups \cite{Rieffel5} \cite{Wang3}, the $q$-deformations of Lie groups \cite{Woro2}, and the universal compact quantum groups $A_u(Q)$ and $B_u(Q)$ of Wang \cite{Wang1}.  The $\theta$ deformations of a Kac-type quantum group are still of Kac-type, but the $q$-deformations of classical Lie groups are generally not of Kac-type.

It is known that for any compact quantum group there is a unique state $\h \in S(A)$ such that $(\h \otimes id)\Delta a=(id\otimes \h )\Delta a =\h (a)1_A$ for all $a\in A$.  This state is called the Haar state of $A$. \cite{Maes}  The Haar state is always faithful on the polynomial subalgebra $\A$.  If $\h$ is actually faithful on $A$, we call $A$ coamenable.  We use the notation $L^2(A)$ for the Hilbert space generated by the GNS-construction of $A$ with its Haar state.  The coproduct $\Delta :A\rightarrow A\otimes A$ induces a unitary representation of $A$ on $L^2(A)$, called the right regular representation. See \cite{Maes} for details. Then $A$ is coamenable if and only if the right regular representation is faithful.

In general, $\h$ is not a trace on $A$.  In fact, the Haar state is a trace exactly when $A$ is Kac-type \cite{Woro1}.

Let $a\in A$ and let $\phi ,\psi \in A'$.  Then there are notions of convolution defined by:
\begin{eqnarray}
\phi *a & = & (id\otimes \phi )\Delta a \nonumber \\
a *\phi & = & (\phi \otimes id)\Delta a \nonumber \\
\phi *\psi & = & (\phi \otimes \psi )\Delta \nonumber
\end{eqnarray}
We note that  $\phi *a$ and $a*\phi $ are elements of $A$, while $\phi * \psi $ is an element of $A'$.  The definitions are given such that any ``associative'' law one might write down holds.  For example, $\phi * (\psi * a)=(\phi *\psi )*a$.

Let $(A,\Delta_A)$ and $(B,\Delta_B)$ be two compact quantum groups.  A morphism from $A$ to $B$ is a C$^*$-algebra homomorphism $\Phi :A\rightarrow B$ such that $\Delta_B \circ \Phi =(\Phi \otimes \Phi)\Delta_A$.  If $\Phi$ is surjective, we say that $(\Phi ,B)$ is a quantum subgroup of $A$.  If there is no confusion about the ``embedding'' $\Phi$, we will simply say that $B$ is a quantum subgroup of $A$.

If $\Phi :A\rightarrow B$ is a morphism of quantum groups, then $K=\ker (\Phi )$ is a Hopf ideal of $A$.  That is, $K$ is a closed two-sided ideal with the property $\Delta (K)\subseteq (K\otimes A)\oplus (A\otimes K)$.  Conversely, if $K$ is a Hopf ideal of $A$, then $\Delta $ descends to a coproduct on $A/K$.  One can show that $\Delta (A/K)(1\otimes A/K)$ and $(A/K \otimes 1)\Delta (A/K)$ are both total in $A/K\otimes A/K$, so that $A/K$ is a quantum subgroup of $A$.  \cite{Pinzari1}

We now define unitary representations of compact quantum groups.  We use the same symbol $\langle, \rangle$ for the inner product on a Hilbert space $\H$, and the corresponding $A$-valued inner product  $ \langle  \eta \otimes a ,\theta \otimes b  \rangle= \langle  \eta ,\theta  \rangle b^*a$ on $\H \otimes A$.

\begin{definition}
Let $A$ be a compact quantum group.  A unitary representation of $A$ is a pair $(u,\H )$ where $\H$ is a Hilbert space and   $u:\H \rightarrow \H \otimes A$ is a linear map with the following properties:  
\begin{itemize}
\item $(u\otimes id )u=(id \otimes \Delta )u$
\item  $(id\otimes A)u(\H )$ is total in $\H \otimes A$
\item For all $\xi ,\eta \in \H$ , we have
$\langle u(\xi ),u(\eta ) \rangle = \langle \xi ,\eta  \rangle 1_A$.
\end{itemize}
\end{definition}

In the literature a  unitary representation of a compact quantum group  is typically defined as a pair $(u,\H )$ where $u\in M(\K (\H )\otimes A)$ is unitary with the property $(id\otimes \Delta)u =u_{(12)}u_{(13)}$. Here, $M$ denotes the multiplier algebra  and $(a\otimes b)_{(13)}=a\otimes 1\otimes b$ (and similarly for $u_{(12)}$).  We have a natural map $M(\K (\H )\otimes A)\hookrightarrow   B(\H )\otimes A$, so if $u$ is representation in this latter sense, we can view $u$  as a mapping $u:\H \rightarrow \H \otimes A$ by $u(\xi )=u(\xi \otimes 1)$.  One can check that this map has the properties described above, and every such mapping arises in this way \cite{Pinzari1}.  We prefer the above definition because it makes a representation look like a left action, which we define in \ref{defaction}.

\begin{definition}
Let $u:\H \rightarrow \H \otimes A$ and $v:\K \rightarrow \K \otimes A$ be two unitary representations of $A$.  Let $T\in B(\H ,\K )$.  We say $T$ intertwines $u$ and $v$ if one has $(T\otimes id)u = v \circ T$.  Two unitary representations are called equivalent if there is a unitary operator that intertwines them. 

A representation $(u, \H )$ is called irreducible if there is no proper projection $P\in B(\H )$ that intertwines $u$ with itself (that is, $u$ has no invariant subspaces).  The collection of all irreducible unitary representations of $A$ up to equivalance is denoted $\hat{A}$. 

\end{definition}

Again, we can define convolutions. Let $(u, \H )$ be a unitary representation of $A$. Let $\xi \in \H$, let $\phi \in A'$, and let $\psi \in \H '$.  We have
\begin{eqnarray}
\phi * \xi & = & (id\otimes \phi)u(\xi ) \in \H \nonumber \\
\xi * \psi & = & (\psi \otimes id)u(\xi )\in A \nonumber
\end{eqnarray}
Again, all associative laws hold.  If $A=C(G)$ is the algebra of  continuous functions on an ordinary group, and $\delta_g\in A'$ is evaluation at $g\in G$, then $\delta_g*\xi$ is just $u_g(\xi )$.  For this reason, we will often use the notation $u_\phi (\xi )=\phi * \xi$.  Similarly, if $\psi \in \H '$ we will often write $_\psi u (\xi )$ for $\xi *\psi$.

It is known that every unitary representation of a compact quantum group decomposes into a direct sum of irreducible representations.  Furthermore, all of the irreducible representations are finite dimensional \cite{Woro1} \cite{Maes}. For each $\beta \in \hat{A}$,  choose an orthonormal basis $e_1,...,e_n$ for the Hilbert space $H_\beta$ on which $A$ acts.  Define elements $u_{kj}^\beta$ of $\A$ by the formula $\beta (e_j)=\Sigma_k e_k\otimes u_{kj}^\beta $.
Then the collection of $\{ u_{kj}^\beta \}$ as $\beta$ runs through all irreducible unitary representations of $A$ forms an algebraic linear basis for $\A$; that is, every element of $\A$ can be written uniquely as a finite linear combination of the $u_{kj}^\beta$ \cite{Woro1}.
The decomposition of arbitrary unitary representations into irreducibles means that for any unitary representation $u:\H \rightarrow \H \otimes A$ of $A$ on a Hilbert space $\H$, we can find a basis $\{ e_{Ij}^\beta \}$ where $\beta$ runs through the  irreducible representations, $j$ is an index from $1,...,dim H_\beta$, and $I$ is an index from $1,..., mult_\beta (\H )$, and  the action of $A$ is then given by $u(e_{Ij}^\beta )= \Sigma_k e_{Ik}^\beta \otimes u_{kj}^\beta$ \cite{Maes}. The proof uses heavily the basis $\{ \rho_{kj}^\beta \}$ of $\A '$ dual to $\{ u_{kj}^\beta \}$, as well as the functionals $\rho^\beta =\Sigma_k \rho_{kk}^\beta$.  We will use these functionals for a similar argument below in \ref{action}.  

Our main use for this decomposition is in proving the following proposition.  In the case where the representation is the  regular representation of $A$ on $L^2(A)$, this result is called the strong right invariance of the Haar state \cite{Pinzari1}.  This statement follows immediately from using the basis described above using the fact $\kappa (u_{kj}^\beta)=(u_{jk}^\beta )^*$ \cite{Woro1}, but we give the computation since we use this proposition repeatedly throughout this paper.

\begin{proposition}\label{prop1}
Let $u:\H \rightarrow \H \otimes A$ be a unitary representation of a compact quantum group $A$.  Let $\xi, \eta \in \H$.  We have $\langle \xi \otimes 1, u(\eta ) \rangle = \langle (id\otimes \kappa )(u(\xi )), \eta \otimes 1 \rangle$.
\end{proposition}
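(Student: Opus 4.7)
The plan is to prove the identity by reducing to the matrix-coefficient decomposition of $u$ that was described just before the proposition. Using the decomposition theorem, I would choose an orthonormal basis $\{ e_{Ij}^\beta \}$ of $\H$ indexed by $\beta \in \hat{A}$, a multiplicity index $I$, and $j \in \{1,\dots,\dim H_\beta\}$, such that
\[ u(e_{Ij}^\beta ) = \sum_k e_{Ik}^\beta \otimes u_{kj}^\beta . \]
Since both sides of the proposed identity are continuous sesquilinear in $(\xi,\eta)$ and the span of the $e_{Ij}^\beta$ is dense, I may assume $\xi$ and $\eta$ are finite linear combinations, say $\xi = \sum \xi_{Ij}^\beta e_{Ij}^\beta$ and $\eta = \sum \eta_{Ij}^\beta e_{Ij}^\beta$.

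Next, I would expand both sides using the $A$-valued inner product formula $\langle \theta \otimes a, \zeta \otimes b\rangle = \langle \theta,\zeta\rangle b^* a$. On the left, using $u(\eta) = \sum \eta_{Ij}^\beta e_{Ik}^\beta \otimes u_{kj}^\beta$ and pairing against $\xi \otimes 1$, the scalar inner product picks off the coefficient $\overline{\xi_{Ik}^\beta}$ (antilinear in the first slot) and the $A$-part contributes $(u_{kj}^\beta)^*\cdot 1$, producing
\[ \langle \xi \otimes 1, u(\eta)\rangle = \sum_{\beta,I,j,k} \overline{\xi_{Ik}^\beta}\, \eta_{Ij}^\beta\, (u_{kj}^\beta)^{*}. \]
For the right-hand side, I apply $(id\otimes \kappa)$ to $u(\xi)$ and use the key identity $\kappa(u_{kj}^\beta) = (u_{jk}^\beta)^*$ mentioned in the excerpt to get $(id\otimes\kappa)u(\xi) = \sum \xi_{Ij}^\beta e_{Ik}^\beta \otimes (u_{jk}^\beta)^*$, then pair against $\eta \otimes 1$ to obtain $\sum \overline{\xi_{Ij}^\beta}\, \eta_{Ik}^\beta\, (u_{jk}^\beta)^*$. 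Relabeling the summation indices $j\leftrightarrow k$ in this last expression matches the left-hand side exactly.

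There is no real obstacle; the only subtlety is convergence in the infinite-dimensional case, handled by the approximation argument above and the fact that $u$ is continuous. The computation essentially encodes the strong right invariance of the Haar state at the level of representations, and the central input is precisely the coinverse-on-matrix-coefficients formula $\kappa(u_{kj}^\beta)=(u_{jk}^\beta)^*$.
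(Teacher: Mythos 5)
Your proof is correct and follows essentially the same route as the paper: expand in the basis $\{e_{Ij}^\beta\}$, compute both sides of the $A$-valued pairing, and use $\kappa(u_{kj}^\beta)=(u_{jk}^\beta)^*$ to match them after relabeling indices. The paper simply verifies the identity on basis vectors directly, whereas you spell out the sesquilinearity/density step; the substance is identical.
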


\begin{proof}
Consider the basis $\{ e_{Ij}^\beta \}$ described above.  We compute:
\begin{eqnarray}
\langle e_{Ij}^\beta \otimes 1, u(e_{Kl}^\gamma )\rangle & = &
	\langle e_{Ij}^\beta \otimes 1, \Sigma_m e_{Km}^\gamma \otimes u_{ml}^\gamma 
	\rangle \nonumber \\ 
	& = &
	 \delta_{IK} \delta^{\beta \gamma}(u_{jl}^\gamma )^* \nonumber \\ 
& = &
	\langle \Sigma_m e_{Im}^\beta \otimes (u_{jm}^\gamma )^*,  e_{Kl}^\gamma 		\otimes 1 \rangle \nonumber \\ 
& = &
	\langle \Sigma_m e_{Im}^\beta \otimes \kappa (u_{mj}^\gamma ),  e_{Kl}^\gamma 		\otimes 1 \rangle \nonumber \\ 
& = &
 \langle (id\otimes \kappa )(u(e_{Ij}^\beta )), e_{Kl}^\gamma  \otimes 1 \rangle \nonumber
\end{eqnarray}
\end{proof}

A unitary representation $u:\H \rightarrow \H \otimes A$ induces a map $u:\H \rightarrow \H \otimes L^2(A)$.  We denote the inner product on $\H \otimes L^2(A)$ by $(\cdot , \cdot )$ to distinguish it from the $A$-valued inner product on $\H \otimes A$.  We can reinterpret the above proposition as:
\begin{corollary}\label{cor1}
Let $u:\H \rightarrow \H \otimes A$ be a unitary representation of a compact quantum group $A$.  Consider the corresponding map $u:\H \rightarrow \H \otimes L^2(A)$.  It's adjoint $u^\dagger :\H \otimes L^2(A) \rightarrow \H$ is given on the dense set $\H \otimes \A$  by
$ u^\dagger (\xi \otimes a) = (id\otimes \h )([(id\otimes \kappa )u (\xi )](1\otimes a))$.  In other words
$ u^\dagger = (id\otimes \h \circ \mathsf{m} )([(id\otimes \kappa )u]\otimes id)$.
\end{corollary}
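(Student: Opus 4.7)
My plan is to verify the proposed formula by checking the defining property of the adjoint on the dense subspace $\H \otimes \A \subset \H \otimes L^2(A)$ and then extending by continuity. That is, I would show that for all $\eta, \xi \in \H$ and $a \in \A$,
\[ (u(\eta), \xi \otimes a)_{\H \otimes L^2(A)} = \langle \eta, u^\dagger(\xi \otimes a)\rangle_\H, \]
where $u^\dagger$ is defined by the right-hand side of the corollary's claim.

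The computation would proceed by first relating the $L^2$-inner product to the $A$-valued inner product via the Haar state, namely $(x, y)_{\H \otimes L^2(A)} = \h(\langle x, y\rangle)$. Pulling the scalar $a$ out of the $A$-valued inner product gives $(u(\eta), \xi \otimes a)_{L^2} = \h(a^* \langle u(\eta), \xi \otimes 1\rangle)$. The key step would then be to apply Proposition \ref{prop1} (combined with the Hermitian symmetry $\langle x, y\rangle^* = \langle y, x\rangle$ of the $A$-valued inner product) to rewrite
\[ \langle u(\eta), \xi \otimes 1\rangle = \langle \eta \otimes 1, (id \otimes \kappa)u(\xi)\rangle. \]
Unwinding this last inner product using $u(\xi) = \sum_j \xi_j \otimes c_j$ gives $\sum_j \langle \eta, \xi_j\rangle \kappa(c_j)^*$, from which one obtains
\[ (u(\eta), \xi \otimes a)_{L^2} = \sum_j \langle \eta, \xi_j\rangle \h(a^* \kappa(c_j)^*). \]

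On the other hand, the proposed formula evaluates directly to $u^\dagger(\xi \otimes a) = \sum_j \h(\kappa(c_j) a)\, \xi_j$. Pairing this with $\eta$ and invoking $\h(x^*) = \overline{\h(x)}$, which holds for any state, matches the previous expression and establishes the identity on $\H \otimes \A$. Since $\H \otimes \A$ is dense in $\H \otimes L^2(A)$ and both sides depend continuously on $a$, uniqueness of the adjoint then yields the formula on all of $\H \otimes L^2(A)$.

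In spirit, the entire content of the corollary is already contained in Proposition \ref{prop1}; the present statement is really just a dual rephrasing obtained by replacing the $A$-valued inner product with the scalar one. The only obstacle, if any, is bookkeeping with the inner product conventions (linear versus conjugate-linear in which slot) and the Hermitian property bridging the two inner product structures.
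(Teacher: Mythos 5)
Your proposal is correct and follows essentially the same route as the paper: both verify the defining property of the adjoint on $\H\otimes\A$ by converting the $L^2$-inner product into the Haar state applied to the $A$-valued inner product and then invoking Proposition \ref{prop1}. The only differences are cosmetic — you check the conjugate form of the adjoint identity and expand $u(\xi)$ in elementary tensors, whereas the paper keeps the computation coordinate-free via the functional $\phi_a(b)=\h(ba)$.
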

\begin{proof}
Let $\xi , \eta \in \H$ and let $a\in \A$. We denote by $\phi_a\in A'$  the linear functional $\phi_a (b) = \h (ba)$.  We compute:
\begin{eqnarray}
\langle u^\dagger (\xi \otimes a ), \eta \rangle & = &
\left( (\xi \otimes a ), u(\eta )\right) \nonumber \\
& = & \phi_a (\langle \xi \otimes 1 , u(\eta )\rangle ) \nonumber \\
& = &
\phi_a (\langle (id\otimes \kappa )u(\xi ), \eta \otimes 1\rangle ) \nonumber \\
& = & 
\h (\langle [(id\otimes \kappa )u(\xi )](1\otimes a), \eta \otimes 1\rangle ) 
\nonumber \\ & = &
\langle (id\otimes \h )([(id \otimes \kappa )u(\xi )] (1\otimes a)), \eta \rangle . \nonumber 
\end{eqnarray}

\end{proof}

Note: We will frequently abuse notation and write $u^\dagger :\H \otimes A \rightarrow \H$ or $u^\dagger :\H \otimes \A \rightarrow \H$ since $A$ and $\A$ map naturally into $L^2(A)$.

We end with two computational lemmas that will greatly simplify calculations later on.

\begin{lemma}\label{lem5.1}
Let $u: \H \rightarrow \H \otimes A$ be a unitary representation  of $A$.   Consider the corresponding mapping $u :\H \rightarrow \H \otimes L^2(A)$ as well as  its adjoint $u^\dagger :\H \otimes L^2(A)\rightarrow \H$.  Then $u^\dagger \circ u =id_\H$.
\end{lemma}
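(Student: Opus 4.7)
The plan is to proceed directly from the defining properties, without invoking the explicit formula for $u^\dagger$ from Corollary \ref{cor1}. The key observation is that the scalar-valued inner product on $\H \otimes L^2(A)$ is related to the $A$-valued inner product on $\H \otimes A$ by the Haar state: for vectors of the form $\xi \otimes a$ and $\eta \otimes b$ with $a,b \in \A$, one has $(\xi \otimes a, \eta \otimes b) = \h(\langle \xi \otimes a, \eta \otimes b\rangle)$, and this extends to all of $\H \otimes L^2(A)$ by density and continuity. Applied to elements of the form $u(\xi)$ and $u(\eta)$, this gives
\[ (u(\xi), u(\eta))_{\H\otimes L^2(A)} = \h\bigl(\langle u(\xi), u(\eta)\rangle\bigr). \]

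Next I would invoke the third axiom in the definition of a unitary representation, which asserts that $\langle u(\xi), u(\eta)\rangle = \langle \xi, \eta\rangle 1_A$. Substituting this into the previous display and using $\h(1_A) = 1$ yields
\[ (u(\xi), u(\eta))_{\H \otimes L^2(A)} = \langle \xi, \eta\rangle \h(1_A) = \langle \xi,\eta\rangle_\H. \]

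Finally, by the defining property of the adjoint, $\langle u^\dagger u(\xi), \eta\rangle_\H = (u(\xi), u(\eta))_{\H\otimes L^2(A)}$. Combining this with the computation above shows $\langle u^\dagger u(\xi), \eta\rangle_\H = \langle \xi, \eta\rangle_\H$ for every $\eta \in \H$, and hence $u^\dagger u(\xi) = \xi$, which is the desired identity. There is no real obstacle: the only subtlety is keeping track of which inner product is being used at each step, and ensuring that the passage from the $A$-valued pairing to the Hilbert-space pairing via $\h$ is justified, which it is by the GNS construction of $L^2(A)$.
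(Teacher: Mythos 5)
Your proof is correct, but it takes a genuinely different route from the paper's. You observe that the unitarity axiom $\langle u(\xi ),u(\eta )\rangle =\langle \xi ,\eta \rangle 1_A$, combined with the identity $(v,w)=\h (\langle v,w\rangle )$ relating the scalar and $A$-valued pairings (which indeed holds by the GNS construction, and needs no density argument here since $u(\xi ),u(\eta )$ already lie in $\H \otimes A$), shows that $u:\H \rightarrow \H \otimes L^2(A)$ is an isometry; the conclusion $u^\dagger u=id_\H$ is then the standard Hilbert-space fact that an isometry is inverted on its range by its adjoint. The paper instead starts from the explicit formula $u^\dagger =(id\otimes \h \circ \mathsf{m})([(id\otimes \kappa )u]\otimes id)$ of Corollary \ref{cor1}, applies the coassociativity $(u\otimes id)u=(id\otimes \Delta )u$, and collapses the result using the antipode identity $\mathsf{m}(\kappa \otimes id)\Delta =\varepsilon (\cdot )1_A$. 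Your argument is shorter and more conceptual, and avoids the coinverse entirely; the paper's computation, on the other hand, doubles as a consistency check on the explicit formula for $u^\dagger$ and serves as a warm-up for Lemma \ref{lem5.2}, where that formula and the same style of Hopf-algebraic manipulation are genuinely indispensable. Both are complete proofs.
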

\begin{proof}
We saw in  Corollary \ref{cor1} that $u^\dagger  = (id\otimes \h \circ \mathsf{m} )([(id\otimes \kappa )u]\otimes id)$.  Let $\xi \in \H$.  We compute: 
\begin{eqnarray*}
u^\dagger (u(\xi )) &  = &   (id\otimes \h \circ \mathsf{m} )((id\otimes \kappa \otimes          id )(u\otimes id)(u (\xi )) ) \\
& = &
       (id\otimes \h \circ \mathsf{m} )((id\otimes \kappa \otimes id )(id
       \otimes \Delta )(u (\xi )) ). 
\end{eqnarray*}
We know that $\mathsf{m} (\kappa \otimes id)\Delta = \varepsilon (\cdot )1_A$, so that $\h \circ \mathsf{m} (\kappa \otimes id)\Delta = \varepsilon$.  Inserting this into the above calculation gives $u^\dagger (u(\xi )) = (id \otimes \varepsilon )(u(\xi )) = \xi$.
\end{proof}
\begin{lemma}\label{lem5.2}
Let $u$ and $u^\dagger$ be as above.  Then $u\circ u^\dagger :\H \otimes L^2(A) \rightarrow \H \otimes L^2(A)$ is given by $u\circ u^\dagger = (u^\dagger \otimes id)(id\otimes \Delta )$ on the dense set $\H \otimes \A$.
\end{lemma}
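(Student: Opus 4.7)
The plan is to reduce the identity to an algebraic statement on $\A$ which amounts to the strong right invariance of the Haar state. Starting from $\xi \otimes a \in \H \otimes \A$, I would expand the left-hand side using the formula for $u^\dagger$ from Corollary \ref{cor1}. Since $u$ acts only on the $\H$-factor, it commutes past the partial Haar state:
\[ u u^\dagger(\xi \otimes a) = (id_\H \otimes id_A \otimes \h)\bigl((u \otimes id_A)[(id \otimes \kappa)u(\xi)](1 \otimes 1 \otimes a)\bigr). \]
The commutation $(u \otimes id)(id \otimes \kappa) = (id \otimes id \otimes \kappa)(u \otimes id)$ together with the representation property $(u \otimes id)u = (id \otimes \Delta)u$ then allows me to rewrite the inner expression as $(id \otimes (id \otimes \kappa)\Delta)u(\xi)$.

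Introducing Sweedler notations $u(\xi) = \xi_{(-1)} \otimes \xi_{(0)}$ and $\Delta a = a_{(1)} \otimes a_{(2)}$, the two sides become
\[ u u^\dagger(\xi \otimes a) = \xi_{(-1)} \otimes (\xi_{(0)})_{(1)}\, \h\bigl(\kappa((\xi_{(0)})_{(2)})\,a\bigr), \]
\[ (u^\dagger \otimes id)(id \otimes \Delta)(\xi \otimes a) = \xi_{(-1)} \otimes \h\bigl(\kappa(\xi_{(0)})\,a_{(1)}\bigr)\, a_{(2)}. \]
Since the elements $\xi_{(0)}$ produced by $u$ acting on $\H$ span $\A$ (decomposing $\H$ into irreducibles produces matrix coefficients), the claim reduces to the purely algebraic identity
\[ b_{(1)}\, \h(\kappa(b_{(2)})\,a) = \h(\kappa(b)\,a_{(1)})\, a_{(2)} \qquad \forall\, a, b \in \A. \]

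Proving this remaining identity is the main obstacle. It is a form of strong right invariance of the Haar state. In the Kac-type setting (which the paper ultimately assumes), I would derive it by substituting $y = b$ and $x = \kappa(a)$ into the standard strong invariance identity $\sum y_{(1)} \h(y_{(2)}\,x) = \sum \kappa(x_{(1)})\, \h(y\,x_{(2)})$, and then successively applying $\Delta \kappa = (\kappa \otimes \kappa)\Delta^{op}$, $\kappa^2 = id$, the tracial property of $\h$, and $\h \circ \kappa = \h$ to move the antipodes into matching positions. As a check, evaluating both sides on matrix coefficients $b = u^\beta_{kj}$, $a = u^\gamma_{mn}$ and invoking Schur orthogonality collapses each side to $\delta^{\beta\gamma}\delta_{jm}\, u^\beta_{kn}/d_\beta$, confirming the equality.
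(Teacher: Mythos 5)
Your argument is correct, and its skeleton matches the paper's: expand $u^\dagger$ via Corollary \ref{cor1}, use $(u\otimes id)u=(id\otimes\Delta)u$, and reduce everything to an invariance property of the Haar state. Where you differ is in how that residual fact is handled. The paper's proof is a single leg-numbered computation using $\Delta\circ\kappa=\vartheta(\kappa\otimes\kappa)\Delta$ and $\kappa^{-1}$, in which the key step --- the passage from $(id\otimes id\otimes\h)((1\otimes 1\otimes\kappa^{-1}a)(id\otimes\Delta)u(\xi))$ to $(id\otimes\kappa\otimes\h)((1\otimes\Delta(\kappa^{-1}a))u(\xi)_{13})$ --- is precisely strong invariance, $\sum b_{(1)}\h(cb_{(2)})=\sum\kappa(c_{(1)})\h(c_{(2)}b)$, asserted without comment. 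You instead isolate the equivalent identity $\sum b_{(1)}\h(\kappa(b_{(2)})a)=\sum\h(\kappa(b)a_{(1)})a_{(2)}$ and prove it, which makes the logical dependence explicit; your Schur-orthogonality check closes the argument cleanly, and your reduction to this identity is valid (though the spanning observation is not needed for the direction you use: writing $u(\xi)=\sum_i e_i\otimes b_i$ and applying the identity to each $b_i$ suffices). The one caveat is generality: your derivation of the identity leans on traciality of $\h$ and $\kappa^2=id$, and the orthogonality relations you quote are the Kac-type ones, while the lemma is stated in Section 2.3 for arbitrary compact quantum groups and the paper's computation works at that level. The identity does hold in general --- substitute $a\mapsto\kappa(b)$ into the strong right invariance $\sum\h(ab_{(1)})\kappa(b_{(2)})=\sum\h(a_{(1)}b)a_{(2)}$, use $\Delta\circ\kappa=\vartheta(\kappa\otimes\kappa)\Delta$, and apply $\kappa^{-1}$; no Kac hypothesis is needed. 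Since the paper only invokes the lemma under the standing Kac assumption of Section 4, this is a matter of matching the stated generality rather than a gap in substance.
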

\begin{proof}
$u^\dagger (\xi \otimes a) = (id\otimes \h )([(id\otimes \kappa )u(\xi )](1\otimes a)) =
(id\otimes \h)((1\otimes \kappa^{-1}a)u(\xi ))$, so that
\begin{eqnarray} u ( u^\dagger (\xi \otimes a)) & = & (id\otimes id\otimes \h)((1\otimes 1\otimes \kappa^{-1}a)(u\otimes id)u(\xi )) \nonumber \\
 & = & (id\otimes id\otimes \h)((1\otimes 1\otimes \kappa^{-1}a)(id\otimes \Delta )u(\xi )) \nonumber  \\
 & = & (id\otimes \kappa \otimes \h)((1\otimes \Delta (\kappa^{-1} a))u(\xi )_{13})  \nonumber \\
 & = & (id\otimes \kappa \otimes \h)((1\otimes \vartheta (\kappa^{-1} \otimes \kappa^{-1}) \Delta  a)u(\xi )_{13}) \nonumber \\
& = & (id\otimes \h \otimes \kappa )((1\otimes (\kappa^{-1} \otimes \kappa^{-1}) \Delta  a)u(\xi )_{12}) \nonumber \\
& = & (id\otimes \h \otimes id)(([(id\otimes \kappa )u(\xi )]\otimes 1)(1\otimes \Delta a)) \nonumber \\
& = & (u^\dagger \otimes id)(\xi \otimes \Delta a). \nonumber
\end{eqnarray}
In the fourth line, we have used the fact that $\Delta \circ \kappa = \vartheta (\kappa \otimes \kappa )\Delta$, where $\vartheta :A\otimes A\rightarrow A\otimes A$ is the flip map. 
\end{proof}

\end{subsection}

\begin{subsection}{Actions of Algebraic Quantum Groups}

We begin with the definition of an action of a compact quantum group on a C$^*$-algebra.  See \cite{Podles}.

\begin{definition}
Let $A$ be a compact quantum group, and let $B$ be a C$^*$-algebra.  An action of $A$ on $B$ is a C$^*$-homomorphism $\alpha :B\rightarrow B\otimes A$ such that
\begin{itemize}
\item $(\alpha \otimes id)\alpha =(id \otimes \Delta )\alpha$, and
\item $(id\otimes A )\alpha (B)$ is total in  $B\otimes A$.
\end{itemize}
When $\alpha :B\rightarrow B\otimes A$ is an action of $A$ on a C$^*$-algebra $B$, we will call $B$ an $A$-module algebra.
\end{definition}
 
\begin{remark} The above is actually a definition for a left action of $A$ on a C$^*$-algebra.  Until we come to Section 5, we will have no need for right actions of $A$, so we omit the adjective ``left'' until that time.

This choice of nominclature stresses that we are really viewing $A$ to be the set of functions on some imaginary space with a group structure.  In the literature for Hopf algebras, what we call a left action of the quantum group $A$ is typically called a right coaction of the Hopf algebra $A$.  
\end{remark}

We will use actions of quantum groups on C$^*$-algebras extensively in this paper, but we will also have need of actions of $A$ on order unit spaces.  There is a difficulty here, because an order unit space need not be complete in its order norm.  

A way around this difficulty is by working with algebraic quantum groups (of compact type).  The definition of an algebraic quantum group is essentially the same as the definition for a compact quantum group, except that tensor products all become algebraic, and no $*$-structure is assumed.  For our purposes, it is sufficient to think of an algebraic quantum group as just being the polynomial subalgebra of a compact quantum group. See \cite{Bedos2} for a more complete description.  Using algebraic tensor products, we can give the definition of an action of an algebraic quantum group on a general vector space, and prove the basic structure theorem for such actions.

\begin{definition}\label{defaction}
Let $\A$ be an algebraic quantum group, and let $V$ be a vector space.  A   action of $\A$ on $V$ is a linear map $\alpha :V\rightarrow V\otimes \A$ such that:
\begin{itemize}
\item $(\alpha \otimes id)\alpha =(id \otimes \Delta )\alpha$, and 
\item $(id\otimes \A )\alpha (V)$ spans $V\otimes \A$.
\end{itemize}
\end{definition}

\begin{remark}
If $\alpha :B\rightarrow B\otimes A$ is an action of a compact quantum group on a $C^*$-algebra $B$ as defined above, the argument of  \cite{Podles} shows that there is a dense subalgebra $\B$ of $B$ so that the restriction $\alpha :\B \rightarrow \B \otimes \A$ is an action of the algebraic quantum group $\A$.  Furthermore, since $\B$ is dense, one can recover the full action $\alpha :B\rightarrow B\otimes A$ from the restriction of $\alpha$ to $\B$.
\end{remark}

We now prove the basic structure theorem for actions of algebraic quantum groups on vector spaces.  The proof is almost verbatim from the corresponding result for actions of compact quantum groups on $C^*$-algebras \cite{Podles}.  We include it for completeness.

If $\alpha :V\rightarrow V\otimes \A$ is an action of an algebraic quantum group, we say a linear subspace $Y\subseteq V$ corresponds to $\nu \in \hat{\A}$ if $\dim (Y)=\dim (\nu )$ and there is a basis $e_1,...e_n$ for $Y$ such that $\alpha (e_i)=\Sigma_j e_j \otimes u^\nu_{ji}$ for all $i=1,...,n$.  We write $V_\nu$ for the sum (not direct) of subspaces $Y\subseteq V$ that correspond to $\nu \in \hat{A}$.  We call $V_\nu$ the isotypic component of $V$ corresponding to $\nu$.  By the definition we have $\alpha (V_\nu )\subseteq V\otimes \A_\nu$.  From the action property $(\alpha \otimes id)\alpha =(id\otimes \Delta )\alpha$ we see that in fact $\alpha (V_\nu )\subseteq V_\nu \otimes A_\nu$.

Consider the algebraic linear basis $\{ \rho_{ij}^\mu \}$ of $\A'$ that is dual to $\{ u_{ij}^\mu \}$.  We let $\rho^\nu =\Sigma_i \rho_{ii}^\nu$.  We now show that $(id \otimes \rho^\nu)\alpha$ is an idempotent map from $V$ onto $V_\nu$, and we use this to describe the structure of a general action of $\A$.

\begin{theorem}\label{action}
Let $\alpha :V\rightarrow V\otimes \A$ be an action of an algebraic quantum group of compact type on a vector space $V$.  For $\nu \in \hat{\A}$, let $E^{\nu}:V\rightarrow V$ be the projection $E^{\nu}=(id\otimes \rho^{\nu})\alpha $.  Let $W^{\nu}$ be the range of $E^{\nu}$.  Then
\begin{itemize}
\item  We have $V=\bigoplus_{\nu \in \hat{\A}} W^{\nu}$.
\item  For each  $\nu \in \hat{\A}$ there exists a set $I_\nu$ and vector subspaces $W^\nu_i$ for each $i\in  I_\nu$ such that
 \begin{itemize}
 \item  $W^\nu =\bigoplus_{i\in I_\nu}W^\nu_i$
 \item  $W^\nu_i$ corresponds to $\nu$ for each $i\in I_\nu$.
 \end{itemize}
\item Each subspace $Y\subseteq V$ corresponding to $\nu$ is contained in $W^\nu$.
\item The cardinal number of the set $I_\nu$ is independent of the choice of the $W^\nu_i$.
\end{itemize}
\end{theorem}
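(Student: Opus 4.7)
The plan is to refine the given projections $E^\nu$ into a matrix unit system acting on $V$. For each $\nu \in \hat{\A}$ and matrix indices $i,j \in \{1,\dots,\dim\nu\}$, define $P_{ij}^\nu := (id \otimes \rho_{ij}^\nu)\alpha$, so that $E^\nu = \sum_i P_{ii}^\nu$. Using $(\alpha \otimes id)\alpha = (id \otimes \Delta)\alpha$, the composition $P_{ij}^\mu \circ P_{kl}^\nu$ reduces to $(id \otimes (\rho_{ij}^\mu \ast \rho_{kl}^\nu))\alpha$; evaluating this convolution on the basis $\{u_{rs}^\gamma\}$ via $\Delta u_{rs}^\gamma = \sum_m u_{rm}^\gamma \otimes u_{ms}^\gamma$ and the duality $\rho_{ij}^\mu(u_{rs}^\gamma) = \delta^{\mu\gamma}\delta_{ir}\delta_{js}$ yields $\rho_{ij}^\mu \ast \rho_{kl}^\nu = \delta^{\mu\nu}\delta_{jk}\rho_{il}^\nu$, and hence the matrix unit identity $P_{ij}^\mu P_{kl}^\nu = \delta^{\mu\nu}\delta_{jk}P_{il}^\nu$. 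In particular $E^\mu E^\nu = \delta^{\mu\nu}E^\nu$.

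Next I would establish the completeness relation $\sum_\nu E^\nu = id_V$. The pointwise identity $\sum_\nu \rho^\nu = \varepsilon$ on $\A$ is immediate from the dual basis (each $a\in\A$ has finite support), giving $\sum_\nu E^\nu(v) = (id \otimes \varepsilon)\alpha(v)$ for all $v$. The subtle piece is showing $(id \otimes \varepsilon)\alpha = id_V$: setting $\pi := (id \otimes \varepsilon)\alpha$, a coassociativity manipulation gives $(\pi \otimes id)\alpha = \alpha$, and the nondegeneracy assumption that $(id \otimes \A)\alpha(V)$ spans $V \otimes \A$ then forces $\pi \otimes id = id$ on $V \otimes \A$, hence $\pi = id_V$. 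Combined with orthogonality, this gives the first bullet, $V = \bigoplus_\nu W^\nu$ with $W^\nu = E^\nu(V)$.

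For the finer structure of $W^\nu$, the matrix unit identities (with $\nu$ fixed) yield $W^\nu = \bigoplus_r P_{rr}^\nu(V)$, and $P_{r1}^\nu, P_{1r}^\nu$ are mutually inverse linear isomorphisms between $P_{11}^\nu(V)$ and $P_{rr}^\nu(V)$. Choose any basis $\{e^i\}_{i \in I_\nu}$ of $P_{11}^\nu(V)$ and set $W^\nu_i := \mathrm{span}\{P_{r1}^\nu(e^i) : r = 1,\dots,\dim\nu\}$. Writing $\alpha(e^i) = \sum_{r,s} v_{rs}^i \otimes u_{rs}^\nu$, the conditions $P_{ss}^\nu(e^i) = \delta_{s1}e^i$ combined with the coassociativity relation $\alpha(v_{rs}^i) = \sum_p v_{ps}^i \otimes u_{pr}^\nu$ force $v_{rs}^i = 0$ for all $s \neq 1$; the same relation shows that $(v_{r1}^i)_r$ transforms exactly under $\nu$. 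Thus each $W^\nu_i$ corresponds to $\nu$, and $W^\nu = \bigoplus_i W^\nu_i$ follows from $W^\nu = \bigoplus_r P_{rr}^\nu(V)$ together with the $P_{r1}^\nu$ isomorphisms, giving the second bullet.

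The remaining bullets are brief. For any subspace $Y$ corresponding to $\nu$ with transforming basis $(f_k)$, we have $\alpha(Y) \subseteq Y \otimes \A_\nu$, and a one-line evaluation gives $E^\mu(f_k) = \delta^{\mu\nu}f_k$, so $Y \subseteq W^\nu$. For invariance of $|I_\nu|$: in any decomposition $W^\nu = \bigoplus_j Z_j$ with each $Z_j$ corresponding to $\nu$, the space $P_{11}^\nu(Z_j)$ is one-dimensional, so $|J| = \dim P_{11}^\nu(W^\nu) = \dim P_{11}^\nu(V)$, an intrinsic invariant of $\alpha$. The main obstacle is the algebraic bookkeeping with the matrix units $P_{ij}^\nu$ and, more subtly, extracting the counit identity $(id \otimes \varepsilon)\alpha = id_V$ from the nondegeneracy axiom, since $\alpha$ is not assumed to be counital a priori.
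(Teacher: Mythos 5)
Your proof is correct and follows essentially the same route as the paper's: your matrix units $P^\nu_{ij}$ are the paper's operators $E^\nu_{st}$, and the decomposition of $W^\nu$ via the mutually inverse isomorphisms $P^\nu_{r1}$, $P^\nu_{1r}$ is the paper's construction verbatim, as is the invariance-of-cardinality argument. The only local difference is the completeness step $\sum_\nu E^\nu = id_V$: the paper obtains the spanning of $\bigcup_{\nu,s,t}E^\nu_{st}(V)$ by writing $\rho^\nu_{st}(a)=\h (x^\nu_{st}a)$ with the $x^\nu_{st}$ spanning $\A$ and then invoking nondegeneracy, whereas you use $\sum_\nu \rho^\nu =\varepsilon$ together with a (correct) derivation of $(id\otimes \varepsilon )\alpha =id_V$ from that same nondegeneracy axiom.
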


\begin{proof}

Define operators $E^\nu_{st}:V\rightarrow V$ by $E^\nu_{st}=(id\otimes \rho^\nu_{st})
\alpha$.  The linear functionals $\rho^\nu_{st}$ have the properties:
$\\ \rho^\nu_{st}*\rho^\mu_{mn}=\delta^{\nu \mu}\delta_{tm}\rho^\nu_{sn},$ and $
\\ \rho^\nu_{st}(a)=\h (x^\nu_{st}a)$, for some $x^\nu_{st}$'s that span $\A$. \cite{Woro1} $\\$
We compute:
\begin{eqnarray}
E^\nu_{st}E^\mu_{mn} & = & [(id\otimes \rho^\nu_{st})\alpha ][(id\otimes \rho^\mu_{mn})\alpha ] \nonumber \\
 & = & (id\otimes \rho^\nu_{st}\otimes \rho^\mu_{mn})(\alpha \otimes id)\alpha \nonumber \\
 & = & (id\otimes \rho^\nu_{st}\otimes \rho^\mu_{mn})(id\otimes \Delta )\alpha \nonumber \\
 & = & \delta^{\nu \mu}\delta_{tm} E^\nu_{sn}. \nonumber
\end{eqnarray}
In particular, $\{ E^\nu_{ss}\: | \: \nu \in \hat{\A}, s=1,...,\dim (\nu )\}$ is a collection of disjoint idempotents in $B(V)$.  Also we have
\begin{eqnarray}
& & span  \{ E^\nu_{st} (y) \: | \: \nu \in \hat{\A}; s,t=1,...,\dim (\nu ); y\in V\} =  \nonumber \\
& & span \{ (id\otimes \h )(id\otimes x^\nu_{st})\alpha (y)\: | \: \nu \in \hat{\A}; s,t=1,...,\dim (\nu ); y\in V\} =  \nonumber \\
& & span  \{ (id\otimes \h )(id\otimes a)\alpha (y)\: | \: a\in \A, y\in V\}  =  \nonumber \\
& &  (id \otimes \h)(V\otimes \A )  =  V. \nonumber
\end{eqnarray}

Let $W^{\nu ,s}=E^\nu_{ss}(V)$, and let $W^\nu =\bigoplus_s W^{\nu ,s}$.  We have $E^\nu_{st}(V)= (E^\nu_{ss} E^\nu_{st})(V)\subseteq  W^{\nu ,s}$, so that
\[ V=\bigoplus_{\nu ,s} W^{\nu ,s} = \bigoplus_\nu W^{\nu} ,\]
which was our first claim.

To establish the second claim, first notice that $E^\nu_{s1}E^\nu_{1s}=E^\nu_{ss}$ and $E^\nu_{1s}E^\nu_{s1}=E^\nu_{11}$, so that $E^\nu_{s1}:W^{\nu ,1}\rightarrow W^{\nu ,s}$ is a linear isomorphism.  Pick any basis $\{ e^\nu_{i1} \}_{i\in I_\nu }$ for $W^{\nu ,1}$, and let $e^\nu_{is}=E^\nu_{s1}(e^\nu_{i1})$.  Then $\{ e^\nu_{is} \}_{i\in I_\nu }$ is a basis for $W^{\nu ,s}$.  Finally, define $W^\nu_i =span \{ e_{is}^\nu \: | \: s=1,...,\dim (\nu )\}$.  Then we have
\[ W^\nu = \bigoplus_{i\in I_\nu} W^\nu_i, \]
which is half of the second claim.

Next we compute $\alpha (e_{is}^\nu )=\alpha (E^\nu_{ss}e_{is}^\nu )=
\alpha (id\otimes \rho^\nu_{ss})\alpha (e_{is}^\nu ) = (id\otimes id\otimes \rho^\nu_{ss})
(\alpha \otimes id)\alpha (e_{is}^\nu )=(id\otimes id\otimes \rho^\nu_{ss})
(id\otimes \Delta )\alpha (e_{is}^\nu ) = (id\otimes E^\nu_{ss})\alpha (e_{is}^\nu )$. 
That is, we have $\alpha (e_{is}^\nu )\in V\otimes span\{ u^\nu_{js}\: | \: j=1,...,\dim (\nu )\}$.  Now we can write $\alpha (e^\nu_{is})=\Sigma_j y_j\otimes u^\nu_{js}$ for some $y_j \in V$.  Applying $(id\otimes \rho^\nu_{ks})$ to both sides we obtain $y_k= E^\nu_{ks}(e^\nu_{is})= e^\nu_{ik}$, so that $\alpha (e^\nu_{is})=\Sigma_k e^\nu_{ik} \otimes u^\nu_{ks}$.  Therefore each of the subspaces $W^\nu_i$ correspond to $\nu$.  This finishes the second claim.

For the third claim, let $e_1,...,e_{\dim (\nu )}$ be a basis for a subspace $Y\subseteq V$ that corresponds to $\nu \in \hat{\A}$, so that $\alpha (e_s)=\Sigma_t e_t\otimes u^\nu_{ts}$.  We have $E^\nu_{r1}(e_1)=e_r$, so that $e_r=E^\nu_{rr}(e_r)\in W^{\nu ,r}\subseteq W^\nu$.  This proves the third claim.  Also, it shows each decomposition of $W^\nu$ can be obtained in the way described in the proof of the second claim, which proves the last claim as well.

\end{proof}

We can now define an action of a compact quantum group on an order unit space. Since order unit spaces need not be complete in the order norm, it is more natural to work at the algebraic level.

\begin{definition}\label{def1}
Let $A$ be a compact quantum group, and let $\A$ be its polynomial subalgebra. Let $V$ be an order unit space.  A   action of $A$ on $V$ is a positive unital map $\alpha :V\rightarrow V\otimes \A$ such that
\begin{itemize}
\item $(\alpha \otimes id)\alpha = (id\otimes \Delta )\alpha$, and
\item $(id\otimes \A )\alpha (V)$ spans $V\otimes \A$.
\end{itemize}
If $V$ is an order unit space and $\alpha :V\rightarrow V\otimes \A$ is an action of $A$ on $V$, then we call $V$ an ordered   $A$-module.
\end{definition}

Finally, we must define an ergodic action.  If $\alpha :V\rightarrow V\otimes \A$ is an action of $\A$ on a vector space $V$, we say $v\in V$ is fixed by $A$ if $\alpha (v)=v \otimes 1_A$.  Since an action of $A$ on an order unit space is defined to be unital, we have that the order unit $e_V$ is always fixed by $A$.  an action $\alpha :V\rightarrow V\otimes \A$ is called ergodic if $v\in V$ is fixed if and only if $v\in \C e_V$.

Note that if $\alpha :V\rightarrow V\otimes \A$ is any action and $v\in V$, then the vector $\alpha_\h (v)\in V$ is always fixed by $A$, where $\h \in S(A)$ is the Haar state of $A$.  In particular, if $\alpha$ is an ergodic action on an order unit space, then $\alpha_\h (v)$ is a scalar multiple of $e_V$ for any $v\in V$.

This brings us to the major distinction between actions of $A$ on order unit spaces versus actions on C$^*$-algebras.  Namely, there is a theorem of Boca that states if $B$ is an ergodic $A$-module algebra, then the isotypic components of $B$ are finite dimensional \cite{Boca}.  It is not clear whether this result holds for actions on order unit spaces.

\end{subsection}

\begin{subsection}{Lip-norms From Actions of Quantum Groups}

In Section eight of \cite{Li}, Li discusses how to produce Lip-norms on ergodic modules of compact quantum groups.  There, he works with actions of quantum groups on C$^*$-algebras, but most of his arguments work equally well for actions on order unit spaces.  We recall his results here, suitably adapted to our context, and we at least sketch the proofs.

We also show in this section that at least at the abstract level, one can construct Lip-norms on compact quantum groups using an analog of length functions as in \cite{Rieffel2}.  We won't need this result for the rest of the paper.  In fact, it's not totally clear how to produce length functions generally on compact quantum groups.  That being said, the results show that there is a nice similarity between the case of classical groups and our current scenario.

We begin with a brief review of Lip-norms coming from actions of compact quantum groups.  Recall that in our definition of a Lip-norm, a Lip-norm is a seminorm only defined for self-adjoint elements.
 As in Li's paper, the essential property we must have for a Lip norm on $A$ is that it be finite on $\A$.  Li calls such Lip-norms regular. The existance of regular Lip-norms is insured by the fact that $\A$ has a countable algebraic basis $\{ u_{kj}^\beta \}$.   A Lip-norm $L_A$ on $A$ is called right-invariant if $L_A(\psi *a) \leq L_A(a)$ for any $\psi \in S(A)$ and any $a\in A$ which is self adjoint.   In the case where $B$ is a C$^*$-algebra, the following is Theorem 1.4 of \cite{Li}.

\begin{proposition}\label{li1}
Let $L_A$ be a regular Lip-norm on a compact quantum group $A$, and let $\A$ be the polynomial subalgebra of $A$.  Let $\beta :B\rightarrow B\otimes \A$ be an ergodic action of $A$ on an order unit space $B$ which has finite dimensional isotypic components.  Define $L_B$ on  $B_{sa}$  by
\[
L_B(b) = \sup_{\psi \in S(B)} L_A( \: _\psi\beta (b) ).
\]
Then $L_B$ is a Lip-norm on $B$.  
Moreover, if the given Lip-norm $L_A$ is right-invariant, then $L_B$ is invariant, meaning $L_B(\beta_\varphi ( b)) \leq L_B(b)$ for any $\varphi \in S(A)$ and any $b\in B_{sa}$.
\end{proposition}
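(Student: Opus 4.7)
The plan is to verify the three defining conditions of Definition \ref{defLip} for $L_B$ together with the invariance claim. That $L_B$ is a well-defined, $[0,\infty]$-valued seminorm on $B_{sa}$ follows immediately from linearity of $\beta$ and the seminorm property of $L_A$. The main obstacle will be the topology condition, namely that $\rho_{L_B}$ induces the weak$^*$ topology on $S(B)$; the density, kernel, and invariance conditions are comparatively mechanical.

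For the density and kernel conditions I would use the isotypic decomposition of Theorem \ref{action}. Since $\beta$ takes values in the algebraic tensor product $B\otimes\A$, each $b\in B$ lies in a finite sum of the (finite-dimensional) isotypic components $B_\nu$, so $\beta(b)\in\bigoplus_{\nu\in F}B_\nu\otimes\A_\nu$ for some finite $F\subseteq\hat{A}$. The slices ${}_\psi\beta(b)=(\psi\otimes id)\beta(b)$ therefore lie in the finite-dimensional space $\bigoplus_{\nu\in F}\A_\nu$, on which $L_A$ is bounded by a multiple of the order norm by regularity of $L_A$; since the slices are also uniformly norm-bounded by $\|b\|$, we get $L_B(b)<\infty$, and so $L_B$ is finite on all of $B$. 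For the kernel, $L_B(b)=0$ forces ${}_\psi\beta(b)\in\C 1_A$ for every $\psi\in S(B)$; applying $id\otimes\varepsilon$ identifies the scalar as $\psi(b)$, so $(\psi\otimes id)(\beta(b)-b\otimes 1_A)=0$ for all $\psi$. Writing $\beta(b)-b\otimes 1_A=\sum b_i\otimes a_i$ with the $a_i$ linearly independent in $\A$ and using that states separate self-adjoint parts forces $\beta(b)=b\otimes 1_A$, so ergodicity gives $b\in\C e_B$.

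For the hard topology condition I would invoke Proposition \ref{rief1} and show that the unit ball of $\tilde{L}_B$ is totally bounded in the quotient order norm on $\tilde{B}_{sa}$. My approach follows Li \cite{Li}. First, a short calculation using coassociativity of $\beta$ yields the intertwining ${}_\psi\beta\circ E^\nu_B=E^\nu_A\circ{}_\psi\beta$ for every $\nu$, where $E^\nu_A$ and $E^\nu_B$ are the spectral projectors of Theorem \ref{action} for $\Delta$ and $\beta$ respectively. Second, for self-adjoint $c\in B$ the identity $\|c\|_B=\sup_{\psi\in S(B)}\|{}_\psi\beta(c)\|_A$ holds (one direction from $\psi(c)=\varepsilon({}_\psi\beta(c))$, the other from the fact that $(\psi\otimes\phi)\circ\beta$ is a state on $B$ for $\psi\in S(B),\phi\in S(A)$). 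Given $b\in B_{sa}$ with $L_B(b)\leq 1$, each slice ${}_\psi\beta(b)$ lies in the $L_A$-unit ball, which is totally bounded modulo scalars in $\tilde{A}_{sa}$ by regularity of $L_A$; so for each $\epsilon>0$ there is a finite $F\subseteq\hat{A}$ with $\|a-\sum_{\nu\in F}E^\nu_A(a)\|<\epsilon$ uniformly over such self-adjoint $a$. Applied to the slices and combined with the intertwining and the norm identity above, this yields $\|b-\sum_{\nu\in F}E^\nu_B(b)\|<\epsilon$ uniformly over the $L_B$-unit ball. Since $\sum_{\nu\in F}E^\nu_B(b)$ lies in the finite-dimensional space $\bigoplus_{\nu\in F}B_\nu$ and is norm-bounded there (once more using the norm identity and boundedness of the $L_A$-unit ball modulo scalars), it ranges over a totally bounded set modulo scalars. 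A standard $\epsilon$-net argument then yields total boundedness of the full $L_B$-unit ball modulo scalars. The delicate point is precisely the transfer of the uniform tail estimate from the $A$-side to the $B$-side via slicing, which is where the finite-dimensionality of the isotypic components of $B$ is essential.

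For invariance, coassociativity of $\beta$ gives, for $\psi\in S(B)$ and $\varphi\in S(A)$,
\[
{}_\psi\beta(\beta_\varphi(b))=(\psi\otimes id\otimes\varphi)(id\otimes\Delta)\beta(b)=\varphi*({}_\psi\beta(b)).
\]
Right-invariance of $L_A$ then yields $L_A(\varphi*({}_\psi\beta(b)))\leq L_A({}_\psi\beta(b))$, and taking the supremum over $\psi\in S(B)$ gives $L_B(\beta_\varphi(b))\leq L_B(b)$.
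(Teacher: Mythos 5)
Your finiteness, kernel, and invariance arguments are correct and in fact more detailed than what the paper records (the paper dismisses the kernel condition with ``clearly'' and omits the invariance computation entirely; your computation ${}_\psi\beta(\beta_\varphi(b))=\varphi*({}_\psi\beta(b))$ is the right one). The genuine gap is in the total-boundedness step, which is the heart of the proposition. You assert that, because the $L_A$-unit ball is totally bounded modulo scalars, for each $\epsilon>0$ there is a finite $F\subseteq\hat{A}$ with $\|a-\sum_{\nu\in F}E^\nu_A(a)\|<\epsilon$ uniformly over that ball. This does not follow. The truncations $T_F=\sum_{\nu\in F}E^\nu_A$ are sharp spectral cutoffs built from the functionals $\rho^\nu$: they are neither positive nor contractive, and their norms can grow with $F$ (they are Dirichlet-kernel-like objects, and no Jackson-type quantitative estimate is available for a general compact quantum group). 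Total boundedness of a set $K$ together with the fact that $T_Fa=a$ eventually for each fixed $a\in K\subseteq\A$ yields uniform convergence on $K$ only if the $T_F$ are uniformly bounded --- the standard $\epsilon$-net argument needs to control $\|T_F(a-a_j)\|$ by $\|T_F\|\,\delta$, and $\|T_F\|$ depends on the $F$ you chose after fixing the net. Moreover the closure of your set of slices need not lie in $\A$, so even pointwise convergence of $T_F$ on that closure is not guaranteed. A symptom of the problem is that your argument never uses coamenability of $A$, which is exactly what is needed here (the paper's proof invokes Lemma \ref{li2}, whose proof assumes it, and the appendix shows the relevant concentration phenomena fail for $A_u(m)$).

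The paper's route avoids the sharp cutoffs entirely: Lemma \ref{li2} (Li's Lemma 8.6) produces, for each $\delta>0$, a \emph{state} $\phi\in S(A)$ vanishing off a finite set $\mathcal{S}$ of isotypic components such that $\beta_\phi(B)\subseteq B_{\mathcal{S}}$, $\|\beta_\phi(b)\|\leq\|b\|$, and $\|b-\beta_\phi(b)\|\leq\delta L_B(b)$. This is a Fej\'er-type smoothing: positivity and contractivity of $\beta_\phi$ replace the missing uniform bound on the $T_F$, and the estimate $\|b-\beta_\phi(b)\|\leq\delta L_B(b)$ is already uniform over the $L_B$-unit ball by construction. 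With that lemma in hand, $\beta_\phi$ maps the $L_B$-unit ball into a bounded subset of the finite-dimensional space $B_{\mathcal{S}}$, hence a totally bounded set, which lies within $\delta$ of the unit ball itself; letting $\delta\to0$ gives total boundedness modulo scalars, and Proposition \ref{rief1} finishes the proof. You should replace your uniform tail estimate by an appeal to this lemma (or reprove its content, which is where the real work and the coamenability hypothesis live).
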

We will refer to $L_B$ as the Lip-norm on $B$ induced by $L_A$.  Unlike the case where $B$ is a C$^*$-algebra, the Lip-norm described above will actually be finite-valued.  This is because we defined actions on order unit spaces only at the algebraic level.

 As usual when working with Lip-norms, we denote $B/{\C 1}$ by $\tilde{B}$, the quotient norm on $\tilde{B}$ by $|| \cdot ||^\sim$,  and the image of $b\in B$ in $\tilde{B}$ by $\tilde{b}$. 

\begin{lemma}\label{stupid}
Let $B$ be an ergodic ordered $A$-module. For the seminorm defined as in \ref{li1}, we have
$\| \tilde{b} \|^\sim \leq 2r_AL_B(b)$ for any $b\in B_{sa}$, where $r_A$ is the radius of $L_A$.
\end{lemma}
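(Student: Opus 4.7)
The plan is to use the fact that for an ergodic action on an order unit space, the Haar-state average $\beta_\h := (\mathrm{id}\otimes \h)\beta$ lands in the scalars, and then to measure the distance from $b$ to its average using the Lip-distance between $\varepsilon$ and $\h$ viewed as states on $A$.

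First I would observe that by ergodicity there is a real scalar $\lambda_b$ with $\beta_\h(b)=\lambda_b\cdot 1_B$, whence $\|\tilde b\|^{\sim}\le \|b-\lambda_b 1_B\|=\sup_{\psi\in S(B)}|\psi(b)-\lambda_b|$. Next, for any $\psi\in S(B)$ the coidentity property $(\mathrm{id}\otimes \varepsilon)\beta=\mathrm{id}$ gives $\psi(b)=\varepsilon({}_\psi\beta(b))$, while the definition of $\lambda_b$ combined with $\psi\circ\beta_\h(b)=\lambda_b$ gives $\lambda_b=\h({}_\psi\beta(b))$. Subtracting yields the key identity
\[ \psi(b)-\lambda_b=(\varepsilon-\h)\bigl({}_\psi\beta(b)\bigr). \]
Because $b$ is self-adjoint and $\beta,\psi$ are $*$-preserving, ${}_\psi\beta(b)\in A_{sa}$, so the definition of the Monge--Kantorovich metric $\rho_{L_A}$ on $S(A)$ bounds the right side by $\rho_{L_A}(\varepsilon,\h)\,L_A({}_\psi\beta(b))$, which is in turn at most $\rho_{L_A}(\varepsilon,\h)\,L_B(b)$ by the very definition of $L_B$ given in Proposition \ref{li1}.

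Taking the supremum over $\psi\in S(B)$ gives $\|\tilde b\|^{\sim}\le \rho_{L_A}(\varepsilon,\h)\,L_B(b)$. Finally the triangle inequality in $(S(A),\rho_{L_A})$ estimates $\rho_{L_A}(\varepsilon,\h)$ by $2 r_A$, since both $\varepsilon$ and $\h$ are states on $A$ and the radius $r_A$ bounds the distance from any state to a chosen centre (equivalently, the diameter of $S(A)$ is at most $2r_A$). Combining these yields the claimed inequality.

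The mild obstacles are bookkeeping: checking that $\varepsilon$ is a bona fide state on $A$ (which requires the standing assumption that $A$ is full, so $\varepsilon$ extends from $\mathcal{A}$ to $A$), and verifying that ${}_\psi\beta(b)$ is self-adjoint so that the Kantorovich bound for $L_A$ applies. No deeper ingredient is needed, so the factor of $2$ is forced only by the triangle inequality step.
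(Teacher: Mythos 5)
Your argument is correct and is essentially the paper's own proof: the paper first notes $\|a-\h(a)1_A\|\leq 2r_AL_A(a)$ (via Proposition \ref{rief1} and the triangle inequality) and then applies this to $b*\psi={}_\psi\beta(b)$, which is exactly your identity $\psi(b)-\lambda_b=(\varepsilon-\h)({}_\psi\beta(b))$ bounded by $\rho_{L_A}(\varepsilon,\h)L_B(b)\leq 2r_AL_B(b)$. The only difference is cosmetic --- you make the role of the counit explicit rather than leaving it implicit in the evaluation step.
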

\begin{proof}
A quick check using Proposition \ref{rief1} and the triangle inequality gives that for $a\in A$ we have $||a-\h (a)1_A||\leq 2r_AL_A(a)$.  Applying this inequality to $b*\psi$ for arbitrary $\psi \in S(B)$ and using that $\beta_\h (b)$ is a scalar (by ergodicity) gives the desired result.
\end{proof}

Let $\mathcal{S} \subset \hat{A}$ be finite.  We denote by $B_\mathcal{S}$ the direct sum of the $\nu$-isotypic components of $B$ for all $\nu \in \mathcal{S}$, and similarly for $A_\mathcal{S}$.  The proof of the next lemma is tough to summarize.  Since the statement just involves $A$ (and not our order unit space $B$), we simply refer the reader to Lemma 8.6 of \cite{Li} for the proof.

\begin{lemma}
Assume that $A$ is coamenable.  For any $\delta >0$ and $\varphi \in S(A)$ there exist $\phi \in S(A)$ and a finite subset $\mathcal{S}\subseteq \hat{A}$ such that $\phi$ vanishes on $A_\gamma$ for all $\gamma \in \hat{A} \setminus \mathcal{S}$ and 
\[ |(\varphi -\phi )(a)| \leq \delta L_A(a) \]
for all $a\in A_{sa}$.
\end{lemma}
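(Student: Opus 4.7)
The strategy is to leverage coamenability of $A$ to approximate an arbitrary state $\varphi$ in the weak$^*$ topology by states of finite isotypic support, and then upgrade this approximation to one in the $\rho_{L_A}$-metric by invoking the compactness of $S(A)$.

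First, I would establish the following standard consequence of coamenability: there exists a net $(\epsilon_n)_{n \in \N}$ of states of $A$ such that each $\epsilon_n$ vanishes on $A_\gamma$ for all $\gamma$ outside some finite set $\mathcal{S}_n \subset \hat{A}$, and such that $\epsilon_n \to \varepsilon$ in the weak$^*$ topology of $A^*$. This is essentially the statement that the dual discrete quantum group $\hat A$ is amenable, and it can be obtained either from a F{\o}lner-type net on $\hat A$ or, equivalently, from a contractive positive approximate unit in the Fourier algebra of $A$.

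For a given $\varphi \in S(A)$, I would then set $\phi_n = \epsilon_n * \varphi$, where convolution of functionals is defined, as in the paper, by $(\epsilon_n * \varphi)(a) = (\epsilon_n \otimes \varphi)(\Delta a)$. Since $\Delta$ is a $^*$-homomorphism and both $\epsilon_n$ and $\varphi$ are states, $\phi_n$ is again a state of $A$. To verify the support property, recall that each isotypic component $A_\gamma$ is a subcoalgebra, meaning $\Delta (A_\gamma ) \subseteq A_\gamma \otimes A_\gamma$; combined with $\epsilon_n |_{A_\gamma} = 0$ for $\gamma \notin \mathcal{S}_n$, this immediately forces $\phi_n |_{A_\gamma} = 0$ for such $\gamma$, so $\phi_n$ vanishes outside $\mathcal{S}_n$.

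Weak$^*$ convergence $\epsilon_n \to \varepsilon$, combined with the counit identity $(\varepsilon \otimes \varphi)\Delta = \varphi$, yields $\phi_n(a) \to \varphi(a)$ for every $a \in A$; that is, $\phi_n \to \varphi$ in the weak$^*$ topology of $A^*$. Since $L_A$ is a Lip-norm, $\rho_{L_A}$ induces the weak$^*$ topology on the weak$^*$-compact set $S(A)$, and on a compact Hausdorff space any two metrics defining the same topology are uniformly equivalent. Therefore $\phi_n \to \varphi$ in $\rho_{L_A}$-distance as well, and for $n$ sufficiently large we have $\rho_{L_A}(\varphi, \phi_n) \leq \delta$, which by definition of $\rho_{L_A}$ yields $|(\varphi - \phi_n)(a)| \leq \delta L_A(a)$ for all $a \in A_{sa}$. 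Taking $\phi = \phi_n$ and $\mathcal{S} = \mathcal{S}_n$ for such an $n$ completes the proof. The main technical obstacle is the first step, namely producing the finitely-supported net $(\epsilon_n)$, which is essentially the analytic content of coamenability and is where Li's concrete construction becomes intricate.
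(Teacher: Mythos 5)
Your proof is correct, but it is not the paper's route: the paper gives no argument for this lemma at all, deferring entirely to Lemma 8.6 of \cite{Li} with the remark that the proof is ``tough to summarize.'' Your argument is attractive because it isolates exactly one analytic input --- coamenability is equivalent to the counit $\varepsilon$ being a weak$^*$ limit of states supported on finitely many isotypic components --- and derives everything else softly: the convolution $\phi = \epsilon_n * \varphi$ inherits finite support because each $A_\gamma$ is a subcoalgebra ($\Delta (A_\gamma )\subseteq A_\gamma \otimes A_\gamma$), it converges weak$^*$ to $(\varepsilon \otimes \varphi )\Delta = \varphi$ because the functionals $\epsilon_n\otimes \varphi$ are uniformly bounded, and the third axiom in Definition \ref{defLip} (that $\rho_{L_A}$ induces the weak$^*$ topology on $S(A)$) converts weak$^*$ convergence into $\rho_{L_A}$-convergence, which after rescaling by $L_A(a)$ is exactly the asserted inequality (the cases $L_A(a)=0$ and $L_A(a)=\infty$ are trivial, since states agree on scalars). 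The one point you must be sure is not circular is the deferred first step, and it is not: since $A$ is full and coamenable the GNS representation on $L^2(A)$ is faithful, so by the standard Hahn--Banach/spectral-radius argument $\varepsilon$ lies in the weak$^*$ closed convex hull of the vector states $\omega_\xi$ with $\xi \in L^2(A)$; approximating $\xi$ in norm by elements of $\A$ supported on finitely many isotypic components and invoking the orthogonality relations for $\h$ shows these vector states may be taken finitely supported. In short, your reduction trades Li's quantitative construction for a qualitative approximation of $\varepsilon$ plus the metrizability already built into the definition of a Lip-norm; the only thing lost is explicit control of $\mathcal{S}$ in terms of $\delta$, which neither the statement nor its sole application in the paper (Lemma \ref{li2}, where only $\varphi = \varepsilon$ is used) requires.
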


The following lemma will allow us to prove \ref{li1}.  Also, it will be a key tool for us later on.

\begin{lemma}\label{li2}
Let $B$ be an ergodic ordered $A$-module, and suppose $A$ is coamenable.  Let $\delta >0$ be given.  Then there exists a finite $\mathcal{S} \subseteq \hat{A}$ and a $\phi \in S(A)$ with the following properties:
\begin{itemize}
\item     $\| \beta_\phi (b) \| \leq \| b \|$ for all $b\in B_{sa}$, 
\item       $\beta_\phi ( B)\subseteq B_{\mathcal{S}}$,
\item  $\| b-\beta_\phi (b) \| \leq \delta L_B(b)$ for all $b\in B_{sa}$.
\end{itemize}
Furthermore, $\phi$ and $\mathcal{S}$ can be chosen independent of $B$.
\end{lemma}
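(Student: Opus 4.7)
The plan is to invoke the preceding lemma with the specific choice $\varphi = \varepsilon$, the counit of $A$. Since $A$ is assumed coamenable, $\varepsilon$ extends to a (pure) state on $A$, so the hypothesis of that lemma is met. This produces $\phi \in S(A)$ and a finite $\mathcal{S} \subseteq \hat{A}$ such that $\phi$ vanishes on $A_\gamma$ for every $\gamma \notin \mathcal{S}$ and $|(\varepsilon - \phi)(a)| \leq \delta L_A(a)$ for every $a \in A_{sa}$. Both $\phi$ and $\mathcal{S}$ are then manufactured entirely from the data of $A$, $L_A$, and $\delta$, which will yield the final claim of independence from $B$.

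I would then verify the three bullets in order. The first is immediate: $\beta$ is a positive unital map and $id \otimes \phi$ is positive unital on $B \otimes \A$, so $\beta_\phi = (id \otimes \phi)\beta$ is positive unital, hence contractive on the self-adjoint part of $B$ for the order unit norm. For the second bullet, Theorem \ref{action} gives that any $b$ in the $\nu$-isotypic component $W^\nu \subseteq B$ satisfies $\beta(b) \in W^\nu \otimes \A_\nu$, so $\beta_\phi(b)$ vanishes unless $\phi$ is nonzero on $\A_\nu$, that is, unless $\nu \in \mathcal{S}$; summing over $\nu$ yields $\beta_\phi(B) \subseteq B_\mathcal{S}$.

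The crucial third bullet hinges on the identity $\beta_\varepsilon = id_B$, which follows from the counit axiom $(id \otimes \varepsilon)\beta = id$ built into any action. Hence $b - \beta_\phi(b) = \beta_{\varepsilon - \phi}(b)$, which is self-adjoint whenever $b$ is (because $\varepsilon$ and $\phi$ are both hermitian functionals). Since the order unit norm on $B_{sa}$ is computed by $\sup_{\psi \in S(B)} |\psi(\cdot)|$, I would write
\[
\|b - \beta_\phi(b)\| = \sup_{\psi \in S(B)} \bigl| (\varepsilon - \phi)\bigl(\,_\psi\beta(b)\bigr) \bigr| \leq \delta \sup_{\psi \in S(B)} L_A\bigl(\,_\psi\beta(b)\bigr) = \delta L_B(b),
\]
where the inequality is the estimate from the preceding lemma applied to the self-adjoint element $_\psi\beta(b) \in A_{sa}$, and the final equality is the definition of the induced Lip-norm $L_B$ from Proposition \ref{li1}.

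The only real subtlety, and the reason coamenability enters the hypothesis, is that the entire argument hinges on being allowed to set $\varphi = \varepsilon$ in the preceding lemma; without coamenability the counit need not be bounded, hence not a state, and one loses the identity $\beta_\varepsilon = id_B$ that converts the bound on $(\varepsilon - \phi)$ into a bound on $b - \beta_\phi(b)$. Everything else is a bookkeeping exercise once that choice is made.
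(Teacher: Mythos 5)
Your proof is correct and follows the paper's argument exactly: apply the preceding lemma with $\varphi=\varepsilon$, note the three bullets follow from positivity of $\phi$, the containment $\beta(B_\gamma)\subseteq B_\gamma\otimes\A_\gamma$, and the identity $\beta_\varepsilon=id$ together with the estimate $|(\varepsilon-\phi)(a)|\leq\delta L_A(a)$ (your displayed computation is the ``simple calculation'' the paper leaves implicit). One small quibble with your closing remark: the fact that $\varepsilon$ is a state on $A$ comes from the standing assumption that $A$ is full, not from coamenability --- coamenability is needed as a hypothesis of the preceding approximation lemma itself.
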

\begin{proof}
Use the previous lemma with the same $\delta$ and with $\varphi =\varepsilon$, the counit. Consider the $\phi \in S(A)$ obtained by the lemma.  Since $\phi$ is a state, the first result is clear.   For any $\gamma \in \hat{A}$ we have $\beta (B_\gamma)\subseteq B_\gamma \otimes A_\gamma$.  Since $\phi$ vanishes on $A_\gamma$ for $\gamma \notin \mathcal{S}$, we must have $\beta_\phi ( B)\subseteq B_\mathcal{S}$.  The last result is a simple calculation using   the previous lemma and the fact that $\beta_\varepsilon (b)=b$.
\end{proof}

We can now prove Proposition \ref{li1}.

\begin{proof}

We use Proposition \ref{rief1}.  Again, since we are really using algebraic actions of the algebraic quantum group $\A$,  it is clear that $L_B(b)$ is finite for any $b\in B$, since any $b$ will lie in some finite sum of isotypic components, and because the isotypic components themselves are assumed finite dimensional. 
Clearly $L_B$ vanishes exactly on $\C 1_B$. By Lemma \ref{stupid} we have $||\cdot ||^\sim \leq 2r_AL_B$, so that the $L_B$ unit ball of $\tilde{B}$ is a bounded set in the quotient norm.  For any $\delta >0$ let $\phi$ and $\mathcal{S}$ be as in Lemma \ref{li2}. Then the image of the $L_B$-unit ball of $\tilde{B}$ under $\beta_\phi $ is a bounded set in the finite dimensional space $B_\mathcal{S}$, so is totally bounded.  As $\delta >0$ was arbitrary, the $L_B$-unit ball of $\tilde{B}$ is totally bounded also, which verifies the condition of Proposition \ref{rief1}.

\end{proof}

This finishes the summary of Li's results that will be needed in this paper.  For future reference, we include a discussion on length-functions for compact quantum groups, and how they can be used to give left-invariant Lip-norms.  We begin by recalling the definition for regular groups.

Let $G$ be an ordinary group.  A length function $\l \in C(G)$ is a function with the following properties:
\begin{itemize}
\item $\l (x)\geq 0$, with equality if and only if $x=e$.
\item $\l (x)=\l (x^{-1})$ for all $x\in G$.
\item $\l (xy)\leq \l (x)+\l (y)$ for all $x, y\in G$.
\end{itemize}
If $(\pi , M)$ is an ergodic $G$-module, we have a Lip-norm on $M$ given by 
\[ L(m) =\sup_{x\neq e} \left( \frac{||\pi_x(m)-m||}{l(x)} \right) \]

Motivated by this, we give the following definition for a length function on a compact quantum group.

\begin{definition}
Let $A$ be a compact quantum group.  A length function on $A$ is an element $\l \in A$ such that:
\begin{itemize}
\item $\l \geq 0$, and $\mu (\l )=0$ for $\mu \in S(A)$ iff $\mu =\varepsilon$.
\item $(\mu * \nu )(\l )\leq \mu (\l )+\nu (\l )$ for all $\mu ,\nu \in S(A)$.
\end{itemize}
\end{definition}
Note that by convexity it suffices to check these   conditions for pure states.  In particular, length functions on ordinary groups satisfy the conditions of this definition.  Also, we do not assume an analog of the condition $l(x)=l(x^{-1})$.  For ordinary groups, this condition is imposed to make the metric $\rho_l(x,y)=l(x^{-1}y)$ symmetric, but it is actually not needed to get a Lip-norm on $C(G)$.  This is especially convenient in the quantum group case.  The natural condition would be that $l=\kappa (l)$, but $\kappa$ generally doesn't extend to all of $A$.  Taking $l\in \A$ won't work in general, since for $A$ cocommutative no element of $\A$ can satisfy the first condition (this follows from Peter-Weyl).

\begin{theorem}\label{lippy}
Let $A$ be a compact quantum group, and let $\l \in A$ be a length function on $A$.  Let $(\pi , M)$ be a ergodic ordered $A$-module with finite dimensional isotypic components.  Consider the seminorm on $M$ given by:
\[ L(m) =\sup_{\mu \in S(A),\mu \neq \varepsilon} \left( \frac{||\pi_\mu (m)-m||}{\mu (\l )}\right) \]
Then $L$ is a Lip-norm on $M$.  Moreover, if one obtains a Lip-norm $L_A$ on $A$ by means of this formula, then the Lip-norm on $M$ is the same as the one given in Proposition \ref{li1}.  Also, the Lip-norm $L_A$ is left-invariant.
\end{theorem}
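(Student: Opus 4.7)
My plan is to verify the three conditions of Definition \ref{defLip} directly from the formula, then to derive the ``moreover'' clauses from a short coassociativity computation. First I would check that $L$ is a seminorm vanishing exactly on $\C e_M$: subadditivity and homogeneity are immediate, and if $L(m)=0$ then $\pi_\mu(m)=m$ for every $\mu\in S(A)$, so by linearity $\pi(m)=m\otimes 1_A$, and ergodicity gives $m\in\C e_M$. For finiteness I would use the isotypic decomposition of Theorem \ref{action}: for $m\in M_\nu$ one writes $\pi(m)=\sum_i v_i\otimes u_i$ with finitely many $v_i\in M_\nu$ and $u_i\in A_\nu$, giving
\[ \|\pi_\mu(m)-m\| \le \sum_i \|v_i\|\,|(\mu-\varepsilon)(u_i)|, \]
so finiteness of $L(m)$ on each isotypic component reduces to a Lipschitz estimate $|(\mu-\varepsilon)(u)|\le C_u\,\mu(\l)$ on matrix coefficients, equivalently to finiteness of the corresponding $L_A$ on the polynomial subalgebra $\A$.

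For total boundedness of the $L$-unit ball of $M/\C e_M$ I would follow Lemma \ref{li2}: given $\delta>0$, produce a state $\phi\in S(A)$ supported on a finite set $\mathcal S\subseteq\hat A$ satisfying $|(\varepsilon-\phi)(a)|\le\delta L_A(a)$ for self-adjoint $a\in A$; applied to $a={}_\psi\pi(m)$ and combined with $L_A({}_\psi\pi(m))\le L(m)$ (proved below), this yields $\|m-\pi_\phi(m)\|\le\delta L(m)$, and the finite dimensionality of $M_{\mathcal S}$ then forces total boundedness. Together with the boundedness bound $\|\tilde m\|\le 2r L(m)$ from the analog of Lemma \ref{stupid}, Proposition \ref{rief1} concludes that $L$ is a Lip-norm.

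For the ``moreover'' clause, the key identity is
\[ \mu * ({}_\psi\pi(m)) = {}_\psi\pi(\pi_\mu(m)), \]
a one-line consequence of $(\pi\otimes id)\pi=(id\otimes\Delta)\pi$. Since ${}_\psi\pi=(\psi\otimes id)\pi$ is positive unital and hence contractive, this gives $L_A({}_\psi\pi(m))\le L(m)$, whence the Li-induced Lip-norm satisfies $L_M(m)=\sup_\psi L_A({}_\psi\pi(m))\le L(m)$. For the reverse, self-adjointness of $\pi_\mu(m)-m$ gives $\|\pi_\mu(m)-m\|=\sup_{\psi\in S(M)}|\mu({}_\psi\pi(m))-\varepsilon({}_\psi\pi(m))|$, and the inner expression equals $|\varepsilon(\mu * {}_\psi\pi(m) - {}_\psi\pi(m))|\le\|\mu * {}_\psi\pi(m) - {}_\psi\pi(m)\|$, which upon dividing by $\mu(\l)$ and taking suprema yields $L(m)\le L_M(m)$. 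Left-invariance of $L_A$ follows from the Sweedler identity $\mu * (a*\phi) = (\mu*a)*\phi$ together with the contractivity of $a\mapsto a*\phi$ for a state $\phi$.

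The main obstacle I expect is the Lipschitz estimate $|(\mu-\varepsilon)(u)|\le C_u\mu(\l)$ on matrix coefficients underlying the finiteness step. In the classical case this comes for free from smoothness of matrix coefficients on a Lie group equipped with a Riemannian length function, but at the present abstract level it appears to need to be built into the hypotheses on $\l$, consistent with the author's remark that it is unclear how to produce length functions on general compact quantum groups.
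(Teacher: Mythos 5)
Your treatment of the ``moreover'' clauses is sound and essentially equivalent to the paper's: where you use the fact that the order norm of a self-adjoint element is the supremum of $|\psi(\cdot )|$ over states, the paper instead proves $\|m\|=\sup_{\psi \in S(M)}\|{}_\psi\pi (m)\|$ via density of convex combinations of product states in $S(M\otimes A)$; both arguments rest on the same convolution identity $\mu *(m*\psi )=(\mu *m)*\psi$, and your derivation of left-invariance from it is correct. Your worry about finiteness of $L$ on a dense subspace is also legitimate; the paper does not resolve that point either and concedes that producing length functions in general is unclear.

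The genuine gap is in your total-boundedness step. You invoke Lemma \ref{li2} to produce a state $\phi$ supported on finitely many isotypic components with $|(\varepsilon -\phi )(a)|\leq \delta L_A(a)$. That lemma rests on Li's Lemma 8.6 and requires $A$ to be coamenable, whereas Theorem \ref{lippy} makes no such assumption; the paper remarks immediately after the statement that ``Since $A$ is not assumed to be coamenable, one cannot appeal to Li's result.'' The point of the theorem is precisely to give a construction that bypasses coamenability. The paper's substitute is a compactness argument: it first proves, in a separate lemma, that for any functional of the form $\mu (a)=\h (ab)$ the operator $\pi_\mu$ is compact (finite rank when $b\in \A$, since the image of $\pi_{\rho^\gamma_{ij}}$ lands in the finite-dimensional isotypic component $M_\gamma$, and a norm limit of such operators in general). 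It then takes $\mu_{\mathcal{S}}=\Sigma_{\gamma \in \mathcal{S}}\rho^\gamma$ for finite $\mathcal{S}\subseteq \hat{A}$: these form an increasing net of positive functionals dominated by $\varepsilon$, converging weak$^*$ to $\varepsilon$ on the dense subalgebra $\A$ and hence, by uniform boundedness, on all of $A$, so one may choose $\mathcal{S}$ with $\mu_{\mathcal{S}}(\l )<\delta /2$, cover the totally bounded set $\pi_{\mu_{\mathcal{S}}}(\B_1)$ by finitely many $\delta /2$-balls, and use $\|m-\pi_{\mu_{\mathcal{S}}}(m)\|\leq L(m)\mu_{\mathcal{S}}(\l )$ to conclude. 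To repair your outline you must replace the appeal to Lemma \ref{li2} by this, or by some other coamenability-free argument.
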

As before, we note that it suffices to take the supremum over pure states.  We note that even though the seminorm constructed here agrees with the one in Proposition \ref{li1}, it is not sufficient to consider the case of the regular module $A$.  Since $A$ is not assumed to be coamenable, one cannot appeal to Li's result.

\begin{lemma}
Let $A$ be a compact quantum group with length function $\l$ and let $(\pi ,M)$ be an ergodic ordered $A$-module.  Consider the seminorm $L$ given above.  The state space $S(M)$ has radius at most $\h (\l )$ for $L$.
\end{lemma}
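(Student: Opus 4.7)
\emph{Proof plan.} The key observation is that, by ergodicity, the range of the Haar-state contraction $\pi_\h$ lies in $\C 1_M$, so $\pi_\h$ naturally singles out a basepoint of $S(M)$ relative to which every other state is close.

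First, I would substitute $\mu = \h$ into the supremum defining $L$. The degenerate case $\h = \varepsilon$ occurs only when $A = \C$, in which case the statement is vacuous, so I assume $\h \neq \varepsilon$; then $\h$ is one of the states over which the supremum is taken, and the length-function axiom guarantees $\h(\l) > 0$, so rearranging yields
\[ \|\pi_\h(m) - m\| \leq \h(\l)\, L(m) \qquad \text{for every } m \in M_{sa}. \]

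Next, I would invoke ergodicity. By the invariance property of the Haar state recalled immediately after Definition \ref{def1}, the element $\pi_\h(m)$ is fixed by $\pi$, so ergodicity forces $\pi_\h(m) = c_m \cdot 1_M$ for some scalar $c_m$; this scalar is real because $\pi_\h$ is positive unital and $m = m^*$. Substituting back,
\[ \|m - c_m\,1_M\| \leq \h(\l)\, L(m). \]

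Finally, I would define $\phi_0 \in S(M)$ by $\phi_0(m) = c_m$ on $M_{sa}$ and extend $\C$-linearly; positivity and unitality of $\pi_\h$ make $\phi_0$ a state. For any $\phi \in S(M)$ and any self-adjoint $m$ with $L(m) \leq 1$,
\[ |\phi(m) - \phi_0(m)| = |\phi(m - c_m\,1_M)| \leq \|m - c_m\,1_M\| \leq \h(\l), \]
so $\rho_L(\phi, \phi_0) \leq \h(\l)$, which bounds the radius of $S(M)$ by $\h(\l)$.

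The only detail requiring verification is that $\phi_0$ is genuinely positive and unital, which is immediate from the positive-unitality of $\pi_\h$. I do not anticipate any substantive obstacle: the entire content of the argument is the ergodic projection onto $\C 1_M$ furnished by the Haar state together with the single evaluation of $L$ at $\mu = \h$.
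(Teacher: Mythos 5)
Your proof is correct and follows essentially the same route as the paper: both arguments substitute $\mu = \h$ into the supremum defining $L$ to get $\|m - \pi_\h(m)\| \leq \h(\l)L(m)$, use ergodicity to view $\pi_\h$ as a state $\phi_0$ on $M$, and then bound $|\phi(m)-\phi_0(m)|$ by $\|m-\pi_\h(m)\|$ for any $\phi \in S(M)$. Your explicit handling of the degenerate case $\h=\varepsilon$ is a small extra care the paper omits, but the substance is identical.
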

\begin{proof}
Since $\pi$ is an ergodic action, we have a conditional expectation $E:M\rightarrow \C e_M$ by $E(m)=\pi_\h (m)$.  By identifying $\C e_M$ with the scalars, we can view $E$ as a state on $M$.  Let $\phi \in S(M)$.  Then $\phi (\pi_\h (m))=\pi_\h (m)$.  Thus $|\phi (m)-E(m)|=|\phi (m-\pi_\h (m))|\leq ||m-\pi_\h(m)||\leq L(m) \h(\l ).$ This shows the radius of $S(M)$ is at most $\h (\l )$.
\end{proof}

The following is an analog of Lemma 2.5 from \cite{Rieffel2}.

\begin{lemma}
Let $\mu \in A '$ be a linear functional of the form $\mu (a)=\h (ab)$, some $b\in A$.    Then the operator $\pi_\mu :M\rightarrow M$  is compact.
\end{lemma}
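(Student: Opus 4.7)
The plan is to approximate $\pi_\mu$ in operator norm by a sequence of finite-rank operators $\pi_{\mu_n}$, where $\mu_n(a) = \h(a b_n)$ for polynomial approximants $b_n \in \A$ of $b$; compactness of $\pi_\mu$ will then follow as a norm-limit of finite-rank operators.

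First I would fix a sequence $b_n \in \A$ with $b_n \to b$ in the norm of $A$, so that $\|\mu_n - \mu\| \le \|b_n - b\| \to 0$ in $A'$. To promote this to operator-norm convergence of the $\pi_{\mu_n}$, I would use that any bounded $\nu \in A'$ admits a Jordan-type decomposition as a complex linear combination of four states, each of norm bounded by $\|\nu\|$. For any state $\phi \in S(A)$ the slice map $\pi_\phi : M \to M$ is positive and unital, hence contractive in the order-unit norm. This yields a universal bound $\|\pi_\nu\|_{op} \le C\|\nu\|$ and consequently $\pi_{\mu_n} \to \pi_\mu$ in operator norm.

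Next I would verify that each $\pi_{\mu_n}$ has finite rank. Write $b_n = \sum_{\beta \in \mathcal{S}_n} \sum_{i,j} c_{ij}^\beta u_{ij}^\beta$ as a finite linear combination of matrix coefficients over a finite set $\mathcal{S}_n \subset \hat{A}$. For any $a \in A_\gamma$, the product $a u_{ij}^\beta$ is a matrix coefficient of the tensor representation $\gamma \otimes \beta$; since the Haar state annihilates the matrix coefficients of every nontrivial irreducible subrepresentation, $\mu_n$ vanishes on $A_\gamma$ unless the trivial representation occurs in $\gamma \otimes \beta$ for some $\beta \in \mathcal{S}_n$, i.e.\ unless $\gamma$ lies in the finite set $\bar{\mathcal{S}}_n := \{\bar\beta : \beta \in \mathcal{S}_n\}$. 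By Theorem \ref{action}, $M = \bigoplus_\gamma M_\gamma$ with $\pi(M_\gamma) \subseteq M_\gamma \otimes A_\gamma$, so $\pi_{\mu_n}$ vanishes on $M_\gamma$ for $\gamma \notin \bar{\mathcal{S}}_n$ and has image contained in $\bigoplus_{\gamma \in \bar{\mathcal{S}}_n} M_\gamma$. By the standing hypothesis of Theorem \ref{lippy} that the isotypic components are finite dimensional, this target space is finite dimensional, so $\pi_{\mu_n}$ has finite rank.

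The main technical obstacle will be the continuity estimate $\|\pi_\nu\|_{op} \le C\|\nu\|$: without genuine operator-norm (as opposed to merely pointwise) convergence of the finite-rank approximants one cannot deduce compactness of the limit. Once that estimate is established via the Jordan decomposition and contractivity of $\pi_\phi$ for states $\phi$, the remainder of the argument is essentially bookkeeping with the isotypic decomposition.
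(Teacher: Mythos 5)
Your proof is correct, and the overall strategy --- approximate $b$ by polynomial elements $b_n\in\A$, show each $\pi_{\mu_n}$ is finite rank using the isotypic decomposition, and pass to a norm limit --- is the same as the paper's. The two arguments diverge in how the operator-norm convergence $\pi_{\mu_n}\to\pi_\mu$ is obtained. The paper exploits the special form of $\mu$ directly: it writes $\pi_{\mu_n}(m)-\pi_\mu(m)=(id\otimes\h)\bigl[\pi(m)(1_M\otimes(b_n-b))\bigr]$ and reads off $\|\pi_{\mu_n}(m)-\pi_\mu(m)\|\leq\|b_n-b\|\,\|m\|$, with no need to decompose functionals. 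You instead prove the a priori bound $\|\pi_\nu\|\leq C\|\nu\|$ for arbitrary $\nu\in A'$ by splitting $\nu$ into states via the Jordan decomposition and using contractivity of the positive unital slice maps $\pi_\phi$, then combine this with $\|\mu_n-\mu\|\leq\|b_n-b\|$. Your route is slightly longer but more robust: it applies to any norm-convergent sequence of functionals, not just those of the form $\h(\cdot\,b)$, and it avoids having to make sense of (and norm-estimate) the right multiplication $\pi(m)(1_M\otimes c)$ inside $M\otimes A$ when $M$ is merely an order unit space rather than an algebra --- a point the paper's one-line estimate quietly glosses over. For the finite-rank step the mechanisms are essentially equivalent: the paper uses the dual-basis functionals $\rho^\gamma_{ij}$ and the projections $E^\gamma$ to place the image of each $\pi_{\rho^\gamma_{ij}}$ inside the finite-dimensional component $M_\gamma$, while you use the orthogonality relations to show $\mu_n$ kills all but finitely many $A_\gamma$ and then invoke $\pi(M_\gamma)\subseteq M_\gamma\otimes A_\gamma$; both rest on the same Peter--Weyl bookkeeping.
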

\begin{proof}
Initially take $b \in \A $.  Then $\mu $ is a finite linear combination of the $\rho^\gamma_{ij}$.  Also we have a projection $E^\gamma :M\rightarrow M_\gamma$ by $E^\gamma (m)=\rho^\gamma *m$.  Note that $\rho^\gamma *\rho^\gamma_{ij}=\rho^\gamma_{ij}$.  Thus $E^\gamma (\pi_{\rho^\gamma_{ij}}(m))=\pi_{\rho^\gamma *\rho^\gamma_{ij}}(m)=\pi_{\rho^\gamma_{ij}}(m)$; that is, the image of $\pi_{\rho^\gamma_{ij}}$ is contained in $M_\gamma$.  Since we've assumed that $M_\gamma$ is finite dimensional, we have $\pi_{\rho^\gamma_{ij}}$ is finite rank for each $\rho^\gamma_{ij}$.  Thus $\pi_\mu$ is finite rank whenever $b\in \A$.  If $b$ is an arbitrary element of $A$ and if $\{ b_n\}$ is a sequence in $\A$ such that $\lim_{n\rightarrow \infty}b_n=b$, then for the corresponding $\mu_n$ we  have
\begin{eqnarray}
||\pi_{\mu_n}(m)-\pi_\mu (m)|| & = & ||(id\otimes \h )[\alpha (m) (1_M \otimes (b_n-b))]|| \nonumber \\ 
 & \leq & ||\alpha (m)|| \: || (1_M \otimes (b_n-b))|| \nonumber \\ 
 & \leq & || b_n-b|| \: ||m||, \nonumber 
\end{eqnarray}
so that $\pi_{\mu_n}\rightarrow \pi_\mu$ in norm as $n\rightarrow \infty$.  Thus $\pi_\mu$ is compact for any $\mu$ of the form $\mu (a)=\h (ab)$. 
\end{proof}
\begin{remark} The assumption on the form of $\mu$ actually says that $\mu$ is an element of the dual quantum group of $A$, which is a quantum group of discrete type.  
\end{remark}

We can now prove Theorem \ref{lippy}.

\begin{proof}
We verify that the ball $\B_1 =\{ m: ||m||\leq 1\: \text{and} \: L(m)\leq 1\}$ is totally bounded.  Let $\delta >0$.  Think of  $\l$ as a weak$^*$ continuous function on $S(A)$ by $\l (\phi )\equiv \phi (\l )$.  We have $l(\varepsilon )=0$.

Let $\gamma , \beta \in \hat{A}$ be arbitrary.  By equations 5.35 and 5.36 of \cite{Woro1}, the linear functionals $\rho^\gamma$ considered before are of the form stipulated in the previous lemma, and they have the property that 
\[ (id\otimes \rho^\gamma )u^\beta =\delta^{\gamma \beta}I_{B(H_\beta )} . \]
(This is just a rewording of things we've said before.)  The counit is given by
\[ (id\otimes \varepsilon )u^\beta =I_{B(H_\beta )} . \]
Let $\mathcal{F}(\hat{A} )$ be the set of all finite subsets of $\hat{A}$, ordered by inclusion.  Let $\mathcal{S} \in \mathcal{F}(\hat{A} )$, and 
let $\mu_\mathcal{S}=\Sigma_{\gamma \in \mathcal{S}} \rho^\gamma$.  Then $\mu_\mathcal{S}$ is of the form considered in the previous lemma, and the collection $\{ \mu_\mathcal{S} \}_{\mathcal{S}\in \mathcal{F}(\hat{A})}$ is an increasing net of positive linear functionals on $A$.  By the formulae above, this net converges weak$^*$ to the counit on the polynomial subalgebra $\A$.  We also see that each element of this net is bounded above by $\varepsilon$; in particular, these linear functionals are uniformly bounded in norm.  Therefore they actually converge weak$^*$ to the counit on all of $A$, and we can find a $\mu_\mathcal{S}$ such that $l(\mu_\mathcal{S})\equiv \mu_\mathcal{S} (l)<\delta/2$.

  By the lemma, $\pi_{\mu_\mathcal{S}}$ is compact, so that $\pi_{\mu_\mathcal{S}} (\B_1)$ is totally bounded and can be covered by finitely many balls of radius $\delta /2$.  For any $m\in \B_1$ we have
$||m-\pi_{\mu_\mathcal{S}} (m)||\leq L(m){\mu_\mathcal{S}} (\l )\leq \delta /2$.  Thus $\B_1$ can be covered by finitely many balls of radius $\delta$. This proves that $L$ is a Lip-norm.

Next we check that $L$ is the same Lip-norm on $M$ as the one constructed by Li.  We have 
\[ L_A(a) =\sup_{\mu \in S(A),\mu \neq \varepsilon} \left( \frac{||\Delta_\mu (a)-a||}{\mu (\l )}\right) \]
It's clear from the form of $L_A$ and $L$ that we will have 
$L(m)=\sup_{\varphi \in S(M)} L_A(m*\varphi )$ as long as we have 
$||m||=\sup_{\varphi \in S(M)} ||m*\varphi ||$ for all $m\in M_{sa}$.  One way to see this equality is to use the fact that the set of convex combinations $\{ \Sigma_i c_i \varphi_i \otimes \mu_i : c_i>0, \Sigma c_i=1; \varphi \in S(M) , \mu_i \in S(A)\}$ is dense in $S(M\otimes A)$.  (This follows from the Schmidt decomposition from physics).  Thus we have:
\begin{eqnarray}
\sup_{\varphi \in S(M)} ||m*\varphi || & = &
\sup_{\varphi \in S(M),\mu \in S(A)} |\mu ( m*\varphi )| \nonumber \\
& = & \sup_{\varphi_i \in S(M),\mu_i \in S(A)} |\Sigma_i c_i \mu_i ( m*\varphi_i )| \nonumber \\
& = & \sup_{\psi \in S(M\otimes A)} |\psi (\pi (m))| =|| \pi (m) ||. \nonumber
\end{eqnarray}
Thus we just need to show that $||m||=||\pi (m)||$.  Since $\pi$ is positive and unital, we have $||\pi (m)||\leq ||m||$.  The other inequality follows from $(id\otimes \varepsilon )(\pi (m))=m$. 

Finally, applying this last result to the regular module, we have that $L_A$ is induced from $L_A$ when $A$ is viewed as an ergodic ordered $A$-module.  This says exactly that $L_A$ is left-invariant in the sense of \cite{Li}. 
\end{proof}

It would be interesting to find conditions on the length function $l$ that would insure that $L_A$ is also right-invariant, which would give that $L$ is invariant for any ergodic module $(\pi ,M)$.  For ordinary length functions, the correct condition is $l(x^{-1}yx)=l(y)$.  To find the corresponding condition in the quantum group case one needs to determine how to solve the equation $\mu *\nu =\nu *\omega$ for $\omega$ where $\mu , \nu ,$ and $\omega$ are (pure) states of $A$.

We also note that given  a right action of $A$ on an order unit space $\alpha :N\rightarrow A\otimes N$ with finite dimensional isotypic components, then a length function $l\in A$ will also induce a Lip-norm on $N$.  In particular, we have a Lip-norm on $A$ given by 
\[ L_A'(a) =\sup_{\mu \in S(A), \mu \neq \varepsilon} 
	\left( \frac{|| \: _\mu\Delta (a)-a||}{\mu (l)} \right)   \]
which is right-invariant.

\end{subsection}

\begin{subsection}{Berezin Quantization of Coadjoint Orbits}

In this section we quickly review the Berezin Quantization of coadjoint orbits of compact Lie groups.  We give no proofs, since we will begin proving analogs of these results for compact quantum groups in Section 4.  We include this material so that one can compare our later results with the case of an ordinary compact group. For more details, see \cite{Rieffel3}.

Let $G$ be a compact group, and let $(U,\H )$ be an irreducible unitary representation of $G$.  Then there is an ergodic action $\alpha$ of $G$ on $B(\H )$ by conjugation.  Let $P\in B(\H )$ be a rank 1 projection.  Then we can define the Berezin covariant symbol with respect to $P$ as a map $\s :B(\H )\rightarrow C(G)$ by the formula:
\[ \s_T(x) =tr(T\alpha_x(P)) \hspace{2in} T\in B(\H ), x\in G. \]
Let $H<G$ be the stabilizer of $P$ under the action $\alpha$.  Then $\s$ can be viewed as a map $\s : B(\H )\rightarrow C(G/H)$.  It is positive unital, and it intertwines the action $\alpha$ with the left translation action of $G$ on $C(G/H)$.   

If we consider the Hilbert-Schmidt inner product on $B(\H )$ with normalized trace, and the normalized $G$-invariant measure on $G/H$, then we can ask what is the adjoint of the map $\s :B(\H )\rightarrow L^2(G/H)$.  A simple computation gives the formula:
\[ \bs_f =d_\H \int_{G/H} f(x) \alpha_x(P) dx \hspace{2in} f\in L^2(G/H), \]
where $d_\H$ is the dimension of $\H$.

Finally, the composition $\s \circ \bs :C(G/H)\rightarrow C(G/H)$ is called the Berezin transform.  It is given explicitly by 
\[  (\s \circ \bs ) (f)(x)=\int_G f(xy)h_P(y)\: dy \]
where $h_P(x)=d_\H tr(P\alpha_x (P))$.  In \cite{Rieffel3} it was noted that since $h_P(x)=h_P(x^{-1})$, the expression on the right hand side above is just the convolution product of the functions $f$ and $h_P$.  However, for our purposes below, we will find it necessary to view things in terms of the measure $h_P(y) \; dy$.  The right hand side above becomes $[(id\otimes h_P)\Delta f](x)$, so using the notion of convolution for linear functionals on quantum groups above, we see that $(\s \circ \bs)(f)=h_P*f$, where we view $h_P$ as a linear functional on $C(G)$.

That is the story of the Berezin Symbol, it's adjoint, and the Berezin Transform for coadjoint orbits of compact groups.  To get a quantization of $C(G/H)$ we need a sequence of maps $\bs^n:C(G/H)\rightarrow B(\H^n)$.  For Rieffel's condition \cite{book} for strict quantizations to hold, we must have the dimensions of the Hilbert spaces $\H^n$ tending to infinity.  If we are to use Berezin's maps for the quantizations, we must find rank one projections in each $B(\H^n)$ which have the same stabilizer $H<G$.  This can be accomplished by taking $\H^n=\H^{\otimes n}$ and $P^n=P^{\otimes n}$.  However, there is a difficulty here since the representation $(U^{\otimes n},\H^{\otimes n})$ is not irreducible.  

There is a way around this problem for Lie groups.  If $G$ is a semisimple Lie group, then we can choose $P\in B(\H )$ to be the projection corresponding to a highest weight vector $\xi$.  If $\xi$ has weight $\omega$, then it is known that the representation of $G$ on the cyclic subspace generated by $\xi^{\otimes n}\in \H^{\otimes n}$ is an irreducible representation of $G$ with highest weight vector $\xi^{\otimes n}$ and highest weight $n\omega$.  We then take $\H^n$ to be the cyclic subspace of $\H^{\otimes n}$ generated by $\xi^{\otimes n}$, and we take $P^n=P^{\otimes n}\in B(\H^n)\subseteq B(\H^{\otimes n})$.  We now have a sequence of irreducible representations of $G$, with a sequence of rank one projections that each have stabilizer $H<G$.  Furthermore, it is known that the stabilizer of $P$ is the same as the stabilizer of $\omega$ under the coadjoint action, so that $C(G/H)$ is actually a coadjoint orbit of $G$.

Now we can consider the collection of maps $\bs^n:C(G/H)\rightarrow B(\H^n)$.  If we let $\hbar = 1/n$, it can be shown that this collection of maps actually creates a strict quantization of the C$^*$-algebra $C(G/H)$, and this construction is the Berezin quantization of a coadjoint orbit.  In \cite{Rieffel3}, Rieffel constructs a sequence of $G$-invariant Lip-norms $L_n$ on $B(\H^n)$, such that $(B(\H^n),L_n)$ converges to $C(G/H)$ with its usual $G$-invariant metric in quantum Gromov-Hausdorff distance.

\end{subsection}

\end{section}

\begin{section}{Stabilizers of Quantum Group Actions}

Orbits and stabilizers are absolutely fundamental in the study of group actions.  However, the definitions of both strongly involve appealing to the points of the group as a set.  For actions of quantum groups, it's not entirely clear what orbits and stabilizers should be, because one cannot really appeal to the points in ``the underlying group." Futhermore, orbits and stabilizers have hardly been discussed at all in the literature.  We have no need for orbits in this paper, but a notion of a  stabilizer will be essential.  There are two natural definitions that can be used, depending on whether one wants to view the stabilizer as a compact quantum subgroup, or as a set of points that fix the given element.  We give the definitions here.  Let $u :\H \rightarrow \H \otimes A$ be a unitary representation of compact quantum group $A$ on a Hilbert space $\H$.  We consider the category of pointed vector spaces. That is, the category of pairs $(B,1_B)$ such that $1_B\in B$ is a distinguished nonzero vector which we call the unit of $B$, and morphisms being unit preserving linear maps.
\begin{definition}\label{def2}
Let $\xi \in \H $.  Let $\Pi : A\rightarrow B$ be a surjective morphism in some subcategory of the category of pointed vector spaces.  We say that $\Pi$ stabilizes $\xi$ if $(id \otimes \Pi)(u(\xi )) = \xi \otimes 1_B$.   If there is no confusion about the quotient map, we will say $B$ stabilizes $\xi$.
\end{definition}

\begin{definition}\label{def3}
Let $u, \H ,\xi ,$ and $A$ be as above.  The quantum group stabilizer of $\xi$ is the universal quantum subgroup of $A$ that stabilizes $\xi$.
\end{definition}

\begin{definition}\label{def4}
Let $u, \H ,\xi ,$ and $A$ be as above.  The state stabilizer of $\xi$ is the universal quotient order unit space of $A$ that stabilizes $\xi$.
\end{definition}
As for the nominclature in the last definition, we will see in Proposition \ref{prop3} that via $\Pi$ the state space of the state stabilizer of $\xi$ can be identified with $\{ \varphi \in S(A) \: |\: u_\varphi (\xi )=\xi \}$.  Note also that the quantum group stabilizer of $\xi$ is a quotient of the state stabilizer (as an order unit space).

Although it is clear that the set of quantum group morphisms $\Pi : A\rightarrow B$ form the objects of a category (whose morphisms are commutative diagrams), it is not obvious that there exists a universal object.  Similarly for the set of positive unital maps.  In the following subsections we will prove the existance of the quantum group stabilizer and state stabilizer.

Both the quantum group stabilizer and the state stabilizer of $\xi$ have properties that resemble the usual properties for stabilizers.  It turns out for the purposes of this paper, the state stabilizer gives the results we desire.  This shouldn't be surprising, since order unit spaces are the appropriate category to consider for quantum Gromov-Hausdorfff distance.  However, quantum group stabilizers will likely be important for other purposes, so we develop a few of their properties here.

\begin{subsection}{General Results}

Let $(u, \H )$ be a unitary representation of $A$, and let $\xi \in \H$. In this section we show that in a wide variety of categories, a universal morphism $\Pi :A\rightarrow B$ stabilizing $\xi$ exists. In the following lemma, $B$ is an object in the category of pointed vector spaces.  

\begin{lemma}\label{lem1}
  Let $\mathcal{J}_\xi \subseteq A$ be the set $\mathcal{J}_\xi=
 \{ \langle u(\xi ), \eta \otimes 1_A \rangle -  \langle  \xi ,\eta  \rangle 1_A \: |\: \eta \in \H \}$. Let $\Pi :A\rightarrow B$ be a linear, unital map. Then $\Pi :A\rightarrow B$ stabilizes $\xi$ if and only if $\mathcal{J}_\xi \subseteq ker(\Pi )$.
\end{lemma}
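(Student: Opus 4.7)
The approach is to recognize that the elements of $\mathcal{J}_\xi$ are precisely the ``coordinate functions'' of the vector $u(\xi)-\xi\otimes 1_A\in\H\otimes A$ obtained by pairing against vectors of the form $\eta\otimes 1_A$. Concretely, for each $\eta\in\H$ define the linear functional $\phi_\eta:\H\to\C$ by $\phi_\eta(\zeta)=\langle\zeta,\eta\rangle$; expanding any $w\in\H\otimes A$ in an orthonormal basis of $\H$ then shows that $\langle w,\eta\otimes 1_A\rangle=(\phi_\eta\otimes id_A)(w)$. The proof amounts to extracting these coordinates from both sides of the stabilization identity.

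For the forward direction, assume $\Pi$ stabilizes $\xi$ and fix $\eta\in\H$. Apply $\phi_\eta\otimes id_B$ to both sides of $(id_\H\otimes\Pi)(u(\xi))=\xi\otimes 1_B$: the left side equals $\Pi(\langle u(\xi),\eta\otimes 1_A\rangle)$ using the identity above together with the commutativity of $\phi_\eta\otimes id_B$ with $id_\H\otimes\Pi$, while the right side equals $\phi_\eta(\xi)1_B=\langle\xi,\eta\rangle 1_B$. Since $\Pi$ is unital, $\langle\xi,\eta\rangle 1_B=\Pi(\langle\xi,\eta\rangle 1_A)$, so $\Pi$ annihilates the difference $\langle u(\xi),\eta\otimes 1_A\rangle-\langle\xi,\eta\rangle 1_A$. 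Because $\eta$ was arbitrary, $\mathcal{J}_\xi\subseteq\ker\Pi$.

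For the converse, fix an orthonormal basis $\{e_i\}$ of $\H$ and write $u(\xi)=\sum_i e_i\otimes a_i$ with $a_i=\langle u(\xi),e_i\otimes 1_A\rangle$, and $\xi=\sum_i\langle\xi,e_i\rangle e_i$. The hypothesis applied with $\eta=e_i$ gives $\Pi(a_i)=\langle\xi,e_i\rangle 1_B$ for every $i$, and termwise application of $id_\H\otimes\Pi$ then yields $(id_\H\otimes\Pi)(u(\xi))=\sum_i e_i\otimes\Pi(a_i)=\xi\otimes 1_B$, as desired. The only technical obstacle is justifying this termwise application when $\H$ is infinite-dimensional, since the expansion of $u(\xi)$ a priori converges only in the Hilbert $C^*$-module $\H\otimes A$ while $\Pi$ carries no continuity. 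This is handled by decomposing $u$ into a direct sum of finite-dimensional irreducibles, so that within the relevant isotypic components the expansion of $u(\xi)$ becomes a finite sum on which $id_\H\otimes\Pi$ acts unambiguously.
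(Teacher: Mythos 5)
Your proof is correct and follows essentially the same route as the paper's: the forward direction is identical (pair against $\eta\otimes 1_A$ and use linearity and unitality of $\Pi$), and your converse is the natural coordinate-wise argument that the paper dismisses as ``similar.'' The one caveat is that your reduction to finite-dimensional isotypic components does not fully dispose of the termwise application of $id\otimes\Pi$ when $\xi$ meets infinitely many components (the sum over components is still infinite and $\Pi$ carries no continuity), but the paper itself never explains how $(id\otimes\Pi)(u(\xi))$ is to be interpreted in that generality, so this is a defect of the statement rather than of your argument.
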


\begin{proof}
Suppose that $\Pi :A\rightarrow B$ stabilizes $\xi$.  We have $(id \otimes \Pi)(u(\xi )) = \xi \otimes 1_B$.  Take $\eta \in \H$, and apply $ (\langle \cdot , \eta \rangle \otimes id)$ to both sides.  We obtain
$\Pi (\langle u(\xi ), \eta \otimes 1_A \rangle) = \langle  \xi ,\eta  \rangle 1_B$.  Since $\Pi$ is linear and unital,
$\Pi (\langle u(\xi ), \eta \otimes 1_A \rangle) - \langle  \xi ,\eta  \rangle 1_A)=0$, so $(\langle u(\xi ), \eta \otimes 1_A \rangle) - \langle  \xi ,\eta  \rangle 1_A)\in ker(\Pi )$.  The converse is similar.

\end{proof}

It's important to notice that the set $\mathcal{J}_\xi$ is more than just a linear subspace of $A$.  It is a 2-sided coideal.  The following proposition is similar to  Lemma 5.4  from \cite{Jurco}.  It will make the computation of quantum group stabilizers much easier later.

\begin{proposition}\label{lem3}
The set $\mathcal{J}_\xi$ given above is a two-sided coideal of $A$. That is, we have $\Delta (\mathcal{J}_\xi)\subseteq (\mathcal{J}_\xi \otimes A) \oplus (A\otimes \mathcal{J}_\xi)$.
\end{proposition}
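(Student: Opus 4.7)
My plan is to pick an orthonormal basis $\{e_i\}$ of $\H$, write $u(e_j)=\sum_i e_i\otimes u_{ij}$ and $\xi=\sum_j \xi_j e_j$, and exploit the fact that $\eta\mapsto \langle u(\xi),\eta\otimes 1\rangle -\langle \xi,\eta\rangle 1_A$ is $\mathbb{C}$-linear in $\eta$. This reduces the statement to the single-vector generators $c_i$ of $\mathcal{J}_\xi$ obtained at $\eta=e_i$. Using the paper's inner-product convention $\langle v\otimes a,w\otimes b\rangle=\langle v,w\rangle b^*a$, which is antilinear in the first slot, a short calculation identifies these generators as $c_i=\sum_j \overline{\xi_j}\,u_{ij}-\overline{\xi_i}\cdot 1_A \in A$. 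The coideal inclusion then reduces to showing $\Delta(c_i)\in \mathcal{J}_\xi\otimes A+A\otimes \mathcal{J}_\xi$ for each $i$.

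Next I would compute $\Delta(c_i)$ using the representation property $(u\otimes id)u=(id\otimes \Delta)u$, which specialized to basis vectors gives the familiar coproduct $\Delta(u_{ij})=\sum_k u_{ik}\otimes u_{kj}$. Plugging this in and grouping the $u_{ik}$'s into the left tensor factor produces a term $\sum_k u_{ik}\otimes \bigl(\sum_j \overline{\xi_j}\,u_{kj}\bigr)$ in which the right-hand factor is precisely $c_k+\overline{\xi_k}\cdot 1_A$. Substituting back and collecting the scalar pieces yields the clean splitting $\Delta(c_i)=\sum_k u_{ik}\otimes c_k+c_i\otimes 1_A$, which visibly sits in $A\otimes \mathcal{J}_\xi+\mathcal{J}_\xi\otimes A$.

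The only real obstacle I expect is bookkeeping: one must keep the placement of complex conjugates straight and, crucially, recognize at the final step that the leftover stabilizer-like element $\sum_k \overline{\xi_k}\,u_{ik}-\overline{\xi_i}\cdot 1_A$ produced by the splitting is exactly the original $c_i$. That coincidence of indices is precisely what makes $\mathcal{J}_\xi$ a two-sided coideal rather than merely a left one, and it is forced by the symmetry between $(u\otimes id)u$ and $(id\otimes \Delta)u$. If $\H$ is infinite dimensional, I would restrict the basis to run over the finitely many irreducible subrepresentations that the cyclic span of $\xi$ meets, so that no convergence questions arise.
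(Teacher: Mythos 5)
Your proof is correct, but it takes a genuinely different route from the one in the paper. The paper deliberately avoids working with $\mathcal{J}_\xi$ directly: it passes to $\kappa^{-1}(\mathcal{J}_\xi)$, whose generic element is $\langle \xi\otimes 1_A, u(\eta)\rangle-\langle\xi,\eta\rangle 1_A$ with $\xi$ sitting in the slot that stays fixed, performs a basis-free splitting of $\Delta$ there using $(u\otimes id)u=(id\otimes\Delta)u$, and then transports the conclusion back to $\mathcal{J}_\xi$ via the anti-comultiplicativity $\Delta\circ\kappa=\vartheta(\kappa\otimes\kappa)\Delta$. You instead work with $\mathcal{J}_\xi$ itself in coordinates and verify the explicit comodule-type identity $\Delta(c_i)=\sum_k u_{ik}\otimes c_k+c_i\otimes 1_A$. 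The reason the paper's detour exists is that a naive splitting of $\Delta$ applied to the generic element $\langle u(\xi),\eta\otimes 1_A\rangle-\langle\xi,\eta\rangle 1_A$ throws off terms lying in $\mathcal{J}_{e_i}$ for basis vectors $e_i$ rather than in $\mathcal{J}_\xi$; your choice to expand $\xi$ (not $\eta$) in coordinates is exactly what circumvents this, and the ``coincidence'' you highlight --- that the leftover scalar piece $\sum_k\xi_k u_{ik}-\xi_i 1_A$ reassembles into $c_i$ --- is the same structural fact that the paper encodes through the coinverse. Your version buys a proof that never touches $\kappa$ (so it needs neither $\Delta\circ\kappa=\vartheta(\kappa\otimes\kappa)\Delta$ nor the observation that the relevant elements lie in $\A$ where $\kappa$ is defined) and produces a sharper output, namely the explicit formula for $\Delta(c_i)$; the paper's version buys basis-independence. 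Two small caveats, neither fatal: whether the $\xi_j$ appear conjugated depends only on which slot of the inner product is taken linear and does not affect the argument, as you note; and your reduction to finitely many generators requires the cyclic span of $\xi$ to be finite dimensional, which holds in every application in the paper (irreducible representations of compact quantum groups are finite dimensional) and is handled no more carefully in the paper's own proof, where the splitting of $\Delta$ applied to a general $u(\eta)$ implicitly involves the same convergence question.
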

\begin{proof}
It's actually a little easier to work with elements of $\kappa ^{-1}(\mathcal{J}_\xi)$ rather than elements of $\mathcal{J}_\xi$ itself.  Since $u$ is unitary, a generic element of $\kappa ^{-1}(\mathcal{J}_\xi)$ is given by
$(\langle \xi \otimes 1_A, u(\eta ) \rangle) - \langle  \xi ,\eta  \rangle 1_A)$.  We compute:
\begin{eqnarray}
 \Delta (\langle \xi \otimes 1_A, u(\eta ) \rangle) & - & \langle  \xi  ,\eta  \rangle 1_A)  = 
         \langle  \xi \otimes 1_A \otimes 1_A,(u\otimes id) u(\eta ) \rangle
       - \langle  \xi ,\eta  \rangle (1_A \otimes 1_A)   \nonumber  \\
     &   = & \{ ( \langle \xi \otimes 1_A, u(\cdot ) \rangle -
       \langle \xi , \cdot \rangle
       1_A) \otimes id\}
        (u(\eta )) +
       1_A\otimes (\langle  \xi \otimes 1_A,u(\eta )
       \rangle - \langle  \xi , \eta  \rangle 1_A)   \nonumber \\
      &  \in & (\kappa ^{-1}(\mathcal{J}_\xi) \otimes A)\oplus (A\otimes \kappa ^{-1}(\mathcal{J}_\xi)) \nonumber
\end{eqnarray}
Of course, this is all done for $\kappa ^{-1}(\mathcal{J}_\xi)$.  But we have $\Delta \circ \kappa = \vartheta (\kappa \otimes \kappa )\Delta$, where $\vartheta :A\otimes A \rightarrow A\otimes A$ is the flip map.  Thus $\Delta (\mathcal{J}_\xi) \subseteq
(\mathcal{J}_\xi \otimes A)\oplus (A\otimes \mathcal{J}_\xi)$ as well.
\end{proof}

Since we will usually be working with positive unital maps, we will not need to consider the entire set $\mathcal{J}_\xi$.  In fact, for a positive unital map $\Pi :A\rightarrow B$, it suffices to check the value of $\Pi$ on a single element.

\begin{proposition}\label{cool}

Suppose that  $B$ is a  normed space and that $\Pi :A\rightarrow B$ is a norm nonincreasing unital map.  Let $\s_\xi \in A$ be the element 
$\s_\xi =\langle  u(\xi ), \xi \otimes 1_A \rangle$.  Then $\Pi$ stabilizes $\xi$ if and only if $\Pi (\s_\xi )=||\xi ||^2 1_B$.

\end{proposition}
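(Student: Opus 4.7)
The plan is to prove both directions using Lemma \ref{lem1}, which characterizes the stabilization property as vanishing of $\Pi$ on the coideal $\mathcal{J}_\xi = \{ \langle u(\xi), \eta \otimes 1_A \rangle - \langle \xi, \eta \rangle 1_A : \eta \in \H \}$. The forward direction is immediate: if $\Pi$ stabilizes $\xi$, then $\Pi$ annihilates every element of $\mathcal{J}_\xi$, and specializing to $\eta = \xi$ exhibits $\s_\xi - \|\xi\|^2 1_A$ as a member of $\mathcal{J}_\xi$, whence $\Pi(\s_\xi) = \|\xi\|^2 1_B$.

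For the converse, assume $\Pi(\s_\xi) = \|\xi\|^2 1_B$, and take an arbitrary $f$ in the ``state space'' $T = \{ f \in B^* : f(1_B) = 1, \|f\| \leq 1 \}$. The composition $\psi := f \circ \Pi$ is a unital norm-contractive linear functional on the unital C$^*$-algebra $A$; the classical fact that unital contractions on a unital C$^*$-algebra are automatically positive guarantees that $\psi$ is a state of $A$, and by hypothesis $\psi(\s_\xi) = f(\|\xi\|^2 1_B) = \|\xi\|^2$.

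The technical heart is a Cauchy-Schwarz argument in the positive semidefinite sesquilinear form $\langle x, y \rangle_\psi := \psi(\langle x, y \rangle_A)$ on $\H \otimes A$. Using $\langle u(\xi), u(\xi) \rangle_A = \|\xi\|^2 1_A$ (unitarity of $u$), the vector $w := u(\xi) - \xi \otimes 1_A$ satisfies
\[ \langle w, w \rangle_\psi = 2\|\xi\|^2 - 2\,\mathrm{Re}\,\psi(\s_\xi) = 0. \]
Applying Cauchy-Schwarz to $w$ and $\eta \otimes 1_A$ then forces $\psi(\langle w, \eta \otimes 1_A \rangle_A) = 0$ for every $\eta \in \H$, which is precisely $f(\Pi(c)) = 0$ for the generic element $c \in \mathcal{J}_\xi$.

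Finally, I would invoke that $T$ separates the points of $B$, a standard consequence of Hahn-Banach (made quantitative by the Bohnenblust-Karlin numerical range inequality when $B$ is only a unital normed space with $\|1_B\| = 1$). This forces $\Pi$ to vanish on all of $\mathcal{J}_\xi$, and Lemma \ref{lem1} concludes that $\Pi$ stabilizes $\xi$. The one delicate point I anticipate is this final separation step for arbitrary normed $B$; in every concrete category actually used in the paper (operator systems, order unit spaces) it is routine.
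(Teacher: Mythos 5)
Your proof is correct in every setting where the proposition is actually invoked, and it takes a genuinely different route from the paper's. The paper argues directly in $\H \otimes B$: from $\| (id\otimes \Pi )u(\xi )\| \leq \| \xi \|$ and $\langle (id\otimes \Pi )u(\xi ), \xi \otimes 1_B \rangle = \| \xi \|^2 1_B$ it invokes ``the Cauchy--Schwarz inequality'' for a $B$-valued pairing to conclude $(id\otimes \Pi )u(\xi )=\xi \otimes 1_B$, the forward direction being, exactly as in your write-up, the observation that $\s_\xi -\| \xi \|^2 1_A$ lies in $\mathcal{J}_\xi$. You instead scalarize: you compose $\Pi$ with unital contractive functionals $f$ on $B$, note that $f\circ \Pi$ is automatically a state of the C$^*$-algebra $A$, run an honest scalar Cauchy--Schwarz argument in the positive semidefinite form $\psi (\langle \cdot ,\cdot \rangle )$ on the Hilbert module $\H \otimes A$, and then appeal to separation of points of $B$ by such $f$. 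What your version buys is precision: it replaces the paper's unexplained $B$-valued Cauchy--Schwarz (whose ``equality case'' is not available for a general cross-norm on $\H \otimes B$) with genuine Cauchy--Schwarz for scalar forms, and it isolates exactly where structure on $B$ beyond ``normed space with a unit'' enters. The delicate point you flag at the end is real and not removable: for a bare normed space the set $T=\{ f\in B^* : \| f\| \leq 1, f(1_B)=1\}$ can fail to separate points (take $B=\C \oplus V$ with $\| (\lambda ,v)\| =\max (|\lambda |,\| v\| )$ and $1_B=(1,0)$, for which $T$ consists of the single functional $(\lambda ,v)\mapsto \lambda$), and with such a target one can manufacture norm-nonincreasing unital maps $\Pi$ satisfying $\Pi (\s_\xi )=\| \xi \|^2 1_B$ that do not stabilize $\xi$; so the proposition must be read with $B$ in one of the categories the paper actually uses (C$^*$-quotients, operator systems, Archimedean order unit spaces), where the relevant states do separate points. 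The paper's own proof carries the same hidden dependence inside its appeal to Cauchy--Schwarz in $\H \otimes B$; your argument has the merit of making that dependence visible.
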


\begin{proof}

Since $u$ is unitary we have $||u(\xi )||^2=||\xi ||^2$.  Since $\Pi$ is norm nonincreasing, we have $\\ ||(id\otimes \Pi)u(\xi )||^2\leq ||\xi ||^2$.  But $\Pi (\sigma_\xi)=||\xi ||^2 1_B$ says exactly that
 $ \langle  (id\otimes \Pi)u(\xi ), \xi \otimes 1_B \rangle = ||\xi ||^2 1_B$.  Thus the Cauchy-Schwarz inequality gives $(id\otimes \Pi)u(\xi )=\xi \otimes 1_B$, so $\Pi$ stabilizes $\xi$.    Conversely, if $\Pi$ stabilizes $\xi$ then $\Pi (\sigma_\xi-||\xi ||^2 1_A)=0$ since $(\sigma_\xi-||\xi ||^2 1_A )\in  \mathcal{J}_\xi$.

\end{proof}

\begin{remark}
The notation $\s_\xi$ is chosen here  deliberately to match the notation for the Berezin symbol used later.
\end{remark}

We want to show that the quantum group stabilizer and the state stabilizer of $\xi$ both exist.  A quotient map $A\rightarrow A/K$ of a compact quantum group $A$ is a map of quantum groups if and only if the kernel $K$ is a Hopf ideal. Similarly, a quotient map $A\rightarrow A/K$ of an order unit space is a map of order unit spaces if and only if the kernel $K$ is an order ideal. Both Hopf ideals and order ideals have the property that the intersection of an arbitrary family of Hopf ideals (order ideals) is a Hopf ideal (order ideal). Thus there exists a minimal Hopf ideal (order ideal) generated by the set $\mathcal{J}_\xi$ above.  This reasoning and  Lemma \ref{lem3} above immediately give the following:

\begin{theorem}\label{thm2}
Let $u:\H \rightarrow \H \otimes A$ be a unitary representation of a compact quantum group $A$, and let $\xi \in \H$. The quantum group stabilizer of $\xi$ and the state stabilizer of $\xi$ both exist.

 Let $\mathcal{J}_\xi =\{ \langle u(\xi ), \eta \otimes 1_A \rangle -  \langle  \xi ,\eta  \rangle 1_A \: |\: \eta \in \H \}$. Let $H( \mathcal{J}_\xi )$ be the Hopf ideal generated by $\mathcal{J}_\xi$, and let $O( \mathcal{J}_\xi )$ be the order ideal generated by $\mathcal{J}_\xi$. Then we have:$\\$
a) The natural map $A\rightarrow A/H( \mathcal{J}_\xi )$ is the quantum stabilizer of $\xi$. $\\$
b) The natural map $A\rightarrow A/O( \mathcal{J}_\xi )$ is the state stabilizer of $\xi$.

\end{theorem}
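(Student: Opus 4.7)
The plan is to exploit Lemma \ref{lem1}, which reduces the stabilizing condition to the ideal-theoretic statement $\mathcal{J}_\xi \subseteq \ker(\Pi)$, and then to build the universal quotient by intersecting the appropriate family of ideals. First I would establish the basic stability lemma: an arbitrary intersection of Hopf ideals in $A$ is again a Hopf ideal, and likewise an arbitrary intersection of order ideals is again an order ideal. Both checks are routine verifications from the definitions in Sections 2.1 and 2.3. Together with the observation that $\mathcal{J}_\xi$ is contained in at least one proper ideal of each type --- namely $\ker(\varepsilon)$, which is both a Hopf ideal and an order ideal since $\varepsilon$ is a positive unital $*$-homomorphism $A\to \C$, and which contains $\mathcal{J}_\xi$ because applying $\varepsilon$ to $\langle u(\xi),\eta\otimes 1_A\rangle$ and using $(id\otimes\varepsilon)u(\xi)=\xi$ reproduces $\langle\xi,\eta\rangle$ --- this guarantees that $H(\mathcal{J}_\xi)$ and $O(\mathcal{J}_\xi)$ are well-defined and proper.

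For part (a), with $H(\mathcal{J}_\xi)$ in hand, the remarks in Section 2.3 on Hopf ideals give that the quotient $A/H(\mathcal{J}_\xi)$ inherits a compact quantum group structure and that the projection $\Pi_H\colon A\to A/H(\mathcal{J}_\xi)$ is a surjective morphism of quantum groups. Since $\mathcal{J}_\xi\subseteq H(\mathcal{J}_\xi)=\ker(\Pi_H)$, Lemma \ref{lem1} tells us that $\Pi_H$ stabilizes $\xi$. For universality I would take any quantum subgroup morphism $\Phi\colon A\to B$ stabilizing $\xi$; Lemma \ref{lem1} gives $\mathcal{J}_\xi\subseteq\ker(\Phi)$, and $\ker(\Phi)$ is by hypothesis a Hopf ideal, so by minimality $H(\mathcal{J}_\xi)\subseteq\ker(\Phi)$, giving the desired factorization of $\Phi$ through $\Pi_H$.

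Part (b) runs along the same rails but in the category of order unit spaces. The projection $\Pi_O\colon A\to A/O(\mathcal{J}_\xi)$ is positive and unital by the construction of quotients by order ideals from Section 2.1, and it stabilizes $\xi$ by Lemma \ref{lem1}. Universality is again immediate: any positive unital $\Phi\colon A\to B$ stabilizing $\xi$ has kernel an order ideal (as noted at the end of the discussion in Section 2.1) containing $\mathcal{J}_\xi$ and hence containing $O(\mathcal{J}_\xi)$, producing the required factorization.

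The main technical subtlety I expect to encounter is at the interface between algebra and topology in part (a): the Hopf ideal $H(\mathcal{J}_\xi)$ must be a \emph{closed} two-sided $*$-ideal in $A$ that respects the coalgebra structure, so one needs to verify that taking the norm closure of the $*$-ideal algebraically generated by $\mathcal{J}_\xi$ preserves the coideal condition $\Delta(K)\subseteq (K\otimes A)\oplus(A\otimes K)$. This follows by combining the continuity of $\Delta$ with Proposition \ref{lem3} and the multiplicativity of $\Delta$, but it is the step that genuinely requires care. The order-ideal version in part (b) sidesteps this issue because order ideals are not required to be closed in the order norm --- a feature explicitly noted in Section 2.1 and already foreshadowed by the paper's remark that stabilizers may fail to be norm closed.
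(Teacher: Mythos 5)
Your proposal is correct and follows essentially the same route as the paper: reduce stabilization to the containment $\mathcal{J}_\xi \subseteq \ker(\Pi)$ via Lemma \ref{lem1}, observe that arbitrary intersections of Hopf ideals (resp.\ order ideals) are again Hopf ideals (resp.\ order ideals) so that the generated ideals exist, and read off universality. The extra details you supply --- that $\ker(\varepsilon)$ is a proper ideal of both types containing $\mathcal{J}_\xi$, and the care needed when closing the generated $*$-ideal --- are worthwhile refinements of the paper's terser argument (the latter point is what the paper's Lemma \ref{cor2} addresses).
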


This theorem gives us an abstract realization of the state stabilizer and the quantum group stabilizer. However, in practice it is quite difficult to understand what is the order ideal or the Hopf ideal generated by some subset of $A$. In the next two sections, we will find more concrete versions.

\begin{remark}
Consider an arbitrary subcategory of the category of pointed vector spaces.  This argument shows that a universal morphism $\Pi :A\rightarrow B$ stabilizing $\xi$ exists if the subcategory has the property that the intersection of any arbitrary collection of kernels is a kernel.
\end{remark}

\end{subsection}

\begin{subsection}{Quantum Group Stabilizers}

For purposes of quantum Gromov-Hausdorff distance, the most elementary category to work in is that of order unit spaces.  So for the purposes of this paper the state stabilizer is more useful than the quantum group stabilizer.  However, the  quantum group stabilizer is technically simpler to work with, so it is good to start with. It also has many nice properties, so we record them here for future use.
Throughout this section  $u:\H \rightarrow \H \otimes A$ is a unitary representation, and $\xi  $ is a distinguished vector in $\H$.

 First, we compute the quantum group stabilizers for $A$ commutative or cocommutative.

\begin{example}
Let $(U,\H )$ be a unitary representation of a compact group $G$, and let $A=C(G)$. Let $\xi \in \H$. View $U:\H \rightarrow \H \otimes C(G)=C(G\rightarrow \H ).$  With this view, we have $U(\xi )(x)=U_x(\xi )$.
Let $\Pi :C(G)\rightarrow B$ be a surjective morphism of quantum groups stabilizing $\xi$.  Since $\Pi$ is a surjective homomorphism, we have that $B$ is isomorphic to $C(H)$ for some $H<G$, and under this identification the morphism $\Pi$ becomes the restriction map $|_H:C(G)\rightarrow C(H)$. Since $C(H)$ stabilizes $\xi$ we have that $U(\xi )|_H (x)=\xi$; that is, $U(\xi )(y)=\xi$ for all $y\in H$. This just says that $H<G$ is contained in the the usual stabilizer of $\xi$.

Let $H<G$ be the usual stabilizer of $\xi$. A similar argument shows that the restriction morphism $|_H :C(G)\rightarrow C(H)$ stabilizes $\xi$. Thus $C(H)$ is the quantum group stabilizer of $\xi$.
\end{example}

\begin{example}
Let $\Gamma$ be a discrete group, and let $A=C^*(\Gamma)$ be the full group C$^*$-algebra of $\Gamma$. Let $u\in M_n(A)$ be an n-dimensional unitary (co)representation.  All irreducible unitary (co)representations of $A$ are one dimensional, and are given by elements of $\Gamma$. Thus we have that $u$ is equivalent to a diagonal matrix $diag[\gamma_1, \gamma_2, ..., \gamma_n]$ for some elements $\gamma_1,...,\gamma_n\in \Gamma$. If $\xi \in \C^n$, we have $u(\xi )_i =\xi_i\gamma_i$ for $i=1,...,n$.

Let $\Pi :C^*(\Gamma )\rightarrow B$ be a quantum group morphism stabilizing $\xi$. Up to isomorphism, we must have $B=C^*(\Gamma /N)$ for some normal subgroup $N \lhd \Gamma$, and $\Pi$ becomes the natural map induced from the quotient group homomorphism $\pi : \Gamma \rightarrow \Gamma /N$. Since $\Pi$ stabilizes $\xi$, we have $\xi_i \pi (\gamma_i )=\xi_i e_{\Gamma /N}$. Suppose $\xi$ is generic, so that $\xi_i\neq 0$ for all $i$. Then we have $\pi (\gamma_i)=e_{\Gamma /N}$ for all $i=1,...,n$.

Conversely, if we let $N\lhd \Gamma$ be the normal subgroup generated by $\{ \gamma_1,...,\gamma_n\}$, then a similar argument gives that the natural map $C^*(\Gamma )\rightarrow C^*(\Gamma /N)$ stabilizes $\xi$. So we have $C^*(\Gamma /N)$ is the quantum group stabilizer of $\xi$.

\end{example}

We saw before than the quantum group stabilizer of $\xi \in \H$ is given by the natural quotient map $A\rightarrow A/H(\mathcal{J}_\xi)$, where $H(\mathcal{J}_\xi)$ is the Hopf ideal generated by a specific subset $\mathcal{J}_\xi$.  We now show that it suffices to compute the closed 2-sided ideal generated by $\mathcal{J}_\xi$.

\begin{lemma}\label{cor2}
Let $\langle \mathcal{J}_\xi \rangle$ be the two-sided ideal generated by $\mathcal{J}_\xi$. Then $\langle \mathcal{J}_\xi \rangle$ is a Hopf ideal.
\end{lemma}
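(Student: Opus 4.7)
The plan is to deduce the result directly from Proposition \ref{lem3}, using only that $\Delta$ is a unital $*$-homomorphism. The point is that a two-sided ideal generated by a coideal automatically remains a coideal, because multiplication in $A \otimes A$ preserves each of the two summands $\mathcal{J}_\xi \otimes A$ and $A \otimes \mathcal{J}_\xi$ when multiplied on either side by $\Delta(a) \in A \otimes A$.

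In detail, I would first observe that a typical element of the algebraic two-sided ideal generated by $\mathcal{J}_\xi$ is a finite sum of terms of the form $a j b$ with $a,b \in A$ and $j \in \mathcal{J}_\xi$. Applying $\Delta$ and using multiplicativity gives $\Delta(ajb) = \Delta(a)\,\Delta(j)\,\Delta(b)$. By Proposition \ref{lem3}, $\Delta(j)$ can be written as $\sum_i k_i \otimes c_i + \sum_l c'_l \otimes k'_l$ with $k_i, k'_l \in \mathcal{J}_\xi$ and $c_i, c'_l \in A$. Writing $\Delta(a) = \sum a_{(1)} \otimes a_{(2)}$ and $\Delta(b) = \sum b_{(1)} \otimes b_{(2)}$ in Sweedler notation, the product lands in
\[
\sum (a_{(1)} k_i b_{(1)}) \otimes (a_{(2)} c_i b_{(2)}) \;+\; \sum (a_{(1)} c'_l b_{(1)}) \otimes (a_{(2)} k'_l b_{(2)}),
\]
so the first term sits in $\langle \mathcal{J}_\xi \rangle \otimes A$ and the second in $A \otimes \langle \mathcal{J}_\xi \rangle$. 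Summing over the generators gives the coideal property on the algebraic ideal.

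Finally, since being a Hopf ideal in the paper's sense requires norm-closure, I would pass to the closed two-sided ideal $\overline{\langle \mathcal{J}_\xi \rangle}$. Continuity of $\Delta$ and of the projections $A \otimes A \to A \otimes A / (\overline{\langle \mathcal{J}_\xi \rangle} \otimes A + A \otimes \overline{\langle \mathcal{J}_\xi \rangle})$ immediately extend the inclusion
\[
\Delta\bigl(\langle \mathcal{J}_\xi \rangle\bigr) \subseteq \bigl(\langle \mathcal{J}_\xi \rangle \otimes A\bigr) + \bigl(A \otimes \langle \mathcal{J}_\xi \rangle\bigr)
\]
to its closure, yielding the Hopf ideal property for $\overline{\langle \mathcal{J}_\xi \rangle}$.

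There is no genuine obstacle here; the content is entirely in Proposition \ref{lem3}, and this lemma is the routine algebraic consequence. The only point one has to be slightly careful about is whether $\langle \mathcal{J}_\xi \rangle$ refers to the algebraic or closed ideal, which is resolved by passing to the closure at the end. Indeed, this is exactly why the paper earlier noted the equivalence of quantum subgroups with closed Hopf ideals: once we have the Hopf ideal, the resulting quotient $A/\overline{\langle \mathcal{J}_\xi \rangle}$ is automatically a quantum subgroup of $A$, and by Lemma \ref{lem1} it is the quantum group stabilizer of $\xi$, giving a more concrete realization than the one provided by Theorem \ref{thm2}(a).
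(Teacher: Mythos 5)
Your proof is correct and is exactly the argument the paper intends: the paper's own proof is the one-line remark that the result "follows from Proposition \ref{lem3} since $\Delta$ is a $*$-homomorphism," and your write-up simply fills in the routine details (multiplicativity of $\Delta$ applied to a generator $ajb$, plus a continuity argument for the closure). No gap, and no genuinely different route.
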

\begin{proof}
Since $\Delta :A\rightarrow A\otimes A$ is a $*$-homomorphism, this result follows from the Proposition \ref{lem3}.
\end{proof}

\begin{proposition}\label{cor3}
The universal C$^*$-algebra stabilizing $\xi$ is in fact a quantum group.  In other words, $A/\langle \mathcal{J}_\xi \rangle$ is the quantum group stabilizer of $\xi$.
\end{proposition}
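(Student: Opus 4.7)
The proof plan has two short threads once Lemma \ref{cor2} is available. First, I would identify the universal surjective unital C$^*$-homomorphism $\Pi : A \to B$ stabilizing $\xi$ with the quotient $A/\langle \mathcal{J}_\xi \rangle$. By Lemma \ref{lem1}, such a $\Pi$ stabilizes $\xi$ if and only if $\mathcal{J}_\xi \subseteq \ker \Pi$; since $\ker \Pi$ is automatically a closed two-sided ideal, the minimal kernel containing $\mathcal{J}_\xi$ is exactly $\langle \mathcal{J}_\xi \rangle$, so $A/\langle \mathcal{J}_\xi \rangle$ is the universal object in the C$^*$-category.

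Second, to promote this to the quantum group category I would invoke Lemma \ref{cor2}, which just established that $\langle \mathcal{J}_\xi \rangle$ is a Hopf ideal. As recalled in Section 2.3, the coproduct $\Delta$ then descends to a coproduct on $A/\langle \mathcal{J}_\xi \rangle$ making it a compact quantum group with the natural surjection a quantum group morphism. In particular, the universal C$^*$-quotient stabilizing $\xi$ is automatically a quantum group, which is the first assertion of the proposition.

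Finally, to see that $A/\langle \mathcal{J}_\xi \rangle$ is the quantum group stabilizer in the sense of Definition \ref{def3}, I would compare with the realization $A/H(\mathcal{J}_\xi)$ given in Theorem \ref{thm2}(a) by showing $\langle \mathcal{J}_\xi \rangle = H(\mathcal{J}_\xi)$. The inclusion $H(\mathcal{J}_\xi) \subseteq \langle \mathcal{J}_\xi \rangle$ holds because $\langle \mathcal{J}_\xi \rangle$ is a Hopf ideal containing $\mathcal{J}_\xi$ and $H(\mathcal{J}_\xi)$ is the smallest such; the reverse inclusion holds because every Hopf ideal is, by definition, a closed two-sided ideal, so $H(\mathcal{J}_\xi)$ contains the minimal one $\langle \mathcal{J}_\xi \rangle$.

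There is no substantive obstacle here: all the content has been packaged into Proposition \ref{lem3} and Lemma \ref{cor2}, and the proposition itself is essentially a universal-property matching. The only thing to be mildly careful about is making sure that an arbitrary quantum group morphism $A \to B$ stabilizing $\xi$ factors through $A/\langle \mathcal{J}_\xi \rangle$ as a morphism of quantum groups (not merely as a $^*$-homomorphism), but this is automatic since both quotient coproducts are induced from $\Delta$ by the same factorization.
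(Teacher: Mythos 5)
Your proposal is correct and follows essentially the same route as the paper: both arguments rest on the universal property of $A/\langle \mathcal{J}_\xi \rangle$ among unital C$^*$-quotients stabilizing $\xi$ together with Lemma \ref{cor2}. The only cosmetic difference is in the last step, where you prove the ideal equality $\langle \mathcal{J}_\xi \rangle = H(\mathcal{J}_\xi)$ by two inclusions, while the paper observes that the quantum group stabilizer must be a quotient of $A/\langle \mathcal{J}_\xi \rangle$ and that the latter is already a quantum subgroup; these are equivalent phrasings of the same universal-property matching.
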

\begin{proof}
By the same reasoning as in Theorem \ref{thm2}, we have that the natural map $A\rightarrow A/\langle \mathcal{J}_\xi \rangle$ is the universal map of unital C$^*$-algebras stabilizing $\xi$. Since any morphism of quantum groups is in particular a $^*$-homomorphism, it follows that the quantum group stabilizer of $\xi$ must be a quotient of $A/\langle \mathcal{J}_\xi \rangle$.   By Lemma \ref{cor2}, $A/\langle \mathcal{J}_\xi \rangle$ is actually a quantum subgroup of $A$, so it is the quantum group stabilizer of $\xi$.
\end{proof}

\begin{remark} By Proposition \ref{cool}, we have that $\langle \mathcal{J}_\xi \rangle$ is just the principal ideal generated by $(\s_\xi -||\xi ||^2 1_A)$.
\end{remark}

Proposition \ref{cor3} makes computing the quantum group stabilizer much easier than Theorem  \ref{thm2}, simply because it is much easier to visualize the two-sided ideal generated by $\mathcal{J}_\xi$ rather than the Hopf ideal generated by $\mathcal{J}_\xi$.

We check an analog of a simple result from the representation theory of ordinary groups.  If $(U,\H )$ is a unitary representation of a group $G$, then $U^{\otimes n}$ is the diagonal action of $G$ on $\H^{\otimes n}$. If $H<G$ is the stabilizer of some $\xi \in \H$, then $H$ will also stabilize $\xi^{\otimes n}\in \H^{\otimes n}$.  Of course, if $G$ is not connected, then $H$ may not be all of the  stabilizer of $\xi^{\otimes n}$.

\begin{proposition}\label{prop2}
Let $(u,\H )$ be a unitary representation of a compact quantum group $A$, and let $\xi \in \H$. Let $\Pi :A\rightarrow B$ be the quantum stabilizer of $\xi$. Consider the unitary representation $(u^{\otimes n}, \H^{\otimes n})$ of $A$. Then $\Pi :A\rightarrow B$ stabilizes $\xi^{\otimes n}$.
\end{proposition}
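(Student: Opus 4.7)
The plan is to use that a quantum group morphism is by definition a $*$-homomorphism of $C^*$-algebras, so that the map $(id\otimes\Pi): A\otimes A\to A\otimes B$ respects products. The key is to write the tensor product representation $u^{\otimes n}$ in terms that make the multiplication in $A$ visible. Working in the multiplier-algebra picture mentioned in Section~2.3, if $U\in M(\K(\H)\otimes A)$ is the unitary corresponding to $u$, then $u^{\otimes n}$ corresponds to
\[ U^{(n)} = U_{1,n+1}\,U_{2,n+1}\cdots U_{n,n+1} \;\in\; M(\K(\H^{\otimes n})\otimes A), \]
with multiplication taking place only in the last ($A$-valued) tensor factor. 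In the Sweedler-type notation $u(\xi)=\xi_{[0]}\otimes \xi_{[1]}$ this reads
$u^{\otimes n}(\xi^{\otimes n}) = \xi^{(1)}_{[0]}\otimes\cdots\otimes\xi^{(n)}_{[0]}\otimes\xi^{(1)}_{[1]}\cdots\xi^{(n)}_{[1]}$, with all $\xi^{(i)}=\xi$ and an implicit summation.

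Next I would apply $(id^{\otimes n}\otimes\Pi)$ to this expression. Because $\Pi$ is a $*$-homomorphism, it is multiplicative, and the only factor to which $\Pi$ is actually applied is the element of $A$ on the rightmost slot. Thus
\[ (id^{\otimes n}\otimes \Pi)\bigl(U_{1,n+1}\,U_{2,n+1}\cdots U_{n,n+1}\bigr)
 \;=\; \bigl((id\otimes\Pi)U\bigr)_{1,n+1}\cdots\bigl((id\otimes\Pi)U\bigr)_{n,n+1}. \]
The hypothesis that $\Pi$ stabilizes $\xi$ means exactly that $(id\otimes\Pi)U$ fixes the vector $\xi\otimes 1_B$, i.e.\ $(id\otimes\Pi)U(\xi\otimes 1_B)=\xi\otimes 1_B$. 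Applying the factors above to $\xi^{\otimes n}\otimes 1_B$ one at a time (from right to left), each factor $((id\otimes\Pi)U)_{i,n+1}$ acts on a vector of the form $\cdots\otimes\xi\otimes\cdots\otimes 1_B$ and returns the same vector. Iterating $n$ times yields $\xi^{\otimes n}\otimes 1_B$, which is precisely the claim that $\Pi$ stabilizes $\xi^{\otimes n}$. A painless alternative is induction on $n$: the $n=1$ case is the hypothesis, and the inductive step $n\to n+1$ is the $n=2$ calculation just performed, applied to $\xi^{\otimes n}$ and $\xi$ in place of $\xi$ and $\xi$.

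There is no real obstacle here; the proof rests entirely on the fact that $\Pi$ is multiplicative. I would stress this in the write-up because the same proof does \emph{not} go through for the state stabilizer of Definition~\ref{def4}: a positive unital map need not respect the product used in forming $u^{\otimes n}$, so the analogous statement for state stabilizers is genuinely a different (and more subtle) matter, as the discussion in Section~3.3 and the calculations of Section~4 will bear out. The only minor bookkeeping point is to verify that the tensor product of unitary representations in the mapping picture really does correspond to the product $U_{1,n+1}\cdots U_{n,n+1}$ in the multiplier-algebra picture; this is a standard translation between the two equivalent definitions of representation recalled in Section~2.3 and requires no new ideas.
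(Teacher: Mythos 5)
Your proof is correct, and it isolates exactly the right mechanism: the statement holds for quantum group stabilizers because $\Pi$ is multiplicative, and your closing remark that the same argument cannot work for state stabilizers is borne out by the paper, which has to prove the analogous statement (Proposition \ref{nthpower}) by a separate GNS-type argument using $\s_{\xi^{\otimes n}}=\s_\xi^{\,n}$.

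The route differs from the paper's in packaging rather than in substance. The paper stays in the mapping picture and never touches the operator $U$: it invokes Lemma \ref{lem1} and Proposition \ref{cor3} to reduce the claim to showing that the coideal $\mathcal{J}_{\xi^{\otimes n}}$ lies in the two-sided ideal $\langle\mathcal{J}_\xi\rangle=\ker\Pi$, and then proves this by induction via the telescoping identity
$\prod_i a_i-\prod_i b_i=(a_1-b_1)\prod_{i\geq 2}a_i+b_1\bigl(\prod_{i\geq 2}a_i-\prod_{i\geq 2}b_i\bigr)$
applied to $a_i=\langle u(\xi),\eta_i\otimes 1_A\rangle$ and $b_i=\langle\xi,\eta_i\rangle 1_A$. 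That telescoping sum is precisely the algebraic shadow of your ``apply the leg-numbered factors one at a time,'' so the two arguments encode the same computation. What your version buys is directness: you verify the fixed-vector equation $(id^{\otimes n}\otimes\Pi)u^{\otimes n}(\xi^{\otimes n})=\xi^{\otimes n}\otimes 1_B$ outright and need neither the characterization of $\ker\Pi$ as $\langle\mathcal{J}_\xi\rangle$ nor the coideal machinery of Section 3.1. What the paper's version buys is the slightly stronger bookkeeping fact $\mathcal{J}_{\xi^{\otimes n}}\subseteq\langle\mathcal{J}_\xi\rangle$, i.e.\ a statement about \emph{every} $C^*$-quotient killing $\mathcal{J}_\xi$, which fits its ideal-theoretic framework. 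One small point you should make explicit in a final write-up: the order of the factors in $U_{1,n+1}\cdots U_{n,n+1}$ depends on the tensor-product convention, but this is harmless for your argument since each factor individually fixes $\xi^{\otimes n}\otimes 1_B$, so any ordering of the product does too.
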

\begin{proof}
Denote $J=\mathcal{J}_{\xi}$ and $J_n=\mathcal{J}_{\xi^{\otimes n}}$. We must show that $J_n$ is contained in the two-sided ideal generated by $J$ for all $n\in \N$. This is trivial when $n=1$. Inductively assume that $J_{n-1}$ is contained in the ideal generated by $J$.  A generic element of $J_n$ is of the form
$\langle u^{\otimes n}(\xi^{\otimes n}), (\bigotimes_{i=1}^n \eta_i)\otimes
       1_A \rangle-\langle \xi^{\otimes n},\bigotimes_{i=1}^n\eta_i \rangle
       1_A = \Pi_{i=1}^n\langle u(\xi ),\eta_i\otimes 1_A\rangle -
       \Pi_{i=1}^n\langle \xi , \eta_i\rangle 1_A.$
But notice 
\begin{eqnarray}
\Pi_{i=1}^n\langle u(\xi ),\eta_i\otimes 1_A\rangle & - & 
       \Pi_{i=1}^n\langle \xi , \eta_i\rangle 1_A  = \nonumber \\
	& = &  
(\langle u(\xi ), \eta_1\otimes 1_A\rangle -\langle \xi , \eta_1 \rangle )
       \Pi_{i=2}^n\langle u(\xi ),\eta_i\otimes 1_A\rangle + \nonumber \\
     &  + & \langle \xi , \eta_1 \rangle \left( \Pi_{i=2}^n\langle u(\xi ),
       \eta_i\otimes 1_A\rangle -\Pi_{i=2}^n\langle \xi , \eta_i\rangle 1_A
       \right) 
       \in JA + J_{n-1} \nonumber
\end{eqnarray}
which is in the two-sided ideal generated by $J$ by the induction hypothesis.
\end{proof}

Finally, we consider quantum coset spaces.  They give the simplist examples of ergodic actions of compact quantum groups.  First, we give the definition.

\begin{definition}
Let $A$ be a compact quantum group, and $\Pi :A\rightarrow B$ be a quantum subgroup of $A$.  Then the quantum coset space $A^B $ is the unital C$^*$-subalgebra of $A$ given by 
\[ A^B  =\{ a\in A \: | \: (\Pi \otimes id)\Delta (a)=1_B\otimes a\} . \]
\end{definition}

Intuitively, $A^B $ is the set of elements of $A$ invariant under the right action $(\Pi \otimes id)\Delta :A\rightarrow B\otimes A$ of $B$ on $A$.  Since the coproduct also gives a left action of $A$ on itself, there should be a remaining left action of $A$ on $A^B $.  The following proposition is well known, see \cite{Pinzari1} for example.  We omit the proof, since we will give a verbatim proof for the corresponding result for state stabilizers in Proposition \ref{lem5}.

\begin{proposition}
$A^B $ is a right coideal of $A$.  That is, $\Delta (A^B )\subseteq A^B \otimes A$.
\end{proposition}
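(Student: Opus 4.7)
The plan is to verify the inclusion by a short computation resting on the coassociativity of $\Delta$, and then to extract the statement about membership in $A^B \otimes A$ via a slice-map argument. Fix $a \in A^B$, so by definition $(\Pi \otimes id)\Delta(a) = 1_B \otimes a$; the goal is to show $\Delta(a) \in A^B \otimes A$.

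First I would show that $\Delta(a)$ is a fixed vector for the induced right coaction of $B$ on the first tensor factor of $A \otimes A$, i.e., that
\[
\bigl((\Pi \otimes id)\Delta \otimes id\bigr)\bigl(\Delta(a)\bigr) = 1_B \otimes \Delta(a).
\]
The left-hand side equals $(\Pi \otimes id \otimes id)(\Delta \otimes id)\Delta(a)$. Applying coassociativity $(\Delta \otimes id)\Delta = (id \otimes \Delta)\Delta$ and then pulling the $\Pi \otimes id$ inside the outer $id \otimes \Delta$, it rewrites as $(id \otimes \Delta)\bigl((\Pi \otimes id)\Delta(a)\bigr) = (id \otimes \Delta)(1_B \otimes a) = 1_B \otimes \Delta(a)$, as desired.

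Next I would deduce $\Delta(a) \in A^B \otimes A$ from this fixed-point identity by slicing the second tensor factor. For any bounded linear functional $\psi \in A^*$, applying $id \otimes id \otimes \psi$ to the displayed identity and writing $b_\psi := (id \otimes \psi)\Delta(a)$ produces $(\Pi \otimes id)\Delta(b_\psi) = 1_B \otimes b_\psi$, so every slice $b_\psi$ lies in $A^B$. The slice-map characterization of the spatial tensor product then forces $\Delta(a) \in A^B \otimes A$.

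The main obstacle is precisely this slice-map step: for a general unital C$^*$-subalgebra the slice-map property of the minimal tensor product can be delicate. A cleaner route, and probably the one adapted in the verbatim proof for Proposition \ref{lem5}, is to first carry out the argument on the dense polynomial subalgebra $\A \subseteq A$, where all tensor products are algebraic and slicing is immediate, and then extend by continuity using that $\A \cap A^B$ is dense in $A^B$. The density follows automatically from the conditional expectation $E = \bigl((\h_B \circ \Pi) \otimes id\bigr)\Delta : A \to A^B$ onto the fixed-point subalgebra (where $\h_B$ is the Haar state of $B$), since $\Delta(\A) \subseteq \A \otimes \A$ forces $E(\A) \subseteq \A \cap A^B$, and $E$ is a surjection from $A$ onto $A^B$.
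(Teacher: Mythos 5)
Your computation is exactly the paper's own argument (the paper omits the proof here and gives it verbatim for Proposition \ref{lem5}): coassociativity yields $(\Pi \otimes id \otimes id)(\Delta \otimes id)\Delta (a) = 1_B \otimes \Delta (a)$, from which membership of $\Delta (a)$ in $A^B \otimes A$ is read off. Your concern about the final slice-map step is legitimate---the paper silently elides it---and your density argument via $\A$ closes the gap, though an even shorter finish is to apply $\h_B$ to the first leg of your displayed identity to get $(E \otimes id)\Delta (a) = \Delta (a)$ with $E = (\h_B \circ \Pi \otimes id)\Delta$ the conditional expectation onto $A^B$, so that $\Delta (a)$ lies in the range of $E \otimes id$, which is $A^B \otimes A$.
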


Since $B$ is actually a quantum group itself, it has its own Haar state $\h_B$.  As in the case for ordinary groups, integrating over $B$ gives us an expectation from $A$ onto $A^B $.  The following proposition is stated without proof quite often.  See \cite{Pinzari1}, \cite{Wang2} for example.  We give a proof for completeness.

\begin{proposition}
Let $\Pi :A\rightarrow B$ be a quantum subgroup of $A$, and let $\h_B$ be the Haar state of $B$.  Then there is an expectation $E=(\h_B \circ \Pi \otimes id)\Delta :A\rightarrow A^B $.  
\end{proposition}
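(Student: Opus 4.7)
The plan is to verify the three properties required of an expectation (positivity/unitality, image landing in $A^B$, and identity on $A^B$) by reducing everything to the invariance of the Haar state $\h_B$ and to the fact that $\Pi$ is a quantum group morphism.

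First I would recast the construction in a cleaner form. Let $\Phi = (\Pi \otimes id)\Delta : A \to B \otimes A$, so that $E = (\h_B \otimes id)\Phi$ and $A^B = \{a \in A : \Phi(a) = 1_B \otimes a\}$. The key structural identity is that $\Phi$ is a coaction in the sense that $(id \otimes \Phi)\Phi = (\Delta_B \otimes id)\Phi$. I would verify this by a direct computation: starting from the right hand side and using $\Delta_B \circ \Pi = (\Pi \otimes \Pi)\Delta$ (which holds because $\Pi$ is a morphism of quantum groups), one gets $(\Pi \otimes \Pi \otimes id)(\Delta \otimes id)\Delta$, which by coassociativity of $\Delta$ equals $(\Pi \otimes \Pi \otimes id)(id \otimes \Delta)\Delta = (id \otimes \Phi)\Phi$.

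Next, to see that $E(a) \in A^B$ for every $a \in A$, I would apply $(\h_B \otimes id \otimes id)$ to the identity $(id \otimes \Phi)\Phi(a) = (\Delta_B \otimes id)\Phi(a)$ in $B \otimes B \otimes A$. The left hand side rewrites as $(id \otimes \Phi)(\h_B \otimes id)\Phi(a) = \Phi(E(a))$, while the right hand side becomes $((\h_B \otimes id)\Delta_B \otimes id)\Phi(a) = (\h_B(\cdot)\,1_B \otimes id)\Phi(a) = 1_B \otimes E(a)$ by left invariance of the Haar state of $B$. Hence $\Phi(E(a)) = 1_B \otimes E(a)$, i.e.\ $E(a) \in A^B$. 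Conversely, if $a \in A^B$ then $\Phi(a) = 1_B \otimes a$, so $E(a) = (\h_B \otimes id)(1_B \otimes a) = a$, showing $E|_{A^B} = id$ and in particular $E^2 = E$.

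Finally, positivity and unitality of $E$ follow immediately: $\Delta$ is a unital $*$-homomorphism, $\Pi$ is a unital $*$-homomorphism, and $\h_B$ is a state. If one wants the $A^B$-bimodule property of a conditional expectation, it follows from $\Phi$ being a $*$-homomorphism: for $a \in A^B$ and $b \in A$ one has $\Phi(ab) = \Phi(a)\Phi(b) = (1_B \otimes a)\Phi(b)$, and applying $\h_B \otimes id$ yields $E(ab) = aE(b)$; the symmetric identity is analogous. The main bookkeeping obstacle is keeping the tensor legs straight in the three-fold tensor product when deriving $\Phi \circ E = 1_B \otimes E(\cdot)$, but once the coaction identity is in place the rest is essentially mechanical.
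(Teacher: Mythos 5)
Your proof is correct and follows essentially the same route as the paper's: both arguments rest on coassociativity, the morphism identity $\Delta_B\circ\Pi=(\Pi\otimes\Pi)\Delta$, and the invariance of $\h_B$, with your version merely isolating the coaction identity $(id\otimes\Phi)\Phi=(\Delta_B\otimes id)\Phi$ as an intermediate step before applying $\h_B$, where the paper runs the computation in a single chain. The additional $A^B$-bimodule observation is a correct bonus not included in the paper's proof.
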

\begin{proof}
It is clear that the map $(\h_B \circ \Pi \otimes id)\Delta :A\rightarrow A$ is completely positive.  Suppose $a\in A^B $.  Then $(\Pi \otimes id)\Delta a=1_B\otimes a$, so $E(a)=(\h_B \otimes id)(1_B\otimes a) =a$.  Thus the rangle of $E$ contains $A^B $, and $E$ fixes $A^B $.  Thus it remains to show that $E(A)\subseteq A^B $.  Let $a\in A$ be arbitrary.  We compute:
\begin{eqnarray}
(\Pi \otimes id)\Delta (E(a)) 
	& = & 
	(\Pi \otimes id)\Delta [(\h_B\circ \Pi \otimes id)\Delta a] \nonumber \\
	& = & 
	(\h_B\circ \Pi \otimes \Pi \otimes id)(id\otimes \Delta)\Delta a \nonumber \\
	& = & 
	(\h_B\circ \Pi \otimes \Pi \otimes id)(\Delta \otimes id)\Delta a \nonumber \\
	& = & 
	([(\h_B\otimes id)\Delta_B \circ \Pi ]  \otimes id)\Delta a \nonumber \\
	& = & 
	1_B\otimes [(\h_B\circ \Pi \otimes id)\Delta a]=1_B\otimes E(a). \nonumber 
\end{eqnarray}
Thus $E(a)\in A^B $ for all $a\in A$.
\end{proof}

\end{subsection}

\begin{subsection}{State Stabilizers}

Consider a unitary representation $(U,\H )$  of an ordinary group $G$.  For $\xi \in \H$, the stabilizer of $\xi$ is defined to be the set $H_\xi =\{ h\in G \: | \: U_h(\xi )=\xi \}$.  Typically one views $H_\xi$ as being a subgroup of $G$, and then one is led to define stabilizers for quantum groups as being quantum subgroups, as we did in the previous section.  However, one could also forget the group structure of $G$ and think just in terms of its topology.  The the points of $G$ correspond to pure states of $C(G)$, and the points of $H_\xi$ correspond to the pure states  $\mu \in S(C(G))$ such that 
\[ U_\mu (\xi )=\int_G U_x(\xi ) \: \: d\mu (x) =\xi .\]
With this as motivation, we make the following definition.

\begin{definition}\label{defstabilizer}
Let $u:\H \rightarrow \H \otimes A$ be a unitary representation of a compact quantum group $A$.  Let $\xi \in \H$.  We denote by $H_\xi$ the set
\[ H_\xi =\{ \varphi \in S(A) \: |\: u_\varphi (\xi )=\xi \} . \]
If the vector $\xi$ is understood, we will denote $H_\xi$ simply by $H$.  

Because of Proposition \ref{prop3} below, we will often call $H_\xi$ the state stabilizer of $\xi$.  (This was the reason for our choice of the term ``state stabilizer'' to begin with).
\end{definition}

Clearly $H_\xi$ is a closed, convex subset of $S(A)$, so it can be recovered from its set of extreme points.  In particular, for an ordinary group it contains the same information as the usual stabilizer.

Theorem \ref{thm2} above gives us an abstract realization of the state stabilizer $\Pi :A\rightarrow B$ as a mapping of order unit spaces. In practice, the order ideal generated by a particular set is quite difficult to visualize, much less compute.  We seek a simpler expression for the state stabilizer. 

\begin{proposition}\label{prop3}
Let $u:\H \rightarrow \H \otimes A$ be a unitary representation of $A$, and let $\xi \in \H$.  Let $\Pi :A\rightarrow B$ be the state stabilizer of $\xi$. Then $ker  (\Pi )=\bigcap_{\phi \in H_\xi} ker (\phi)$. Furthermore, the pullback of the state space of $B$ by $\Pi$ is $H_\xi$.
\end{proposition}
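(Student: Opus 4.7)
The plan is to set $K := \bigcap_{\varphi \in H_\xi} \ker(\varphi)$ and to prove $K = \ker(\Pi)$ via two separate inclusions; the statement about the pullback of $S(B)$ will then fall out almost immediately. The crucial preparatory observation, extracted from the computation in Lemma \ref{lem1}, is that a state $\varphi\in S(A)$ belongs to $H_\xi$ exactly when it vanishes on $\mathcal{J}_\xi$. This equivalence comes from pairing the identity $u_\varphi(\xi)=\xi$ against arbitrary $\eta\in\H$ and using $\varphi(\langle u(\xi),\eta\otimes 1_A\rangle)=\langle u_\varphi(\xi),\eta\rangle$.

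For $\ker(\Pi)\subseteq K$, I would first verify that $K$ is itself an order ideal: linearity and $*$-closedness are automatic, and the order-ideal condition ($0\leq v\leq k$ with $k\in K\cap A^+$ implies $v\in K$) is immediate from the positivity of each $\varphi\in H_\xi$. By the preparatory observation, $K$ contains $\mathcal{J}_\xi$, so by minimality $O(\mathcal{J}_\xi)\subseteq K$, and Theorem \ref{thm2}(b) identifies $\ker(\Pi)$ with $O(\mathcal{J}_\xi)$.

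The reverse inclusion $K\subseteq\ker(\Pi)$ is the main obstacle, and here I would exploit the universal property of $\Pi$ from both sides. First, any $\psi\in S(B)$ pulls back to an element of $H_\xi$: because $\Pi$ stabilizes $\xi$, a one-line calculation gives
\[ u_{\psi\circ\Pi}(\xi)=(\mathrm{id}\otimes\psi)(\mathrm{id}\otimes\Pi)u(\xi)=(\mathrm{id}\otimes\psi)(\xi\otimes 1_B)=\xi. \]
Consequently, for $a\in K$ we have $\psi(\Pi(a))=(\psi\circ\Pi)(a)=0$ for every $\psi\in S(B)$. Because $B$ is Archimedean (invoking the convention that order-unit quotients are tacitly replaced by their maximal Archimedean quotient), $S(B)$ separates points of $B$, forcing $\Pi(a)=0$. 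This Archimedeanization step is the subtle point; without it states on the quotient need not separate points and the inclusion could fail.

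For the final claim, the inclusion $\Pi^*S(B)\subseteq H_\xi$ is the one-line computation carried out above. Conversely, given $\varphi\in H_\xi$, the equality $\ker(\Pi)=K\subseteq\ker(\varphi)$ just established lets $\varphi$ factor uniquely through $\Pi$ as $\psi\circ\Pi$ for a linear functional $\psi$ on $B$; since $\Pi$ is surjective and the positive cone of $B$ consists of images of positive elements of $A$, $\psi$ inherits positivity and unitality from $\varphi$ and so lies in $S(B)$, giving $\varphi=\Pi^*\psi$.
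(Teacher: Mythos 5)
Your proof is correct, but it is organized differently from the paper's. The paper does not split $\ker(\Pi)=K$ into two inclusions against a pre-existing $\Pi$; instead it takes $K=\bigcap_{\phi\in H_\xi}\ker(\phi)$ as the starting point and shows directly that the quotient $A\rightarrow A/K$ \emph{is} the state stabilizer: first that every stabilizing surjection of order unit spaces factors through $A\rightarrow A/K$ (the same pullback-of-states computation plus Archimedean separation that you use for $K\subseteq\ker(\Pi)$), and then that $A\rightarrow A/K$ itself stabilizes $\xi$, which the paper gets by a Banach-extension/bipolar argument identifying the pullback of $S(A/K)$ with the closed convex hull of $H_\xi$ (which equals $H_\xi$). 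Your route replaces that duality step with two more elementary ingredients: the observation from Lemma \ref{lem1} that $\varphi\in H_\xi$ iff $\varphi$ kills $\mathcal{J}_\xi$, which together with the fact that $K$ is an order ideal gives $O(\mathcal{J}_\xi)\subseteq K$ and hence $\ker(\Pi)\subseteq K$ via Theorem \ref{thm2}(b); and the factorization $\varphi=\psi\circ\Pi$ for $\varphi\in H_\xi$, which handles the reverse inclusion of the state-space claim. This is arguably cleaner, since it avoids invoking the Hahn--Banach/closed-convex-hull machinery. One small caveat: the identification $\ker(\Pi)=O(\mathcal{J}_\xi)$ that you import from Theorem \ref{thm2}(b) is only literally true before the tacit Archimedeanization of the quotient; if the Archimedeanization enlarges the kernel, your first inclusion has a gap. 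A more robust way to get $\ker(\Pi)\subseteq K$ without touching $O(\mathcal{J}_\xi)$ at all is to note that each $\varphi\in H_\xi$ is itself a surjective order-unit morphism onto $(\C,1)$ stabilizing $\xi$, so the universal property of $\Pi$ forces $\varphi$ to factor through $\Pi$, whence $\ker(\Pi)\subseteq\ker(\varphi)$ for every such $\varphi$.
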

\begin{proof}
Let $\Phi :A\rightarrow C$ be any surjective morphism of order unit spaces that stabilizes $\xi$.  Let $\mu \in S(C)$.  As a morphism of order unit spaces, $\Phi$ is positive and unital, so that $\mu \circ \Phi \in S(A)$.  We have $(id \otimes \mu \circ \Phi )(u (\xi ))=(id \otimes \mu)(id \otimes \Phi )(u (\xi ))= (id \otimes \mu) (\xi \otimes 1_C) = \xi $, so $\mu \circ \Phi \in H_\xi$.  Since we consider only Archimedian order unit spaces, $S(C)$ separates the points of $C$, so we see that $ker  (\Phi ) \supseteq \bigcap_{\phi \in H_\xi} ker (\phi) \equiv K$ and $\Phi$ factors through the natural map $\Pi : A\rightarrow A/K$.  On the other hand, since $K$ is an intersection of order ideals, it is an order ideal itself and  $\Pi: A\rightarrow A/K$ is a morphism of order unit spaces. So we have seen that any surjective morphism of order unit spaces that stabilizes $\xi$ factors through the natural map $\Pi :A\rightarrow A/K$.

To show that $\Pi :A\rightarrow A/K$ is the state stabilizer of $\xi$, we only have left to show that this map stabilizes $\xi$. As usual, the dual space of $A/K$ pulls back via $\Pi$ to the set of elements in $A'$ that annihilate $K$. Since $K=\bigcap_{\phi \in H_\xi} ker (\phi)$, the Banach extension theorem implies that the pullback of the dual space of $A/K$ is the closed linear span of the elements of $H_\xi$. Since the elements of $H_\xi$ are states, the state space of $A/K$ is the closed convex hull of $H_\xi$, which is $H_\xi$. But the elements of $H_\xi$ stabilize $\xi$, so reversing the argument in the last paragraph shows that $A/K$ stabilizes $\xi$, and thus is the state stabilizer. This proves the last claim also.

\end{proof}

We now define an operator system that will play the role of our quantum coadjoint orbit later on.  Recall our notation $_\varphi\Delta (a) =a*\varphi =(\varphi \otimes id )\Delta a.$

\begin{definition}\label{quantumhomo}
Let $A$ be a compact quantum group, and let $\Pi :A\rightarrow B$ be a positive unital map from $A$ to an order unit space $B$.  We define the quantum homogeneous space corresponding to $\Pi$ to be the set
\[ A^\Pi =\{ a\in A \: | \: (\Pi \otimes id)\Delta a =1_B \otimes a \} .\]
It is an operator system.

Typically, we will interested in the case when $\Pi :A\rightarrow B$ is the state stabilizer of some vector $\xi$ in a unitary representation of $A$.  In this case we will denote $A^\Pi$ as $A^{H_\xi}$, or as $A^H$ when $\xi$ is understood.  In this case $A^{H_\xi}$ is given by
\[ A^{H_\xi}  =\{ a\in A \: | \: _\varphi\Delta (a) = a , \: \: \forall \varphi \in H_\xi \} .\]
\end{definition}

Intuitively, $A^\Pi$ is the set of 
``$\Pi $-invariant'' elements of $A$, when $A$ is considered to be an right ordered  $A$-module via the coproduct $\Delta :A\rightarrow A$.   Of course, it wouldn't be reasonable to call $A^H$ the orbit of $\xi$ under $u$, since an ergodic action of a compact quantum group is not necessarily characterized by the stabilizer of some element. For example, consider an ordinary compact group acting ergodicly on a noncommutative C$^*$-algebra.

\begin{proposition}\label{lem5}
$A^\Pi$ is an operator system, and a right coideal of A.
\end{proposition}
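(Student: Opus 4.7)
The plan is to verify the defining properties of an operator system and then carry out a short coassociativity computation for the coideal claim.

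First, I would dispose of the operator system axioms. The element $1_A$ lies in $A^\Pi$ because $\Delta(1_A)=1_A\otimes 1_A$ and $\Pi$ is unital, so $(\Pi\otimes id)\Delta(1_A)=1_B\otimes 1_A$. Linearity of the defining relation makes $A^\Pi$ a linear subspace. For $*$-closure I would invoke the observation from Section 2.1 that any positive unital map between order unit spaces is automatically $*$-preserving (since $V_{sa}=V^+-V^+$); applying $*$ to the identity $(\Pi\otimes id)\Delta(a)=1_B\otimes a$ then gives $(\Pi\otimes id)\Delta(a^*)=1_B\otimes a^*$. Norm-closure follows from boundedness of $\Pi$ (it has norm $1$ on the self-adjoint part and is bounded overall) together with continuity of $\Delta$: a norm limit of elements satisfying the defining equation satisfies it in the limit.

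For the coideal property, let $a\in A^\Pi$. The key identity to derive is
\[ (\Pi\otimes id\otimes id)(\Delta\otimes id)\Delta(a) \;=\; 1_B\otimes \Delta(a). \]
This I would obtain in three strokes: coassociativity rewrites $(\Delta\otimes id)\Delta$ as $(id\otimes\Delta)\Delta$; next, $\Pi\otimes id\otimes id$ commutes with $id\otimes\Delta$ acting on the last two factors, so the left side equals $(id\otimes\Delta)\bigl[(\Pi\otimes id)\Delta(a)\bigr]$; finally, inserting the hypothesis $(\Pi\otimes id)\Delta(a)=1_B\otimes a$ produces $1_B\otimes \Delta(a)$. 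To convert this into $\Delta(a)\in A^\Pi\otimes A$ I would slice by an arbitrary continuous functional $\phi$ on the third tensor factor. Writing $b_\phi=(id\otimes \phi)\Delta(a)\in A$ and pushing $id\otimes id\otimes \phi$ through the identity above yields $(\Pi\otimes id)\Delta(b_\phi)=1_B\otimes b_\phi$, so $b_\phi\in A^\Pi$ for every $\phi\in A^*$. Because $A^\Pi$ is norm-closed by the previous paragraph, this slicewise membership is exactly the statement $\Delta(a)\in A^\Pi\otimes A$ in the minimal C$^*$-tensor product.

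The only subtlety I anticipate is interpreting $\Pi\otimes id$ as a map out of C$^*$-tensor products when $\Pi$ is only assumed positive rather than completely positive. I would sidestep this by treating the defining equation for $A^\Pi$ as the family of scalar identities $(\mu\circ\Pi\otimes id)\Delta(a)=a$ indexed by $\mu\in S(B)$, which makes sense because $\mu\circ\Pi\in S(A)$. Since states of $B$ separate points (we work with Archimedian spaces), the family of identities is equivalent to the original one, and the coassociativity computation goes through verbatim with $\mu\circ\Pi$ in place of $\Pi$. The substantive content of the proof is therefore the one-line coassociativity calculation.
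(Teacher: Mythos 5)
Your proof is correct and follows essentially the same route as the paper: the operator system axioms are dispatched by unitality, positivity, and boundedness of $\Pi$, and the coideal property rests on the identical coassociativity computation $(\Pi\otimes id\otimes id)(\Delta\otimes id)\Delta(a)=(id\otimes\Delta)(\Pi\otimes id)\Delta(a)=1_B\otimes\Delta(a)$. Your additional care in slicing by functionals to pass from that identity to $\Delta(a)\in A^\Pi\otimes A$, and your remark on interpreting $\Pi\otimes id$ via states when $\Pi$ is merely positive, make explicit two steps the paper leaves implicit.
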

\begin{proof}
The mapping $\Pi :A\rightarrow B$ is unital and positive, so $A^\Pi$ is a unital, norm-closed, and  $*$-closed subspace of $A$. Thus $A^\Pi$ is an operator system.

For the second claim, we must show that if $a\in A^\Pi$, then $\Delta a\in A^\Pi \otimes A$.
We have $(\Pi \otimes id)\Delta a =1_B\otimes a$.  We compute
$  (\Pi \otimes id\otimes id)(\Delta \otimes id)\Delta a=
(\Pi \otimes id\otimes id)(id \otimes \Delta)\Delta a=
(id \otimes \Delta)(\Pi \otimes id)\Delta a=
(1_B\otimes \Delta a)$.

\end{proof}
\begin{remark}
Since $\Pi$ is not necessarily an algebra homomorphism, $A^\Pi$ need not be a subalgebra of $A$.  
\end{remark}

For future reference, we note a simple criterion for whether or not $\varphi \in S(A)$ is in $H_\xi$.   The following lemma is the analog of Proposition \ref{cool} written for states.

\begin{lemma}\label{lem4}
Let $A$ be a compact quantum group and let $u, \H ,$ and $\xi$ be as above. Let $\varphi \in S(A)$. Then $\varphi$ is in $H_\xi$ iff $\langle u_\varphi (\xi ) ,\xi  \rangle =\| \xi \| ^2$.  In other words, $\varphi$ is in $H_\xi$ iff $\varphi (\s_\xi )=||\xi ||^2$ iff $\varphi (\s_\xi^*)=||\xi ||^2$.
\end{lemma}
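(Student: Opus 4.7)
The plan is to observe that this lemma is essentially a direct specialization of Proposition \ref{cool} to the case where the target normed space is $\C$ and the unital, norm-nonincreasing map is a state $\varphi$ of $A$. Every state is unital and of norm one, hence in particular norm-nonincreasing; and if we identify $\H \otimes \C$ with $\H$ in the obvious way, then the requirement of Definition \ref{def2} that ``$\varphi$ stabilize $\xi$'' reads $(id \otimes \varphi)u(\xi) = \xi$, which is exactly the condition $u_\varphi(\xi) = \xi$ defining membership in $H_\xi$. So invoking Proposition \ref{cool} with $\Pi = \varphi$, $B = \C$, and $1_B = 1$ yields immediately the equivalence $\varphi \in H_\xi \iff \varphi(\s_\xi) = \|\xi\|^2$.

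It is worth unpacking what this invocation uses, to reassure the reader. The content of Proposition \ref{cool} rests on the Cauchy-Schwarz inequality for $u$ (viewed as a unitary in $B(\H) \otimes A$) together with the norm nonincreasing property; concretely, one has $\|(id \otimes \varphi)u(\xi)\| \leq \|u(\xi)\| = \|\xi\|$, and then the hypothesis $\langle u_\varphi(\xi), \xi \rangle = \|\xi\|^2$ forces equality in the Cauchy-Schwarz bound $|\langle u_\varphi(\xi), \xi\rangle| \leq \|u_\varphi(\xi)\|\|\xi\| \leq \|\xi\|^2$, whence $u_\varphi(\xi) = \xi$. The converse direction is trivial: if $u_\varphi(\xi) = \xi$, then $\varphi(\s_\xi) = \langle u_\varphi(\xi), \xi \rangle = \|\xi\|^2$.

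The remaining equivalence $\varphi(\s_\xi) = \|\xi\|^2 \iff \varphi(\s_\xi^*) = \|\xi\|^2$ is a triviality and requires no quantum-group machinery. Since $\varphi$ is a state, $\varphi(\s_\xi^*) = \overline{\varphi(\s_\xi)}$, and since $\|\xi\|^2$ is a real scalar, the two equations are complex conjugates of each other; either forces $\varphi(\s_\xi)$ to be real and equal to $\|\xi\|^2$. There is no genuine obstacle in this lemma; its purpose is simply to record a convenient and concrete criterion that will be reused repeatedly when computing state stabilizers in the sections that follow.
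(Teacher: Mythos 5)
Your proof is correct and follows essentially the same route as the paper: the paper's own argument for Lemma \ref{lem4} is precisely the Cauchy--Schwarz-plus-contractivity computation that you spell out when unpacking your invocation of Proposition \ref{cool}, so specializing that proposition to $B=\C$, $\Pi=\varphi$ is just a repackaging of the same idea. Your added remark that $\varphi(\s_\xi^*)=\overline{\varphi(\s_\xi)}$ handles the final equivalence, which the paper's proof leaves implicit.
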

\begin{proof}
Since $u$ is unitary and $\varphi$ is a state, $u$ and $id\otimes \varphi$ are contractive and $\| u_\varphi (\xi ) \| \leq \| \xi \|$.  By the Cauchy-Schwarz inequality we have $u_\varphi (\xi )= \xi$ iff $\langle  u_\varphi (\xi ) ,\xi \rangle =\| \xi \| ^2$.
\end{proof}

\begin{example}
Let $(U,\H )$ be a unitary representation of a compact group $G$, and let $A=C(G)$.  Let $\xi \in \H$.  We have that the state space of $A$ consists of Borel probability measures on $G$. If $\mu \in S(A)$ we have $U_\mu (\xi )=\int_G U_x(\xi )d\mu (x)$.  Since $||U_x(\xi )||=||\xi ||$ for all $x\in G$ and $\mu$ is a probability measure, Minkowski's inequality gives $U_\mu (\xi )=\xi$ iff $U_x(\xi )=\xi$ on the support of $\mu$.  Thus $H_\xi$ is the set of probability measures supported on the usual stabilizer $H<G$ of $\xi$. It follows that the state stabilizer of $\xi$ is just the restriction map $C(G)\rightarrow C(H)$.
\end{example}

We see that for $A=C(G)$, both the state stabilizer and the quantum stabilizer of $\xi \in \H $ are equal to $C(H)$, where $H<G$ is the usual stabilizer of $\xi$. However, for cocommutative quantum groups, the results are quite different.

\begin{example}
Let $A=C^*(\Gamma )$ for a discrete group $\Gamma$, and let $(u,\C^n )$ be an $n$-dimensional unitary (co)representation of $A$.  As before, we can replace $u$ with an equivalent representation to arrange that $u\in M_n(A)$ is a diagonal matrix with entries $\gamma_1, ...,\gamma_n$.  Let $\xi \in \C^n$ with each coordinate nonzero. We have $u(\xi )_i=\xi_i \gamma_i$.  Let $\varphi \in S(A)$.  We have $u_\varphi (\xi )_i=\varphi (\gamma_i)\xi_i$.  Thus
$\\ H_\xi =\{ \varphi \in S(A)\: | \: \varphi (\gamma_i)=1, \forall i=1,...,n\}$.

The state space of $A$ is given by matrix elements of unitary representations of the group $\Gamma$ (not to be confused with unitary (co)representations of $A$!)  That is, if $\nu$ is a unitary representation of $\Gamma$ and $x\in \H_\nu$ is a unit vector, we have a state
$\varphi_{\nu , x}(\gamma )=\langle \nu_\gamma(x),x\rangle$, and every state on $C^*(\Gamma )$ arises in this way. By the Cauchy-Schwarz inequality we have $\varphi_{\nu ,x}(\gamma )=1$ if and only if $\nu_\gamma (x)=x$.
Since $\nu$ is a representation, we have that if $\nu_{\gamma_i}(x)=x$ for all $i=1,...,n$, then $\nu_\gamma (x)=x$ for any $\gamma$ in the subgroup generated by the $\gamma_i$.  So we see that the state $\varphi_{\nu , x}$ is in $ H_\xi$ if and only if $x\in \H_\nu$ is fixed by the subgroup generated by $\{ \gamma_i \}_{i=1,...,n}$.

In contrast, we had before that the quantum group stabilizer of $\xi$ was given by $C^*(\Gamma /N)$, where $N$ is the normal subgroup generated by the $\gamma_i$.  A state on $C^*(\Gamma /N)$ corresponds to a representation of $\Gamma$ that is trivial on $N$. Intuitively, the quantum group stabilizer of $\xi$ is made out of representations of $\Gamma$ that are trival on $\gamma_1, ..., \gamma_n$, whereas the state stabilizer of $\xi$ is made out of representations of $\Gamma$ which have a vector fixed by $\gamma_1, ...,\gamma_n$.

\end{example}
\begin{remark}
In the example above, let $\Omega \subseteq \Gamma$ be the subgroup generated by $\gamma_1,...,\gamma_n$.  For simplicity, consider the case where $\Omega$ is finite.  Let $p\in C^*(\Gamma )$ be the projection
\[ p=\frac{1}{|\Omega |} \Sigma_{\gamma \in \Omega} \: \gamma .\]
Let $(\nu ,\H )$ be a unitary representation of the group $\Gamma$, and let $x \in \H$.  Then any nonzero vector $y\in \H$ of the form $y=\nu (p) x$ will be fixed by the subgroup $\Omega$, and so will give a state in the state stabilizer $H_\xi$.    It's straightforward to check that every state in $H_\xi$ arises in this way.  Therefore, viewed as a mapping of order unit spaces we have that the state stabilizer of $\xi$ is given by the natural map $C^*(\Gamma )\rightarrow pC^*(\Gamma )p$.  

From Theorem 2.4 of \cite{Franz}, we see that the state stabilizer of $\xi$ is a compact quantum subhypergroup of $C^*(\Gamma )$.  We will not discuss hypergroups much in this paper, but in the appendix we will give an example which suggests that state stabilizers are not even quantum subhypergroups in general.

\end{remark}

The previous example actually indicates the general difference between the state stabilizer and the quantum stabilizer, as we now show.

\begin{example}

Let $(u,\H )$ be a unitary representation of a compact quantum group $A$, and let $\xi \in \H$ be a unit vector.  By Lemma \ref{lem4} we have 
$H_\xi= \{ \varphi \in S(A)\: | \: \varphi (\s_\xi )=1\}$.  Let $\varphi \in S(A)$. By the Gelfand-Naimark Theorem  there is a $*$-homomorphism $(\pi ,\H_\pi )$ of the underlying C$^*$-algebra of $A$ and a unit vector $\nu \in \H_\pi$ such that $\varphi (a)=\langle \pi (a) \nu ,\nu \rangle$.  
By the Cauchy-Schwarz inequality we have $\varphi \in H_\xi$ if and only if 
$\pi (\s_\xi )\nu =\nu$.  So states in the state stabilizer generally correspond the unit vectors fixed by $\s_\xi$ in the $*$-representations of of the underlying C$^*$-algebra of $A$.

On the other hand, by Proposition \ref{cor3} the quantum group stabilizer $\Pi :A\rightarrow B$ is actually the universal map of C$^*$-algebras that stabilizes $\xi$.  By Proposition \ref{cool} a map of C$^*$-algebras  stabilizes $\xi$ if and only if it sends $\s_\xi$ to the identity.  Let $\pi :A\rightarrow B(\H_\pi )$ be a $*$-representation of the underlying C$^*$-algebra of $A$.  If $\pi$ is the pullback of some $*$-representation of $B$, then clearly $\pi (\s_\xi )=1$.  Conversely, if $\pi (\s_\xi )=1$, by the C$^*$-universality of $B$, we must have that $\pi$ factors through $\Pi$, so that $\pi$ is the pullback of a $*$-representation of $B$.  Again, by Gelfand-Naimark, states on $B$ correspond to pairs $(\pi , \nu )$ where $\pi$ is a $*$-representation of $B$ and $\nu \in \H_\pi$. Thus the states of $A$ that come from the quantum group stabilizer of $\xi$ generally correspond to unit vectors in the $*$-representations of the underlying C$^*$-algebra of $A$ that send $\s_\xi$ to the identity operator.  We summarize as follows.
\end{example}

\begin{summary}\label{summary}
To compute either the quantum group stabilizer or the state stabilizer of $\xi$, first consider the $*$-representation theory of the underlying C$^*$-algebra of $A$.  The quantum group stabilizer is computed by determining which  $*$-representations of $A$  send $\s_\xi$ to the identity operator.  The state stabilizer is computed by determining which $*$-representations of $A$ have a fixed vector for $\s_\xi$.
\end{summary}

The reasoning of the last example allows us to prove the analog of Proposition \ref{prop2} for state stabilizers.

\begin{proposition}\label{nthpower}

Let $(u,\H )$ be a unitary representation of a compact quantum group $A$, and let $\xi \in \H$.  Consider the unitary representation $(u^{\otimes n} ,\H^{\otimes n})$ of $A$, and the vector $\xi^{\otimes n}$.  Then the state stabilizer of $\xi$ also stabilizes $\xi^{\otimes n}$.

\end{proposition}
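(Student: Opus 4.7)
The goal is to verify that the state stabilizer $\Pi : A \to B$ of $\xi$ also stabilizes $\xi^{\otimes n}$ as a morphism of order unit spaces. By Lemma \ref{lem1} this amounts to showing $\mathcal{J}_{\xi^{\otimes n}} \subseteq \ker(\Pi)$, and by Proposition \ref{prop3} we have $\ker(\Pi) = \bigcap_{\varphi \in H_\xi} \ker(\varphi)$. Hence it suffices to show that every state $\varphi \in H_\xi$ also belongs to $H_{\xi^{\otimes n}}$ for the representation $u^{\otimes n}$. Via Lemma \ref{lem4} applied in both directions, this reduces further to the scalar implication: if $\varphi(\s_\xi) = \|\xi\|^2$, then $\varphi(\s_{\xi^{\otimes n}}) = \|\xi\|^{2n}$.

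The plan splits into two steps. First, I would establish the algebraic identity $\s_{\xi^{\otimes n}} = \s_\xi^n$. Writing $u(\xi) = \sum_j \eta_j \otimes a_j$ and unwinding the standard multiplier-algebra factorization $u^{\otimes n} = u_{1,n+1} u_{2,n+1} \cdots u_{n,n+1}$ one factor at a time, one obtains $u^{\otimes n}(\xi^{\otimes n}) = \sum_{j_1,\dots,j_n} \eta_{j_1} \otimes \cdots \otimes \eta_{j_n} \otimes a_{j_1} \cdots a_{j_n}$. Pairing via the $A$-valued inner product with $\xi^{\otimes n} \otimes 1$ then factorizes through the tensor structure of the Hilbert inner product, yielding $\s_{\xi^{\otimes n}} = \prod_{i=1}^n \bigl( \sum_{j} \langle \eta_j, \xi\rangle a_j \bigr) = \s_\xi^n$.

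Second, I would show that $\varphi(\s_\xi) = \|\xi\|^2$ forces $\varphi(\s_\xi^n) = \|\xi\|^{2n}$ by a standard Cauchy--Schwarz / eigenvector argument in the GNS representation $(\pi_\varphi, K_\varphi, \Omega_\varphi)$ of $\varphi$. Unitarity of $u$ gives $\|\s_\xi\| \leq \|\xi\|^2$, while the Cauchy--Schwarz inequality for states gives $\|\pi_\varphi(\s_\xi)\Omega_\varphi\|^2 = \varphi(\s_\xi^* \s_\xi) \geq |\varphi(\s_\xi)|^2 = \|\xi\|^4$. These combine to force $\|\pi_\varphi(\s_\xi)\Omega_\varphi\| = \|\xi\|^2$, and since $\langle \pi_\varphi(\s_\xi)\Omega_\varphi, \Omega_\varphi\rangle = \varphi(\s_\xi) = \|\xi\|^2$, Cauchy--Schwarz in $K_\varphi$ saturates. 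Matching the value of the inner product then gives $\pi_\varphi(\s_\xi)\Omega_\varphi = \|\xi\|^2 \Omega_\varphi$, so $\Omega_\varphi$ is an eigenvector of every power $\pi_\varphi(\s_\xi)^n = \pi_\varphi(\s_\xi^n)$ with eigenvalue $\|\xi\|^{2n}$, giving $\varphi(\s_\xi^n) = \|\xi\|^{2n}$ as required.

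The main obstacle I anticipate is the bookkeeping in the first step: keeping the multiplier-algebra slots and the ordering of the $a_j$'s straight inside the $A$-valued inner product so that the identity $\s_{\xi^{\otimes n}} = \s_\xi^n$ emerges cleanly (in particular, one must check that the chosen convention for the tensor product of corepresentations produces $\s_\xi^n$ rather than, say, $(\s_\xi^*)^n$). Once that identity is in hand the eigenvector extraction is immediate, and chaining the reductions back through Lemma \ref{lem4}, Proposition \ref{prop3}, and Lemma \ref{lem1} finishes the proof.
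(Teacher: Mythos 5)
Your proposal is correct and follows essentially the same route as the paper: reduce via Lemma \ref{lem4} to a statement about states evaluated on $\s_\xi$, establish the identity $\s_{\xi^{\otimes n}} = \s_\xi^n$, and use a GNS/Cauchy--Schwarz saturation argument to show the cyclic vector is an eigenvector of $\pi_\varphi(\s_\xi)$, hence of all its powers. The only cosmetic differences are that you keep $\|\xi\|$ general instead of normalizing and spell out the Cauchy--Schwarz details that the paper delegates to the example preceding Summary \ref{summary}.
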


\begin{proof}
For simplicity, take $\xi$ to be a unit vector.  Consider the elements $\s_\xi$ and $\s_{\xi^{\otimes n}}$ of $A$.  Because of Lemma \ref{lem4}, it suffices to show that if $\varphi \in S(A)$ and $\varphi (\s_\xi )=1$ then $\varphi (\s_{\xi^{\otimes n}})=1$ also.  From the reasoning in the example, we have 
$\varphi (a) = \langle \pi (a) \nu ,\nu \rangle $ for some $*$-representation of $A$ and some unit vector $\nu$ that is fixed by $\s_\xi$.  Notice that $
\s_{\xi^{\otimes n}} = \langle u^{\otimes n}(\xi ^{\otimes n}), \xi^{\otimes n}\otimes 1\rangle = (\langle u(\xi ), \xi \otimes 1\rangle )^n =\s_\xi^n$.  Thus if $\pi$ is a $*$-representation and $\pi (\s_\xi )(\nu ) =\nu$ we have 
$\pi (\s_{\xi^{\otimes n}})(\nu )=\pi (\s_\xi^n)(\nu )=(\pi (\s_\xi ))^n(\nu )=\nu$ also.  Thus if $\varphi \in S(A)$ and $\varphi (\s_\xi )=1$, then $\varphi (\s_{\xi^{\otimes n}})=1$ also.

\end{proof}

We now turn to the issue of whether $H_\xi$ is closed under the coinverse operation of $\mathcal{A}'$.  If $\varphi \in \mathcal{A}'$ then the coinverse of $\varphi$ is defined by  $(\kappa \varphi ) (a) = \overline{\varphi (\kappa (a)^*)}$.  There are a couple of difficulties here.  The first is that it's not immediately clear that states on $\A$ are norm-bounded. Thus, it's not clear that our Lemma \ref{lem4} can be used.
However, we have assumed that all of our quantum groups are full, and it is known for $A$ full  that states of $\A$ are norm-bounded \cite{Bedos1}.  But this is not the only problem.  Generally we have $*\circ \kappa  = \kappa ^{-1} \circ *$, so $\kappa $ is not positive unless $\kappa ^2 =id$.  So $\varphi \in S (\mathcal{A})$ implies $\kappa \varphi \in S(\mathcal{A})$ only if  $\kappa ^2 =id$.

\begin{proposition}\label{prop4}
Let $u, \H ,\xi$ be as above and let $H_\xi$ be the state stabilizer of $\xi$.  Suppose that $A$ is full and Kac-type.  Then $S(A)$ and $H_\xi$ are closed under the coinverse on $\mathcal{A}'$.
\end{proposition}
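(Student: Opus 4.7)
The plan is to handle the two claims in turn, deriving both from a careful use of the Kac identities together with the computational Proposition \ref{prop1}. Throughout, we make use of the standing assumption that $A$ is full (so states on $\A$ extend continuously to $A$) and Kac-type (so $\kappa$ extends to all of $A$ and satisfies $\kappa^2 = id$).

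First I would verify that $\kappa\varphi \in S(A)$ whenever $\varphi \in S(A)$. Linearity and unitality are immediate:  $(\kappa\varphi)(1) = \overline{\varphi(\kappa(1)^*)} = \overline{\varphi(1)} = 1$. For positivity, the Kac-type assumption gives the identity $\kappa(x)^* = \kappa(x^*)$, which follows from the general relation $* \circ \kappa = \kappa^{-1} \circ *$ together with $\kappa^{-1} = \kappa$. Consequently, for $a = b^*b$ we compute $\kappa(a)^* = \kappa(b^*b)^* = (\kappa(b)\kappa(b^*))^* = \kappa(b)\kappa(b)^*$, which is manifestly positive. Thus $\varphi(\kappa(a)^*)$ is a nonnegative real number, and therefore so is its conjugate $(\kappa\varphi)(a)$.

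For the second claim, the key step is to establish the identity $\sigma_\xi^* = \kappa(\sigma_\xi)$. To see this, recall that $\sigma_\xi = \langle u(\xi), \xi \otimes 1\rangle$, so $\sigma_\xi^* = \langle \xi \otimes 1, u(\xi)\rangle$. Applying Proposition \ref{prop1} with $\eta = \xi$ gives
\[ \sigma_\xi^* = \langle \xi \otimes 1, u(\xi)\rangle = \langle (id \otimes \kappa)u(\xi), \xi \otimes 1\rangle. \]
Writing $u(\xi) = \sum_i e_i \otimes a_i$ and unpacking the $A$-valued inner product shows that the right-hand side equals $\sum_i \langle e_i,\xi\rangle \kappa(a_i) = \kappa(\sigma_\xi)$. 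Combined with $\kappa^2 = id$, this yields $\kappa(\sigma_\xi)^* = (\sigma_\xi^*)^* = \sigma_\xi$.

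With this identity in hand, the rest is immediate from Lemma \ref{lem4}. If $\varphi \in H_\xi$, then $\varphi(\sigma_\xi) = \|\xi\|^2$, which is real, so
\[ (\kappa\varphi)(\sigma_\xi) = \overline{\varphi(\kappa(\sigma_\xi)^*)} = \overline{\varphi(\sigma_\xi)} = \|\xi\|^2, \]
and another application of Lemma \ref{lem4} places $\kappa\varphi$ in $H_\xi$. The only real content is the identity $\sigma_\xi^* = \kappa(\sigma_\xi)$; once that is extracted from Proposition \ref{prop1}, the Kac-type hypothesis does all the remaining work and there is no genuine obstacle to overcome.
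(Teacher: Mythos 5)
Your proof is correct and follows essentially the same route as the paper's: the first claim rests on the Kac identity that $\kappa$ commutes with $*$ (so the dual coinverse preserves positivity and unitality), and the second on Lemma \ref{lem4} together with Proposition \ref{prop1} applied with $\eta=\xi$, which is exactly the content of the paper's manipulation $\langle \xi , u_\varphi (\xi ) \rangle =\langle u_{\kappa \varphi} (\xi ),  \xi  \rangle$. Your repackaging of that step as the explicit identity $\s_\xi^* = \kappa (\s_\xi )$ is a clean and equivalent way to organize the same computation.
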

\begin{proof}
Generally we have any state on $A$ restricts to a state on $\mathcal{A}$, and the restriction map is injective since $\mathcal{A}$ is dense in $A$.  On the other hand, by Theorem 3.3 of \cite{Bedos1} for $A$ full states on $\mathcal{A}$ are norm bounded, so that $S(A)=S(\mathcal{A})$ for $A$ full.  Hence we can define $\kappa : S(A)\rightarrow \mathcal{A}'$.  The coinverse map of $\A'$ is antimultiplicative, so that $\kappa (\varphi \varphi^*)=\kappa (\varphi^*)\kappa (\varphi )$.  For $\kappa =\kappa ^{-1}$ we have $\kappa $ commutes with $*$, so $\kappa $ is positive.   Since $\kappa $ is  unital, we have  $\kappa : S(A)\rightarrow S(A)$.

By Lemma \ref{lem4}, $\varphi \in H_\xi$ is equivalent to $\langle \xi , u_\varphi (\xi ) \rangle =\| \xi \| ^2$. Because $\varphi$ is a state, we have $\kappa \varphi =\varphi \circ \kappa$.  Finally, because $u$ is unitary we have $\langle \xi , u_\varphi (\xi ) \rangle =\langle u_{\varphi \circ \kappa} (\xi ),  \xi  \rangle =\langle u_{\kappa \varphi} (\xi ),  \xi  \rangle$.   So $\varphi \in H_\xi$ implies $\kappa \varphi \in H_\xi$.
\end{proof}

Above we defined $A^{H_\xi}=\{ a\in A\: | \: _\varphi\Delta (a )=a ,\: \forall \varphi \in H_\xi \}$.  This definition suggests a natural question.  Namely, if $\psi \in S(A)$ and one has $_\psi\Delta (a)=a$ for all $a\in A^{H_\xi}$, does one necessarily have $\psi \in H_\xi$?  It is not obvious that this should be the case, but for Kac-type quantum groups, we can use Proposition \ref{prop4} to obtain an affirmative answer.  First we need a simple computational lemma.

\begin{lemma}\label{switch}
Let $\varphi \in A'$ and let $a\in \A$. Suppose $A$ is of Kac-type.  We have $\Delta_\varphi (a)=a$ if and only if $_{\varphi \circ \kappa}\Delta (\kappa a)=\kappa a$.
\end{lemma}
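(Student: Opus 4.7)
The plan is to show directly that $_{\varphi\circ\kappa}\Delta(\kappa a) = \kappa(\Delta_\varphi a)$, after which the biconditional follows immediately from the bijectivity of $\kappa$. The key input is the standard identity $\Delta\circ\kappa = \vartheta(\kappa\otimes\kappa)\Delta$, where $\vartheta$ denotes the flip — this is precisely the identity already invoked in the fourth line of the proof of Lemma \ref{lem5.2} — together with the Kac condition $\kappa^2 = id$.

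First I would unwind the definitions: $\Delta_\varphi a = (id\otimes\varphi)\Delta a$ and $_{\varphi\circ\kappa}\Delta(\kappa a) = (\varphi\circ\kappa \otimes id)\Delta(\kappa a)$. Writing $\Delta a = \sum a_{(1)}\otimes a_{(2)}$ in Sweedler notation, the identity $\Delta\circ\kappa = \vartheta(\kappa\otimes\kappa)\Delta$ gives
\[
\Delta(\kappa a) = \sum \kappa(a_{(2)})\otimes \kappa(a_{(1)}).
\]
Applying $(\varphi\circ\kappa)\otimes id$ to both sides yields
\[
_{\varphi\circ\kappa}\Delta(\kappa a) = \sum \varphi(\kappa^2 a_{(2)})\,\kappa(a_{(1)}),
\]
and the Kac assumption $\kappa^2 = id$ collapses this to $\sum \varphi(a_{(2)})\,\kappa(a_{(1)}) = \kappa\bigl(\sum a_{(1)}\varphi(a_{(2)})\bigr) = \kappa(\Delta_\varphi a)$.

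Having established $_{\varphi\circ\kappa}\Delta(\kappa a) = \kappa(\Delta_\varphi a)$, the equivalence is straightforward: since $\kappa$ restricted to $\A$ is a bijection (Hopf algebra axiom), we have $\kappa(\Delta_\varphi a) = \kappa a$ if and only if $\Delta_\varphi a = a$. I anticipate essentially no technical obstacle here — the content of the lemma is really just the compatibility of $\Delta$ and $\kappa$ combined with $\kappa^2 = id$, and the only care required is in tracking which side of the coproduct the functional is being applied to (the ``$\Delta_\varphi$ versus $_\varphi\Delta$'' distinction) so that the flip in $\Delta\circ\kappa = \vartheta(\kappa\otimes\kappa)\Delta$ lands the computation on the correct side.
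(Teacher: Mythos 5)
Your proof is correct and takes the same route as the paper: the paper's proof is the one-line observation that the claim follows from $\Delta\circ\kappa = \vartheta(\kappa\otimes\kappa)\Delta$, and your Sweedler computation simply spells out that argument, correctly locating the single place where the Kac condition $\kappa^2=id$ is needed.
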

\begin{proof}
This statement follows immediately from the fact that $\Delta \circ \kappa =\vartheta (\kappa \otimes \kappa )\Delta$, where $\vartheta :\A \otimes \A \rightarrow \A \otimes \A $ is the flip map.
\end{proof}

\begin{proposition}\label{dual}
Let $u:\H \rightarrow \H \otimes A$ be a unitary representation of a compact quantum group $A$, and let $\xi \in \H$.  Let $H_\xi$ and $A^{H_\xi}$ be as above.  Let $\psi \in S(A)$.  If $A$ is of Kac-type, then $\psi \in H_\xi$ if and only if $_\psi\Delta (a)=a$ for all $a\in A^{H_\xi}$.
\end{proposition}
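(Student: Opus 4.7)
The plan is to handle the forward direction immediately from the definition of $A^{H_\xi}$, and to prove the nontrivial converse by exhibiting $\s_\xi^*$ as a specific element of $A^{H_\xi}$ and probing the hypothesis against it. By Lemma \ref{lem4}, membership of $\psi$ in $H_\xi$ is equivalent to $\psi(\s_\xi^*) = \|\xi\|^2$, so if one can establish $\s_\xi^* \in A^{H_\xi}$, then applying the hypothesis ${}_\psi\Delta(a) = a$ with $a = \s_\xi^*$ and then the counit $\varepsilon$ should deliver the required equality.

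The substantive step is therefore to verify $\s_\xi^* \in A^{H_\xi}$. First, Proposition \ref{prop1} applied at $\eta = \xi$ yields $\s_\xi^* = \kappa(\s_\xi)$. Next, I plan to show that $\s_\xi$ lies in the left-invariant space ${}^HA := \{b \in A : (\text{id} \otimes \varphi)\Delta b = b \text{ for all } \varphi \in H_\xi\}$. In an orthonormal basis $\{e_i\}$ of $\H$ with $u(\xi) = \sum_i e_i \otimes a_i$, the coaction identity $(u \otimes \text{id})u = (\text{id} \otimes \Delta)u$ gives $\Delta(a_i) = \sum_l u_{il} \otimes a_l$, and the condition $\varphi \in H_\xi$ is characterized by the scalar equations coming from $u_\varphi(\xi) = \xi$, i.e., each $\varphi(a_l)$ equals the $l$-th coordinate of $\xi$. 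Substituting these into $(\text{id} \otimes \varphi)\Delta(\s_\xi)$ produces $\s_\xi$ itself, so $\s_\xi \in {}^HA$.

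Here the Kac assumption enters essentially. By Proposition \ref{prop4}, $H_\xi$ is closed under $\kappa$, and together with Lemma \ref{switch} this implies that $\kappa$ restricts to a bijection between ${}^HA$ and $A^{H_\xi}$. Hence $\s_\xi^* = \kappa(\s_\xi) \in A^{H_\xi}$. Applying the hypothesis to this element gives $(\psi \otimes \text{id})\Delta(\s_\xi^*) = \s_\xi^*$, and composing with $\varepsilon$ and using $(\text{id} \otimes \varepsilon)\Delta = \text{id}$ collapses the left-hand side to $\psi(\s_\xi^*)$. A short computation using $(\text{id} \otimes \varepsilon)u(\xi) = \xi$ yields $\varepsilon(\s_\xi) = \|\xi\|^2$, and hence $\varepsilon(\s_\xi^*) = \|\xi\|^2$ as well. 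Therefore $\psi(\s_\xi^*) = \|\xi\|^2$, and $\psi \in H_\xi$ by Lemma \ref{lem4}. The main obstacle I expect is the bookkeeping for the left-invariance calculation (keeping the conventions on the $A$-valued inner product and the components of $\xi$ straight) and for applying Lemma \ref{switch} in the correct direction; Kac-type is used both to close $H_\xi$ under $\kappa$ and to have $\kappa$ defined on the whole of $A$ so that Lemma \ref{switch} applies to arbitrary elements of $A^{H_\xi}$.
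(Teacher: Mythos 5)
Your proof is correct, and it takes a genuinely different route from the paper's. The paper's argument reduces to the isotypic components of $u$ (explicitly restricting to the case where these are finite dimensional), embeds each irreducible summand into $\C^n\otimes \A$ via matrix coefficients, and thereby produces a finite family $\{a_1,\dots ,a_n\}\subseteq \A$ whose fixedness under $\Delta_\psi$ characterizes $\psi\in H_\xi$; it then applies the $\kappa$-switch (Lemma \ref{switch}) together with $H_\xi\circ\kappa =H_\xi$ (Proposition \ref{prop4}) to move these test elements into $A^{H_\xi}$. You instead observe that membership in $H_\xi$ is detected by the \emph{single} element $\s_\xi^*=\kappa (\s_\xi )$ via Lemma \ref{lem4}, verify that $\s_\xi$ is left-$H$-invariant, and use the same $\kappa$-switch plus Proposition \ref{prop4} to conclude $\s_\xi^*\in A^{H_\xi}$; evaluating the hypothesis at this one element against $\varepsilon$ (with $\varepsilon (\s_\xi^*)=\|\xi\|^2$) finishes the argument. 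Both proofs consume the Kac hypothesis in exactly the same two places, but yours is more economical and in fact more general: since $\s_\xi$ is always a well-defined element of $A$ and $\kappa$ is bounded on all of $A$ in the Kac case, you avoid the paper's technical caveat about finite-dimensional isotypic components entirely. Two small presentational suggestions: the left-invariance of $\s_\xi$ is cleanest done invariantly, via $(id\otimes\varphi )\Delta\s_\xi =\langle u(u_\varphi (\xi )),\xi\otimes 1\rangle =\s_\xi$ for $\varphi\in H_\xi$, which sidesteps the infinite-sum bookkeeping you worry about; and you should note explicitly that Lemma \ref{switch} extends from $\A$ to all of $A$ by continuity of $\kappa$ in the Kac case, since that is where you apply it to $\s_\xi$, which need not lie in $\A$.
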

\begin{proof}
To avoid technical difficulties, we prove only the case when the isotypic components of $u$ are finite dimensional.  For the general case, one must replace the action $\Delta :\A \rightarrow \A \otimes \A$ with the right regular representation $``\Delta$''$:L^2(A)\rightarrow L^2(A)\otimes A$.

Let $\H =\bigoplus_{\mu \in \hat{A}} \H_\mu$ be the decomposition of $\H$ into isotypic components.  It's easy to see that for $\varphi \in S(A)$, we have $u_\varphi (\xi )=\xi$ if and only if $u_\varphi (\xi^\mu )=\xi^\mu$ for each $\mu \in \hat{A}$, where $\xi^\mu$ is the projection of $\xi $ into $\H_\mu$. That is, we have $H_\xi = \bigcap_{\mu \in \hat{A}} H_{\xi^\mu}$ and thus $A^{H_\xi}=\bigcap_{\mu \in \hat{A}} A^{H_{\xi^\mu}}$.
 So it suffices to consider the case when $\H =\H_\mu$ for some $\mu \in \hat{A}$.  Since we are assuming that the isotypic components of $u$ are finite dimensional, we have in this case that $u$ is finite dimensional.

By the definition of $A^H$, we have $\psi \in H_\xi$ implies $_\psi\Delta a=a$ for all $a\in A^H$.  For the other direction first note that since $\H$ is finite dimensional, we can embed $\H$ as an $\A$-module into $\C^n\otimes \A$ for some $n\in \N$.  Indeed, if $(\mu , \K_\mu)$ is an irreducible representation of $A$, then one can choose a basis $\{ e_1,...,e_m\}$ of $\K_\mu$ such that $u(e_j)=\Sigma_l e_l\otimes u_{lj}^\mu$.  Then the map defined by $e_j\mapsto u_{1j}^\mu$ is an intertwining map from $\K_\mu$ to $\A$.

Thus we can assume that $(u,\H )$ is an $\A$-submodule of $\C^n \otimes \A$, and we can view $\xi \in \C^n\otimes \A$.  Write $\xi$ as a tuple $(a_1,...a_n)$ with $a_j \in \A$ for each $j$. Let $\psi \in S(A)$.  Then one has $(id\otimes \Delta_\psi ) \xi =\xi $ if and only if $\Delta_\psi ( a_j)=a_j$ for all $j$.  So we have shown that there exists a subset $\{ a_1,...,a_n\} \subseteq \A$ such that $\psi \in H_\xi$ if and only if $\Delta_\psi( a_j)=a_j$ for all $j$.  By the lemma, this is equivalent to $_{\psi \circ \kappa}\Delta (\kappa a_j)=\kappa a_j$ for all $j$.  By Proposition \ref{prop4} we have $H_\xi =H_\xi \circ \kappa$, so that these $\kappa a_j$ are in $A^{H_\xi}$.  Furthermore, by replacing $\psi$ with $\psi \circ \kappa$ we have just shown that if $\psi  \in S(A)$ fixes this finite set $\{ \kappa a_1,...,\kappa a_n \} \subseteq A^{H_\xi}$, then $\psi \circ \kappa \in H_\xi$. Again, we use that $H_\xi =H_\xi \circ \kappa$ to get that $\psi \in H_\xi$. This  proves the proposition.

\end{proof}

Finally, if $\Pi :A\rightarrow B$ is a quantum subgroup of $A$, then the Haar measure of $B$ gives a idempotent map from $A$ to $A^\Pi $.  Since the state stabilizer is generally not a quantum subgroup, there's no reason to expect that $H_\xi$ should have a Haar state; that is, an element $\mathbi{h} \in H_\xi $ such that $\mathbi{h} *\varphi =\varphi *\mathbi{h} =\mathbi{h}$ for all $\varphi \in H_\xi$.  Thus we have the question:

\begin{question}
When does the state stabilizer have a Haar state?
\end{question}

In the appendix we use the results of this paper to that for $A$ a theta deformation of an ordinary Lie group, the answer is often yes.  We also give evidence that for a particular representation of $A_u(n)$ the answer is probably not.

\end{subsection}

\end{section}

\begin{section}{Berezin Quantization for  Quantum Groups}

\begin{subsection}{The Berezin Symbol and its Adjoint}

For the remainder of the paper,  $A$ will be a coamenable compact quantum group of Kac-type,  and $u\in B(\H ) \otimes A $ will be an irreducible unitary representation of $A$.   We have an action of $A$ on $B (\H )$ by $\alpha (T)=u(T\otimes 1)u^*$.  It's easy to check that as representations of $A$ we have $\alpha \cong u\otimes  \overline{u}$.  In particular, $\alpha$ is unitary when $B(\H )$ is viewed as a Hilbert space with the Hilbert-Schmidt inner product.  Finally, to define the Berezin symbol we choose a unit vector $\xi \in \H$, and we let $P$ in $B(\H )$ be the corresponding rank $1$ projection.

 We can now define the Berezin symbol $\s :B(\H ) \rightarrow A$ by:
\[  \s_T = (tr \otimes id)[(P\otimes 1) \alpha (T)] \]
where $tr$ is the usual, unnormalized trace.  It is clear from the definition that $\s$ is unital and completely  positive.  We wish to show that the range of $\s$ is contained in $A^H$, where $H$ is the state stabilizer of $P$; that is, $H=\{ \phi \in S(A) \: | \: \alpha_\phi (P)=P \}$.  Here, we retain the shorthand notation $\alpha_\phi =(id\otimes \phi )\alpha $ from the last section. Notice that already we must use that $A$ is of Kac-type.

\begin{proposition}\label{prop5}
We have $\s :B(\H )\rightarrow A^H$.  The kernel of $\s$ is the orthogonal compliment of $\{ \alpha_\varphi(P) \: |\: \varphi \in A'\}$.  In particular, the restriction of $\s$ to the  the set $\{ \alpha_\varphi(P) \: |\: \varphi \in A'\}$ is injective.
\end{proposition}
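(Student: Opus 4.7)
My proof hinges on a duality identity
$$\varphi(\s_T) = tr(P\alpha_\varphi(T)) = tr(\alpha_{\varphi \circ \kappa}(P)T), \qquad \varphi \in A',\ T \in B(\H).$$
I would establish this by direct computation in matrix coefficients. Choose a basis of $\H$ with $\xi$ as its first vector (so $P = E_{11}$) and write $u = \sum E_{ij}\otimes u_{ij}$; a routine expansion gives $\alpha_\varphi(T)_{kl} = \sum_{jn}\varphi(u_{kj}u_{ln}^*)T_{jn}$, whence $tr(P\alpha_\varphi(T)) = \sum_{jn}\varphi(u_{1j}u_{1n}^*)T_{jn}$. On the other side, $\alpha_{\varphi\circ\kappa}(E_{11})_{kl} = (\varphi\circ\kappa)(u_{k1}u_{l1}^*)$, and the Kac identities $\kappa(u_{ij}) = u_{ji}^*$, $\kappa(u_{ij}^*) = u_{ji}$, together with the anti-multiplicativity of $\kappa$, reduce this to $\varphi(u_{1l}u_{1k}^*)$. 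A relabeling of indices matches the two expressions.

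With the identity in hand, everything else is quick. For $\s_T \in A^H$: given $\varphi\in H$, the coassociativity $(id\otimes\Delta)\alpha = (\alpha\otimes id)\alpha$ yields $\Delta\s_T = \sum \s_{T_i}\otimes a_i$ where $\alpha(T) = \sum T_i\otimes a_i$, so $(\varphi\otimes id)\Delta\s_T = \sum tr(P\alpha_\varphi(T_i))a_i$. The duality identity converts each trace to $tr(\alpha_{\varphi\circ\kappa}(P)T_i)$, and Proposition \ref{prop4} (where Kac-type enters a second time) tells us $\varphi\circ\kappa\in H$, so $\alpha_{\varphi\circ\kappa}(P) = P$, collapsing the sum back to $\s_T$.

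For the kernel: $\s_T = 0$ iff $\varphi(\s_T) = 0$ for every $\varphi\in A'$, iff $tr(\alpha_{\varphi\circ\kappa}(P)T) = 0$ for every $\varphi\in A'$. Since $\kappa^2 = id$, the map $\varphi\mapsto\varphi\circ\kappa$ is a bijection of $A'$, so the condition becomes $tr(\alpha_\psi(P)T) = 0$ for all $\psi\in A'$, i.e.\ $T$ is HS-orthogonal to the linear span $\{\alpha_\varphi(P):\varphi\in A'\}$. (The span is closed under $*$ via $\psi\mapsto\bar\psi$, so the orthogonality condition is symmetric in the standard $tr(X^*Y)$ sense.) Injectivity on $\{\alpha_\varphi(P):\varphi\in A'\}$ is then immediate: $B(\H)$ is finite dimensional (irreducibles of compact quantum groups are), so it splits as this subspace plus its HS-complement, and any vector lying in both is orthogonal to itself and hence zero. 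The only delicate point in the whole argument is the duality identity, where the interaction of $*$ with $\kappa$ must be tracked carefully; once this is settled, the three claims fall out mechanically.
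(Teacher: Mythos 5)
Your proof is correct and follows essentially the same route as the paper's: the identity $\varphi(\s_T)=tr(\alpha_{\varphi\circ\kappa}(P)T)$ is exactly the step the paper justifies by saying ``$\alpha$ is unitary for the Hilbert-Schmidt inner product,'' and both arguments then combine it with Proposition \ref{prop4} ($\varphi\in H\Rightarrow\varphi\circ\kappa\in H$) for the range claim and with bijectivity of $\varphi\mapsto\varphi\circ\kappa$ for the kernel claim. Your explicit matrix-coefficient verification of the duality identity and your remark that $\{\alpha_\psi(P)\}$ is $*$-closed (so the untwisted trace pairing really does detect HS-orthogonality) are slightly more careful than the paper's write-up, but they are refinements of the same argument rather than a different one.
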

\begin{proof}
Let $\phi \in S(A)$.  We compute:
\begin{eqnarray}
 _\phi\Delta (\s_T) & = & (\phi \otimes id)\Delta [(tr \otimes id)[(P\otimes 1) \alpha (T)]]  \nonumber \\
 & = & (tr\otimes \phi \otimes id)[(P\otimes 1\otimes 1)(id\otimes \Delta )(\alpha (T))] \nonumber \\
 & = &  (tr\otimes \phi \otimes id)[(P\otimes 1\otimes 1)(\alpha \otimes id )(\alpha (T))] \nonumber \\
 & = & (tr\otimes (\phi \circ \kappa ) \otimes id)[(\alpha (P)\otimes 1)(\alpha (T))_{13}] \nonumber \\
 & = & (tr\otimes id)((\alpha_{\kappa \phi}(P)\otimes 1)(\alpha (T))). \nonumber
\end{eqnarray}
To obtain the fourth line above we have used that $\alpha$ is unitary for the Hilbert-Schmidt inner product on $B(\H )$.  Now suppose that $\phi \in H$.  Then by Proposition \ref{prop4} we have $\kappa \phi \in H$, so the above calculation shows
$ _\phi\Delta (\s_T) =\s_T$.  That is, $\s_T \in A^H$.

For the second claim, let $T\in ker(\s )$.  Let $\mu \in A'$ be arbitrary.  The above calculation shows that $tr (\alpha_{\kappa \phi}(P) \alpha_\mu (T))=0$ for all $\phi \in S(A)$.  Since $S(A)$ spans $A'$, we see that $T\in ker(\s )$ if and only if the subspaces $\{ \alpha_\varphi(P) \: |\: \varphi \in A'\}$ and $\{ \alpha_\mu(T) \: |\: \mu \in A'\}$ of $B(\H )$ are orthogonal. Since $\alpha$ is unitary, we have $\langle \alpha_\varphi(P), \alpha_\mu(T)\rangle = \langle \alpha_{ (\mu \circ \kappa ) *\varphi}(P), T\rangle$, so that $T\in \ker (\s)$ iff $T$ is orthogonal to $\{ \alpha_\varphi(P) \: |\: \varphi \in A'\}$.
In particular, we see that  the restriction of $\s$ to $\{ \alpha_\varphi(P) \: |\: \varphi \in A'\}$ is injective.
\end{proof}

In light of this proposition we denote $\B =\{ \alpha_\varphi(P) \: |\: \varphi \in A'\} $.  Like $A^H$, $\B$ is not necessarily  a subalgebra of $B(\H )$, but is an operator system.  Eventually, we will show that under suitable conditions, if we replace $u$ by $u^{\otimes n}$ and look at the sequence of spaces $\B^n$, they will converge to $A^H$ in quantum Gromov-Hausdorff distance.

Consider the case when $G$ is an ordinary group.  If $\mu \in C(G)'$ is a Borel measure on $G$, then we have $\alpha_\mu (P)=\int_G \alpha_x(P)d\mu (x)$. So we see that in this case $\B$ is the cyclic subspace of $B(\H )$ generated by $P$.  
We show that this is the case in general.    The cyclic subspace generated by $P$ is clearly invariant under the operators $\alpha_\varphi$, so it is certainly no smaller than $\B$.  Thus we have only to  check that $\B$ is an $\alpha$-invariant subspace of $B(\H )$.

\begin{lemma}
We have $\alpha :\B \rightarrow \B \otimes A$.
\end{lemma}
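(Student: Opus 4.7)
The plan is to combine the coaction identity $(\alpha \otimes id)\alpha = (id \otimes \Delta)\alpha$ with the fact that $B(\H)$ is finite dimensional, which holds since $u$ is irreducible.

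First I would write $\alpha(P) = \sum_{i=1}^n T_i \otimes a_i$ as a finite sum in $B(\H) \otimes A$, chosen so that $\{a_i\}_{i=1}^n$ is linearly independent in $A$. Since $A'$ separates points of $A$, one can pick $\varphi_k \in A'$ with $\varphi_k(a_j) = \delta_{jk}$, which gives $T_k = \alpha_{\varphi_k}(P) \in \B$. On the other hand, $\alpha_\varphi(P) = \sum_i \varphi(a_i) T_i$ for arbitrary $\varphi \in A'$, so $\B = \text{span}\{T_1, \ldots, T_n\}$; in particular $\B$ is finite dimensional, and each $T_i$ lies in $\B$.

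For any $\varphi \in A'$, I would then slice the coaction identity applied to $P$ on the third leg by $\varphi$ to obtain
\[ \alpha(\alpha_\varphi(P)) = (id \otimes id \otimes \varphi)(\alpha \otimes id)\alpha(P) = (id \otimes id \otimes \varphi)(id \otimes \Delta)\alpha(P) = \sum_{i=1}^n T_i \otimes (\varphi * a_i) \in \B \otimes A, \]
using the notation $\varphi * a = (id \otimes \varphi)\Delta a$ from Section 2.3 and the fact that each $T_i \in \B$. By linearity of $\alpha$ together with the spanning description of $\B$, this yields $\alpha(\B) \subseteq \B \otimes A$.

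The argument is essentially formal; the one point that needs comment is the existence of the dual functionals $\varphi_k$, which is immediate from linear independence of $\{a_i\}$ and the fact that $A'$ separates points of the C$^*$-algebra $A$. I do not anticipate a genuine obstacle here—the real content of the identification $\B = \text{span}\{T_i\}$, which the above also establishes, is what makes the cyclic-subspace description following the lemma work out.
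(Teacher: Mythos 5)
Your proof is correct, but it follows a genuinely different route from the paper's. The paper argues by duality in the Hilbert--Schmidt inner product: it invokes the identification $\ker(\s)=\B^{\perp}$ from Proposition \ref{prop5} together with the explicit formula for $\alpha^{\dagger}$ from Corollary \ref{cor1}, and shows that any $T\perp\B$ pairs to zero against every element of $\alpha(\B)$ inside $B(\H)\otimes L^2(A)$, reducing at the last step to the orthogonality $\langle \alpha_\psi(T),\alpha_\varphi(P)\rangle=0$ already established there. Your argument is instead purely algebraic: you write $\alpha(P)=\sum_i T_i\otimes a_i$ with $\{a_i\}$ linearly independent, extract the $T_i$ as elements of $\B$ via dual functionals, and then slice the coassociativity identity $(\alpha\otimes id)\alpha=(id\otimes\Delta)\alpha$ to get $\alpha(\alpha_\varphi(P))=\sum_i T_i\otimes(\varphi * a_i)\in\B\otimes A$. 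What your approach buys is independence from the inner-product machinery (and hence from the Kac-type and tracial structure implicitly present in the paper's computation), plus the explicit byproduct $\B=\mathrm{span}\{T_1,\dots,T_n\}$, which makes the finite-dimensionality and the cyclic-subspace description of $\B$ transparent; it is essentially the standard proof that the subcomodule generated by an element of a finite-dimensional comodule is spanned by the legs of its coaction. What the paper's approach buys is economy within its own framework: it reuses Proposition \ref{prop5} and the $\alpha^{\dagger}$ formula, both of which are needed repeatedly afterwards anyway. Both proofs are sound.
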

\begin{proof}
By Proposition \ref{prop5}, we have $\ker (\s )=\B ^{\perp}$.  Let $T$ be orthogonal to $\B$, and let $b\in \A$.  It suffices to show that $(T\otimes b )$ is orthogonal to the image of $\alpha (\B )$ in $B(\H )\otimes L^2(A)$.
By Corollary \ref{cor1}, we have $\alpha^\dagger=(id\otimes \h \circ \mathsf{m} )([(id\otimes \kappa )\alpha ]\otimes id)$.  We compute: 
\begin{eqnarray}
\langle T\otimes b, \alpha (\alpha_\varphi(P))\rangle & = &
( \alpha^\dagger(T\otimes b) ,\alpha_\varphi (P) ) \nonumber \\
& = &
\langle (id\otimes \h )([(id\otimes \kappa )\alpha (T)](1\otimes b)), \alpha_\varphi(P) \rangle \nonumber \\ & = &
\langle (id\otimes \h )[(1\otimes \kappa^{-1}b)\alpha (T)], \alpha_\varphi(P) \rangle \nonumber \\ & = &
\langle \alpha_\psi (T), \alpha_\varphi(P) \rangle =0, \nonumber
\end{eqnarray}
where we have set  $\psi (a) =\h ((\kappa^{-1}b) a)$.

\end{proof}

We must compute the adjoint of the Berezin symbol.  We let $\h \in S(A)$ be the Haar state of $A$.  We obtain an inner product on $A^H$ by $\langle a, b \rangle =\h (b^*a)$. This inner product may be degenerate when $A$ is not coamenable.  Eventually we will restrict to this case, but there is no need to just yet. We use on $\B \subseteq B(\H )$ the inner product coming from the normalized trace: $\langle S,T\rangle =d_{\H}^{-1}tr(T^*S)$, where $d_{\H}$ denotes the dimension of $\H$.  We can now compute the adjoint of $\s$.

\begin{proposition}\label{prop6}
Consider the injective map $\s :\B \rightarrow A^H$.  It's adjoint $\bs :A^H\rightarrow \B$ is given by:
\begin{eqnarray*} \bs_a & = & d_\H \alpha^\dagger (P\otimes a) \\
& = & d_{\H}(id\otimes \h )([(id\otimes \kappa )\alpha (P)](1 \otimes a)). 
\end{eqnarray*}
Furthermore,  $\bs$ is unital, positive, and surjective.
\end{proposition}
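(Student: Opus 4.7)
The plan is to derive the adjoint formula directly from the defining property of the adjoint, then verify the three remaining properties in turn. For the formula, I would compute
$\langle \s_T, a\rangle_{A^H} = \h(a^*\s_T) = (tr\otimes\h)((P\otimes a^*)\alpha(T))$
and recognize the right-hand side as the Hilbert-space pairing $(\alpha(T), P\otimes a)$ on $B(\H)\otimes L^2(A)$, where $B(\H)$ carries the unnormalized Hilbert-Schmidt inner product. The defining property of the adjoint $\alpha^\dagger$ of $\alpha:B(\H)\to B(\H)\otimes L^2(A)$ then rewrites this as $(T, \alpha^\dagger(P\otimes a))_{HS} = d_\H\langle T, \alpha^\dagger(P\otimes a)\rangle_\B$, forcing $\bs_a = d_\H\alpha^\dagger(P\otimes a)$. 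The second expression is immediate from Corollary \ref{cor1} applied to the representation $\alpha$.

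For unitality, I would set $a = 1_A$ in the explicit formula. The Kac identity $\h\circ\kappa = \h$ collapses this to $\bs_{1_A} = d_\H\alpha_\h(P)$. Irreducibility of $u$ makes $\alpha = \mathrm{Ad}(u)$ an ergodic action of $A$ on $B(\H)$, so $\alpha_\h(P)\in\C I$; the Kac hypothesis then pins down the invariant state as the normalized trace, giving $\alpha_\h(P) = d_\H^{-1}I$ and $\bs_{1_A} = I$.

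Positivity is the main technical point, because neither $(id\otimes\kappa)\alpha(P)$ nor $[(id\otimes\kappa)\alpha(P)](1\otimes a)$ is manifestly positive in $B(\H)\otimes A$: the map $id\otimes\kappa$ is only anti-multiplicative in the second leg and so does not preserve positivity on the full tensor product. My plan is to sidestep this by rewriting $\bs_a = d_\H\,\alpha_{\psi_a}(P)$, where $\psi_a\in A'$ is defined by $\psi_a(b) = \h(\kappa(b)\,a)$. For $a\geq 0$, the Kac identity $\kappa(b^*b) = \kappa(b)\kappa(b)^*$ combined with the tracial property of $\h$ yields $\psi_a(b^*b) = \h(\kappa(b)\kappa(b)^*a) = \h(\kappa(b)^*a\kappa(b)) \geq 0$, so $\psi_a$ is a positive functional on $A$. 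Since $\alpha(P)\geq 0$ in $B(\H)\otimes A$, it follows that $\bs_a = d_\H(id\otimes\psi_a)\alpha(P)\geq 0$.

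For surjectivity, Proposition \ref{prop5} already supplies injectivity of $\s$. Coamenability of $A$ makes $\h$ faithful on $A$, hence on $A^H$, so $\langle a,b\rangle = \h(b^*a)$ is a genuine (non-degenerate) inner product. The adjoint relation then identifies $(\mathrm{ran}\,\bs)^\perp\subseteq\B$ with $\ker(\s) = \{0\}$. Since $u$ is irreducible, $\H$ is finite-dimensional and so is $\B\subseteq B(\H)$; in a finite-dimensional inner product space, a subspace with trivial orthogonal complement must be the whole space, so $\bs$ is surjective. The one bookkeeping subtlety throughout is the factor of $d_\H$, which arises from the switch between the unnormalized Hilbert-Schmidt inner product used in the computation of $\alpha^\dagger$ and the normalized one used on $\B$.
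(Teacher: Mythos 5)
Your proposal is correct, and for the adjoint formula, unitality, and surjectivity it follows essentially the paper's own route: the formula comes from unwinding $\langle \bs_a,T\rangle_\B=\h(\s_T^*a)$ against the Hilbert--Schmidt pairing (with the same $d_\H$ bookkeeping), unitality is Woronowicz's second orthogonality relation with $F=I$ in the Kac case (your ``ergodicity plus Kac pins down the invariant state as the normalized trace'' is exactly that fact), and surjectivity is injectivity of $\s$ plus finite dimensionality. Where you genuinely diverge is positivity. The paper views $(id\otimes\kappa)\alpha(P)$ as a positive element of $B(\H)\otimes A^{opp}$ (using that $\kappa$ is a $*$-homomorphism into the opposite algebra when $\kappa^2=id$), notes $\h$ is still a trace on $A^{opp}$, and conjugates by $(1\otimes a^*)$ under the trace. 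You instead absorb everything into the slice functional: $\bs_a=d_\H\,\alpha_{\psi_a}(P)$ with $\psi_a(b)=\h(\kappa(b)a)$, and check $\psi_a(b^*b)=\h(\kappa(b)^*a\kappa(b))\ge 0$ using $*\circ\kappa=\kappa\circ *$ and traciality of $\h$; positivity of $\bs_a$ then follows from positivity of slice maps applied to $\alpha(P)\ge 0$. The two arguments consume exactly the same Kac-type inputs, but yours stays inside $B(\H)\otimes A$ and has the pleasant side effect of exhibiting $\bs_a$ as an element of $\B=\{\alpha_\varphi(P)\mid\varphi\in A'\}$ on the nose, so the claim that $\bs$ maps into $\B$ (which the paper leaves implicit) comes for free; it is also consonant with how the paper later proves that $\hp$ is a state in Lemma \ref{lem6}.
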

\begin{proof}
Let $a\in A^H$ and $T\in \B$.  We compute
\begin{eqnarray}
\langle \bs_a, T\rangle_\B & = & \langle a, \s_T \rangle_{L^2(A)} = \h (\s_T^*a)          \nonumber \\ & = & 
       (tr\otimes \h )\{ (P\otimes 1)\alpha (T)^*(1\otimes a)\} \nonumber \\
& = & d_{\H}  \langle P\otimes a, \alpha(T) \rangle_{\B \otimes L^2(A)}     \nonumber \\
&  = &      \langle d_{\H}\alpha^\dagger (P\otimes a), T\rangle_\B . \nonumber
\end{eqnarray}
This proves the first formula for $\bs$.  The second formula follows from Corollary \ref{cor1}.  Since $\s :\B \rightarrow A^H$ is injective, $\bs : A^H\rightarrow \B$ is surjective.  Since $u\in B(\H )\otimes \A$ was assumed irreducible, by Theorem 5.4 of \cite{Woro1} we have the second orthogonality relation for $\alpha$; that is, for any $T\in B(\H )$ we have $(id\otimes \h )\alpha (T)= Tr(FT)/Tr(F)\; 1_{B(\H )}$, for some positive operator $F\in B(\H )$. Since $A$ is of Kac-type, Theorem 5.6 of \cite{Woro1} implies that $F=I_{B(\H )}$.  In particular, we have $\bs_{1_A}= d_{\H}(id\otimes \h )\alpha (P)= tr(P)\; 1_{B(\H )}=1_{B(\H )} $, so that $\bs$ is unital.

To prove positivity, note that  $\kappa^2=id$ implies that $\kappa$ is a $*$-homomorphism from $A$ to $A^{opp}$, so is completely positive.  Thus $(id\otimes \kappa )\alpha (P)$ is positive, viewed as an element of $B(\H )\otimes A^{opp}$.  (Note, it may not be a positive element when viewed in $B(\H )\otimes A$). By Theorem 5.6 of \cite{Woro1}, the condition $\kappa^2=id$ is equivalent to assuming that the Haar state is a trace.  Then $\h$ is also a tracial state on $A^{opp}$.  Let $\cdot_{opp}$ denote the product on $A^{opp}$.
Finally, since $\h$ is a state it is completely positive, and we have
\begin{eqnarray}
\bs_{aa^*} & = & d_{\H}(id\otimes \h )([(id\otimes \kappa )\alpha (P)](1
          \otimes aa^*)) \nonumber \\
       & = &  d_{\H}(id\otimes \h )((1 \otimes a^*\cdot_{opp}a)
        \cdot_{opp} [(id\otimes \kappa )\alpha (P)]) \nonumber \\
       & = &  d_{\H}(id\otimes \h )((1 \otimes a)
        \cdot_{opp} [(id\otimes \kappa )\alpha (P)]
        \cdot_{opp} (1\otimes a^*)) \geq 0 \nonumber
\end{eqnarray}
\end{proof}

\begin{remark}
The argument above is easily modified to give that $\bs$ is a completely positive map when viewed from $A^{opp}$ to $\B$.  This does not imply that $\bs$ is a completely positive map on $A$, since the complete order isomorphism between $A$ and $A^{opp}$ is $\kappa$, rather than the identity.   If $\bs :A\rightarrow \B$ is completely positive also, one would expect that $id :A\rightarrow A^{opp}$ would be complete order isomorphism.  A unital $2$-order isomorphism between unital C$^*$-algebras is a $*$-isomorphism \cite{Kerr}. This suggests that $\bs$ is completely positive only when $A$ is commutative.
\end{remark}

We now have a positive, unital, injective map $\s :\B \rightarrow A^H$ and its positive, unital, surjective adjoint $\bs :A^H\rightarrow \B$.  We also know that we have actions $\Delta :A^H\rightarrow A^H \otimes A$ and $\alpha :\B \rightarrow \B \otimes A$.  This brings us to the final fundamental property of the Berezin Symbol and the Berezin Adjoint; they are equivariant.

\begin{proposition}\label{invariant}
Consider the Berezin symbol and its adjoint  $\s :\B \rightarrow A^H$ and $\bs :A^H\rightarrow \B$.  They are equivariant for actions $\alpha$ and $\Delta$ of $A$ on $B(\H )$ and $A$.  Explicitly, we have
\begin{eqnarray}
(\s \otimes id )\circ \alpha = \Delta \circ \s & and \nonumber \\
(\bs \otimes id)\circ \Delta  =   \alpha \circ \bs &     \nonumber
\end{eqnarray}

\end{proposition}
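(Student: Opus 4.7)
The plan is to verify each of the two equivariance statements by a direct calculation using the coassociativity of $\alpha$ and the formula for $\alpha^\dagger$ developed in Section 2.3.

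For the first identity $(\s \otimes id)\circ \alpha = \Delta \circ \s$, I would expand the left-hand side using the definition $\s_T = (tr\otimes id)[(P\otimes 1)\alpha(T)]$, which gives
\[ (\s\otimes id)\alpha(T) = (tr\otimes id\otimes id)\bigl[(P\otimes 1\otimes 1)(\alpha\otimes id)\alpha(T)\bigr]. \]
The coassociativity of the action, $(\alpha\otimes id)\alpha = (id\otimes \Delta)\alpha$, then converts this to $(tr\otimes \Delta)[(P\otimes 1)\alpha(T)] = \Delta\s_T$, since $\Delta$ commutes with the slot where it acts. This is essentially the same calculation that shows any ``slice map'' of a coaction is equivariant, and I expect no obstacle.

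For the second identity $(\bs\otimes id)\circ \Delta = \alpha\circ \bs$, I would work from the compact formula $\bs_a = d_\H\, \alpha^\dagger(P\otimes a)$ proved in Proposition~\ref{prop6}. Applying $\alpha$ to both sides yields
\[ \alpha(\bs_a) = d_\H\,(\alpha\circ\alpha^\dagger)(P\otimes a), \]
and Lemma~\ref{lem5.2}, applied to the representation $\alpha$ in place of $u$, gives $\alpha\circ\alpha^\dagger = (\alpha^\dagger\otimes id)(id\otimes \Delta)$ on the dense subspace $B(\H)\otimes \A$. Plugging in,
\[ \alpha(\bs_a) = d_\H\,(\alpha^\dagger\otimes id)(P\otimes \Delta a) = (\bs\otimes id)(\Delta a), \]
where in the last step I distribute $\alpha^\dagger$ across the Sweedler components of $\Delta a$ and recognize the definition of $\bs$. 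A short density/continuity remark handles passage from $a\in\A^H$ to general $a\in A^H$.

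The only mildly delicate point is the density step in the second identity, since Lemma~\ref{lem5.2} is stated on the algebraic dense subspace $B(\H)\otimes \A$; however, both sides of the desired equality are continuous positive unital maps, and the Peter--Weyl decomposition ensures that the polynomial part $\A^H = A^H\cap \A$ is dense in $A^H$, so the identity extends by continuity. Aside from this bookkeeping, both equivariance properties reduce to coassociativity and the unitarity identities for $u$, so I do not anticipate any substantive obstacle.
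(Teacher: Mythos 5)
Your proof of the first identity is the paper's computation verbatim: expand $\s$, apply coassociativity $(\alpha\otimes id)\alpha=(id\otimes\Delta)\alpha$, and pull $\Delta$ out of the trace slot. For the second identity you take a genuinely different (and cleaner) route. The paper does not apply Lemma \ref{lem5.2} here; instead it dualizes the first identity to obtain $\alpha^\dagger\circ(\bs\otimes id)=\bs\circ\Delta^\dagger$ on the image of $\A$ in $L^2(A)$, expands both sides with the explicit formula $\alpha^\dagger=(id\otimes\h\circ\mathsf{m})([(id\otimes\kappa)\alpha]\otimes id)$, and then strips off the auxiliary test element $b$ by invoking nondegeneracy of the Haar state on $\A$. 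You instead apply $\alpha$ directly to $\bs_a=d_\H\,\alpha^\dagger(P\otimes a)$ and use Lemma \ref{lem5.2} in the form $\alpha\circ\alpha^\dagger=(\alpha^\dagger\otimes id)(id\otimes\Delta)$ --- which is legitimate since $\alpha$ is unitary for the Hilbert--Schmidt inner product on $B(\H)$ --- and then identify $d_\H(\alpha^\dagger\otimes id)(P\otimes\Delta a)$ with $(\bs\otimes id)\Delta a$ by distributing $\alpha^\dagger$ over the legs of $\Delta a$. This is correct, and it is in fact exactly the computation the paper carries out later, in Section 5.2, to motivate the general Berezin adjoint $\bs_x=d_\H E(P\otimes x)$; your version buys a shorter argument that avoids the nondegeneracy-of-$\h$ step, at no real cost. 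Your closing density remark is also sound: $\bs\otimes id$ is automatically bounded because $\B$ is finite dimensional, and $A^H\cap\A$ is dense in $A^H$ since the isotypic components of the coideal $A^H$ are contained in those of $A$, hence in $\A$ (alternatively, one can, as the paper does, first prove the identity for all $a\in\A$ and then extend to all of $A$).
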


\begin{proof}
We compute:
\begin{eqnarray} (\s \otimes id)\alpha (T) &  = & (tr\otimes id\otimes id )[(P\otimes 1\otimes 1)(\alpha \otimes id)\alpha (T)] \nonumber \\
&  = &
(tr\otimes id\otimes id )[(P\otimes 1\otimes 1)(id \otimes \Delta )\alpha (T)] =
\Delta (\s_T). \nonumber
\end{eqnarray}
Mimicking this calculation for $\bs$ directly is difficult, but if we work on the image of $\A$ in $L^2(A)$,  we can simply take the adjoint of the first result.  Explicitly, we have
\[ \alpha^\dagger\circ (\bs \otimes id) = \bs \circ \Delta^\dagger .\]
By Corollary \ref{cor1}, we have $\alpha^\dagger=(id\otimes \h \circ \mathsf{m} )([(id\otimes \kappa )\alpha ]\otimes id)$, and similarly for $\Delta^\dagger$. Let $a, b\in \A$. We have 
\begin{eqnarray*}
\alpha^\dagger (\bs \otimes id )(a\otimes b) & = &
(id \otimes \h )([(id\otimes \kappa )\alpha (\bs_a)] (1 \otimes b)) \\
& = &
(id \otimes \h )[(1\otimes \kappa^{-1}b)\alpha (\bs_a)].
\end{eqnarray*}
 Similarly, 
\begin{eqnarray*}
(\bs \circ \Delta^\dagger)(a\otimes b) & = &
\bs [(id\otimes \h )([(id\otimes \kappa )\Delta a](1\otimes b))] \\ & = &
\bs [(id\otimes \h )(1\otimes \kappa^{-1}b) \Delta a]  \\ & = &
(id\otimes \h )[(1\otimes \kappa^{-1}b )(\bs \otimes id )\Delta a]. 
\end{eqnarray*}
  Since $\h$ is nondegenerate on $\A$, this forces $(\bs \otimes id )\Delta a = \alpha (\bs_a)$ for any $a\in \A$.  Since both sides are continuous and $\A$ is dense in $A$, we have that $(\bs \otimes id )\Delta a = \alpha (\bs_a)$ for any $a\in A$.
\end{proof}

\begin{remark}
If one compares the Berezin symbol and Berezin adjoint defined in this section to the ones for ordinary Lie groups in Section 2.6, they will notice that the two differ by a coinverse.  The classical Berezin symbol and adjoint intertwine the action of $G$ on $B(\H )$ by conjugation with the action of $G$ on $C(G/H)$ by left translation. 

 For a quantum group $A$, the natural left action of $A$ on a homogeneous space is the action of $A$ on $A^H$ by right translations, because the left action 
of $A$ on $^H\!A$ by left translations requires one to use the coinverse, which generally does not extend to all of $A$.  For $A$ of Kac-type, the coinverse does extend to all of $A$, but there really is no reason to insist on using left translations, as it just complicates formulas and obscures the overall picture.

This switch between using left and right translations on the corresponding homogeneous spaces is the reason that our Berezin symbols differ from the classical ones by a coinverse.
\end{remark}

\end{subsection}

\begin{subsection}{The Berezin Transform and Berezin Quantization}

First, we compute the composition $(\s \circ \bs)(a)$.  The brute force computation of this composition is very messy.  We will give a much simplier version using the result of Lemma \ref{lem5.2}.  We find that there is an explicit linear functional $\hp \in S(A) $ such that $(\s \circ \bs)(a)= \: _\hp \!\Delta (a)= (\hp \otimes id )\Delta a$.  We make the definition first.

\begin{definition}\label{def5}
Let $\hp \in A'$ be the linear functional dual to the element $d_{\H}\s_P$.  That is $\hp (a) = d_{\H}\h (\s_P a)$.
\end{definition}

\begin{lemma}\label{lem6}
$\hp $ is a state.
\end{lemma}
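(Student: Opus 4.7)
The plan is to verify the two defining properties of a state for $\hp$: positivity and unitality.

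For unitality, I would compute $\hp(1_A) = d_\H \h(\s_P)$ directly. By Fubini we have $\h(\s_P) = (tr\otimes \h)[(P\otimes 1)\alpha(P)] = tr\bigl(P \cdot (id\otimes \h)\alpha(P)\bigr)$. In the proof of Proposition \ref{prop6} it was already observed that, because $A$ is of Kac-type and $u$ is irreducible, the second orthogonality relation of \cite{Woro1} gives $(id\otimes \h)\alpha(P) = d_\H^{-1}\,1_{B(\H)}$. Substituting this yields $\h(\s_P) = tr(P)/d_\H = 1/d_\H$, so $\hp(1_A) = 1$, as required.

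For positivity, I would argue as follows. Since $\alpha$ is a $*$-homomorphism and $P\ge 0$, the element $(P\otimes 1)\alpha(P)(P\otimes 1)$ is a positive element of $B(\H)\otimes A$ equal to $(P\otimes 1)\alpha(P)$ after using idempotency of $P$ on one side and the identification via $tr$; more cleanly, $\s_P = (tr\otimes id)[(P\otimes 1)\alpha(P)(P\otimes 1)]$ because $P^2=P$, and $tr\otimes id$ is completely positive on $B(\H)\otimes A$. Hence $\s_P$ is a positive element of $A$. Now since $A$ is of Kac-type, the Haar state $\h$ is tracial. For any $a\in A$, writing $\s_P = b^*b$ with $b\in A$ and using the tracial property of $\h$, we get
\[
\hp(a^*a) = d_\H\,\h(\s_P a^*a) = d_\H\,\h(a^*a\,b^*b) = d_\H\,\h\bigl((ba^*)^*(ba^*)\bigr)\ge 0,
\]
which establishes positivity.

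The main (very mild) obstacle is keeping track of the Kac-type hypothesis in two distinct roles: once to apply the second orthogonality relation in the form $F = I_{B(\H)}$ for the unitality calculation, and once to obtain traciality of $\h$ for the positivity calculation. Both have already been recorded in the paper (the former explicitly in the proof of Proposition \ref{prop6}, the latter in Section 2.3), so the verification is short.
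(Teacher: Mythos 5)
Your proof is correct and follows essentially the same route as the paper: unitality via the second orthogonality relation (with $F=I_{B(\H )}$ in the Kac case), and positivity by writing the positive element $\s_P$ as $b^*b$ and invoking traciality of $\h$. Your extra remark that $\s_P=(tr\otimes id)[(P\otimes 1)\alpha (P)(P\otimes 1)]$ is a slightly more self-contained justification of the positivity of $\s_P$ than the paper's appeal to the positivity of $\s$, but the argument is the same in substance.
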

\begin{proof}
By the second orthogonality relation, $\hp (1)=d_{\H} (tr\otimes \h )((P\otimes 1)\alpha (P))= tr(P \; tr(P)1_{B(\H )})=1$.  Also, since $\s$ is positive, $d_{\H} \s_P=bb^*$, some $b\in A$.  (Actually, an simple computation  shows $b=\sqrt{d_{\H}}\langle u(\xi ), \xi \otimes 1\rangle$ works.)
Thus $\hp (a^*a)= \h (bb^*a^*a)=\h  (b^*a^*ab)\geq 0$, so $\hp $ is a state.
\end{proof}

\begin{proposition}\label{prop7}
The composition $\s \circ \bs :A^H\rightarrow A^H$ is given by
$(\s \circ \bs )(a) =\: _\hp\!\Delta a$.
\end{proposition}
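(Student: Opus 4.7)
The plan is to exploit the equivariance established in Proposition \ref{invariant} together with the adjointness between $\s$ and $\bs$, avoiding any direct manipulation of $\kappa$ and the explicit formula for $\bs$.

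First I would apply equivariance of $\bs$ to rewrite
\[ \alpha(\bs_a) = (\bs \otimes id)\Delta a. \]
Substituting into the defining formula for $\s$, one gets
\[ (\s \circ \bs)(a) = (tr \otimes id)\bigl[(P \otimes 1)(\bs \otimes id)\Delta a\bigr], \]
which in Sweedler notation $\Delta a = \sum a_{(1)} \otimes a_{(2)}$ becomes
\[ (\s \circ \bs)(a) = \sum tr(P\,\bs_{a_{(1)}})\, a_{(2)}. \]
So everything reduces to computing the functional $a \mapsto tr(P\,\bs_a)$ on $A^H$ and recognizing it as $\hp$.

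For this identification I would use the adjointness relation $\langle \bs_a, T\rangle_\B = \langle a, \s_T\rangle_{L^2(A)}$ from the proof of Proposition \ref{prop6}, specialized to $T = P$. Since $P$ is self-adjoint and $\s$ is positive (hence $*$-preserving), $\s_P$ is self-adjoint, so
\[ d_\H^{-1}\,tr(P\,\bs_a) = \langle \bs_a, P\rangle_\B = \langle a, \s_P\rangle_{L^2(A)} = \h(\s_P a), \]
which yields $tr(P\,\bs_a) = d_\H\,\h(\s_P a) = \hp(a)$ by Definition \ref{def5}. Plugging back in gives
\[ (\s \circ \bs)(a) = \sum \hp(a_{(1)})\,a_{(2)} = (\hp \otimes id)\Delta a = \:_\hp\Delta a, \]
as desired.

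There is no real obstacle here: both ingredients (equivariance and adjointness) are already proved, and the only small observation required is that $P=P^*$ forces $\s_P = \s_P^*$, so the adjointness pairing gives $\hp$ rather than some conjugated variant. The proof is therefore essentially a two-line computation once Proposition \ref{invariant} is invoked, which is exactly why the author's remark before the proposition — that the brute-force computation using the explicit formula for $\bs$ is messy — can be bypassed entirely.
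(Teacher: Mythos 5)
Your proof is correct, and it does take a genuinely different route from the paper's. The paper works directly from the explicit formula $\bs_a = d_\H\,\alpha^\dagger(P\otimes a)$: it invokes Lemma \ref{lem5.2} to rewrite $\alpha\circ\alpha^\dagger$ as $(\alpha^\dagger\otimes id)(id\otimes\Delta)$, then pairs against an arbitrary $b\in A$, uses that $\alpha^\dagger$ is the Hilbert-space adjoint of $\alpha$, and appeals to nondegeneracy of the Haar state to arrive at $d_\H(\h\otimes id)((\s_P\otimes 1)\Delta a)$. You instead package the same machinery into the two statements already proved before Proposition \ref{prop7}: the equivariance $\alpha(\bs_a)=(\bs\otimes id)\Delta a$ of Proposition \ref{invariant}, and the pairing $\langle\bs_a,T\rangle_\B=\langle a,\s_T\rangle_{L^2(A)}$ from the proof of Proposition \ref{prop6}, specialized to $T=P$ (your observation that $P=P^*$ forces $\s_P=\s_P^*$, so the pairing gives $\h(\s_P a)$ rather than $\h(\s_P^* a)$, is the right small check). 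One point worth making explicit: your identification of the functional $c\mapsto tr(P\,\bs_c)$ with $\hp$ is carried out only for $c\in A^H$, since that is where $\bs$ and the adjointness relation are defined; this suffices because $\Delta a\in A^H\otimes A$ by Proposition \ref{lem5}, and $\hp$ as defined on all of $A$ restricts consistently. Your route is shorter, avoids the coinverse and the nondegeneracy argument entirely, and is a clean illustration of why the equivariance was worth proving first. What the paper's more pedestrian computation buys is portability: in Section 5 the same statement is needed for $\s\circ\bs:N^H\rightarrow N^H$ (Proposition \ref{transform}), where $N^H$ carries no inner product and $\bs$ is not the adjoint of $\s$ in any sense, so the pairing you rely on is unavailable there and the argument via $\alpha^\dagger$ and Lemma \ref{lem5.2} is the one that generalizes.
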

\begin{proof}
Let $a\in A^H$.  We compute
\begin{eqnarray*}
(\s \circ \bs )(a) & = & d_\H \s (\alpha^\dagger (P\otimes a)) \\
	& = & d_\H (tr\otimes id)((P\otimes 1)[\alpha \circ \alpha^\dagger (P\otimes a)]) \\
	& = & d_\H (tr\otimes id)((P\otimes 1)(\alpha^\dagger \otimes id)(P\otimes \Delta a)),   
\end{eqnarray*}
where we have used Lemma \ref{lem5.2} in the last line.

We would like to use that $\alpha^\dagger :\B \otimes L^2(A)\rightarrow \B$ really is the adjoint of $\alpha$, so we take an arbitrary element $b\in A$ and we apply $\h (b^* \cdot)$ to both sides.  We obtain
\begin{eqnarray*}
\langle (\s \circ \bs )(a) ,b\rangle_{L^2(A)} & =  & d_\H
	\langle (\alpha^\dagger \otimes id)(P\otimes \Delta a), P\otimes b
	\rangle_{\B \otimes L^2(A)} \\
& = &	d_\H \langle P\otimes \Delta a, \alpha (P)\otimes b 
	\rangle_{\B \otimes L^2(A)\otimes L^2(A)} \\
& = & 	d_\H (tr\otimes \h \otimes \h )((\alpha (P)\otimes b^*)(P\otimes \Delta a)).  
\end{eqnarray*}
Because the Haar state is nondegenerate, we must have 
\begin{eqnarray*}
(\s \circ \bs )(a) & = & d_\H (tr\otimes \h \otimes id)((\alpha (P)\otimes 1)
	(P\otimes \Delta a)) \\
& = & d_\H (\h \otimes id)((\s_P\otimes 1)\Delta a) =  \: _\hp\!\Delta (a) .
\end{eqnarray*}
\end{proof}

Thus far, we have described how, given a Kac-type compact quantum group $A$, a distinguished unitary irreducible representation $u\in B(\H )\otimes \A $, and a distinguished vector $\xi \in \H$, we can construct the Berezin symbol $\s :\B \rightarrow A^H$ and its adjoint $\bs :A^H\rightarrow \B$.  To construct the Berezin Quantization of $A^H$, we must find a sequence of irreducible representations $(u^n, \H^n)$ with corresponding rank one projections $P^n$ that all have the same state stabilizer $H$.

In the case of compact semisimple Lie groups, one takes the vector $\xi$ above to be the highest weight vector for the representation $u$.  One then looks at the subrepresentation of $(u^{\otimes n},\H^{\otimes n} )$ generated by $\xi^{\otimes n}$.  This subrepresentation is denoted by $(u^n,\H^n )$.  It is known that $(u^n, \H^n)$  is irreducible and that $\xi^n$ is it's highest weight vector.  It is also true in this case that $\B^n$ is all of $B(\H^n)$, and that $A^H=C(G/H)$ is a subalgebra of $A=C(G)$.  So one can define a collection  of $C^*$-algebras $A_\hbar$ for $\hbar \in \{ 0 \}\cup 1/\N$ by $A_0=C(G/H)$, and $A_\hbar =B(\H^{1/\hbar })$, which can be shown to give  a strict quantization of $C(G/H)$.

In order to get a strict quantization  of our operator system $A^H$, we must make one final definition.

\begin{definition}\label{def6}
Let $(u,\H )$ be an irreducible unitary representation of a compact quantum group $A$. Let $\xi \in \H$. We call $\xi$ primitive-like if the cyclic subrepresentation of the representation $u^{\otimes n}\in B(\H^{\otimes n})\otimes \A$ generated by $\xi^{\otimes n}$ is irreducible for each $n\in \N$.
\end{definition}

Suppose our irreducible unitary representation $(u,\H )$ has a primitive-like vector $\xi$.  We let $(u^n, \H^n)$ be the irreducible cyclic subrepresentation of $\H^{\otimes n}$ generated by $\xi^{\otimes n}$, and we write $\alpha^n$ for the corresponding action of $A$ on $B(\H^n)$.  By Proposition \ref{nthpower} we have that the state stabilizer of $P$ also stabilizes $P^n$ for all $n\in \N$.  Hence we have a sequence of maps $\s^n:\B^n\rightarrow A^H$ and $\bs^n:A^H\rightarrow \B^n$.    We now will show that there is a strict quantization of our operator system $A^H$ such that $A_\hbar$ for $\hbar \in \{ 0 \}\cup 1/\N$ given  by $A_0=A^H$, and $A_\hbar =\B^{1/\hbar }$.  In the next subsection we will reinterpret this result in terms of Lip-norms.

We have the sequence of Berezin Transforms $(\s^n\circ \bs^n):A^H\rightarrow A^H$.  By Proposition \ref{prop7} we have that $(\s^n\circ \bs^n)(a)=\: _\hpn\!\Delta (a)$, where $\hpn \in S(A)$
is  given by $\hpn (a)=d_{\H^n}\h (\s_{P^n}^n a)$.  By the proof of Proposition \ref{nthpower}, we also have that $\s_{P^n}^n=(\s_P)^n$.
We will use this observation to show that the sequence $\{ \hpn \}_{n\in \N}$ converges weak$^*$ to the counit $\varepsilon$ on $A^H$.  Among other things, this will imply that the sequence of operators $(\s^n \circ \bs^n ):A^H\rightarrow A^H$ converges strongly to the identity operator on $A^H$.  This explains why we had to use the state stabilizers rather than quantum group stabilizers.  Indeed, if one extends the definition of $\bs$ to $A$ by the same formula, $\s \circ \bs$ maps all of $A$ into $A^H$, so the sequence of maps $(\s^n \circ \bs^n )$ cannot converge to the identity on any subspace of $A$ strictly larger than $A^H$.  In particular, if $\Pi :A\rightarrow B$ is the quantum group stabilizer of $P$, then $(\s^n \circ \bs^n )$ converges to the identity on $A^B$ only when $A^B$ coincides with $A^H$.

We have $\hpn (a)=d_{\H^n} \h (\s_{P^n}^n a)=d_{\H^n} \h ((\s_P)^n a)$.  We have seen that $\hpn$ is a state, so the factor $  d_{\H^n}$ can be thought of as just a normalization to make $\hpn$ unital.  For computational purposes then, we use the expression
 $\hpn (a)= \h ((\s_P)^n a)/\h ((\s_P)^n)$.   We denote  $L^2(A,\h )$ by $\K $, and the corresponding cyclic vector by $\Omega$.  In terms of the inner product on $\K$ we have $\h (a)=\langle a\Omega ,\Omega \rangle$ and
\[
\hpn (a)= \frac{\langle a \Omega, (\s_P)^n \Omega \rangle}{\langle  \Omega, (\s_P)^n \Omega \rangle}
\]
From these expressions it's clear that $\h$ and $\hpn$ extend by weak continuity to $W^*(A) \subseteq B(\K )$.  We use the same notations $\h$ and $\hpn$ for these extensions. For $\gamma >0$, we denote by $\chi_\gamma$ the element of $W^*(\s_P)$ corresponding to the characteristic function of the interval $[1-\gamma, 1]$.  Our first task is to estimate $\hpn ((1-\chi_\gamma)a)$ in terms of $n$ for fixed $\gamma$. For the first time, we use our assumption that $A$ is coamenable.

\begin{lemma}\label{lem8}
For any $a\in A$, any $\gamma >0$, and any $n\in \N$ we have
\[ |\hpn ((1-\chi_\gamma)a)| \leq \frac{\| a\| }{\h (\chi_{\gamma /2})}
       \left( \frac{1-\gamma}{1-\gamma /2}\right)^n
\]
In particular, for $\gamma$ fixed we have $\hpn ((1-\chi_\gamma)a)$ goes to zero for any $a\in A$ as $n\longrightarrow \infty$.
\end{lemma}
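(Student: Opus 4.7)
The plan is to rewrite the ratio defining $\hpn$ explicitly in terms of the Haar state and the positive operator $\s_P$, then estimate the numerator from above and the denominator from below using spectral-calculus bounds inside the abelian von Neumann subalgebra $W^*(\s_P)\subseteq W^*(A)$.

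First, I would recall from the text preceding the lemma that $\s_{P^n}^n=(\s_P)^n$ and that
\[ \hpn((1-\chi_\gamma)a)=\frac{\h\bigl((\s_P)^n(1-\chi_\gamma)a\bigr)}{\h\bigl((\s_P)^n\bigr)}, \]
so I must control the numerator from above and the denominator from below.  The element $\s_P$ is positive with $\|\s_P\|\leq 1$ (positivity comes from $d_\H\s_P=bb^*$ in Lemma \ref{lem6}; the spectral condition $1\in\mathrm{spec}(\s_P)$ follows from $\varepsilon(\s_P)=1$, which is immediate from $(id\otimes\varepsilon)\alpha(P)=P$), and $\chi_\gamma$ commutes with $\s_P$.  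Hence in $W^*(\s_P)$ we have the operator inequalities
\[ (\s_P)^n(1-\chi_\gamma)\leq (1-\gamma)^n\cdot 1\qquad\text{and}\qquad (\s_P)^n\geq (1-\gamma/2)^n\chi_{\gamma/2}. \]

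Next I would establish the auxiliary Cauchy--Schwarz bound that for any positive $b\in A$ and any $a\in A$ one has $|\h(ba)|\leq \|a\|\h(b)$.  This follows by writing $b=b^{1/2}b^{1/2}$ and applying the Cauchy--Schwarz inequality in the Haar GNS inner product:
\[ |\h(ba)|^2=|\h(b^{1/2}\cdot b^{1/2}a)|^2\leq \h(b)\,\h(b^{1/2}aa^*b^{1/2})\leq \h(b)\,\|a\|^2\h(b), \]
where the last step uses $b^{1/2}aa^*b^{1/2}\leq \|aa^*\|\,b$.  Applying this with $b=(\s_P)^n(1-\chi_\gamma)\geq 0$ and combining with the two spectral estimates above yields at once
\[ |\hpn((1-\chi_\gamma)a)|\leq \frac{\|a\|\,\h\bigl((\s_P)^n(1-\chi_\gamma)\bigr)}{\h\bigl((\s_P)^n\bigr)}\leq \frac{\|a\|(1-\gamma)^n}{(1-\gamma/2)^n\h(\chi_{\gamma/2})}, \]
which is the desired inequality.

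For the ``in particular'' statement, since $(1-\gamma)/(1-\gamma/2)<1$ the geometric factor decays to zero.  The only remaining issue is that the denominator $\h(\chi_{\gamma/2})$ is strictly positive for every $\gamma>0$; this is where coamenability enters.  Because $1\in\mathrm{spec}(\s_P)$, the spectral projection $\chi_{\gamma/2}$ is a nonzero projection in $W^*(A)$, and since $A$ is coamenable and Kac-type the Haar state extends to a faithful normal state on $W^*(A)$, so $\h(\chi_{\gamma/2})>0$.  I expect the main technical care to lie not in any one inequality but in properly identifying the correct Cauchy--Schwarz normalization so that both spectral estimates can be inserted cleanly; once that is set up, each step is elementary.
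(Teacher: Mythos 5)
Your proof is correct and follows essentially the same route as the paper's: the same splitting into numerator and denominator, the same spectral estimates $(\s_P)^n(1-\chi_\gamma)\leq(1-\gamma)^n$ and $(\s_P)^n\geq(1-\gamma/2)^n\chi_{\gamma/2}$, and the same use of traciality of $\h$ (which you invoke implicitly when you pass from $\h(a^*ba)$ to $\h(b^{1/2}aa^*b^{1/2})$ in the Cauchy--Schwarz step --- worth making explicit, since the Kac hypothesis is exactly what supplies it). Your treatment of $\h(\chi_{\gamma/2})>0$ via faithfulness of the normal extension of the trace is a slight repackaging of the paper's argument, which instead bounds $\chi_{\gamma/2}$ from below by nonzero positive elements of $C^*(\s_P)$ and uses faithfulness of $\h$ on $A$; both versions hinge on coamenability guaranteeing that $1$ remains in the spectrum of $\s_P$ in the GNS representation.
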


\begin{proof}
We have $\hpn  ((1-\chi_\gamma)a) = \langle (1-\chi_\gamma) a \Omega, (\s_P)^n \Omega \rangle/\langle  \Omega, (\s_P)^n \Omega \rangle$.  We look at the numerator first.  Using that $\h $ is a trace and that $\s_P$ commutes with $1-\chi_\gamma$ we compute:
\begin{eqnarray}
|\langle (1-\chi_\gamma) a \Omega, (\s_P)^n \Omega \rangle | & = &
|\h ((\s_P)^n(1-\chi_\gamma)a) | \nonumber \\
& = & |\h ((1-\chi_\gamma) (\s_P)^{n/2}a(\s_P)^{n/2}(1-\chi_\gamma))|  \nonumber \\
& \leq & \| a\| \h ((1-\chi_\gamma)(\s_P)^n)\leq \| a\| (1-\gamma)^n . \nonumber
\end{eqnarray}
In the last inequality we use that $\| (1-\chi_\gamma)(\s_P)^n\| \leq (1-\gamma)^n$.

Now for the denominator.  We have
$\h ((\s_P)^n)\geq \h (\chi_{\gamma/2}(\s_P)^n) \geq (1-\gamma/2)^n\h (\chi_{\gamma/2})$.
Since $A$ is coamenable,  $\h$ is faithful on  $A$.  It is clear that there are positive, nonzero elements of $C^*(\s_P)$ that are less than $\chi_{\gamma/2}$.  Since $\h$ is strictly positive on any such  element, we must have $\h (\chi_{\gamma/2})>0$ as well.  Putting our estimates for the numerator and the denominator together we obtain the desired result.

\end{proof}

Next we  estimate the value of $\hpn (\chi_\gamma a)$ as $\gamma$ goes to zero.

\begin{lemma}\label{lem9}
Let $\Phi :A\rightarrow B$ be the state stabilizer of $P$, and let $a\in A$.  We have
\[
\limsup_{\gamma\longrightarrow 0} \left( \sup_{n\in \N} |\hpn (\chi_\gamma a)|\right) \leq \| \Phi (a)\|  \]
\end{lemma}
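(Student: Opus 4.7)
The strategy is to regard $\mu_{n,\gamma}(a) := \hpn(\chi_\gamma a)$ as a finite positive linear functional on $A$, to show that any weak$^*$ limit point obtained by letting $\gamma\to 0$ (with $n$ free) is a positive multiple of a state lying in the state stabilizer $H$ of $P$, and then to invoke $\|\Phi(a)\| = \sup_{\varphi\in H}|\varphi(a)|$ which follows from Proposition~\ref{prop3}.

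First I would verify that $\mu_{n,\gamma}$ is positive with norm $\mu_{n,\gamma}(1) = \hpn(\chi_\gamma) \leq 1$: since $\chi_\gamma$ and $(\s_P)^n$ commute in $W^*(A)$ and $\h$ is tracial (Kac), for $a=b^*b$ one rewrites $\mu_{n,\gamma}(b^*b)\,\h((\s_P)^n) = \h(b\chi_\gamma(\s_P)^n b^*)\geq 0$.  On the range of $\chi_\gamma$ the spectrum of $\s_P$ lies in $[1-\gamma,1]$, so $\chi_\gamma\s_P\geq(1-\gamma)\chi_\gamma$; multiplying by $(\s_P)^n$ and applying $\h$ yields the key comparison
\[
\mu_{n,\gamma}(\s_P) \;\geq\; (1-\gamma)\,\mu_{n,\gamma}(1).
\]
I would then argue by contradiction.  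If the conclusion fails there exist $\epsilon>0$, $\gamma_k\to 0$, and $n_k$ with $|\mu_{n_k,\gamma_k}(a)| > \|\Phi(a)\| + \epsilon$ for all $k$.  By Banach--Alaoglu, pass to a weak$^*$ convergent subnet; the limit $\mu$ is a positive functional with $\|\mu\| = \mu(1) \leq 1$, and is nonzero since $|\mu(a)|\geq\|\Phi(a)\|+\epsilon>0$.  Passing the key comparison to the limit together with $\mu(\s_P) \leq \|\mu\|\|\s_P\| \leq \mu(1)$ yields $\mu(\s_P)=\mu(1)$, so the normalized state $\tilde\mu := \mu/\mu(1)$ satisfies $\tilde\mu(\s_P) = 1$.

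The crux is to upgrade $\tilde\mu(\s_P)=1$ to $\tilde\mu\in H$.  I consider the UCP map $\alpha_{\tilde\mu} := (id\otimes\tilde\mu)\alpha : B(\H)\to B(\H)$.  Since $\tilde\mu(\s_P) = tr(P\alpha_{\tilde\mu}(P)) = \langle\alpha_{\tilde\mu}(P)\xi,\xi\rangle = 1$ and $0\leq\alpha_{\tilde\mu}(P)\leq 1$, the vector $\xi$ is a unit eigenvector of $\alpha_{\tilde\mu}(P)$ for eigenvalue $1$.  The Kac hypothesis now enters a second time in an essential way: from $\kappa(u_{ij}) = u_{ji}^*$ and $\kappa^2 = id$ combined with $uu^*=1$ one derives $\sum_i u_{im}u_{in}^* = \delta_{mn}$, and then a direct computation on matrix units yields $(tr\otimes id)\alpha(T) = tr(T)\cdot 1_A$ for all $T\in B(\H)$; in particular $tr(\alpha_{\tilde\mu}(P)) = \tilde\mu(1) = 1 = tr(P)$.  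A positive contraction on the finite-dimensional space $\H$ with trace $1$ and $\xi$ as a $1$-eigenvector must equal the rank-one projection $\xi\xi^* = P$, so $\alpha_{\tilde\mu}(P) = P$ and $\tilde\mu\in H$.  Then by Proposition~\ref{prop3}, $|\tilde\mu(a)| \leq \|\Phi(a)\|$, and hence $|\mu(a)| = \mu(1)|\tilde\mu(a)| \leq \|\Phi(a)\|$, contradicting the choice of $\mu$.  The main obstacle is precisely this last upgrade from the single spectral identity $\tilde\mu(\s_P) = 1$ to the operator equality $\alpha_{\tilde\mu}(P) = P$, which relies essentially on the Kac-type hypothesis (through trace-preservation of the conjugation action $\alpha$) and on $P$ being of rank one.
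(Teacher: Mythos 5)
Your proof is correct, but it takes a genuinely different route from the paper's. The paper works on the algebra side: it first bounds $|\hpn (\chi_\gamma a)|\leq \|\chi_\gamma a\chi_\gamma\|$ uniformly in $n$, packages the compressions $\Psi_\gamma (a)=\chi_\gamma a\chi_\gamma$ into a single positive unital map $\Psi :A\rightarrow A/\mathcal{I}$ by showing that the kernels $\ker (\Psi_\gamma )$ increase as $\gamma \searrow 0$ and taking the union order ideal, observes that $\Psi (\s_P)=1$, and then invokes the universal property of the state stabilizer (Proposition \ref{cool}) to factor $\Psi$ through $\Phi$ and conclude $\|\Psi (a)\|\leq \|\Phi (a)\|$. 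You instead work on the state side: you treat $\hpn (\chi_\gamma \, \cdot \,)$ as a subnormalized positive functional, extract a weak$^*$ limit point as $\gamma \rightarrow 0$ by Banach--Alaoglu, use the spectral comparison $\chi_\gamma \s_P\geq (1-\gamma )\chi_\gamma$ to force the normalized limit $\tilde{\mu}$ to satisfy $\tilde{\mu}(\s_P)=1$, and conclude $\tilde{\mu}\in H$, whence $|\tilde{\mu}(a)|\leq \|\Phi (a)\|$ by Proposition \ref{prop3}. The two arguments are dual to one another and both ultimately rest on the characterization of the state stabilizer by the single condition $\s_P\mapsto 1$; yours reaches it through the states, the paper's through the universal quotient map. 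Your compactness-and-contradiction argument is softer and avoids the order-ideal bookkeeping, and it makes visible that the mass of $\hpn$ concentrates on genuine elements of $H$, which foreshadows the appendix's construction of the Haar state $\mathbi{h}$ of $H$ as a weak$^*$ limit of the $\hpn$; the paper's version is more constructive and yields the quantitative intermediate bound $\sup_n |\hpn (\chi_\gamma a)|\leq \|\Psi_\gamma (a)\|$ for each fixed $\gamma$. One remark: your ``crux'' step, upgrading $\tilde{\mu}(\s_P)=1$ to $\alpha_{\tilde{\mu}}(P)=P$ via trace preservation of $\alpha$ in the Kac case, is correct but is a hands-on rederivation of Lemma \ref{lem4}, which already gives $\varphi \in H$ if and only if $\varphi (\s_P)=1$ by Cauchy--Schwarz applied to the unitary representation $\alpha$ on the Hilbert--Schmidt space; citing that lemma would shorten your argument considerably.
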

\begin{proof}
We begin the estimate as we did in the previous lemma.  We have
\begin{eqnarray*}
|\hpn (\chi_\gamma a)| & = & |\h (\chi_\gamma a (\s_P)^n)|/\h ((\s_P)^n) \\ 
& = &
| \h ((\s_P)^{n/2}\chi_\gamma a \chi_\gamma (\s_P)^{n/2})| /\h ((\s_P)^n) 
\\ & \leq &
\| \chi_\gamma a \chi_\gamma \| \h ((\s_P)^n)/\h ((\s_P)^n) 
\\ & = &
\| \chi_\gamma a \chi_\gamma \| .
\end{eqnarray*}
Define a map $\Psi_\gamma :A\rightarrow  B(\chi_\gamma \K )$ by $\Psi_\gamma (a)
= \chi_\gamma a \chi_\gamma$.  Note that $\Psi_\gamma$ is positive.  Since $\chi_\gamma$ is the unit of $ B(\chi_\gamma \K )$, we have that $\Psi_\gamma$ is also unital.
Finally, we note that $\Psi_\gamma (\s_P) = \chi_\gamma \s_P \chi_\gamma \geq (1-\gamma) \chi_\gamma = (1-\gamma ) 1_{B(\chi_\gamma \K )}$, so that
$(1-\gamma ) 1_{B(\chi_\gamma \K )} \leq \Psi_\gamma (\s_P) \leq 1_{B(\chi_\gamma \K )}$.

Next we show $\ker (\Psi_\gamma )\subseteq \ker (\Psi_\delta )$ for $\gamma > \delta$.  Let $a\in A$.  Then $a\in \ker (\Psi_\gamma )$ if and only if
$\chi_\gamma a \chi_\gamma =0$ if and only if
$\langle \chi_\gamma a \chi_\gamma T\Omega, S\Omega \rangle = 0$ for all $T,S\in B(\K )$.
Let $b,c \in A$ be arbitrary.   Set $T= \chi_\delta b$ and $S=\chi_\delta c $.  We have $0= \langle \chi_\gamma a \chi_\gamma  \chi_\delta b \Omega, \chi_\delta c\Omega \rangle = \langle \chi_\delta a   \chi_\delta b \Omega,  c\Omega \rangle$, where we have used $\chi_\gamma  \chi_\delta = \chi_\delta$ for $\gamma > \delta$.  Since $b,c$ were arbitrary and $A\Omega$ is dense in $\K$, we have
$a\in \ker (\Psi_\gamma )$ implies $a\in \ker (\Psi_\delta )$.

We can now define $\mathcal{I} = {\bigcup_{\gamma \searrow 0} \ker (\Psi_\gamma)}$.  By the preceeding paragraph, $\mathcal{I}$ is an increasing union of complex order ideals, and so it is an order ideal.  Thus the natural map $\Psi :A\rightarrow A/\mathcal{I}$ is positive.  By construction,  $\Psi :A\rightarrow A/\mathcal{I}$ factors through each of the maps $A\rightarrow A/{\ker (\Psi_\gamma)}$.
We saw above that $(1-\gamma ) 1_{B(\chi_\gamma \K )} \leq \Psi_\gamma (\s_P) \leq 1_{B(\chi_\gamma \K )}$. Thus we must have $(1-\gamma )1_{A/\mathcal{I}}
\leq  \Psi (\s_P)  \leq 1_{A/\mathcal{I}}$ for any $\gamma >0$, so that
 $\Psi (\s_P) = 1_{A/\mathcal{I}}$.
By Lemma \ref{lem4} we have  that the state stabilizer  is the universal positive unital map $\Phi :A\rightarrow B$ such that $\Phi (\s_P) = 1_B$, so that $\Psi$ factors through $\Phi$.  Therefore $\| \Psi (a) \| \leq \| \Phi (a) \|  $ for all $a\in A$.

At the beginning of this proof we showed that
$ |\hpn (\chi_\gamma a)| \leq \| \chi_\gamma a \chi_\gamma \| =\| \Psi_\gamma (a) \| $
 for any $n\in \N$.  Therefore we obtain:
\[
\limsup_{\gamma\longrightarrow 0} \left( \sup_{n\in \N} |\hpn (\chi_\gamma a)|\right) \leq \| \Psi (a)\| \leq \| \Phi (a)\| .
\]

\end{proof}

\begin{lemma}\label{lem10}
Let $\Phi :A\rightarrow B$ be the state stabilizer of $P$, and let $a\in A^H$.  Then $\Phi (a)=\varepsilon (a) 1_B$.
\end{lemma}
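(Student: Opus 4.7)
The plan is to use the characterization of $H_\xi$ from Proposition \ref{prop3} together with the counit property of $\Delta$. For $a \in A^H$, by definition one has ${_\varphi}\Delta(a) = (\varphi \otimes \mathrm{id})\Delta a = a$ for every $\varphi \in H_\xi$. Applying the counit $\varepsilon$ of $A$ to both sides and using $(\mathrm{id}\otimes \varepsilon)\Delta = \mathrm{id}$, I would immediately obtain
\[
\varphi(a) = (\varphi \otimes \varepsilon)\Delta a = \varepsilon\bigl({_\varphi}\Delta(a)\bigr) = \varepsilon(a)
\]
for every $\varphi \in H_\xi$. So every state in the stabilizer $H_\xi$ takes the same value, namely $\varepsilon(a)$, on elements of $A^H$.

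Next I would transfer this information through $\Phi$. By Proposition \ref{prop3}, the pullback map $\mu \mapsto \mu \circ \Phi$ is a bijection from $S(B)$ onto $H_\xi$, so for every $\mu \in S(B)$ the previous calculation gives $\mu(\Phi(a)) = (\mu \circ \Phi)(a) = \varepsilon(a)$, i.e. $\mu(\Phi(a) - \varepsilon(a)1_B) = 0$. Since $B$ is (tacitly) an Archimedean order unit space, states on $B$ separate points, and therefore $\Phi(a) = \varepsilon(a)1_B$, which is the claim.

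The only subtle point is the appeal to Proposition \ref{prop3} — the identification $H_\xi = \{\mu \circ \Phi : \mu \in S(B)\}$ is precisely what makes the argument work, and it is the reason the state stabilizer (rather than the quantum group stabilizer) is the correct notion here. No estimates or analytic arguments are needed; the whole statement is a formal consequence of the definition of $A^H$, the counit identity, and the universal property of $\Phi$.
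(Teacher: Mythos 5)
Your proof is correct, but it takes a different (and somewhat heavier) route than the paper's. The paper proves this in two lines working directly with the tensor form of the definition: since $a\in A^H$ means $(\Phi\otimes id)\Delta a = 1_B\otimes a$, one simply applies $(id\otimes\varepsilon)$ to both sides and uses the counit identity to get $\Phi(a)=\varepsilon(a)1_B$ — no states, no separation argument, no appeal to Proposition \ref{prop3}. You instead start from the state-wise characterization $(\varphi\otimes id)\Delta a = a$ for all $\varphi\in H_\xi$, deduce $\varphi(a)=\varepsilon(a)$ by applying the counit, and then transfer this to $B$ via the pullback identification of $S(B)$ with $H_\xi$ and the fact that states separate points of an Archimedean order unit space. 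Both arguments hinge on the same single idea (hit the invariance relation with $\varepsilon$), but yours requires the equivalence of the two descriptions of $A^H$, which itself rests on Proposition \ref{prop3} and point-separation; the paper's version bypasses all of that by never leaving the element level. One small overstatement on your side: you only need that $\mu\circ\Phi\in H_\xi$ for every $\mu\in S(B)$ (one inclusion of the pullback), not that $\mu\mapsto\mu\circ\Phi$ is a bijection onto $H_\xi$, though Proposition \ref{prop3} does supply the full statement. Your incidental remark that every state in $H_\xi$ restricts to $\varepsilon$ on $A^H$ is a nice byproduct not made explicit in the paper.
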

\begin{proof}
By definition, $a\in A^H$ means that
 $(\Phi \otimes id)\Delta a= 1_B \otimes a$.
We have

 $\Phi (a)=\Phi (id \otimes \varepsilon )\Delta a=
(id \otimes \varepsilon )(\Phi \otimes id)\Delta a=
(id \otimes \varepsilon )(1_B\otimes a)=\varepsilon (a) 1_B$.
\end{proof}

Finally, we obtain our desired result.

\begin{proposition}\label{prop9}
Let $a\in A^H$.
Then $\lim_{n\rightarrow \infty} \hpn (a) = \varepsilon (a)$.
\end{proposition}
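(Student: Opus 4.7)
The plan is to reduce the claim to showing $\h_{P^n}(b) \to 0$ for a suitably normalized element $b \in A^H$, and then to split this quantity via the spectral projection $\chi_\gamma$ of $\s_P$, bounding the two pieces with Lemma \ref{lem9} and Lemma \ref{lem8} respectively.

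First I would set $b = a - \varepsilon(a) 1_A$. Since $a \in A^H$ and the scalars obviously lie in $A^H$, we have $b \in A^H$, and moreover $\varepsilon(b) = 0$. Since $\h_{P^n}$ is unital, $\h_{P^n}(a) - \varepsilon(a) = \h_{P^n}(b)$, so it suffices to show $\h_{P^n}(b) \to 0$. The crucial point is that by Lemma \ref{lem10}, for any element of $A^H$ we have $\Phi(c) = \varepsilon(c) 1_B$, so in particular $\Phi(b) = 0$.

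Now fix $\epsilon > 0$ and write
\[ \h_{P^n}(b) = \h_{P^n}(\chi_\gamma b) + \h_{P^n}((1-\chi_\gamma)b). \]
By Lemma \ref{lem9}, since $\|\Phi(b)\| = 0$, we can choose $\gamma > 0$ small enough that $\sup_{n\in \N} |\h_{P^n}(\chi_\gamma b)| < \epsilon/2$. With this $\gamma$ now fixed, Lemma \ref{lem8} gives the geometric decay estimate
\[ |\h_{P^n}((1-\chi_\gamma)b)| \leq \frac{\|b\|}{\h (\chi_{\gamma /2})} \left( \frac{1-\gamma}{1-\gamma /2}\right)^n, \]
whose right-hand side tends to $0$ as $n \to \infty$. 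Hence for $n$ sufficiently large, this second term is also bounded by $\epsilon/2$, and combining the two bounds yields $|\h_{P^n}(b)| < \epsilon$ for all large $n$, proving $\h_{P^n}(a) \to \varepsilon(a)$.

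The work is essentially already done by the two preceding lemmas; the only subtlety is the reduction step $a \mapsto a - \varepsilon(a) 1_A$, which is exactly what Lemma \ref{lem10} was designed to enable. The reason this is the right reduction is that Lemma \ref{lem9} only controls $\h_{P^n}(\chi_\gamma \cdot)$ in terms of the image under the state stabilizer $\Phi$, so without first subtracting the counit value to land in the kernel of $\Phi|_{A^H}$, the bound would not collapse to zero. No real obstacle remains beyond assembling these estimates in the correct order.
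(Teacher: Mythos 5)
Your proof is correct and follows essentially the same route as the paper's: the reduction $b=a-\varepsilon(a)1_A$ combined with Lemma \ref{lem10}, then the decomposition via $\chi_\gamma$ with Lemma \ref{lem9} controlling the $\chi_\gamma$ piece and Lemma \ref{lem8} killing the complementary piece. The only cosmetic difference is the order of limits (you fix $\gamma$ first via the $\limsup_{\gamma\to 0}$ statement and then send $n\to\infty$, whereas the paper takes $\limsup_{n\to\infty}$ for each $\gamma$ and then lets $\gamma\to 0$), which is an equally valid assembly of the same estimates.
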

\begin{proof}
Let $a\in A^H$ and let $b= a-\varepsilon (a)1$.  We have $\varepsilon (b)=0$, so by Lemma \ref{lem10} $\Phi (b) =0$.  For any $\gamma >0$, using Lemma \ref{lem8} gives
$\lim_{n\rightarrow \infty} | \hpn (b) - \hpn (\chi_\gamma b)|=
\lim_{n\rightarrow \infty}  | \hpn ((1-\chi_\gamma)b)| =0$.
Thus $\limsup_{n\rightarrow \infty} |\hpn (b)| =
\limsup_{n\rightarrow \infty} |\hpn (\chi_\gamma b)|
\leq \sup_{n\in \N} |\hpn (\chi_\gamma b)| $, for all $\gamma >0$.
Now, using Lemma \ref{lem9} we have
$\limsup_{n\rightarrow \infty} |\hpn (b)| \leq \| \Phi (b)\| = 0$.
 So $\lim_{n\rightarrow \infty} \hpn (b)$ exists and is equal to zero.  Finally, we have $\lim_{n\rightarrow \infty} \hpn (a) =
\lim_{n\rightarrow \infty} \hpn (b+\varepsilon (a)1) = \varepsilon (a)$.

\end{proof}

As a corollary of the weak convergence of $\hpn$, we can show that we have a strict quantization  of the operator system  $A^H$.

\begin{proposition}\label{prop10}
Let $a\in A^H$.  We have $(\s^n \circ \bs^n)(a) \rightarrow a$ in norm as $n\rightarrow \infty$.  Moreover, the maps $\bs^n:A^H\rightarrow \B^n$ give a strict quantization of $A^H$.
\end{proposition}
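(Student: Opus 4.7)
The plan is to use Proposition~\ref{prop7} to rewrite the Berezin transform, then upgrade the pointwise convergence $\hpn \to \varepsilon$ on $A^H$ (Proposition~\ref{prop9}) to norm convergence via uniform boundedness and density. By Proposition~\ref{prop7} and the counit axiom,
\[ (\s^n\circ\bs^n)(a) - a \;=\; (\hpn\otimes id)\Delta a - (\varepsilon\otimes id)\Delta a \;=\; \bigl((\hpn - \varepsilon)\otimes id\bigr)(\Delta a), \]
so it suffices to show that the right-hand side tends to zero in norm.

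Since $A^H$ is a right coideal of $A$ (Proposition~\ref{lem5}), we have $\Delta a \in A^H \otimes A$ whenever $a\in A^H$, and this is the norm-closure of the algebraic tensor product $A^H \odot A$. On an elementary tensor $b\otimes c$ with $b\in A^H$,
\[ \bigl((\hpn - \varepsilon)\otimes id\bigr)(b\otimes c) \;=\; \bigl(\hpn(b) - \varepsilon(b)\bigr)\,c \;\longrightarrow\; 0 \]
in norm by Proposition~\ref{prop9}, and the same then holds for finite linear combinations. Because $\hpn$ and $\varepsilon$ are both states (coamenability is used here to extend $\varepsilon$ continuously to $A$), each operator $(\hpn - \varepsilon)\otimes id$ has norm at most $2$, so a standard $\epsilon/3$ argument extends the vanishing from $A^H \odot A$ to all of $A^H \otimes A$. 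Specializing to $x=\Delta a$ gives the first assertion.

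For the second assertion I would invoke Theorem~\ref{deformquant}. Each $\s^n$ and $\bs^n$ is positive and unital by Propositions~\ref{prop5} and~\ref{prop6} (unitality of $\s^n$ follows from $\s^n(1) = tr(P^n)\,1_A = 1_A$), and we have just established that $\s^n\circ\bs^n$ converges strongly to the identity on $A^H$. Theorem~\ref{deformquant} then yields a continuous field of operator systems over $\{0\}\cup 1/\N$ with fibres $\B^{1/\hbar}$ (and fibre $A^H$ at $\hbar = 0$), together with the quantization map $\mathcal{Q}_{1/n} = \bs^n$, which is by definition a strict quantization of $A^H$ in the sense of Definition~\ref{defdeformquant}.

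The main obstacle in this outline is the density step, which relies precisely on the right coideal structure of $A^H$ to arrange that the left tensor factors lie in $A^H$; without this, Proposition~\ref{prop9} (which gives convergence only on $A^H$) would not apply. The rest is a routine bounded-convergence argument, and the deeper analytic content really lives in Proposition~\ref{prop9}, whose proof already leveraged coamenability via the faithfulness of $\h$.
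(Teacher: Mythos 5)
Your proof is correct and follows essentially the same route as the paper: rewrite the Berezin transform via Proposition~\ref{prop7}, use the right-coideal property of $A^H$ to place $\Delta a$ in $A^H\otimes A$, upgrade the pointwise convergence of Proposition~\ref{prop9} to strong convergence on the tensor product by the elementary-tensor/density/uniform-boundedness argument (a step the paper leaves implicit), and invoke Theorem~\ref{deformquant} for the strict quantization. One small correction: the extension of $\varepsilon$ to all of $A$ comes from the standing fullness assumption, not from coamenability, which enters only through the faithfulness of $\h$ in Lemma~\ref{lem8}, exactly as you note at the end.
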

\begin{proof}
Let $a\in A^H$. By Proposition \ref{prop7} we have
 $(\s^n \circ \bs^n)(a) = (\hpn \otimes id) \Delta a$.
By Lemma \ref{lem5} we have $\Delta a \in A^H\otimes A$.  By Proposition \ref{prop9} we see
 $(\hpn \otimes id)(\omega )\rightarrow (\varepsilon \otimes id)(\omega )$
 for any $\omega \in A^H\otimes A$.
But $(\varepsilon \otimes id) \Delta a = a$, so we have the first part of the proposition.

The second statement follows immediately from the first using Theorem \ref{deformquant}.
\end{proof}

\begin{remark} 
The use of coamenability in the proof of Lemma \ref{lem8} appears to be insignificant, but in the appendix we will give an example that shows it is essential.
\end{remark}

\end{subsection}

\begin{subsection}{The Convergence of $\s^n \circ \bs^n$ in Terms of Lip-norms.}

Let $L_A$ be any regular Lip-norm on $A$.  We have ergodic actions of $A$ on $A^H$ and on each $\B^n$.  Thus $L_A$ induces a Lip-norm $L$ on $A^H$ as well as Lip-norms $L_n$ on each $\B^n$ as described in Section 2.5.  Our goal now becomes to show that $(\B^n, L_n)$ converges to $(A^H, L)$ for quantum Gromov-Hausdorff distance as $n\longrightarrow \infty$.  Our first step is to reinterpret Proposition \ref{prop10} in terms of the Lip-norm.  We begin with a technical lemma.

\begin{lemma}\label{technical}
Let $V$ and $W$ be order unit spaces.  Let $K\subseteq V\otimes W$ be totally bounded for the order norm, and let $\phi \in S(V)$ be such that $(\phi \otimes id)(K)=0$.  Let $\delta >0$.   Then there exists a function $f:S(V)\rightarrow [0, \infty)$, continuous in the $w^*$-topology such that
\begin{eqnarray}
\| (\mu \otimes id)(\omega )\| \leq f(\mu ), \forall \mu \in S(V), \forall \omega \in K, \\
\text{and} \hspace{1.5in} f(\phi )\leq \delta .
\end{eqnarray}
\end{lemma}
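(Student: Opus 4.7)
The plan is to exploit the total boundedness of $K$ to reduce control of $\|(\mu\otimes id)(\omega)\|$ over all of $K$ to finitely many test elements. First I would fix a finite $(\delta/2)$-net $\omega_1,\ldots,\omega_N\in K$ for the order norm, so that every $\omega\in K$ satisfies $\|\omega-\omega_i\|<\delta/2$ for some $i$.

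A key preliminary observation is that for every state $\mu\in S(V)$, the slice map $\mu\otimes id:V\otimes W\rightarrow W$ is order-norm contractive. Indeed, it is manifestly positive (it sends $\sum v_j\otimes w_j$ with $v_j,w_j\ge0$ to $\sum\mu(v_j)w_j\ge0$) and it sends $e_V\otimes e_W$ to $e_W$, so that if $-re_V\otimes e_W\le\tau\le re_V\otimes e_W$ then $-re_W\le(\mu\otimes id)(\tau)\le re_W$. This yields, for any $\omega\in K$ and any index $i$,
\[ \|(\mu\otimes id)(\omega)\|\le\|(\mu\otimes id)(\omega_i)\|+\|\omega-\omega_i\|, \]
and hence, choosing $i$ with $\|\omega-\omega_i\|<\delta/2$, a uniform bound $\|(\mu\otimes id)(\omega)\|\le\max_i\|(\mu\otimes id)(\omega_i)\|+\delta/2$ valid for every $\omega\in K$.

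With this in hand I would define
\[ f(\mu)=\max_{1\le i\le N}\|(\mu\otimes id)(\omega_i)\|+\frac{\delta}{2}. \]
The estimate just derived gives property (1). For property (2), the hypothesis $(\phi\otimes id)(K)=0$ forces $(\phi\otimes id)(\omega_i)=0$ for each $i$, so $f(\phi)=\delta/2\le\delta$.

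The last ingredient is weak$^*$-continuity of $f$, and this is where the algebraic nature of $V\otimes W$ becomes essential: each $\omega_i$ is a finite sum $\omega_i=\sum_k v_{i,k}\otimes w_{i,k}$, so $(\mu\otimes id)(\omega_i)=\sum_k\mu(v_{i,k})w_{i,k}$ takes values in a fixed finite-dimensional subspace of $W$ and depends continuously on the scalars $\mu(v_{i,k})$, which are themselves weak$^*$-continuous functionals of $\mu$. Since norms restricted to a fixed finite-dimensional subspace are continuous in the coordinates, each $\mu\mapsto\|(\mu\otimes id)(\omega_i)\|$ is weak$^*$-continuous, and the maximum of finitely many such continuous nonnegative functions (plus the constant $\delta/2$) is continuous and nonnegative. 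The main subtlety here, rather than a genuine obstacle, is ensuring that the tensor-product elements admit a finite elementary-tensor representation; this is automatic because the paper takes $V\otimes W$ in the purely algebraic sense.
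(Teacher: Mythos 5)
Your proof is correct, but it takes a genuinely different route from the paper's. The paper starts from the constant function $\lambda=\sup_{\omega\in K}\|\omega\|$ (which satisfies condition (1) by boundedness of $K$) and then iteratively improves it: using the Gelfand-like map $\omega\mapsto\hat\omega$, $\hat\omega(\mu)=\|(\mu\otimes id)(\omega)\|$, it observes that $\hat K$ is totally bounded in $C(S(V))$, invokes Arzel\`a--Ascoli to get a weak$^*$ neighborhood of $\phi$ on which \emph{all} of the $\hat\omega$ are uniformly less than $f(\phi)/3$, subtracts a bump function supported in that neighborhood to replace $f(\phi)$ by $2f(\phi)/3$ while preserving (1), and repeats until the value at $\phi$ drops below $\delta$. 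You instead extract a finite $(\delta/2)$-net $\omega_1,\dots,\omega_N$ from $K$, use order-norm contractivity of the positive unital slice maps $\mu\otimes id$ to transfer control from the net to all of $K$, and define $f$ in one shot as $\max_i\|(\mu\otimes id)(\omega_i)\|+\delta/2$; weak$^*$-continuity then comes for free because the paper's tensor product is algebraic, so each $\omega_i$ is a finite sum of elementary tensors. Both arguments rest on the same engine (total boundedness reduces uniform control to finitely many elements), but yours is more direct: it avoids the Arzel\`a--Ascoli step and the iteration entirely, hits $f(\phi)=\delta/2$ immediately rather than after "sufficiently many" passes, and makes explicit the one extra ingredient the paper leaves implicit, namely that $\|(\mu\otimes id)(\omega-\omega_i)\|\le\|\omega-\omega_i\|$ because positive unital maps between order unit spaces have norm one.
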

\begin{proof}
Since $K$ is totally bounded, it is bounded.  Thus for some $\lambda \in [0, \infty)$ the constant function $g$ defined by $g(\mu )=\lambda$ satisfies equation $(1)$.  Let $f$ be an element of $C(S(V), \R )$ satisfying condition $(1)$.  If $f(\phi )=0$, we're done.  If $f(\phi )>0$, we produce a function $F$ satisfying condition $(1)$ such that $F(\phi )= 2f(\phi )/3$.  Repeating the process sufficiently many times will then give a function satisfying $(1)$ and $(2)$.

Suppose $f$ is a continuous function on $S(V)$ satisfying condition $(1)$, and suppose that $f(\phi) >0$.  There is a Gelfand-like association $\hat{\omega}(\mu ) =\| (\mu \otimes id) (\omega )\|$ from $K$ into $C(S(V))$.  Since $\hat{K}\cup \{ f\} \subseteq C(S(V))$ is totally bounded, by Ascoli Arzela there exists a neighborhood $\mathcal{U}$ of $\phi$ such that for all $\nu \in \mathcal{U}$ and all $\omega \in K$ we have
\begin{eqnarray}
\| (\nu \otimes id)(\omega ) \| = \hat{\omega}(\nu ) < f(\phi )/3  \nonumber \\
f (\nu )>2 f(\phi)/3.   \nonumber
\end{eqnarray}
Let $g\in C(S(V))$ be a nonnegative function such that $g(\phi )=f(\phi )/3 =\| g\|$, and $g(\mu )=0$ for all $\mu \notin \mathcal{U}$.  Then a quick check shows $F =(f-g)$ satisfies condition $(1)$, and has $F(\phi )= 2f(\phi )/3$.  Indeed,  for $\mu \notin \mathcal{U}$ we have $(f-g)(\mu )=f(\mu )\geq \| (\mu \otimes id)(\omega ) \|$ for all $\omega \in K$, while for $\nu \in \mathcal{U}$ we have $  (f-g)(\nu )\geq 2f(\phi )/3 -f(\phi )/3 = f(\phi )/3 > \| (\nu \otimes id)(\omega ) \|$ for all $\omega \in K$.

\end{proof}

We can now reinterpret Proposition \ref{prop10} in terms of  the Lip-norm of $A^H$.

\begin{proposition}\label{estimate1}
Let $L$ be any Lip-norm on $A^H$.  Then there exists a sequence of positive real numbers $\{ \delta_n\}_{n\in \N}$ converging to zero such that
$\| (\s^n \circ \bs^n)(a)-a\| < L(a)\delta_n$ for all $a\in A^H$.
\end{proposition}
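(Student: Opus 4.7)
My plan is to rewrite the difference $(\s^n\circ\bs^n)(a)-a$ in a form suited to Lemma \ref{technical}, then push the pointwise convergence $\hpn\to\varepsilon$ of Proposition \ref{prop9} through that lemma to get a uniform rate on a suitably normalized ball.

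By Proposition \ref{prop7}, together with the identity $(\varepsilon\otimes id)\Delta=id$ and the fact that $(\hpn-\varepsilon)(1_A)=0$, I will rewrite
\[ (\s^n\circ\bs^n)(a)-a=((\hpn-\varepsilon)\otimes id)(\Delta a-1_A\otimes a). \]
The essential point of this reformulation is that the element $\omega_a:=\Delta a-1_A\otimes a\in A^H\otimes A$ is annihilated by $\varepsilon\otimes id$, which matches the vanishing hypothesis of Lemma \ref{technical} at $\phi=\varepsilon$.

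I then normalize. Setting $\B_1=\{a\in A^H\: :\: L(a)\leq 1,\ \h(a)=0\}$, Proposition \ref{rief1} guarantees that the $\tilde L$-unit ball of $\tilde{A^H}=A^H/\C 1_A$ is totally bounded in the quotient norm, and the section $\tilde a\mapsto a-\h(a)1_A$ identifies this ball bijectively with $\B_1$ via a norm-continuous map (the standard two-line estimate $\|a-\h(a)1_A\|\leq 2\|\tilde a\|^{\sim}$). Hence $\B_1$, and therefore $K:=\{\omega_a\: :\: a\in\B_1\}\subseteq A^H\otimes A$, is totally bounded.

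Now Lemma \ref{technical}, applied with $V=A^H$, $W=A$, and $\phi=\varepsilon$, produces for every prescribed $\delta>0$ a $w^*$-continuous $f_\delta:S(A^H)\to[0,\infty)$ with $\|(\mu\otimes id)(\omega)\|\leq f_\delta(\mu)$ on $K$ and $f_\delta(\varepsilon)\leq\delta$. Proposition \ref{prop9} says $\hpn\to\varepsilon$ pointwise on $A^H$, i.e. in the $w^*$-topology of $S(A^H)$; continuity of $f_\delta$ then yields
\[ \limsup_{n\to\infty}\sup_{a\in\B_1}\|(\s^n\circ\bs^n)(a)-a\|\leq\limsup_{n\to\infty}f_\delta(\hpn)\leq\delta. \]
Since $\delta>0$ was arbitrary, $\delta_n:=\sup_{a\in\B_1}\|(\s^n\circ\bs^n)(a)-a\|$ tends to $0$. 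For a general $a\in A^H$ the bound follows by homogeneity: if $L(a)$ is positive and finite then $(a-\h(a)1_A)/L(a)\in\B_1$ and the unital maps $\s^n\circ\bs^n$ preserve the scalar part, so scaling gives $\|(\s^n\circ\bs^n)(a)-a\|\leq L(a)\delta_n$. The cases $L(a)\in\{0,\infty\}$ are trivial, and a cosmetic enlargement of $\delta_n$ (say by $1/n$) converts the ``$\leq$'' into the stated strict inequality.

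The only nontrivial step is the verification of total boundedness of $\B_1$, and hence of $K$ in the tensor-product norm used by Lemma \ref{technical}; once this compatibility between the order norm on $A^H$, the quotient norm on $\tilde{A^H}$, and the norm on $A^H\otimes A$ is in hand, the argument is a clean combination of Proposition \ref{prop9}, Lemma \ref{technical}, and scaling.
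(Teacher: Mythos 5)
Your argument is correct and is essentially the paper's own proof: both rewrite $(\s^n\circ\bs^n)(a)-a$ as a state slice of $\Delta a-1\otimes a$, use that this operator kills the scalars so that the image of the $L$-unit ball is totally bounded via Proposition \ref{rief1}, and then feed Lemma \ref{technical} at $\phi=\varepsilon$ together with the weak$^*$ convergence $\hpn\to\varepsilon$ of Proposition \ref{prop9}. The only differences (explicit normalization by $\h(a)1_A$, writing $\hpn-\varepsilon$ in place of $\hpn$, and defining $\delta_n$ as a supremum rather than through an $N_\delta$) are cosmetic.
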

\begin{proof}
We show that for any $\delta >0$, there exists a $N_\delta \in \N$ such that $\| (\s^n \circ \bs^n)(a)-a\| < L(a)\delta$ for all $a\in A$ and all $n\geq N_\delta$.

Consider the linear operator $T: A^H\rightarrow A^H\otimes A$ by
$T(a)= \Delta a-1\otimes a$.  Clearly $T$ is bounded and vanishes on $\C  1_A$.  Since $L$ is a Lip-norm, we have $K=T\{ a\in A^H \: | \: L(a)\leq 1\}$ is totally bounded.
 Also $(\varepsilon \otimes id)T(a) = (\varepsilon \otimes id)(\Delta a- 1\otimes a)=a-a=0$.  By the lemma, there exists a function $f\in C(S(A^H))$ such that $f(\varepsilon )\leq \delta/2$ and
$\| (\mu \otimes id)(\Delta a-1\otimes a)\| \leq f(\mu )$  for all $\mu \in S(A^H)$.  Let $\mu =\hpn$, for some $n$ to be determined.  Then for $L(a)\leq 1$ we have
$\| (\s^n \circ \bs^n)(a)-a\| =
\| (\hpn \otimes id)(\Delta a-1\otimes a)\| \leq f(\hpn )$.
Since $f$ is continuous in the $w^*$-topology and $f(\varepsilon )\leq \delta/2$, we can find a $N_\delta \in \N$ such that $f(\hpn ) <\delta$ for all $n\geq N_\delta$.   Then for all $a\in A^H$ and all $n\geq N_\delta $ we have
$\| (\s^n \circ \bs^n)(a)-a\| \leq L(a) \delta$.

\end{proof}

\end{subsection}

\begin{subsection}{Equivariance and $\bs \circ \s$}

In the last section we estimated $\| (\s^n \circ \bs^n)(a)-a\|$ in terms of the Lip-norm on $A^H$.  Our goal in this section is to derive a similar estimate for $\| (\bs^n \circ \s^n)(T)-T\|$ in terms of the Lip-norm on $\B^n$.  Basically we will be able to do this from applying Proposition \ref{prop9} to the isotypic components of $A^H$.  Namely,  Proposition \ref{prop9} guarantees that $(\s^n \circ \bs^n)$ converges strongly to the identity on $A^H$.  Since the isotypic components of $A^H$ are finite dimensional, on any given component   $(\s^n \circ \bs^n)$ will converge to the identity in norm.  We note that the method of this section can be used to simplify the arguments of Sections 4 and 5 of \cite{Rieffel3}.

Let $\mathcal{S} \subseteq \hat{A}$ be finite.  By  Proposition \ref{invariant} we have restricted maps $\s^n |_\mathcal{S}:\B^n_\mathcal{S}\rightarrow A^H_\mathcal{S}$ and $\bs^n |_\mathcal{S}:A^H_\mathcal{S} \rightarrow \B^n_\mathcal{S}$.
 We will supress the $|_\mathcal{S}$ on $\s$ and $\bs$ when the meaning is clear.  As we noted before, it is not clear in general whether the isotypic components of an ergodic ordered $A$-module must be finite dimensional.  However, since $A^H$ is an $A$-submodule of the regular $A$-module $\Delta :A\rightarrow A\otimes A$, the isotypic components of $A^H$ are contained in the isotypic components of $A$, which are finite dimensional. (This follows either from Boca, since $A$ is an $A$-module algebra \cite{Boca}, or from the Peter-Weyl theorem for quantum groups \cite{Woro1}).
  Proposition \ref{prop9} gives that $(\s^n \circ \bs^n)$ converges strongly to the identity on $A^H$, so $(\s^n \circ \bs^n)|_\mathcal{S}$ converges strongly to the identity on $A^H_\mathcal{S}$.
Since $A^H_\mathcal{S}$ is finite dimensional,  we have that $(\s^n \circ \bs^n)|_\mathcal{S}$ converges to the identity on $A^H_\mathcal{S}$ in norm.  This has some consequences:

\begin{corollary}
There exists an $N_\mathcal{S}\in \N$ such that $(\s^n \circ \bs^n): A^H_\mathcal{S}\rightarrow A^H_\mathcal{S}$ is invertible for all $n\geq N_\mathcal{S}$.  Moreover, we can choose $N_\mathcal{S}$ so that $\| (\s^n \circ \bs^n)^{-1}\| <2$ for all $n\geq N_\mathcal{S}$.
\end{corollary}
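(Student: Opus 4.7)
The plan is to combine the norm convergence of $(\s^n \circ \bs^n)|_\mathcal{S}$ to $\mathrm{id}_{A^H_\mathcal{S}}$ with a standard Neumann series argument. The first ingredient is already available from the paragraph preceding the corollary: since $\mathcal{S}$ is a finite set of irreducible representations and each isotypic component $A^H_\mu$ is contained in the corresponding isotypic component $A_\mu$ of the regular action of $A$ on itself, the Peter--Weyl theorem for compact quantum groups forces $A^H_\mathcal{S}$ to be finite dimensional. On a finite dimensional normed space strong convergence coincides with norm convergence, so Proposition \ref{prop9} (combined with the equivariance of Proposition \ref{invariant} to see that $(\s^n \circ \bs^n)$ preserves $A^H_\mathcal{S}$) upgrades to the assertion that $(\s^n \circ \bs^n)|_\mathcal{S}$ converges in the operator norm on $A^H_\mathcal{S}$ to the identity.

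Given this norm convergence, I would choose $N_\mathcal{S} \in \N$ large enough that
\[ \| \mathrm{id}_{A^H_\mathcal{S}} - (\s^n \circ \bs^n)|_\mathcal{S} \| < 1/2 \]
for every $n \geq N_\mathcal{S}$. Writing $T_n = (\s^n \circ \bs^n)|_\mathcal{S}$ and $R_n = \mathrm{id}_{A^H_\mathcal{S}} - T_n$, we then have $\|R_n\| < 1/2 < 1$, so $T_n = \mathrm{id}_{A^H_\mathcal{S}} - R_n$ is invertible via the Neumann series
\[ T_n^{-1} = \sum_{k=0}^\infty R_n^k, \]
which converges in operator norm. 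The standard geometric series estimate yields
\[ \|T_n^{-1}\| \leq \sum_{k=0}^\infty \|R_n\|^k = \frac{1}{1 - \|R_n\|} < \frac{1}{1 - 1/2} = 2, \]
giving both the invertibility and the operator norm bound simultaneously.

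There is essentially no obstacle in this argument; all the substantive work has been done in proving Proposition \ref{prop9}. The finite dimensionality of the restricted problem is precisely what turns the strong convergence afforded by that proposition into the stronger norm convergence needed to feed into the Neumann series, and the bound $\|T_n^{-1}\| < 2$ is then just the tautological consequence of choosing the threshold $1/2$ for how close $T_n$ must be to the identity.
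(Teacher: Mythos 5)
Your proof is correct and follows essentially the same route as the paper: the paper's one-line argument (invertible operators form a norm-open set on which inversion is norm-continuous) is exactly what your Neumann series computation makes explicit, and both rely on the preceding observation that norm convergence of $(\s^n \circ \bs^n)|_\mathcal{S}$ to the identity follows from strong convergence plus finite dimensionality of $A^H_\mathcal{S}$.
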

\begin{proof}
Both statements follow from the facts that the identity operator on $A^H_\mathcal{S}$ is invertible, the set of invertible operators is norm open, and the inversion map is norm continuous on its domain.
\end{proof}

\begin{corollary}\label{last}
Let $N_\mathcal{S}$  be as given from the last corollary.  We have $\s^n: \B^n_\mathcal{S} \rightarrow A^H_\mathcal{S}$ and
$\bs^n: A^H_\mathcal{S}\rightarrow \B^n_\mathcal{S}$ are both invertible for $n\geq N_\mathcal{S}$, and $\| (\s^n)^{-1}\|$ and $\| (\bs^n)^{-1}\|$ are each less than $2$ for  $n\geq N_\mathcal{S}$.
\end{corollary}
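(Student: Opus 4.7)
The plan is to upgrade the invertibility of the composition $\s^n \circ \bs^n$ from the previous corollary into separate invertibility statements for $\s^n$ and $\bs^n$ on each finite-dimensional isotypic subspace, and then read off the norm bounds from the factorization of the inverses.

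First, I would use the equivariance property of the Berezin symbol and its adjoint (Proposition \ref{invariant}) to see that both $\s^n$ and $\bs^n$ preserve isotypic components, so that the restrictions $\s^n : \B^n_\mathcal{S} \to A^H_\mathcal{S}$ and $\bs^n : A^H_\mathcal{S} \to \B^n_\mathcal{S}$ are well-defined maps between finite-dimensional spaces. The finite-dimensionality of $A^H_\mathcal{S}$ follows because $A^H$ sits inside the regular $A$-module on $A$, whose isotypic components are finite-dimensional by Peter--Weyl, while that of $\B^n_\mathcal{S}$ is automatic since $\B^n \subseteq B(\H^n)$.

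Next, I would note that $\s^n$ is injective on $\B^n$ by Proposition \ref{prop5}, so its restriction $\s^n : \B^n_\mathcal{S} \to A^H_\mathcal{S}$ is injective. Meanwhile, the invertibility of $\s^n \circ \bs^n$ on $A^H_\mathcal{S}$ given by the previous corollary forces $\bs^n : A^H_\mathcal{S} \to \B^n_\mathcal{S}$ to be injective as well. Injectivity in both directions between finite-dimensional spaces yields $\dim \B^n_\mathcal{S} = \dim A^H_\mathcal{S}$, and therefore both restrictions are linear bijections.

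For the norm bounds, I would use the factorizations
\[
(\s^n)^{-1} \;=\; \bs^n \circ (\s^n \circ \bs^n)^{-1}, \qquad
(\bs^n)^{-1} \;=\; (\s^n \circ \bs^n)^{-1} \circ \s^n,
\]
and combine $\|\s^n\| = \|\bs^n\| = 1$ (each being a positive unital map between operator systems) with the estimate $\|(\s^n \circ \bs^n)^{-1}\| < 2$ from the previous corollary. This gives $\|(\s^n)^{-1}\| < 2$ and $\|(\bs^n)^{-1}\| < 2$ for all $n \geq N_\mathcal{S}$. The only step requiring care is the dimension count, but this is immediate once one observes that equivariance preserves the injectivity of Proposition \ref{prop5} under restriction to $\mathcal{S}$-isotypic components, so in fact there is no genuine obstacle here.
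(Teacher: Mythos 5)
Your proof is correct and follows essentially the same route as the paper: restrict via equivariance to the finite-dimensional $\mathcal{S}$-isotypic components, combine the injectivity of $\s^n$ from Proposition \ref{prop5} with the invertibility of $\s^n\circ\bs^n$ from the previous corollary, and read off the norm bounds from the factorizations $(\s^n)^{-1}=\bs^n\circ(\s^n\circ\bs^n)^{-1}$ and $(\bs^n)^{-1}=(\s^n\circ\bs^n)^{-1}\circ\s^n$. The only (harmless) deviation is at the bijectivity step, where the paper invokes the surjectivity of $\bs^n$ from Proposition \ref{prop6} together with equivariance, while you replace this by a dimension count extracted from the two injectivity statements; both arguments are valid in this finite-dimensional setting.
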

\begin{proof}
By Proposition \ref{prop5}, $\s^n: \B^n_\mathcal{S} \rightarrow A^H_\mathcal{S}$ is injective.  By Proposition \ref{prop6} and by equivariance, $\bs^n: A^H_\mathcal{S}\rightarrow \B^n_\mathcal{S}$ is surjective.  By the previous corollary $(\s^n \circ \bs^n)$ is invertible for $n\geq N_\mathcal{S}$.  Thus for  $n\geq N_\mathcal{S}$ we have $\s^n: \B^n_\mathcal{S} \rightarrow A^H_\mathcal{S}$ is surjective and $\bs^n: A^H_\mathcal{S}\rightarrow \B^n_\mathcal{S}$ is injective so that they are both invertible.  Furthermore we have
$\| (\s^n)^{-1} \| = \| \bs^n\circ (\s^n \circ \bs^n)^{-1}\| \leq 2$ and
$\| (\bs^n)^{-1} \| = \|  (\s^n \circ \bs^n)^{-1} \circ \s^n \| \leq 2$ for all $n\geq N_\mathcal{S}$.
\end{proof}

\begin{corollary}\label{ball}
The closed ball $B_0(2)\subseteq A^H_\mathcal{S}$ of radius $2$ about the origin has the property that $\bs^n (B_0(2)) \supseteq $ (unit ball of $\B^n_\mathcal{S}$) for every $n\geq N_\mathcal{S}$.
\end{corollary}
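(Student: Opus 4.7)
The plan is to derive this as an essentially immediate consequence of Corollary \ref{last}. Given $T \in \B^n_\mathcal{S}$ with $\|T\| \leq 1$ and $n \geq N_\mathcal{S}$, I would simply set $a = (\bs^n)^{-1}(T)$, which makes sense precisely because Corollary \ref{last} guarantees that $\bs^n : A^H_\mathcal{S} \to \B^n_\mathcal{S}$ is invertible for all $n \geq N_\mathcal{S}$.

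By construction $\bs^n(a) = T$, so the only thing to check is the norm bound $\|a\| \leq 2$. This follows from the second half of Corollary \ref{last}, which asserts $\|(\bs^n)^{-1}\| < 2$: we get $\|a\| = \|(\bs^n)^{-1}(T)\| \leq \|(\bs^n)^{-1}\| \cdot \|T\| < 2$, hence $a \in B_0(2)$. Since $T$ was an arbitrary element of the unit ball of $\B^n_\mathcal{S}$, this yields the claimed inclusion $\bs^n(B_0(2)) \supseteq \{T \in \B^n_\mathcal{S} : \|T\| \leq 1\}$.

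There is no real obstacle here; the corollary is a one-line packaging of the previous corollary's norm estimate. The only subtlety worth pausing on is that the norm on $A^H_\mathcal{S}$ used implicitly by $(\bs^n)^{-1}$ agrees with the order-unit norm inherited from $A^H$, so that $B_0(2)$ in the statement and the image of the operator norm inequality refer to the same ball; this is automatic since $A^H_\mathcal{S}$ is a subspace of the operator system $A^H$ with the restricted norm.
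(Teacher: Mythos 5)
Your proof is correct and is exactly the argument the paper intends: the paper's own proof simply says the statement is immediate from Corollary \ref{last}, and your write-up spells out precisely why — take $a=(\bs^n)^{-1}(T)$ and use the bound $\|(\bs^n)^{-1}\|<2$. No issues.
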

\begin{proof}
 This is immediate from  Corollary \ref{last}.
\end{proof}

\begin{proposition}
Let $\mathcal{S} \subset \hat{A}$ be any finite subset of $\hat{A}$, and let $N_\mathcal{S}\in \N$ be as above.  Then there exists a $K_\mathcal{S}>0$ such that  for any $n\geq N_\mathcal{S}$ and any $T\in \B^n_\mathcal{S}$ we have
$\| T- (\bs^n \circ \s^n)(T)\| \leq L_n(T) K_\mathcal{S} \delta_n$, where $\delta_n$ is the sequence obtained in Proposition \ref{estimate1}.
\end{proposition}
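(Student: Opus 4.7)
The plan is as follows. For $T \in \B^n_\mathcal{S}$ with $n \geq N_\mathcal{S}$, I would use Corollary \ref{last} to write $T = \bs^n(a)$ for a unique $a \in A^H_\mathcal{S}$, and then observe that since both $\s^n$ and $\bs^n$ are equivariant (and hence preserve isotypic components),
\begin{equation*}
T - (\bs^n \circ \s^n)(T) = \bs^n\bigl(a - (\s^n \circ \bs^n)(a)\bigr).
\end{equation*}
The contractivity of $\bs^n$ combined with Proposition \ref{estimate1} then immediately gives
\begin{equation*}
\|T - (\bs^n \circ \s^n)(T)\| \leq \|a - (\s^n \circ \bs^n)(a)\| \leq L(a)\delta_n.
\end{equation*}

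The remaining work is to bound $L(a)$ by a constant multiple of $L_n(T)$. For this I would use two ingredients. First, the equivariance of $\s^n$ (Proposition \ref{invariant}) together with the construction of the Lip-norms in Proposition \ref{li1} yields that positive, unital, equivariant maps do not increase the induced Lip-norm: for $\phi \in S(A^H)$ one has $\phi \circ \s^n \in S(\B^n)$, and equivariance rewrites $_\phi\Delta(\s^n T) = \, _{\phi \circ \s^n}\alpha^n(T)$, so $L(\s^n T) \leq L_n(T)$. Second, since $A^H_\mathcal{S}$ sits inside the finite-dimensional space $A_\mathcal{S}$, the restriction of $L$ to $A^H_\mathcal{S}$ is dominated by the order-norm: there is a finite constant $M_\mathcal{S}$ with $L(b) \leq M_\mathcal{S}\|b\|$ for all $b \in A^H_\mathcal{S}$. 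Applying this bound to the defect $a - \s^n\bs^n a$ (which lies in $A^H_\mathcal{S}$ by equivariance) and invoking Proposition \ref{estimate1} once more yields $L(a - \s^n\bs^n a) \leq M_\mathcal{S} L(a)\delta_n$.

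Combining these via the triangle inequality $L(a) \leq L(\s^n T) + L(a - \s^n T)$ produces $L(a) \leq L_n(T) + M_\mathcal{S} L(a)\delta_n$. After possibly enlarging $N_\mathcal{S}$ so that $M_\mathcal{S}\delta_n \leq 1/2$ for all $n \geq N_\mathcal{S}$, one can solve for $L(a) \leq 2L_n(T)$, and substituting back gives the claimed bound with $K_\mathcal{S} = 2$. The main obstacle is the apparent circularity when bounding $L(a)$ via the triangle inequality, which is resolved precisely because the defect $a - \s^n\bs^n a$ lies in the finite-dimensional $A^H_\mathcal{S}$: there its Lip-norm is controlled by its order-norm, and hence by $L(a)\delta_n$ through Proposition \ref{estimate1}, providing the smallness needed to absorb the circular term.
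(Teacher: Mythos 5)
Your proof is correct, and its skeleton coincides with the paper's: both pull $T$ back to an element $a\in A^H_\mathcal{S}$ with $\bs^n(a)=T$, write $T-(\bs^n\circ\s^n)(T)=\bs^n\bigl(a-(\s^n\circ\bs^n)(a)\bigr)$, and apply contractivity of $\bs^n$ together with Proposition \ref{estimate1}. Where you diverge is in converting the resulting bound $L(a)\delta_n$ into one of the form $K_\mathcal{S}L_n(T)\delta_n$. The paper normalizes $\|T\|\leq 1$, uses Corollary \ref{ball} to place the preimage in the fixed ball $B_0(2)\subseteq A^H_\mathcal{S}$, bounds $L$ there by its maximum $J_\mathcal{S}$ on that compact set, and finally trades $\|T\|$ for $L_n(T)$ via Lemma \ref{stupid}, arriving at $K_\mathcal{S}=2r_AJ_\mathcal{S}$. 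You instead bound $L(a)$ directly by $2L_n(T)$ through an absorption argument, $L(a)\leq L(\s^n\bs^n a)+L(a-\s^n\bs^n a)\leq L_n(T)+M_\mathcal{S}\delta_nL(a)$, using Lemma \ref{equivarLip} for the first term and the domination of $L$ by the norm on the finite-dimensional $A^H_\mathcal{S}$ (plus Proposition \ref{estimate1} again) for the second. Your route yields the cleaner, $\mathcal{S}$-independent constant $K_\mathcal{S}=2$ and avoids Lemma \ref{stupid}; the price is that you must enlarge $N_\mathcal{S}$ so that $M_\mathcal{S}\delta_n\leq 1/2$, whereas the statement fixes $N_\mathcal{S}$ as in Corollary \ref{last}. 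That discrepancy is harmless for the application in Proposition \ref{estimate2} and is in any case repairable: for the finitely many exceptional $n$, the map $T\mapsto T-(\bs^n\circ\s^n)(T)$ is a fixed operator on the finite-dimensional space $\B^n_\mathcal{S}$ vanishing on $\C 1$, so Lemma \ref{stupid} bounds it by a constant multiple of $L_n(T)$, and those finitely many constants can be absorbed into $K_\mathcal{S}$.
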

\begin{proof}
Let $n\geq N_\mathcal{S}$, and let $T\in \B^n_\mathcal{S}$ with $\| T\| \leq 1$.
By Corollary \ref{ball}  there exists an $a_n\in B_0(2)$ such that $\bs^n(a_n) =T$.
We compute: $\\
\| T-(\bs^n \circ \s^n)(T)\| = \| \bs^n (a_n-(\s^n\circ \bs^n)(a_n))\|
\leq \|a_n-(\s^n\circ \bs^n)(a_n)\| \leq L(a_n)\delta_n.\\$
Since $L_A$ was assumed to be a regular Lip-norm, $L_A$ is finite on $A_\mathcal{S}$, which implies $L$ is finite on $A^H_\mathcal{S}$.  Hence the restriction of $L$ to the finite dimensional space $A^H_\mathcal{S}$ is just a usual (finite-valued) seminorm, so that it is continuous with respect to the norm.  Let $J_\mathcal{S}$ be the maximum value of $L$ on the compact set $B_0(2)$. Then for all $T\in  \B^n_\mathcal{S}$ we have $\| T-(\bs^n \circ \s^n)(T)\| \leq \| T\| J_\mathcal{S}\delta_n$.  Since the expression on the left side vanishes on multiples of the identity, Lemma \ref{stupid} gives $\| T-(\bs^n \circ \s^n)(T)\| \leq L_n( T) (2r_A J_\mathcal{S})\delta_n$.
\end{proof}

We can now prove the main result of this section.  Here we use for the first time that $L_A$ on $A$ is right-invariant, so that the induced Lip-norm $L_n$ on $\B^n$ is invariant by Proposition \ref{li1}.

\begin{proposition}\label{estimate2}
There exists a sequence of positive real numbers $\{ \theta_n\}_{n\in \N}$ converging to zero such that $\| T-(\bs^n \circ \s^n)(T)\| \leq L_n( T) \theta_n$ for all $T\in \B^n$.
\end{proposition}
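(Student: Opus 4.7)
The plan is to combine the estimate on the finite-dimensional isotypic components $\B^n_\mathcal{S}$ from the preceding proposition with the cutoff device supplied by Lemma~\ref{li2} applied to the ergodic action $\alpha^n : \B^n \to \B^n \otimes \A$. That lemma is designed exactly for situations like this: for each $\delta > 0$ it produces a state $\phi_\delta \in S(A)$ and a finite set $\mathcal{S}_\delta \subseteq \hat{A}$, both independent of $n$, such that $\alpha^n_{\phi_\delta}$ maps $\B^n$ into $\B^n_{\mathcal{S}_\delta}$ with $\|T - \alpha^n_{\phi_\delta}(T)\| \leq \delta L_n(T)$. The other crucial input is that $\bs^n \circ \s^n$ is $\alpha^n$-equivariant, which is immediate by composing the two intertwining identities of Proposition~\ref{invariant}; consequently $\bs^n \circ \s^n$ commutes with $\alpha^n_{\phi_\delta}$.

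Given these tools, the proof is a three-term telescoping
\[ T - (\bs^n \circ \s^n)(T) \;=\; \bigl(T - \alpha^n_{\phi_\delta}(T)\bigr) \;+\; \bigl(I - \bs^n \circ \s^n\bigr)\bigl(\alpha^n_{\phi_\delta}(T)\bigr) \;+\; \alpha^n_{\phi_\delta}\bigl((\bs^n \circ \s^n)(T)\bigr) - (\bs^n \circ \s^n)(T). \]
The first term is controlled by $\delta L_n(T)$ directly from Lemma~\ref{li2}. For the second, $\alpha^n_{\phi_\delta}(T)$ lies in $\B^n_{\mathcal{S}_\delta}$, so for $n \geq N_{\mathcal{S}_\delta}$ the previous proposition gives a bound of $K_{\mathcal{S}_\delta}\delta_n L_n(\alpha^n_{\phi_\delta}(T))$, and I would then invoke the invariance of $L_n$ (Proposition~\ref{li1}, using that $L_A$ is right-invariant) to replace $L_n(\alpha^n_{\phi_\delta}(T))$ by $L_n(T)$. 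For the third term, I rewrite it via equivariance and apply Lemma~\ref{li2} once more, obtaining a bound of $\delta L_n\bigl((\bs^n \circ \s^n)(T)\bigr)$. Assembling,
\[ \| T - (\bs^n \circ \s^n)(T) \| \;\leq\; \bigl(2\delta + K_{\mathcal{S}_\delta}\delta_n\bigr)\,L_n(T), \]
valid for all $n \geq N_{\mathcal{S}_\delta}$. The sequence $\theta_n$ is then constructed by a diagonal procedure: choose $\delta = 1/k$, find $N_{\mathcal{S}_{1/k}}$, and, since $\delta_n \to 0$, pick a strictly increasing sequence $n_k \geq N_{\mathcal{S}_{1/k}}$ with $K_{\mathcal{S}_{1/k}}\delta_n < 1/k$ for $n \geq n_k$; setting $\theta_n = 3/k$ on the block $n_k \leq n < n_{k+1}$ yields $\theta_n \to 0$.

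The one step that is not completely mechanical is the bound $L_n\bigl((\bs^n \circ \s^n)(T)\bigr) \leq L_n(T)$ needed to close off the third term, because $\bs^n \circ \s^n$ is not a priori a convolution operator of the form $\alpha^n_\psi$ for some state $\psi$. The way around this is to use the explicit formula $L_n(T) = \sup_{\psi \in S(\B^n)} L_A(\:_\psi\alpha^n(T))$ together with the equivariance of $\bs^n \circ \s^n$: for any unital positive $\alpha^n$-equivariant $F : \B^n \to \B^n$ one has $\:_\psi\alpha^n(F(T)) = \:_{\psi \circ F}\alpha^n(T)$, and $\psi \circ F$ is again a state on $\B^n$, so taking the supremum over $\psi$ yields $L_n(F(T)) \leq L_n(T)$. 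This small observation — that $L_n$ is monotone under arbitrary unital positive equivariant endomorphisms, not just convolutions by states of $A$ — is what I expect to be the main conceptual point that has to be made explicit, and it is the reason that the argument does not need an explicit formula for $\bs^n \circ \s^n$ analogous to $\s^n \circ \bs^n = \:_{\hpn}\Delta$.
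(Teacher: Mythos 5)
Your proof is correct and follows essentially the same route as the paper: the same telescoping decomposition through $\alpha^n_{\phi_\delta}(T)$, with Lemma~\ref{li2} controlling the cutoff, the finite-dimensional estimate on $\B^n_{\mathcal{S}}$ controlling the middle term, and invariance of $L_n$ closing it off. The only (cosmetic) difference is in the third term: the paper writes it as $(\bs^n\circ\s^n)(\alpha^n_{\phi_\delta}(T)-T)$ and bounds it simply by norm-contractivity of the positive unital map $\bs^n\circ\s^n$, which avoids your extra steps of commuting $\bs^n\circ\s^n$ past $\alpha^n_{\phi_\delta}$ and proving $L_n((\bs^n\circ\s^n)(T))\leq L_n(T)$ (the latter being exactly Lemma~\ref{equivarLip}, which the paper only states in the following subsection).
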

\begin{proof}
Let $\theta >0$.  We show there exists an $N_\theta \in \N$ such that $\| T-(\bs^n \circ \s^n)(T)\| \leq L_n( T) \theta$ for all $T\in \B^n$, for all $n\geq N_\theta$.
By Lemma \ref{li2} there exists a finite subset $\mathcal{S}\subseteq \hat{A}$ and a $\psi \in S(A)$ not depending on $n\in \N$, and such that
$\\ \alpha^n_\psi ( \B^n)\subseteq \B^n_{\mathcal{S}}$,
       $\\ \| \alpha^n_\psi (T) \| \leq \| T \|$ for all $T\in \B^n$, and 
       $\\ \| T-\alpha^n_\psi (T) \| \leq L_n(T)\theta /3 $ for all $T\in \B^n. \\ $
We compute:
\begin{eqnarray} \| T-(\bs^n \circ \s^n)(T)\| & \leq &
\| T- \alpha^n_\psi (T)\|  +  \| \alpha^n_\psi (T)-(\bs^n \circ \s^n )(\alpha^n_\psi (T))\|  +
       \|  (\bs^n \circ \s^n )(\alpha^n_\psi (T) - T)\| \nonumber \\
       & \leq &  L_n(T)\theta /3  +  L_n(\alpha^n_\psi (T)) (K_\mathcal{S}\delta_n)
        +  L_n( T)\theta /3 \nonumber \\
       & \leq & L_n(T) (2\theta/3 + K_\mathcal{S}\delta_n) \nonumber
\end{eqnarray}
Since $\lim_{n\rightarrow \infty} \delta_n =0$, there exists an $N_\theta$ such that for all $n\geq N_\theta$, for all $T\in \B^n$ we have
 $\| T-(\bs^n \circ \s^n)(T)\| \leq L_n(T)\theta .$

\end{proof}
\end{subsection}

\begin{subsection}{Convergence in Quantum Gromov-Hausdorff Distance}

We use the results of Propositions \ref{estimate1} and \ref{estimate2} to show that the quantum metric spaces $(\B^n, L_n)$ converge to $(A^H, L)$ in quantum Gromov-Hausdorff distance as $n\longrightarrow \infty$.  First, we prove a lemma that relates Lip-norms to equivariant positive maps.

\begin{lemma}\label{equivarLip}
Let $A$ be a compact quantum group, and let $(\pi_1, C_1)$ and $(\pi_2, C_2)$ be two ergodic ordered $A$-modules.  Let $L_A$ be a Lip-norm on $A$ and let $L_1$ and $L_2$ be the induced Lip norms on $C_1$ and $C_2$.  Suppose $\rho :C_1\rightarrow C_2$ is a positive, unital, and $A$-equivariant map.  Then for any self-adjoint $x\in C_1$ we have $L_2 (\rho (x))\leq L_1 (x)$.
\end{lemma}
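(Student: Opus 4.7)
The plan is to unwind the definition of the induced Lip-norms given in Proposition \ref{li1} and exploit $A$-equivariance to reduce the inequality to a simple change of variable on the state spaces. Recall that for each $i$ the induced Lip-norm is
\[ L_i(c) = \sup_{\psi \in S(C_i)} L_A\bigl(\,_\psi \pi_i(c)\bigr), \]
so it suffices to show that for every $\psi \in S(C_2)$ one has $L_A(\,_\psi \pi_2(\rho(x))) \le L_1(x)$.

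First I would apply the equivariance hypothesis $\pi_2 \circ \rho = (\rho \otimes id) \circ \pi_1$ to rewrite
\[ {}_\psi \pi_2(\rho(x)) = (\psi \otimes id)(\rho \otimes id)\pi_1(x) = \bigl((\psi \circ \rho) \otimes id\bigr)\pi_1(x) = {}_{\psi \circ \rho}\pi_1(x). \]
Next I would observe that since $\rho$ is positive and unital, the composition $\psi \circ \rho$ is a positive unital linear functional on $C_1$, i.e.\ an element of $S(C_1)$.

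Combining these two observations gives
\[ L_A\bigl(\,_\psi \pi_2(\rho(x))\bigr) = L_A\bigl(\,_{\psi \circ \rho}\pi_1(x)\bigr) \le \sup_{\phi \in S(C_1)} L_A\bigl(\,_\phi \pi_1(x)\bigr) = L_1(x), \]
and taking the supremum over $\psi \in S(C_2)$ of the left-hand side yields $L_2(\rho(x)) \le L_1(x)$. There is no real obstacle here; the only point one might wish to double-check is that the formula from Proposition \ref{li1} really is being applied to self-adjoint elements and that $\psi \circ \rho$ lands in the state space (which is where positivity and unitality of $\rho$ are used), but both are immediate.
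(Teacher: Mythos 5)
Your proof is correct and follows essentially the same route as the paper's: rewrite ${}_\psi\pi_2(\rho(x))$ as ${}_{\psi\circ\rho}\pi_1(x)$ via equivariance, note that $\psi\circ\rho\in S(C_1)$ because $\rho$ is positive and unital, and take suprema. Nothing further is needed.
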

\begin{proof}
We have $\pi_1:C_1\rightarrow C_1 \otimes A$,
$\pi_2:C_2\rightarrow C_2 \otimes A$,  and
 $\pi_2 \circ \rho = (\rho \otimes id )\circ \pi_1$. Let $\varphi \in S(C_2)$ be arbitrary.  Since $\rho$ is positive and unital, we have $\varphi \circ \rho \in S(C_1)$.
Let $x\in C_1$ be self-adjoint.  Then
$L_A(\: _\varphi\pi_2(\rho (x))  ) = 
L_A( \: _{\varphi \circ \rho }\pi_1 (x)) \leq L_1 (x)$.  Taking the supremum over $\varphi \in S(C_2)$ gives the desired result.
\end{proof}

In particular,  for $a\in A^H_{sa}$ and $T\in \B^n_{sa}$ we have $L_n (\bs^n (a)) \leq L(a)$ and $L(\s^n (T))\leq L_n(T)$.  We proceed as in \cite{Rieffel3}.   For any $\epsilon >0$ we define a Lip-norm on $A^H\oplus \B^n$ by
\[ L^{n,\epsilon}(a,T) = L(a)\vee L_n(T) \vee \epsilon ^{-1}\| a-\s^n(T)\|
\hspace{.75in} \text{for } a=a^*,\: T=T^* .  \]
We now can use $L^{n, \epsilon}$ to estimate the quantum Gromov-Hausdorff distance between $(A^H, L)$ and $(\B^n, L_n)$.  We first show that for any $\epsilon >0$, the quotient seminorms of $L^{n,\epsilon}$ on $\B^n_{sa}$ and on $A_{sa}$ are simply $L_n$ and $L$, for sufficiently large $n\in \N$.
\begin{lemma}
For any $n\in \N$ and $\epsilon >0$ we have that the quotient seminorm $L^{n,\epsilon}$ on $\B^n_{sa}$ is just $L_n$.
\end{lemma}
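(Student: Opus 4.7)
The plan is to show directly that the quotient seminorm equals $L_n$ by exhibiting an optimal lift. Recall that the quotient seminorm of $L^{n,\epsilon}$ on $\B^n_{sa}$ is
\[ \widetilde{L^{n,\epsilon}}(T) = \inf\{ L^{n,\epsilon}(a,T) : a\in A^H_{sa}\}. \]
The inequality $\widetilde{L^{n,\epsilon}}(T)\geq L_n(T)$ is immediate from the very definition of $L^{n,\epsilon}$, since $L^{n,\epsilon}(a,T)\geq L_n(T)$ for every choice of $a$.

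For the reverse inequality, the obvious candidate lift is $a = \s^n(T)$. With this choice we have $\|a-\s^n(T)\| = 0$, so the contribution from the $\epsilon^{-1}\|a-\s^n(T)\|$ term vanishes regardless of $\epsilon$. It remains to compare $L(\s^n(T))$ with $L_n(T)$, and here I would invoke Lemma \ref{equivarLip}: the Berezin symbol $\s^n :\B^n\rightarrow A^H$ is positive, unital, and $A$-equivariant (positivity and unitality were established in Section 4.1, and equivariance is Proposition \ref{invariant}), so $L(\s^n(T))\leq L_n(T)$ for every self-adjoint $T\in \B^n$.

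Combining these gives
\[ L^{n,\epsilon}(\s^n(T), T) = L(\s^n(T))\vee L_n(T) \vee 0 = L_n(T), \]
hence $\widetilde{L^{n,\epsilon}}(T)\leq L_n(T)$, and equality follows. I expect no real obstacle here — the whole point of including $L_n(T)$ as one of the terms in the maximum defining $L^{n,\epsilon}$, together with the equivariance of $\s^n$, is precisely to guarantee that the $\B^n$-side quotient is not disturbed by the coupling term.
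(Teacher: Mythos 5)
Your proof is correct and is essentially identical to the paper's own argument: the lower bound from the definition of $L^{n,\epsilon}$, and the lift $(\s^n(T),T)$ combined with Lemma \ref{equivarLip} for the upper bound. Nothing to add.
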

\begin{proof}
It's clear from the formula for $L^{n,\epsilon}$ that $L^{n,\epsilon}(a,T) \geq L_n(T)$ for any $a\in A^H_{sa}$ and any $T\in \B^n_{sa}$, so the quotient seminorm of $L^{n,\epsilon}$ is no smaller than $L_n$.  On the other hand, Lemma \ref{equivarLip} gives that $L^{n,\epsilon} (\s^n(T) , T)=L_n(T)$, so the quotient seminorm is precisely $L_n$.
\end{proof}

\begin{lemma}\label{nbound}
 Let $\epsilon >0$ be given.   The quotient seminorm of $L^{n,\epsilon}$ on $A^H_{sa}$ is  $L$ for all sufficiently large $n$, namely when $\delta_n \leq \epsilon$.
\end{lemma}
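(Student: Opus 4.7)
The plan is to establish the two inequalities defining the quotient seminorm. Write $\tilde L^{n,\epsilon}$ for the quotient seminorm of $L^{n,\epsilon}$ on $A^H_{sa}$, defined by
\[ \tilde L^{n,\epsilon}(a) = \inf_{T\in \B^n_{sa}} L^{n,\epsilon}(a,T). \]
The inequality $\tilde L^{n,\epsilon}(a) \geq L(a)$ is immediate from the definition of $L^{n,\epsilon}$, since the first term in the triple maximum is already $L(a)$, independent of $T$.

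For the reverse inequality, the obvious candidate is $T = \bs^n(a)$, which is self-adjoint since $\bs^n$ is positive and $a$ is self-adjoint. I would check the three pieces of $L^{n,\epsilon}(a, \bs^n(a))$ separately. The first term is just $L(a)$. For the second term, Lemma \ref{equivarLip} applied to the equivariant positive unital map $\bs^n : A^H \to \B^n$ (equivariance is Proposition \ref{invariant}) gives $L_n(\bs^n(a)) \leq L(a)$. For the third term, Proposition \ref{estimate1} gives
\[ \|a - \s^n(\bs^n(a))\| = \|a - (\s^n\circ \bs^n)(a)\| \leq L(a)\,\delta_n, \]
so under the hypothesis $\delta_n \leq \epsilon$ we obtain $\epsilon^{-1}\|a - \s^n(\bs^n(a))\| \leq L(a)$. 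Combining the three estimates, $L^{n,\epsilon}(a, \bs^n(a)) \leq L(a)$, hence $\tilde L^{n,\epsilon}(a) \leq L(a)$.

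There is no real obstacle here; the entire content of the lemma is simply that $\bs^n$ is the right test element and that Propositions \ref{estimate1} and \ref{invariant} together with Lemma \ref{equivarLip} have been set up precisely so that the three bounds align. The condition $\delta_n \leq \epsilon$ is exactly what is needed to convert the norm estimate coming from the Berezin transform into a bound of the right form for the last term of $L^{n,\epsilon}$.
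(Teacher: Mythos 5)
Your proof is correct and follows essentially the same route as the paper: both use the test element $T=\bs^n(a)$, bound the middle term by $L(a)$ via Lemma \ref{equivarLip}, and bound the last term by $\epsilon^{-1}\delta_n L(a)$ via Proposition \ref{estimate1}. The only difference is that you spell out the $L_n(\bs^n(a))\leq L(a)$ step explicitly, which the paper absorbs silently into the maximum.
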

\begin{proof}
As before, the quotient seminorm is clearly no smaller than $L$. Let $\delta_n$ be as in Proposition \ref{estimate1}.  For all sufficiently large $n$ we will have  $\delta_n \leq \epsilon$.  Using Proposition \ref{estimate1} and Lemma \ref{equivarLip},  we compute
\[ L^{n,\epsilon}(a, \bs^n_a) =
L(a)\vee \epsilon ^{-1} \| a-\s^n(\bs^n_a)\| \leq
L(a) \vee \epsilon^{-1} \delta_n L(a) =L(a) \]
 so the quotient seminorm of $L^{n,\epsilon}$ is precisely $L$ whenever $\delta_n \leq \epsilon$.
\end{proof}

Let $\epsilon >0$ be given and choose $n$ so that the result of Lemma \ref{nbound}  holds.  Then $L^{n,\epsilon}$ induces a metric on $S(A^H\oplus \B^n)$ via:
\[ \rho_{n, \epsilon}(\mu , \nu )= \sup \{ |\mu (c) -\nu (c)|
: c\in (A^H\oplus \B^n)_{sa} , L^{n,\epsilon}(c) \leq 1\}. \]
For any large enough $n$,  we now estimate the distance between $S(\B^n)$ and $S(A^H)$ in $S(A^H\oplus \B^n)$ for the metric $\rho_{n, \epsilon}$.

\begin{lemma}
Let $\epsilon >0$, and choose $n$ so that the result of Lemma \ref{nbound} holds.  Then $S(A^H)$ is in the $\epsilon$ neighborhood of $S(\B^n)$ as measured by $\rho_{n, \epsilon}$.
\end{lemma}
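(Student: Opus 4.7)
The plan is as follows. Given any $\mu \in S(A^H)$, viewed as an element of $S(A^H \oplus \B^n)$ via pullback along the coordinate projection $A^H \oplus \B^n \to A^H$, I want to exhibit a state $\tilde\nu \in S(\B^n) \subseteq S(A^H \oplus \B^n)$ with $\rho_{n,\epsilon}(\mu, \tilde\nu) \leq \epsilon$. The natural candidate is $\tilde\nu := \mu \circ \s^n$, which is a state on $\B^n$ because $\s^n : \B^n \to A^H$ is positive and unital by the construction in Section 4.1.

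With this choice, the required estimate is immediate. Fix a self-adjoint $(a,T) \in A^H \oplus \B^n$ with $L^{n,\epsilon}(a,T) \leq 1$. Then by the very definition of $L^{n,\epsilon}$ we have $\epsilon^{-1}\|a - \s^n(T)\| \leq 1$, so
\[ |\mu(a) - \tilde\nu(T)| \;=\; |\mu(a - \s^n(T))| \;\leq\; \|a - \s^n(T)\| \;\leq\; \epsilon. \]
Taking the supremum over all admissible $(a,T)$ yields $\rho_{n,\epsilon}(\mu, \tilde\nu) \leq \epsilon$, which is exactly the assertion that $\mu$ lies in the $\epsilon$-neighborhood of $S(\B^n)$.

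There is essentially no hard step: the choice of $\tilde\nu$ is forced by the fact that the only term in $L^{n,\epsilon}$ coupling $A^H$ to $\B^n$ is $\|a - \s^n(T)\|$, and once one has this cross-estimate the positivity/unitality of $\s^n$ does all the work. The one point requiring care is the identification of $S(A^H)$ and $S(\B^n)$ as subsets of $S(A^H \oplus \B^n)$ with the correct induced metric; this is precisely what the previous lemma and Lemma \ref{nbound} guarantee, under the hypothesis $\delta_n \leq \epsilon$ that is already in force. I expect the paper to complement this with a symmetric lemma (swap the roles of $\s^n$ and $\bs^n$, using Proposition \ref{estimate1} to bound $\|\s^n(\bs^n(a)) - a\|$) giving the dual containment; taken together these two estimates bound the Hausdorff distance between $S(A^H)$ and $S(\B^n)$ inside $(S(A^H \oplus \B^n), \rho_{n,\epsilon})$ by $\epsilon$, and since $\epsilon$ may be chosen of the order of $\delta_n$, the quantum Gromov–Hausdorff distance $dist_q^{GH}(A^H, \B^n)$ tends to $0$.
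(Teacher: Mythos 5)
Your proposal is correct and is essentially identical to the paper's own proof: the paper also takes $\nu = \mu\circ\s^n$ (valid since $\s^n$ is positive and unital) and derives $|\mu(a)-\nu(T)| = |\mu(a-\s^n(T))| \leq \|a-\s^n(T)\| \leq \epsilon$ directly from the cross-term in the definition of $L^{n,\epsilon}$. Your closing remarks about the dual containment via $\bs^n$ and Proposition \ref{estimate1} also match the structure of the subsequent lemma in the paper.
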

\begin{proof}
Let $\mu \in S(A^H)\subseteq S(A^H\oplus \B^n)$.  We must find $\nu \in S(\B^n) \subseteq S(A^H\oplus \B^n)$ such that $\rho_{n, \epsilon}(\mu , \nu )\leq \epsilon$.  Since $\s^n$ is positive and unital, we can take $\nu = \mu \circ \s^n$.  Let $(a, T)\in (A^H\oplus \B^n)_{sa}$ be such that $L^{n,\epsilon}(a,T) \leq 1$.  Then in particular, we have $\| a-\s^n_T\| \leq \epsilon$.  Thus
\[
|\mu (a, T)-\nu (a,T)|  =  |\mu (a)-\nu (T)| = |\mu (a)-\mu (\s^n_T)|
        \leq  \| a-\s^n(T)\| \leq \epsilon .
\]  Since this holds for all such $(a,T)$, we have $\rho_{n,\epsilon}(\mu ,\nu )\leq \epsilon$, as desired.
\end{proof}

\begin{lemma}
Let $\epsilon >0$ be given.  Then for all $n\in \N$ such that $\delta_n$ and $\theta_n$ are both less than $ \epsilon$,  we have that $S(\B^n)$ is in the $2\epsilon$ neighborhood of $S(A^H)$ for the metric  $\rho_{n, \epsilon}$.
\end{lemma}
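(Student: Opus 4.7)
The plan is to mirror the construction of the previous lemma, but pull back states in the opposite direction. Given $\nu\in S(\B^n)$, the natural candidate for an approximating state on $A^H$ is $\mu=\nu\circ\bs^n$, which is a state on $A^H$ because $\bs^n:A^H\rightarrow\B^n$ is positive and unital by Proposition \ref{prop6}. The goal is then to show $\rho_{n,\epsilon}(\mu,\nu)\leq 2\epsilon$.

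Fix $(a,T)\in(A^H\oplus\B^n)_{sa}$ with $L^{n,\epsilon}(a,T)\leq 1$. This guarantees three things simultaneously: $L(a)\leq 1$, $L_n(T)\leq 1$, and $\|a-\s^n(T)\|\leq\epsilon$. By the definitions of $\mu$ and the evaluation pairing on the direct sum,
\[
|\mu(a,T)-\nu(a,T)| \;=\; |\nu(\bs^n_a)-\nu(T)| \;\leq\; \|\bs^n_a - T\|.
\]
I would split this last quantity via the triangle inequality through the element $(\bs^n\circ\s^n)(T)$:
\[
\|\bs^n_a - T\| \;\leq\; \|\bs^n_a - (\bs^n\circ\s^n)(T)\| \,+\, \|(\bs^n\circ\s^n)(T) - T\|.
\]
For the first term, since $\bs^n$ is unital and positive it is norm nonincreasing, so it is bounded by $\|a-\s^n(T)\|\leq\epsilon$. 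For the second term, Proposition \ref{estimate2} gives $\|T-(\bs^n\circ\s^n)(T)\|\leq L_n(T)\theta_n\leq\theta_n\leq\epsilon$ by our assumption on $n$. Adding these yields $|\mu(a,T)-\nu(a,T)|\leq 2\epsilon$.

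Since $(a,T)$ was an arbitrary self-adjoint element of the unit ball of $L^{n,\epsilon}$, taking the supremum gives $\rho_{n,\epsilon}(\mu,\nu)\leq 2\epsilon$, so every $\nu\in S(\B^n)$ lies within $2\epsilon$ of $S(A^H)$, as required. There is no real obstacle here: the two key inputs (Proposition \ref{estimate2} controlling $\bs^n\circ\s^n$ on $\B^n$, and the third clause of $L^{n,\epsilon}$) were expressly set up to make this estimate work, and the computation is essentially symmetric to the one in the previous lemma with the roles of $\s^n$ and $\bs^n$ interchanged.
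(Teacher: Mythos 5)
Your proof is correct and is essentially identical to the paper's own argument: same choice $\mu = \nu\circ\bs^n$, same triangle-inequality split through $(\bs^n\circ\s^n)(T)$, and the same two inputs (contractivity of $\bs^n$ applied to $a-\s^n(T)$, and Proposition \ref{estimate2} with $L_n(T)\le 1$). Nothing further to add.
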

\begin{proof}
Let $n$ be large enough so that $\delta_n$ and $\theta_n$ are both less than $\epsilon$, where $\delta_n$ and $\theta_n$ are from Propositions \ref{estimate1} and \ref{estimate2}.    Let $\nu \in S(\B^n) \subseteq S(A^H\oplus \B^n)$.  We must find $\mu \in S(A^H)\subseteq S(A^H\oplus \B^n)$ such that $\rho_{n, \epsilon}(\mu , \nu )\leq \epsilon$.  As in the previous proof, we can take $\mu = \nu \circ \bs^n$.  Let $(a, T)\in (A^H\oplus \B^n)_{sa}$ be such that $L^{n,\epsilon}(a,T) \leq 1$, so that $\| a-\s^n_T\| \leq \epsilon$ and $L_n(T)\leq 1$.  Then
\[
|\nu (a, T)-\mu (a,T)|  =   |\nu (T)-\nu (\bs^n_a)|   \leq  \| T- \bs^n_a\|
 \leq \| T- (\bs^n\circ \s^n)(T)\| + \| \bs^n ( \s^n_T- a)\|  \leq \theta_n +\epsilon
\]  Since this holds for all such $(a,T)$, we have $\rho_{n,\epsilon}(\mu ,\nu )\leq \theta_n +\epsilon \leq 2\epsilon  $, as desired.
\end{proof}

Combining the results of these four lemmas gives us our desired result.

\begin{theorem}\label{main1}
The compact quantum metric spaces $(\B^n, L_n)$ converge to $(A^H, L)$ in quantum Gromov-Hausdorff distance.
\end{theorem}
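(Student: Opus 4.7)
The plan is to combine the four lemmas immediately preceding the theorem with Propositions \ref{estimate1} and \ref{estimate2} in what amounts to a bookkeeping argument. Fix an arbitrary $\epsilon > 0$. For each sufficiently large $n$ I would exhibit a Lip-norm on the direct sum $A^H \oplus \B^n$ that lies in the class $\mathcal{M}$ of bridging Lip-norms, and then control the Hausdorff distance between $S(A^H)$ and $S(\B^n)$ inside the resulting metric state space.

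Specifically, for $n$ large enough that both $\delta_n \leq \epsilon$ and $\theta_n \leq \epsilon$, I would first verify that $L^{n,\epsilon}$ is genuinely a Lip-norm on $A^H \oplus \B^n$. Finiteness on a dense subspace is immediate from the corresponding properties of $L$ and $L_n$, and $L^{n,\epsilon}$ vanishes only on scalar multiples of the unit because $\s^n$ is unital and the coupling term $\epsilon^{-1}\|a - \s^n(T)\|$ forces $a$ and $T$ to share the same scalar value. The weak$^*$ topology axiom follows from Proposition \ref{rief1}, since the $L^{n,\epsilon}$-unit ball in $(A^H \oplus \B^n)/\C 1$ projects into the unit balls for $L$ and $L_n$ in their respective quotients, both of which are already known to be totally bounded. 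The first two preceding lemmas then give that $L^{n,\epsilon} \in \mathcal{M}$, while the third and fourth show that the Hausdorff distance between $S(A^H)$ and $S(\B^n)$ inside $(S(A^H \oplus \B^n), \rho_{n,\epsilon})$ is at most $2\epsilon$. Taking the infimum in the definition of quantum Gromov-Hausdorff distance yields $dist_q^{GH}((A^H, L), (\B^n, L_n)) \leq 2\epsilon$ for all such $n$.

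Since $\delta_n \to 0$ and $\theta_n \to 0$ by Propositions \ref{estimate1} and \ref{estimate2}, this bound holds for all $n$ sufficiently large, and letting $\epsilon \to 0$ gives the desired convergence. The substantive obstacles have already been overcome by the preceding analytic work: Proposition \ref{prop9}, together with coamenability, gives the forward estimate of Proposition \ref{estimate1}, while the invariance of $L_n$ (via right-invariance of $L_A$), together with Lemma \ref{li2} and the finite-dimensionality of the isotypic components of $A^H$, delivers the reverse estimate of Proposition \ref{estimate2}. Once both estimates are in hand, the theorem is a direct assembly of these pieces via the four preceding lemmas and requires no further analytic input.
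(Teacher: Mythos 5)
Your proposal is correct and follows essentially the same route as the paper, whose proof of Theorem \ref{main1} is precisely the assembly of the four preceding lemmas (quotient seminorms equal $L$ and $L_n$, plus the two neighborhood estimates) using the Lip-norm $L^{n,\epsilon}$ and the thresholds $\delta_n,\theta_n\leq\epsilon$ from Propositions \ref{estimate1} and \ref{estimate2}. The one thing you add is an explicit sketch that $L^{n,\epsilon}$ is genuinely a Lip-norm on $A^H\oplus\B^n$ (nondegeneracy via unitality of $\s^n$ and total boundedness via Proposition \ref{rief1}), a verification the paper leaves implicit.
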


\end{subsection}

\end{section}

\begin{section}{Berezin Quantization for General Ergodic $A$-Modules}

Our previous result is actually a special case of a more general theorem.  The constructions in the general case are less intuitive than in the case above, so I thought it'd be best to start with the special case.  Also, we need some important proofs for the special case in order to prove the corresponding facts in the general case. 

One new feature is that we will now have to use both left and right $A$-modules.  For ordinary groups one can get around this, because if $\alpha$ is a left-action of an ordinary group $G$, then $\alpha \circ (\cdot )^{-1}$ is a right-action, and vice versa.  In the Kac case the coinverse is always bounded, so in principle one can still convert right modules to left modules, but doing so obscures the overall picture.  

\begin{definition}\label{rightmodalg}
Let $A$ be a compact quantum group.  A right action of $A$ on a C$^*$-algebra $N$ is a C$^*$-homomorphism $\pi :N\rightarrow A \otimes N$  with the properties
\begin{itemize}
\item  $(id\otimes \pi )\pi = (\Delta \otimes id)\pi$
\item  $(A\otimes 1)\pi (N)$ is total in $ A\otimes N$.
\end{itemize}
When $\pi :N\rightarrow A\otimes N$ is a right action of $A$ on a C$^*$-algebra $N$, we will call the pair $(\pi ,N)$ a right $A$-module algebra.
An ergodic right $A$-module algebra is a right $A$-module algebra $(\pi ,N)$ such that $x\in N$ and $\pi (x)=1_A\otimes x$ only if $x\in \C 1_N$.
\end{definition}

 As before,we start with  a coamenable compact quantum group $A$ of Kac-type. Let  $u:\H \rightarrow \H \otimes A$ be a unitary irreducible representation, and let  $\xi \in \H$ be a unit vector which is a primitive-like for $u$.  As before, we denote the rank $1$ projection corresponding to $\xi$ by $P$, and we let $H$ be the state stabilizer of $P$ for the corresponding action $\alpha :B(\H )\rightarrow B(\H )\otimes A$.
Suppose now we also have an ergodic right $A$-module algebra $\pi :N\rightarrow A\otimes N$.  In the following sections we will construct an analog $\s :M\rightarrow N^H$  of the Berezin symbol and its adjoint $\bs :N^H\rightarrow M$ for a certain operator system $M$.  The Berezin symbol and it's adjoint will be positive unital maps.  As before, we can consider the sequences of maps $\s^n :M^n\rightarrow N^H$.  We aim to prove the following theorem, which is stated here in a slightly imprecise way.

\begin{theorem}\label{ergodicversion}
The spaces $\{ M^n\}_{n\in \N}$ together with $N^H$ form a strict quantization of $N^H$.
Furthermore, the sequence of compact quantum metric spaces $\{ M^n\}_{n\in \N}$ converge to $N^H$ in quantum Gromov-Hausdorff distance.
\end{theorem}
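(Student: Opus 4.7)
The plan is to reduce Theorem \ref{ergodicversion} to the results already proved for the ``untwisted'' case in Section 4 by realizing $N^H$ and $M^n$ as fixed-point operator systems inside suitable tensor products carrying diagonal $A$-actions. Based on the noncommutative vector bundle picture sketched in the introduction, I expect $N^H$ to be the diagonal $A$-invariants inside $N \otimes A^H$ and $M^n$ to be the diagonal $A$-invariants inside $N \otimes \B^n$, where on each factor one uses the appropriate action: the right action $\pi$ on $N$ together with the right translation action of $A$ on $A^H$, or the conjugation action $\alpha^n$ on $\B^n$. The Berezin symbol and its adjoint in this setting should be the restrictions of $\mathrm{id}_N \otimes \s^n$ and $\mathrm{id}_N \otimes \bs^n$ to these fixed-point subspaces; that the restrictions land in the correct spaces is a direct consequence of the equivariance recorded in Proposition \ref{invariant}, and positivity and unitality are inherited from the factorwise maps.

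For the strict quantization assertion, I would show that $\s^n \circ \bs^n$ converges strongly to the identity on $N^H$. On the ambient $N \otimes A^H$ the composition equals $\mathrm{id}_N \otimes (\s^n \circ \bs^n)$, and Proposition \ref{prop10} already gives strong convergence of $\s^n \circ \bs^n$ to the identity on $A^H$. Strong convergence then passes to the tensor product on elementary tensors and extends by uniform boundedness and density; restricting to $N^H$ preserves this. Theorem \ref{deformquant} then yields the strict quantization portion of Theorem \ref{ergodicversion}.

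For the Gromov--Hausdorff statement, I would fix any right-invariant regular Lip-norm $L_A$ on $A$ and use Proposition \ref{li1} to induce Lip-norms $L$ and $L_n$ on $N^H$ and $M^n$ via the residual left $A$-action on each. One must verify these seminorms really are Lip-norms: this amounts to checking that the residual $A$-actions on $N^H$ and $M^n$ are ergodic with finite-dimensional isotypic components, which should follow from Boca's theorem applied to the ergodic action $\pi$ on $N$ combined with the Peter--Weyl decomposition of $A^H$ inside $A$. Once the Lip-norms are in place, the proof reduces to establishing the two key estimates of Sections 4.4 and 4.5 in the new setting: $\| (\s^n \circ \bs^n)(x) - x \| \leq L(x)\delta_n$ for $x \in N^H_{sa}$ and $\| (\bs^n \circ \s^n)(T) - T \| \leq L_n(T)\theta_n$ for $T \in M^n_{sa}$, with $\delta_n, \theta_n \to 0$. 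These should follow from their counterparts, Propositions \ref{estimate1} and \ref{estimate2}, because the core input --- the weak convergence of $\hpn$ to $\varepsilon$ on $A^H$ from Proposition \ref{prop9} --- is an assertion about $A^H$ alone and is unaffected. With these estimates, the bridge construction $L^{n,\epsilon}$ on $N^H \oplus M^n$ from the proof of Theorem \ref{main1} goes through essentially verbatim.

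The main obstacle, I expect, is the setup rather than the analysis: one must choose the diagonal actions so that (i) the residual $A$-action on $N^H$ and $M^n$ is ergodic with finite-dimensional isotypic components, (ii) $\s^n$ and $\bs^n$ genuinely restrict to equivariant positive unital maps between the fixed-point systems, and (iii) the equivariance of Proposition \ref{invariant} descends correctly to the restrictions. Once these bookkeeping issues are settled, the underlying convergence mechanism --- Proposition \ref{prop9}'s weak convergence, which is insensitive to $N$ --- drives both conclusions, and the arguments of Sections 4.4 and 4.5 adapt with only cosmetic changes.
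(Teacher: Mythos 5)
Your overall instinct --- that everything should reduce to the $N=A$ case because the driving mechanism is the weak$^*$ convergence $\hpn \to \varepsilon$ on $A^H$ --- is correct, and your argument for the strict-quantization half is essentially sound: the paper's own proof of Corollary \ref{Berezin2'} amounts to applying $(\hpn \otimes id)$ to $\pi(x) \in A^H \otimes N$ (Lemma \ref{image}) and invoking Proposition \ref{prop9}, which is close in spirit to your ``$\mathrm{id}_N \otimes (\s^n\circ\bs^n)$'' picture. (Note, though, that the paper realizes $M^n$ as the cotensor-type space $\{\omega \in \B^n\otimes N : (\alpha^n\otimes id)\omega = (id\otimes\pi)\omega\}$ rather than as diagonal invariants of $N\otimes\B^n$, and the Berezin symbol there is $(tr\otimes id)((P^n\otimes 1)\,\cdot\,)$, not a restriction of $id_N\otimes\s^n$; your identifications would themselves need proof.)

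The genuine gap is in the Gromov--Hausdorff half. You propose to induce the Lip-norms $L$ and $L_n$ on $N^H$ and $M^n$ via Proposition \ref{li1} applied to ``the residual left $A$-action on each,'' but no such actions exist: $N$ carries only a right action of $A$ and no commuting left action, so after passing to the $H$-fixed points nothing remains --- the paper flags this explicitly as the one major structural difference from the $N=A$ case. Consequently Proposition \ref{li1} is not applicable, the ergodicity and finite-dimensionality you want to extract from Boca's theorem are statements about actions that are not there, and --- most importantly --- the invariance of $L_n$ that powers the proof of the estimate $\|T-(\bs^n\circ\s^n)(T)\|\le L_n(T)\theta_n$ (via Lemma \ref{li2} in Proposition \ref{estimate2}) is unavailable. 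The paper's workaround is twofold: it defines $L$ on $N^H$ by restricting the Lip-norm on $N$ induced by the ergodic right action $\pi$, and $L_n$ on $M^n$ by restricting the seminorm $(\|\cdot\|\otimes L_A\otimes\|\cdot\|)(id\otimes\pi)$ on $\B^n\otimes N$ (which is not itself a Lip-norm, since the ambient action is not ergodic, but becomes one on $M^n$); and it transfers the sequence $\theta_n$ from the $N=A$ case by slicing with states $\psi\in S(N)$, using the identities $\varsigma[(id\otimes\psi)\omega]=\pi_\psi(\s_\omega)$ and $\breve\varsigma(\pi_\psi(x))=(id\otimes\psi)\bs(x)$ of Lemmas \ref{functional1}--\ref{functional3}. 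This state-slicing reduction is the idea your proposal is missing, and without it the second key estimate, and hence the convergence in quantum Gromov--Hausdorff distance, does not follow.
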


\begin{subsection}{Motivation for M and the Berezin Symbol}

As before, we let $\B \subseteq B(\H )$ be the cyclic subspace of $B(\H )$ generated by $P$, and  we have $\alpha :\B \rightarrow \B \otimes A$.  Of course, the range of $\alpha$ is contained in the subspace $\{ \omega \in \B \otimes A \: | \: (\alpha \otimes id)(\omega )= (id\otimes \Delta )(\omega )\}$.  We will confirm shortly that $\alpha$ is actually an order isomorphism from  $\B$  to this subspace.  We defined before $A^H = \{ a\in A \: | \: (\Pi \otimes id)(a) = 1_B\otimes a\}$, where $\Pi :A\rightarrow B$ is the universal positive unital map stabilizing $P$.  The point here is that $A^H$ is actually the set of elements of $A$ fixed by $\Pi$ whenever $A$ is viewed as a right $A$-module algebra.  This is the reason we start with a right $A$-module algebra $\pi :N\rightarrow A \otimes N$, rather than a left module.  Based on these two observations, we are ready to define $M$.
\begin{definition}\label{defM}
Let $\alpha :\B \rightarrow \B \otimes A$ and $\pi :N\rightarrow A\otimes N$ be as above.  We define $M\subseteq \B \otimes N$ to be the  subspace
\[ M = \{ \omega \in \B \otimes N \: | \: (\alpha \otimes id)(\omega )= (id\otimes \pi )(\omega )\} . \]
\end{definition}
Note:  It is clear from the definition that $M$ is norm closed, unital, and $*$-closed, so that $M$ is an operator system.

\begin{example}
Let $N$ be $A$, viewed as a right $A$-module algebra.  We have $\B \cong M$.  Indeed, we have already noted that in this case $\alpha :\B \rightarrow M$.  We also have $id \otimes \varepsilon :M\rightarrow \B$.  As usual, we have $(id \otimes \varepsilon )\circ \alpha =id_\B$.
 If $\omega \in M$, then $\alpha [(id \otimes \varepsilon )(\omega ) ] = (id \otimes id\otimes \varepsilon )(\alpha \otimes id )(\omega ) =
       (id \otimes id\otimes  \varepsilon )(id\otimes \Delta )(\omega ) = \omega$, so that $\alpha \circ (id \otimes \varepsilon )= id_M$ as well.
Both $\alpha$ and $(id\otimes \varepsilon )$ are positive and unital. Thus  $\alpha :\B\rightarrow M$ is an isomorphism of operator systems.
\end{example}

The first thing we wish to show is that $M$ is in fact finite dimensional. The original motivation for the fuzzy sphere was to replace the 2+1 dimensional QFT on the sphere with the 0+1 dimensional QFT on the fuzzy sphere.  Thus it is natural to want our approximations $M^n$ of $N^H$ to be finite dimensional.

Since $\B$ has a left $A$-action $\alpha :\B \rightarrow \B \otimes A$, we have a dual right action $\alpha ':\B'\rightarrow A\otimes \B'$ of $A$ on the dual space $\B'$ by $\alpha'(\varphi )=(\varphi \otimes id)\alpha$.  

\begin{proposition}
The space $Lin_A(\B', N)$ of right $A$-module maps from $\B'$ to $N$ is linearly isomorphic to $M$.  In particular, $M$ is finite dimensional.
\end{proposition}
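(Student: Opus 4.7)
The plan is to exploit the finite-dimensionality of $\B$ to identify $\B\otimes N$ with $\mathrm{Hom}(\B',N)$ and then translate the defining equation of $M$ directly into an $A$-equivariance condition on the corresponding linear map. First I would note that $\B\subseteq B(\H)$ with $\H$ finite-dimensional, so $\B$ is finite-dimensional and the canonical assignment $\omega\mapsto T_\omega$, where $T_\omega(\varphi)=(\varphi\otimes id)\omega$, is a linear isomorphism $\B\otimes N\cong\mathrm{Hom}(\B',N)$.

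Next I would fix a basis $\{b_k\}$ of $\B$ with dual basis $\{\varphi_k\}$ of $\B'$ and compute the matrix coefficients $\alpha(b_k)=\sum_l b_l\otimes a_{lk}$, which give the explicit formula $\alpha'(\varphi_j)=\sum_l a_{jl}\otimes\varphi_l$ in $A\otimes\B'$. Writing $\omega=\sum_k b_k\otimes n_k$ and expanding both sides of the equation $(\alpha\otimes id)\omega=(id\otimes\pi)\omega$ in this basis reduces membership in $M$ to the family of relations $\pi(n_l)=\sum_k a_{lk}\otimes n_k$; a short check shows these are exactly the equalities that say $T_\omega$ intertwines $\alpha'$ with $\pi$, i.e.\ $T_\omega\in Lin_A(\B',N)$. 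Since this correspondence is visibly linear and bijective, the restriction gives the desired isomorphism $M\cong Lin_A(\B',N)$.

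For the finite-dimensionality of $M$, I would apply the isotypic decomposition of Theorem \ref{action} to the right coaction $\alpha'$ on $\B'$; since $\dim\B'<\infty$, this decomposition $\B'=\bigoplus_{\nu\in\mathcal{S}}\B'_\nu$ involves only finitely many $\nu\in\hat{A}$, each with $\B'_\nu$ finite-dimensional. Every $T\in Lin_A(\B',N)$ must carry $\B'_\nu$ into $N_\nu$, so $Lin_A(\B',N)\cong\bigoplus_{\nu\in\mathcal{S}}Lin_A(\B'_\nu,N_\nu)$, and Boca's theorem yields that each $N_\nu$ is finite-dimensional. The only point requiring care is that Boca's result is usually stated for left actions, so one must either cite a right-action version or transport via the Kac-type coinverse; this is routine bookkeeping, and I anticipate no substantive obstacle in the argument.
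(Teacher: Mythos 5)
Your proposal is correct and follows essentially the same route as the paper: both identify $M$ with $Lin_A(\B',N)$ via $\omega\mapsto\hat{\omega}$, $\hat{\omega}(\varphi)=(\varphi\otimes id)\omega$, translate the defining equation $(\alpha\otimes id)\omega=(id\otimes\pi)\omega$ into the intertwining condition (the paper does this basis-free, you with a basis, which is only a cosmetic difference), and then deduce finite-dimensionality from the finiteness of the isotypic decomposition of $\B'$ together with Boca's theorem for the ergodic action on $N$. Your remark about transporting Boca's result from left to right actions is a fair point of care that the paper glosses over, but it is indeed routine.
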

\begin{proof}
Let $\omega \in M$.  Define $\hat{\omega}:\B'\rightarrow N$ by $\hat{\omega}(\varphi )=(\varphi \otimes id)\omega$.  We must show that $\pi (\hat{\omega}(\varphi ))=(id\otimes \hat{\omega})(\alpha'(\varphi ))$.  We compute:
\begin{eqnarray}
\pi (\hat{\omega}(\varphi )) & = & \pi [(\varphi \otimes id)\omega ] 
	\nonumber \\
& = & (\varphi \otimes id\otimes id)(id\otimes \pi )\omega \nonumber \\
& = & (\varphi \otimes id\otimes id)(\alpha \otimes id )\omega \nonumber \\
& = & (\alpha'(\varphi )\otimes id)\omega =(id\otimes \hat{\omega})(\alpha'(\varphi )) \nonumber
\end{eqnarray}
where in the last line we view $\alpha'(\varphi )$ as a mapping $\B \rightarrow A$ on the left and as an element of $A\otimes \B'$ on the right. 

For the converse, if $\hat{\omega}:\B'\rightarrow N$ is any map, we can write 
$\hat{\omega}(\varphi )=(\varphi \otimes id)\omega$ for some $\omega \in \B \otimes N$.  Then the calculation above shows that $\hat{\omega}$ is $A$-equivariant only if $(\varphi \otimes id\otimes id)(\alpha \otimes id )\omega =
 (\varphi \otimes id\otimes id)(id\otimes \pi )\omega$ for all $\varphi \in \B'$.  Hence $\omega \in M$.

The second statement follows easily from the first.  Since $\B'$ is finite dimensional, we have the isotypic component $\B'_\gamma \neq 0$ for only finitely many $\gamma \in \hat{A}$.  By ergodicity, any particular isotypic component of $N$ is finite dimensional.  Thus we are essentially counting linear maps between two finite dimensional spaces, and $Lin_A(\B', N)$ is finite dimensional.

\end{proof}

Before we defined the Berezin symbol $\s: \B \rightarrow A^H$ via $\s (T)= (tr\otimes id )((P\otimes 1)\alpha (T))$.  Taking into account the isomorphism in the above example, we make the following definition.

\begin{definition}
Let $A$, $(\alpha, \B )$, $(\pi , N)$, and $M$ be as above.  The Berezin symbol $\s :M\rightarrow N$ is defined as the map:
\[ \s (\omega )=(tr\otimes id)((P\otimes 1)\omega ).\]
\end{definition}
It's clear that $\s$ is completely positive and unital.

As before, we can consider the $H$-invariant elements of $N$.  Namely, we let $N^H= \{ x\in N \: | \: _\phi\pi (x)= x, \phi \in H\}$.  

\begin{proposition}
The   Berezin symbol $\s :M\rightarrow N$ is injective, and its image is contained in $N^H$.
\end{proposition}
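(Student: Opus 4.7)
The strategy is to reduce both assertions to the original Berezin symbol of Section 4.1, which I will denote $\s_0:\B \rightarrow A^H$ (so $\s_0(T) = (tr\otimes id)((P\otimes 1)\alpha(T))$), by first establishing the auxiliary identity
\[ (\s_0\otimes id_N)(\omega) \;=\; \pi(\s(\omega)) \qquad \text{for all } \omega \in M. \]
To derive this, I apply the operator $(tr \otimes id_A \otimes id_N)\big((P\otimes 1_A \otimes 1_N)\cdot\big)$ to both sides of the defining relation $(\alpha\otimes id_N)(\omega) = (id_\B\otimes \pi)(\omega)$ of $M$, both sides of which live in $\B\otimes A\otimes N$. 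Writing $\omega = \sum_i T_i\otimes n_i$, the left-hand side collapses to $\sum_i \s_0(T_i)\otimes n_i = (\s_0\otimes id_N)(\omega)$, while the right-hand side becomes $\sum_i tr(PT_i)\,\pi(n_i) = \pi(\s(\omega))$, since each $tr(PT_i)$ is a scalar.

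Injectivity then follows immediately: if $\s(\omega) = 0$, then $(\s_0\otimes id_N)(\omega) = \pi(0) = 0$, and since $\s_0$ is injective by Proposition \ref{prop5} and $\B$ is finite-dimensional (as the irreducible representation $(u,\H)$ of the compact quantum group $A$ is finite-dimensional, and $\B \subseteq B(\H)$), the map $\s_0\otimes id_N$ is injective on $\B\otimes N$, forcing $\omega = 0$.

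For the containment $\s(M)\subseteq N^H$, I slice the key identity by an arbitrary $\phi \in H$ on the $A$-factor: the right-hand side yields $\,_\phi\pi(\s(\omega))$, while the left yields $\sum_i \phi(\s_0(T_i))\,n_i$. The computation therefore reduces to showing that $\phi\circ \s_0 = tr(P\,\cdot\,)$ on $\B$ for every $\phi \in H$. By Proposition \ref{prop3}, any such $\phi$ factors as $\phi = \varphi \circ \Pi$ with $\varphi \in S(B)$, where $\Pi:A\rightarrow B$ is the state stabilizer of $P$. Combining this with Proposition \ref{prop5} (which guarantees $\s_0(T)\in A^H$), Lemma \ref{lem10} (which says $\Pi(a) = \varepsilon(a)1_B$ on $A^H$), and the standard relation $(id\otimes \varepsilon)\alpha = id_\B$, one computes $\phi(\s_0(T)) = \varepsilon(\s_0(T)) = tr(PT)$. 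The left-hand side of the sliced identity therefore equals $\sum_i tr(PT_i)n_i = \s(\omega)$, so $\,_\phi\pi(\s(\omega)) = \s(\omega)$ for every $\phi \in H$, which is exactly $\s(\omega)\in N^H$.

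The only point requiring real care is the index bookkeeping in deriving the auxiliary identity, since three tensor factors are in play and the operators $\alpha$, $\pi$, $P$, and $tr$ act on different slots; once that is set up correctly, both conclusions fall out of results already established in Sections 3 and 4.
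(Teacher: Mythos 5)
Your proof is correct, and it packages the argument differently from the paper in a way worth noting. The paper computes, for $\phi\in\A'$, that $\: _\phi\pi(\s_\omega)=(tr\otimes id)((\alpha_{\phi\circ\kappa}(P)\otimes 1)\omega)$ — which is exactly the $\phi$-slice of your unsliced identity $(\s_0\otimes id_N)(\omega)=\pi(\s_\omega)$ — and then finishes both halves by direct computation: injectivity by choosing an orthonormal basis for $\B$ from among the $\alpha_\phi(P)$, and the containment by invoking the closure of $H$ under the coinverse (Proposition \ref{prop4}) together with the unitarity of $\alpha$ for the Hilbert--Schmidt inner product, so that $\alpha_{\phi\circ\kappa}(P)=P$ for $\phi\in H$. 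You instead finish by citation: injectivity is delegated to the already-proved injectivity of $\s_0$ on the finite-dimensional space $\B$ (Proposition \ref{prop5}), and the containment is deduced from the universal property of the state stabilizer — every $\phi\in H$ factors as $\varphi\circ\Pi$ (Proposition \ref{prop3}) and $\Pi=\varepsilon(\cdot)1_B$ on $A^H$ (Lemma \ref{lem10}), so $\phi\circ\s_0=\varepsilon\circ\s_0=tr(P\,\cdot\,)$. Your route for the containment avoids re-using the coinverse and the Hilbert--Schmidt unitarity at this stage (the Kac hypothesis enters only indirectly, through the proof of Proposition \ref{prop5}), and it makes transparent that the whole proposition is a formal consequence of the single intertwining identity plus facts already established for $\s_0:\B\rightarrow A^H$; as a bonus, your identity is the common source of both the paper's computation here and of Lemma \ref{functional1}, which is its slice on the $N$-factor rather than the $A$-factor. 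The paper's version, by contrast, keeps everything self-contained in one explicit calculation. Both are complete; the only bookkeeping point in yours that deserves the care you flagged is the Hahn--Banach/finite-dimensionality step ensuring that $\s_0\otimes id_N$ is injective on $\B\otimes N$, which you have stated correctly.
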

\begin{proof}
Let $\phi \in \A'$.  We compute:
\begin{eqnarray}
\: _\phi\pi (\s_\omega ) & = & \: _\phi\pi [(tr\otimes id)((P\otimes 1)\omega )]=
(tr\otimes id)((P\otimes 1)(id\otimes \phi \otimes id)(id\otimes \pi)\omega )
\nonumber \\
 & = & (tr\otimes \phi \otimes id)((P\otimes 1\otimes 1)(\alpha \otimes id)\omega )
= (tr\otimes id)((\alpha_{\phi \circ \kappa}(P)\otimes 1) \omega ). \nonumber
\end{eqnarray}
If $\s_\omega = 0$, then this calculation shows $\omega \in \B \otimes N$ must be zero.  Indeed, we can select from $\{ \alpha_\phi (P)\}$ an orthonormal basis for $\B$.  Writing $\omega$ as a sum of elementary tensors in terms of this basis and using the above calculation shows $\omega =0$.    Also, if $\phi$ in the above calculation is taken to be in $H$, then $\phi \circ \kappa \in H$ as well, so that
$\: _\phi\pi (\s_\omega) = \s_\omega$.

\end{proof}

Most of the proof of Theorem \ref{ergodicversion} will be done by reducing to the case where $N=A$, which we have proven before.  Our main tool for accomplishing this will be using the linear functionals on $N$.  We give the first such result here.

\begin{lemma}\label{image}
Let $x\in N$ and let $\psi \in S(N)$ be a state on $N$, so that $\pi_\psi  = (id\otimes \psi )\pi :N\rightarrow  A$.  Then $\Delta (\pi_\psi (x)) =(id\otimes \pi_\psi  )(\pi (x)) $.  If $x \in N^H$, then $\pi_\psi (x)\in A^H$. In other words, we have $\pi :N^H\rightarrow A^H \otimes N$.
\end{lemma}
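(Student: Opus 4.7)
The plan is to prove the three claims in order, each following quickly from the preceding one together with the coassociativity axiom for the right action $\pi$.

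First I would establish the identity $\Delta(\pi_\psi(x)) = (id \otimes \pi_\psi)(\pi(x))$ by direct computation. Applying the definition $\pi_\psi = (id \otimes \psi)\pi$ and the right-action coassociativity $(\Delta \otimes id)\pi = (id \otimes \pi)\pi$ from Definition \ref{rightmodalg}, I would write
\[
\Delta(\pi_\psi(x)) = (id \otimes id \otimes \psi)(\Delta \otimes id)\pi(x) = (id \otimes id \otimes \psi)(id \otimes \pi)\pi(x) = (id \otimes \pi_\psi)\pi(x).
\]

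Next, for the second claim, I would apply $(\phi \otimes id)$ to both sides of the first identity, for an arbitrary $\phi \in H$. Tracking the tensor slots carefully, the right-hand side becomes
\[
(\phi \otimes id)(id \otimes \pi_\psi)\pi(x) = \pi_\psi\bigl((\phi \otimes id)\pi(x)\bigr) = \pi_\psi(\pi_\phi(x)),
\]
so that $_\phi\Delta(\pi_\psi(x)) = \pi_\psi(\pi_\phi(x))$. If $x \in N^H$, then $\pi_\phi(x) = x$ for every $\phi \in H$ by the definition of $N^H$, whence $_\phi\Delta(\pi_\psi(x)) = \pi_\psi(x)$ for all $\phi \in H$. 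By Proposition \ref{prop3}, $H$ pulls back to the state space of $B$ under the state stabilizer map $\Pi :A \to B$, so this says $(\Pi \otimes id)\Delta(\pi_\psi(x)) = 1_B \otimes \pi_\psi(x)$; that is, $\pi_\psi(x) \in A^H$.

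Finally, for the last statement $\pi :N^H \to A^H \otimes N$, I would use the standard slicing characterization: an element $\omega \in A \otimes N$ lies in $A^H \otimes N$ if and only if $(id \otimes \psi)\omega \in A^H$ for every $\psi$ in a norm-dense subspace of $N^*$, since $A^H \subseteq A$ is norm closed and states span $N^*$. Applied to $\omega = \pi(x)$ with $x \in N^H$, the previous step gives $(id \otimes \psi)\pi(x) = \pi_\psi(x) \in A^H$ for every $\psi \in S(N)$, and therefore $\pi(x) \in A^H \otimes N$. The only mildly delicate point in this plan is the bookkeeping in the middle step — the order of the two tensor factors of $\Delta(\pi_\psi(x))$ versus $(id \otimes \pi_\psi)\pi(x)$ must match, which it does because both sides are obtained by applying a slice map in the slot that originally held $x$.
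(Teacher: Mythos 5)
Your proposal is correct and follows essentially the same route as the paper: the first identity is the same coassociativity computation, and the membership $\pi_\psi(x)\in A^H$ is obtained by pushing the invariance of $x$ through $\pi_\psi$ — you do this state-by-state over $\phi\in H$ while the paper applies the universal map $\Pi$ and uses $(\Pi\otimes id)\pi(x)=1_B\otimes x$, but these are interchangeable by Proposition \ref{prop3}. The only cosmetic point is that what you denote $\pi_\phi(x)$ is, in the paper's convention, $\:_\phi\pi(x)=(\phi\otimes id)\pi(x)$; your explicit formula makes the intended meaning unambiguous.
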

\begin{proof}
We have 
\begin{eqnarray*}
\Delta (\pi_\psi (x)) & = & \Delta [(id\otimes \psi )\pi (x)] \\ & = &
(id\otimes id\otimes \psi )( \Delta  \otimes id )(\pi (x)) \\ & = &
(id\otimes id\otimes \psi )( id\otimes \pi )(\pi (x))   = 
(id\otimes \pi_\psi  )(\pi (x)). 
\end{eqnarray*}
Let $\Pi :A\rightarrow B$ be the state stabilizer of $P$.  For $x\in N^H$ we have $(\Pi \otimes id)\pi (x) = 1_B\otimes x$.  Thus
\begin{eqnarray*} 
(\Pi \otimes id)\Delta (\pi_\psi (x)) &  =  & (\Pi \otimes id)(id \otimes \pi_\psi )(\pi (x)) \\ & = &
(id \otimes \pi_\psi )(\Pi \otimes id)(\pi (x)) \\ & = &
 (id \otimes \pi_\psi )(1_B\otimes x) = 1_B\otimes (\pi_\psi (x)).
\end{eqnarray*}
\end{proof}

Finally, we relate the Berezin symbol for $M$ to the one on $\B$.  If $\omega \in M$ and $\psi \in S(N)$, we can compute the Berezin symbol of $(id\otimes \psi )\omega \in \B$.

\begin{lemma}\label{functional1}
Let $\omega \in M\subseteq \B \otimes N$ and $\psi \in S(N)$.  Let $\s :M \rightarrow N^H$ be the Berezin symbol on $M$ and let $\varsigma :\B \rightarrow A^H$ be the Berezin symbol on $\B$.  We have:
\[ \varsigma [(id\otimes \psi )\omega ] = \pi_\psi ( \s_\omega). \]
\end{lemma}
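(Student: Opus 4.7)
The plan is to verify the identity by a direct calculation, using the defining relation of $M$ (namely $(\alpha \otimes id)(\omega) = (id \otimes \pi)(\omega)$) to rewrite the left hand side, and simple tensor-leg manipulations on the right hand side; both sides will reduce to the same expression in $\B \otimes A$.

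First, I would unpack the left hand side. Since $\omega \in \B \otimes N$, the slice $(id \otimes \psi)\omega$ lies in $\B$, and by definition of $\varsigma$ on $\B$,
\[ \varsigma[(id\otimes \psi )\omega] = (tr\otimes id)\bigl((P\otimes 1)\,\alpha((id\otimes \psi)\omega)\bigr). \]
Because $\alpha$ acts only on the first tensor leg and $\psi$ only on the second, $\alpha((id\otimes \psi)\omega) = (id\otimes id\otimes \psi)(\alpha \otimes id)(\omega)$. Now invoke $\omega \in M$ to replace $(\alpha \otimes id)(\omega)$ by $(id \otimes \pi)(\omega)$, which yields $\alpha((id\otimes \psi)\omega) = (id\otimes \pi_\psi)(\omega) \in \B \otimes A$.

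Next I would unpack the right hand side. By definition $\s_\omega = (tr\otimes id)((P\otimes 1)\omega) \in N$, so
\[ \pi_\psi(\s_\omega) = (tr\otimes \pi_\psi)\bigl((P\otimes 1)\omega\bigr) = (tr\otimes id)\bigl((id\otimes \pi_\psi)((P\otimes 1)\omega)\bigr). \]
Writing $\omega = \sum_i a_i \otimes b_i$ (purely as a formal tensor), the operator $id \otimes \pi_\psi$ commutes with left multiplication by $P \otimes 1_N$ because the right leg $1_N$ satisfies $\pi_\psi(1_N) = 1_A$ (here we do \emph{not} need $\pi_\psi$ to be multiplicative, only that multiplication by $1$ on the right leg is inert). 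Hence
\[ (id\otimes \pi_\psi)\bigl((P\otimes 1_N)\omega\bigr) = (P\otimes 1_A)\,(id\otimes \pi_\psi)(\omega), \]
so the right hand side also equals $(tr \otimes id)\bigl((P\otimes 1_A)(id\otimes \pi_\psi)(\omega)\bigr)$, matching the left hand side.

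There really isn't a serious obstacle; the content is entirely bookkeeping of tensor legs. The one spot requiring a moment of care is the step swapping $id\otimes \pi_\psi$ past $P\otimes 1_N$, where it is worth noting explicitly that we are \emph{not} using multiplicativity of $\pi_\psi$ (which fails in general), only that the left factor $P$ is unaffected and that $1_N$ maps to $1_A$. Everything else is an instance of the general slice-map identities. Once both sides are brought to the common form $(tr\otimes id)((P\otimes 1_A)(id\otimes \pi_\psi)\omega)$, the lemma follows, and it explains the compatibility of the two Berezin symbols under slicing by a state of $N$.
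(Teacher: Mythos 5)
Your proposal is correct and follows essentially the same route as the paper: slide the state $\psi$ past $\alpha$, invoke the defining relation $(\alpha\otimes id)\omega=(id\otimes\pi)\omega$ of $M$, and finish by tensor-leg bookkeeping. The only cosmetic difference is that you reduce both sides to a common middle expression while the paper runs a single chain of equalities from left to right; your explicit remark that no multiplicativity of $\pi_\psi$ is needed is a nice touch but not a substantive departure.
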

\begin{proof}
\begin{eqnarray}
\varsigma [(id\otimes \psi )\omega ] &  = &
       (tr\otimes id)[(P\otimes 1)\alpha ((id\otimes \psi )\omega )]
       \nonumber \\
& = & (tr\otimes id)[(P\otimes 1) (id\otimes id\otimes \psi )
       ((\alpha \otimes id) \omega )] \nonumber \\
& = & (tr\otimes id\otimes \psi )[(P\otimes 1\otimes 1)
       ((id\otimes \pi ) \omega )] \nonumber \\
& = & \pi_\psi [(tr\otimes id)((P\otimes 1) \omega )] = \pi_\psi ( \s_         \omega ). \nonumber
\end{eqnarray}

\end{proof}

\end{subsection}

\begin{subsection}{The Berezin Adjoint and the Berezin Transform}

In this section we give the formula for $\bs :N^H\rightarrow M$ and develop its properties.  Unlike in the case for $N=A$, there is no natural inner product on $N^H$, so $\bs$ really won't be the adjoint of $\s$ in any sense.  We could just write down the formula for $\bs$ by comparison to the case with $N=A$, but that would not tell us ``why" the image of $\bs$ is contained in $M$.  To motivate the definition then, we first look again at the map $\alpha^\dagger :\B \otimes A\rightarrow \B$.  The following proposition is motivated  from Section 5 of  \cite{Pinzari1}. 
\begin{proposition}\label{idempotent}
Let $E: \B \otimes N\rightarrow \B \otimes N$ be given by $E=(\alpha^\dagger \otimes id)(id\otimes \pi )$.  Then $E$ is an idempotent mapping from $\B \otimes N$ onto $M$.
\end{proposition}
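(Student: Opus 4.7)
The plan is to verify three things: (i) $E$ maps $\B\otimes N$ into $M$; (ii) $E$ restricts to the identity on $M$; (iii) idempotency then follows formally from (i) and (ii). The heart of the argument is (i), where I would compute both $(\alpha\otimes id)\circ E$ and $(id\otimes\pi)\circ E$ and show they agree, using Lemma \ref{lem5.2} on one side and the right-action coassociativity $(id\otimes\pi)\circ\pi=(\Delta\otimes id)\circ\pi$ on the other.

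For step (ii), suppose $\omega\in M$, so $(\alpha\otimes id)\omega=(id\otimes\pi)\omega$. Then
\[
E(\omega)=(\alpha^\dagger\otimes id)(id\otimes\pi)\omega=(\alpha^\dagger\otimes id)(\alpha\otimes id)\omega=((\alpha^\dagger\circ\alpha)\otimes id)\omega=\omega,
\]
where in the last step I invoke Lemma \ref{lem5.1}, which gives $\alpha^\dagger\circ\alpha=id_\B$. So $E|_M=id_M$, and in particular $M\subseteq E(\B\otimes N)$.

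For step (i), I would compute both sides on an arbitrary $\omega\in\B\otimes N$. On one side, using Lemma \ref{lem5.2} (applied in the first two tensor legs) gives
\[
(\alpha\otimes id_N)E(\omega)=((\alpha\circ\alpha^\dagger)\otimes id_N)(id_\B\otimes\pi)\omega=(\alpha^\dagger\otimes id_A\otimes id_N)(id_\B\otimes\Delta\otimes id_N)(id_\B\otimes\pi)\omega.
\]
On the other side, the factors $(\alpha^\dagger\otimes id_N)$ and $(id_\B\otimes\pi)$ act on disjoint legs, so
\[
(id_\B\otimes\pi)E(\omega)=(\alpha^\dagger\otimes id_A\otimes id_N)(id_\B\otimes(id_A\otimes\pi)\circ\pi)\omega,
\]
and the right-action property (Definition \ref{rightmodalg}) rewrites $(id_A\otimes\pi)\circ\pi$ as $(\Delta\otimes id_N)\circ\pi$. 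Thus both expressions equal
\[
(\alpha^\dagger\otimes id_A\otimes id_N)(id_\B\otimes\Delta\otimes id_N)(id_\B\otimes\pi)\omega,
\]
so $(\alpha\otimes id)E(\omega)=(id\otimes\pi)E(\omega)$, i.e. $E(\omega)\in M$.

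Finally, (iii) is immediate: for any $\omega\in\B\otimes N$, (i) gives $E(\omega)\in M$, and then (ii) gives $E(E(\omega))=E(\omega)$. Combined with $E|_M=id_M$, this shows $E$ is an idempotent with image exactly $M$. The only mild obstacle is bookkeeping the tensor-leg positions when transporting $\alpha^\dagger$ past $\pi$; since $\alpha^\dagger$ and $\pi$ act on disjoint legs, this commutation is purely formal, and no analytic subtlety enters because $\alpha^\dagger$ is already known to be well-defined by Corollary \ref{cor1}.
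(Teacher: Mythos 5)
Your proof is correct and follows essentially the same route as the paper: Lemma \ref{lem5.2} together with the coassociativity $(id\otimes\pi)\pi=(\Delta\otimes id)\pi$ to show the range of $E$ lands in $M$, and Lemma \ref{lem5.1} to show $E$ restricts to the identity on $M$, from which idempotency and surjectivity onto $M$ follow. The only difference is cosmetic — you evaluate on an arbitrary $\omega$ where the paper writes the same identities at the level of operators.
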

\begin{proof}
By Lemma \ref{lem5.2} we have 
\begin{eqnarray*}
(\alpha \otimes id)E & = & (\alpha \otimes id )(\alpha^\dagger \otimes id)(id\otimes \pi ) \\ & = &
(\alpha^\dagger \otimes id\otimes id)(id\otimes \Delta \otimes id)(id\otimes \pi )  \\ & = &
 (\alpha^\dagger \otimes id\otimes id)(id\otimes id \otimes \pi )(id\otimes \pi ) \\ & = &
(id\otimes \pi )(\alpha^\dagger \otimes id)(id\otimes \pi ) =
(id\otimes \pi )E,
\end{eqnarray*}
so that the range of $E$ is contained in $M$.  Also by Lemma \ref{lem5.1} we have if $\omega \in M$ then $\\
E(\omega ) = (\alpha^\dagger \otimes id)(id\otimes \pi )(\omega ) =
(\alpha^\dagger \otimes id)(\alpha \otimes id )(\omega ) = (id\otimes id)(\omega )= \omega ,\\ $ which shows $E$ maps onto $M$ and is idempotent.

\end{proof}

\begin{remark}
Consider the map $\alpha^\dagger :B(\H )\otimes A\rightarrow B(\H )$.  The proof of Proposition \ref{prop6} essentially shows that for $A$ of Kac-type, then $\alpha^\dagger$ is positive.  In this case, the map $E:\B \otimes N\rightarrow M$ is positive, so it is almost like a conditional expectation.  (The only difference is that $\alpha^\dagger$, and hence $E$, is not completely positive.)  It seems that the  main difficulty in defining Berezin quantization for  general coamenable compact quantum groups is that this map $E$ is not positive.
\end{remark}

We can now define the Berezin Adjoint.  Consider the case when $N=A$ from before.  In this case we have defined  $\bs_a= d_\H \alpha^\dagger (P\otimes a)\in \B$. Taking into account the identification $\alpha :\B \stackrel{\sim}\longrightarrow M$ we have that 
\begin{eqnarray*}
\alpha (\bs_a) & = & d_\H (\alpha \circ \alpha^\dagger )(P\otimes a) \\ & = & d_\H (\alpha^\dagger \otimes id )(id\otimes \Delta )(P\otimes a) \\
& = & d_\H E(P\otimes a) \in M.
\end{eqnarray*}
  This now motivates our definition for $\bs$ in the current setup.

\begin{definition}
Let $\alpha :\B \rightarrow \B \otimes A$ and $\pi :N\rightarrow A\otimes N$ be as above.  The Berezin Adjoint is the mapping $\bs : N^H\rightarrow M$ via
\begin{eqnarray}
 \bs_x & = & d_\H E(P\otimes x) \nonumber \\
& = & d_\H (\alpha^\dagger \otimes id )(id\otimes \pi)(P\otimes x) \nonumber \\
& = & d_\H (id\otimes \h \circ \mathsf{m} \otimes id)((id\otimes \kappa )\alpha (P)\otimes \pi (x)). \nonumber 
\end{eqnarray}
\end{definition}
By the same computation as in the proof of Proposition \ref{prop6}, we see that $\bs$ is a positive unital map from $N^H$ to $M$.  Note that here we must use that $N$ is a C$^*$-algebra, so that $x\geq 0$ is equivalent to $x=yy^*$.

It is not clear from the definition that $\bs : N^H\rightarrow M$ should be  surjective.  Since $\bs$ isn't really an adjoint of $\s$ in any sense, we cannot appeal to the fact that $\s$ is injective as we did before.  One can show by a  calculation similar to the one in the proof of Lemma \ref{lem5.2} that $E(\alpha_\phi (T)\otimes x) = E(T\otimes \: _\phi\pi(x))$ for any $\phi \in \A '$.  Setting $T=P$, it follows that the extension of $\bs$ to $N$ defined by the same formula is surjective. Moreover, taking $\phi \in H$, it follows that $\bs (x) = \bs (\: _\phi\pi (x))$ for any $x\in N$ and any $\phi \in H$.  However, this is not enough.  Since we are using order unit stabilizers, there's no reason to expect that $H$ has a Haar measure, so there is no way by ``averging" over $H$ to get that $\bs :N^H\rightarrow M$ is surjective.

To get around the surjectivity problem (and others), we will relate the Berezin Adjoint on $N^H$ to the one on $A^H$.  If $x\in N^H$ and  $\psi \in S(N)$ we have $\pi_\psi(x) \in A^H$ by Lemma \ref{image}, so we can compute the Berezin adjoint of $ \pi_\psi(x) $.

\begin{lemma}\label{functional2}
Let $\bs :N^H\rightarrow M$ be the Berezin adjoint  on $N^H$, and let $\breve{\varsigma} :A^H\rightarrow \B$ be the Berezin adjoint  on $A^H$.  We have $\breve{\varsigma} (\pi_\psi(x)) = (id\otimes \psi )\bs (x)$ for any $x\in N^H$ and $\psi \in S(N)$.
\end{lemma}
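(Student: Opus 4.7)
The plan is to do a direct unwinding of the two definitions, using the fact that $\psi$ is a linear functional acting only on the $N$-factor and so commutes with $\alpha^\dagger$, which acts only on the $\B \otimes L^2(A)$-factors. The key observation is that $\bs_x$ lives in $\B \otimes N$ and was built by applying $\alpha^\dagger$ in the $\B \otimes L^2(A)$ slots of $P \otimes \pi(x) \in \B \otimes L^2(A) \otimes N$, so evaluating the $N$-slot with $\psi$ at the end is the same as evaluating it first and then applying $\alpha^\dagger$.

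Concretely, I would start from the right-hand side and compute
\[
(\mathrm{id}\otimes \psi)\bs_x = d_\H\,(\mathrm{id}\otimes \psi)(\alpha^\dagger \otimes \mathrm{id})(\mathrm{id}\otimes \pi)(P\otimes x).
\]
Since $\psi$ is a scalar-valued linear functional on $N$ and $\alpha^\dagger$ acts on the first two tensor slots (the $\B$ and $L^2(A)$ factors), the maps $(\mathrm{id}\otimes \psi)$ and $(\alpha^\dagger \otimes \mathrm{id})$ commute when applied to $\B\otimes L^2(A)\otimes N$. Hence this equals
\[
d_\H\,\alpha^\dagger(\mathrm{id}\otimes \mathrm{id}\otimes \psi)(\mathrm{id}\otimes \pi)(P\otimes x) = d_\H\,\alpha^\dagger\bigl(P\otimes (\mathrm{id}\otimes \psi)\pi(x)\bigr) = d_\H\,\alpha^\dagger(P\otimes \pi_\psi(x)).
\]
By Lemma \ref{image} we have $\pi_\psi(x)\in A^H$, so this last expression is exactly the definition of $\breve{\varsigma}(\pi_\psi(x))$, completing the identification.

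I expect no real obstacle here: the formal manipulations are legitimate because everything is written at the algebraic level (Proposition \ref{idempotent} guarantees $\bs_x \in \B \otimes N$ is well-defined via the formula), and the commutation of $\alpha^\dagger$ with $(\mathrm{id}\otimes \psi)$ is just the usual fact that maps acting on disjoint tensor factors commute. The only subtle point worth highlighting is that we should interpret $\alpha^\dagger :\B \otimes L^2(A)\to \B$ on the dense subspace $\B \otimes \A$ (per the convention introduced after Corollary \ref{cor1}), and note that $\pi(x) \in A \otimes N$ maps naturally into $L^2(A)\otimes N$, so the composition makes sense. This is essentially the same observation that was used in Lemma \ref{functional1} to transport the Berezin symbol through a state on $N$.
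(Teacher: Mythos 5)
Your proof is correct and is essentially the paper's own argument read in the opposite direction: both rest on the single observation that $(\mathrm{id}\otimes\psi)$, acting on the $N$-factor, commutes with $(\alpha^\dagger\otimes\mathrm{id})$, acting on the $\B\otimes L^2(A)$-factors, applied to $(\mathrm{id}\otimes\pi)(P\otimes x)$. Your explicit appeal to Lemma \ref{image} to ensure $\pi_\psi(x)\in A^H$ (so that $\breve{\varsigma}(\pi_\psi(x))$ is defined) matches the remark the paper makes just before stating the lemma.
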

\begin{proof}
We compute: 
\begin{eqnarray*}
\breve{\varsigma} (\pi_\psi(x)) & = & d_\H \alpha^\dagger (P\otimes [(id\otimes \psi)\pi (x)]) \\ & = &  d_\H (id\otimes \psi )(\alpha^\dagger\otimes id)(id\otimes \pi )(P\otimes x) \\ & = &  (id\otimes \psi )\bs (x).
\end{eqnarray*}

\end{proof}

As before, we wish to estimate how far $\s \circ \bs$ and $\bs \circ \s$ are from being the identity operators on $M$ and on $N^H$.  The resulting expression for $\s \circ \bs $ is exactly the same as it was in Proposition \ref{prop7}.   

\begin{proposition}\label{transform}
Consider the composition $\s \circ \bs :N^H\rightarrow N^H$.  Let $x\in N^H$.  We have $(\s \circ \bs)(x) = \: _\hp\pi (x)=(\hp \otimes id )\pi (x)$.

\end{proposition}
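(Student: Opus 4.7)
The plan is to reduce the claim to the scalar case $N=A$, which is exactly Proposition \ref{prop7}, by testing the proposed identity against arbitrary states of $N$. For each $\psi\in S(N)$, the map $\pi_\psi = (id\otimes \psi)\pi : N\to A$ carries $N^H$ into $A^H$ by Lemma \ref{image}, so Proposition \ref{prop7} is applicable to $\pi_\psi(x)$ whenever $x\in N^H$.

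The key computation I would perform combines Lemmas \ref{functional1} and \ref{functional2}: first, Lemma \ref{functional2} gives $\breve{\varsigma}(\pi_\psi(x)) = (id\otimes \psi)\bs(x)$, and applying $\varsigma$ to both sides and then invoking Lemma \ref{functional1} on $\bs(x)\in M$ yields
\[ (\varsigma \circ \breve{\varsigma})(\pi_\psi(x)) = \varsigma((id\otimes \psi)\bs(x)) = \pi_\psi((\s \circ \bs)(x)). \]
Proposition \ref{prop7} evaluates the left-hand side as $(\hp\otimes id)\Delta(\pi_\psi(x))$. Using the identity $\Delta\circ \pi_\psi = (id\otimes \pi_\psi)\circ \pi$ from Lemma \ref{image}, followed by commuting $\hp\otimes id$ past $id\otimes \pi_\psi$ (a routine check on elementary tensors), the left-hand side simplifies to $\pi_\psi(\: _\hp\pi(x))$. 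Thus $\pi_\psi((\s \circ \bs)(x)) = \pi_\psi(\: _\hp\pi(x))$ for every $\psi\in S(N)$.

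To conclude, I would apply the counit $\varepsilon_A$ to both sides of the last identity. Since $A$ is full, $\varepsilon_A$ extends to all of $A$, and the counit property $(\varepsilon_A\otimes id)\pi = id_N$ (standard for coactions of full compact quantum groups) gives $\varepsilon_A\circ \pi_\psi = \psi$. The identity therefore reduces to $\psi((\s\circ\bs)(x)) = \psi(\: _\hp\pi(x))$ for every $\psi\in S(N)$, and the separation of points of $N$ by its state space yields $(\s\circ\bs)(x) = \: _\hp\pi(x)$, as required. I do not anticipate any serious obstacle; the only bookkeeping concern is the tensor-factor management when passing $\hp\otimes id$ through $id\otimes \pi_\psi$, which is automatic on elementary tensors.
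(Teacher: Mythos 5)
Your proof is correct, but it takes a different route from the paper's. The paper proves Proposition \ref{transform} by a direct computation: it expands $\bs_x = d_\H(\alpha^\dagger\otimes id)(P\otimes \pi(x))$, applies the symbol, pairs against an arbitrary $b\in A$ to exploit the adjointness of $\alpha^\dagger$ with respect to the Hilbert--Schmidt and Haar inner products, and then uses nondegeneracy of $\h$ to recognize $(\hp\otimes id)\pi(x)$. You instead slice by states $\psi\in S(N)$ and reduce to the already-established scalar case $N=A$ (Proposition \ref{prop7}), chaining Lemma \ref{functional2}, Lemma \ref{functional1} applied to $\bs(x)\in M$, and the intertwining identity $\Delta\circ\pi_\psi=(id\otimes\pi_\psi)\pi$ from Lemma \ref{image}, before separating points with $\varepsilon_A$ and $S(N)$. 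All the ingredients you invoke are available: $\bs(x)$ does lie in $M$ by Proposition \ref{idempotent}, the counit property $(\varepsilon\otimes id)\pi=id_N$ is used elsewhere in the paper (e.g.\ in the proof of Proposition \ref{estimate3}), and states separate points of the C$^*$-algebra $N$. Your approach is in fact the ``reduce to $N=A$ via linear functionals'' philosophy that the paper announces at the start of Section 5 and deploys for Proposition \ref{estimate4} via Lemma \ref{functional3}; it buys you freedom from redoing the $\alpha^\dagger$ bookkeeping, at the cost of leaning on three auxiliary lemmas and a separation argument, whereas the paper's computation is shorter and self-contained modulo Corollary \ref{cor1} and Lemma \ref{lem5.2}.
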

\begin{proof}
We have
\begin{eqnarray}
(\s \circ \bs)(x) & = & d_\H \s ((\alpha^\dagger \otimes id)(id\otimes \pi)(P\otimes x)) \nonumber \\
 & = & d_\H (tr\otimes id)[(P\otimes 1) ((\alpha^\dagger \otimes id)(id\otimes \pi)(P\otimes x))] \nonumber \\
 & = & d_\H (tr\otimes \h \otimes  id)[(\alpha (P) \otimes 1) (P\otimes \pi (x))] \nonumber \\
& = & d_\H ( \h \otimes  id)[(\varsigma_P\otimes 1) \pi (x)] \nonumber \\
& = & (\hp \otimes id )\pi (x). \nonumber
\end{eqnarray}

\end{proof}

To obtain a sequence of operator systems $M^n$, we proceed exactly as before.  Namely, we replace the unitary representation  $(u,\H )$ of $A$ by $(u^n, \H^n)$, the cyclic subrepresentation of $\H^{\otimes n}$ generated by $\xi^{\otimes n}$.  Again, we assume that $(u^n, \H^n)$ is irreducible for all $n\in \N$.  By Proposition \ref{nthpower} we have $\s^n:M^n\rightarrow N^H$ for all $n$.  Similarly, we have $\bs^n:N^H\rightarrow M^n$.  The following proof is virtually identical to that of Proposition \ref{prop10}.

\begin{corollary}\label{Berezin2'}
The sequence of operators $(\s^n \circ \bs^n ):N^H\rightarrow N^H$ converges strongly to the identity operator on $N^H$.  
\end{corollary}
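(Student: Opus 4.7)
The plan is to run the proof of Proposition \ref{prop10} with only cosmetic changes, substituting the right action $\pi:N\to A\otimes N$ for the coproduct $\Delta$ and citing Proposition \ref{transform} in place of Proposition \ref{prop7}. Strong convergence means $(\s^n\circ\bs^n)(x)\to x$ in norm for each fixed $x\in N^H$, so I only need to verify this pointwise statement.

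Fix $x\in N^H$. By Proposition \ref{transform},
\[
(\s^n\circ\bs^n)(x) = (\hpn \otimes \mathrm{id})\pi(x).
\]
By Lemma \ref{image}, $\pi$ maps $N^H$ into $A^H\otimes N$, so $\pi(x)\in A^H\otimes N$. The right action axiom gives $(\varepsilon\otimes\mathrm{id})\pi(x)=x$, so it suffices to show $(\hpn\otimes\mathrm{id})(\omega)\to(\varepsilon\otimes\mathrm{id})(\omega)$ in norm for every $\omega\in A^H\otimes N$.

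This is precisely the tensor-product extension used at the end of the proof of Proposition \ref{prop10}. On elementary tensors $a\otimes y\in A^H\otimes N$, Proposition \ref{prop9} gives $(\hpn\otimes\mathrm{id})(a\otimes y)=\hpn(a)\,y \to \varepsilon(a)\,y=(\varepsilon\otimes\mathrm{id})(a\otimes y)$. Since each $\hpn$ is a state (Lemma \ref{lem6}) and $\varepsilon$ is a state, the maps $\hpn\otimes\mathrm{id}$ and $\varepsilon\otimes\mathrm{id}$ are all contractive on $A^H\otimes N$ (in the minimal $C^*$-tensor norm that $A^H\otimes N$ inherits as a subspace of $A\otimes N$), so by a standard $\varepsilon/3$ argument the pointwise convergence on the total set of elementary tensors extends to all of $A^H\otimes N$. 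Applying this with $\omega=\pi(x)$ yields $(\s^n\circ\bs^n)(x)\to x$ in norm, which proves the corollary.

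I expect no real obstacle: every ingredient (the intertwining relation via $\hpn$, the fact that $\pi$ preserves $H$-invariance, and the weak$^*$ convergence $\hpn\to\varepsilon$ on $A^H$) has already been established, and the only ``new'' ingredient is the harmless observation that uniformly bounded linear maps converging pointwise on a spanning set converge pointwise everywhere.
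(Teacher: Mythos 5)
Your proposal is correct and follows essentially the same route as the paper: reduce via Proposition \ref{transform} and Lemma \ref{image} to the weak$^*$ convergence $\hpn \to \varepsilon$ on $A^H$ from Proposition \ref{prop9}, then conclude $(\hpn\otimes id)\pi(x)\to(\varepsilon\otimes id)\pi(x)=x$. The only difference is that you spell out the uniform-boundedness/density argument behind the claim that $(\hpn\otimes id)$ converges strongly on $A^H\otimes N$, which the paper asserts without comment.
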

\begin{proof}
Let $x\in N^H$.  By the previous proposition we have $(\s^n \circ \bs^n)(x) = (\hpn \otimes id)\pi (x)$.   By Lemma \ref{image} we have $\pi (x)\in A^H\otimes N$.  Since $\hpn$ converges weak$^*$ to $\varepsilon$ on $A^H$, we have $(\hpn \otimes id)$ converges strongly to $(\varepsilon \otimes id)$ on $A^H\otimes N$.  Thus
$\lim_{n\rightarrow \infty}(\s^n \circ \bs^n)(x) = (\varepsilon \otimes id)\pi (x) = x$.

\end{proof}

For $\hbar \in \{ 0\} \cup 1/\N$,  let $N_0 = N^H$ and $N_\hbar = M^{1/\hbar}$.  We have for each $\hbar \neq 0$ a positive unital mapping $ \bs^{1/\hbar}:N_0\rightarrow N_\hbar$.

\begin{corollary}\label{Berezin2}
The collection of operator systems $\{ N_\hbar \: | \: \hbar \in \{ 0\} \cup 1/\N \}$ with the mappings $\bs^{1/\hbar } $ is a strict quantization of $N^H$.
\end{corollary}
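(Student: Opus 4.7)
The plan is to reduce the statement directly to Theorem \ref{deformquant}. That theorem packages exactly the ingredients we have assembled in Section 5: a sequence of operator systems with positive unital maps going in both directions between them and a distinguished limit operator system, together with strong convergence of the composition $\s^n \circ \bs^n$ to the identity on that limit. So the proof will essentially be a matter of verifying the hypotheses of Theorem \ref{deformquant} in our setting and invoking it.

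First I would set $M_\infty = N^H$ and $M_n = M^n$. The positive unital maps $\s^n : M^n \rightarrow N^H$ and $\bs^n : N^H \rightarrow M^n$ were constructed in the previous subsection; $\s^n$ is positive and unital by inspection from its definition as a partial trace against $P^n$, and $\bs^n$ is positive and unital by the same calculation used in the proof of Proposition \ref{prop6} (the Kac-type hypothesis is what makes the kernel expression in the formula for $\bs^n$ positive in $B(\H^n) \otimes A^{opp}$, and $\h$ is a trace which is completely positive). Both maps land where they should land by the results on $M$ and $N^H$ established in Section 5.1 together with the fact that $H$ also stabilizes $P^n$ (Proposition \ref{nthpower}).

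The strong convergence of $\s^n \circ \bs^n$ to the identity on $N^H$ is precisely the content of Corollary \ref{Berezin2'}. This is the only nontrivial analytic input needed; the earlier sections did the work of showing that the Berezin transform can be computed as $_\hpn\pi$ (Proposition \ref{transform}), that $\pi$ maps $N^H$ into $A^H \otimes N$ (Lemma \ref{image}), and that $\hpn$ converges weak$^*$ to $\varepsilon$ on $A^H$ (Proposition \ref{prop9}). No step should present any real obstacle here, since the conceptual difficulty is bundled into Corollary \ref{Berezin2'} and Theorem \ref{deformquant}.

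Thus the proposal is simply to apply Theorem \ref{deformquant} to the data $(N^H, \{M^n\}, \{\s^n\}, \{\bs^n\})$; the theorem produces the continuous field of operator systems over $\{0\} \cup 1/\mathbb{N}$ whose fibers are $N_\hbar$ and whose quantization map is given by $\bs^{1/\hbar}$ for $\hbar \neq 0$ and the identity at $\hbar = 0$, which is exactly the content of Corollary \ref{Berezin2}.
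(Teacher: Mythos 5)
Your proposal is correct and matches the paper's own argument exactly: the paper proves this corollary by citing Corollary \ref{Berezin2'} for the strong convergence of $\s^n\circ\bs^n$ to the identity on $N^H$ and then invoking Theorem \ref{deformquant}. Your additional verification that the maps are positive and unital is already built into the earlier results of Section 5, so nothing is missing or extraneous.
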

\begin{proof}
This follows immediately from Corollary \ref{Berezin2'} and Theorem \ref{deformquant}.
\end{proof}

\end{subsection}

\begin{subsection}{Lip-norms on $N^H$ and $M$}

In the previous sections we constructed a sequence of operator systems $M^n$ together with positive unital mappings $\s^n :M^n \rightarrow N^H$ and $\bs^n :N^H\rightarrow M^n$.  We also showed that the mappings $\bs^n:N^H\rightarrow M^n$ give a strict quantization of $N^H$.  In this section, we will put Lip-norms on $N^H$ and $M^n$ and show that the quantum metric spaces $M^n$ converge to $N^H$ in quantum Gromov-Hausdorff distance.

There is one major difference between our current setup and the case where $N=A$.  In that case we had that the operator systems $\B^n$ and $A^H$ each carry actions of $A$.  The reason for this is because $A^H$ is the set of points of $A$ fixed by $H$ for the right action of $A$ on itself.  However, $A$ also acts on itself on the left.   Since this left action commutes with the right action of $A$ on itself, it decends to a left action of $A$ on $A^H$.

In our current scenario, $N$ has a right action of $A$, but no left action.  Thus after taking $N^H$ as the set of points fixed by $H$, no action of $A$ remains.  The closest result we have is that of Lemma \ref{image}; namely, that $\pi :N^H\rightarrow A^H \otimes N$.  Similarly, we have $(id \otimes \pi ):M\rightarrow \B  \otimes \A \otimes N$.  Of course, $N$ itself is an ergodic right $A$-module algebra, so any regular Lip-norm on $A$ induces a Lip-norm on $N$.  Similarly, since $(id \otimes \pi ):\B \otimes N \rightarrow \B \otimes \A \otimes N$, we obtain a seminorm on $\B \otimes N$ by
\[L(\omega )= \sup_{\psi \in S(\B ), \phi \in S(N)}  L_A[(\psi \otimes id\otimes \phi )(id \otimes \pi )(\omega )] . \]
We can then restrict these seminorms to the order unit subspaces $N^H$ and $M$.  We will see that they are Lip-norms later, but we prefer to demonstrate the computations first.

In the sequel, we will be doing a lot of computations with the induced Lip-norm, and variations of it.  Hence we develop a special notation.
\begin{notation}
Let $A$ and $B$ be order unit spaces, and let $L_A$ be a seminorm on $A$.  We denote by $L_A\otimes ||$ the seminorm on $A\otimes B$ given by:
\[ (L_A\otimes ||)(\omega ) =\sup_{\varphi \in S(B)} L_A((id\otimes \varphi )\omega )  \hspace{1.7in}  \forall \omega \in A\otimes B \]
Similarly, if $L_B$ is a seminorm on $B$, we have a seminorm $||\otimes L_B$ on $A\otimes B$, and so forth.
\end{notation}
For example, the seminorm on $\B \otimes N$ above can now be written as
$L(\omega )=(||\otimes L_A\otimes ||)(id \otimes \pi )(\omega)$, or more simply, $L=(||\otimes L_A\otimes ||)(id \otimes \pi )$.

For convenience  we include a table that rewrites our previous notions in this notation.

\begin{table}[ht]
\centering
\begin{tabular}{l  l  l}
\hline\hline
Concept & Old Formula & New Notation \\ [.5ex]
\hline
$L_A$ left-invariant & $L_A(a)=\sup_{\varphi \in S(A)}L_A(\: _\varphi\Delta (a))$ & $L_A=(||\otimes L_A)\Delta$ \\
$L_A$ right-invariant & $L_A(a)=\sup_{\varphi \in S(A)}L_A(\Delta_\varphi (a))$ & $L_A=(L_A\otimes ||)\Delta$ \\ 
Induced Lip-norm, Left Module & $L_B(b)=\sup_{\psi \in S(B)}L_A(\: _\psi\pi (b))$ & $L_B = (||\otimes L_A)\pi$ \\
$L_B$ invariant & $L_B(b)=\sup_{\varphi \in S(A)}L_B(\pi_\varphi (b))$ & $L_B=(L_B\otimes ||)\pi$ \\ 
Induced Lip-norm, Right Module & $L_N(x)=\sup_{\psi \in S(N)}L_A(\pi_\psi (x))$ & $L_N=(L_A\otimes ||)\pi$ \\
$L_N$ invariant & $L_N(x)=\sup_{\varphi \in S(A)}L_N(\: _\varphi\pi (x))$ & $L_N=(||\otimes L_N)\pi$ \\ [1ex]
\hline\hline
\end{tabular}
\end{table}

Go back to the case of the regular module $N=A$.  We have two potential Lip-norms on $A^H$.  One is the Lip-norm described just above by restricting the Lip-norm of $A$.  The other is the Lip-norm that we originally used, which is obtained from the action of $A$ on $A^H$.  Similarly, we have a Lip-norm on $\B$ obtained by restricting the seminorm above on $\B \otimes A$ as well as the Lip-norm induced from the action of $A$. 
It would be nice to relate these Lip-norms to each other. For now, we use $L_{A^H}$ and $L_\B$ to denote the Lip-norms on $A^H$ and $\B \cong M$ by the restrictions, and we let $\mathcal{L}_{A^H}$ and $\mathcal{L}_\B$ be the Lip-norms induced by their remaining actions of $A$ that were considered in Section 4.

\begin{lemma}
Let $L_A$ be a regular Lip-norm on $A$.  With the notations above, we have $L_{A^H} =\mathcal{L}_{A^H}$ whenever $L_A$ is left-invariant.  We have $L_\B = \mathcal{L}_\B$ whenever $L_A$ is right-invariant.
\end{lemma}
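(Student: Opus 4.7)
The approach is to unfold the two definitions in each case and reduce the equality to a direct application of the stated one-sided invariance together with the trivial lower bound obtained at the counit $\varepsilon$. The preceding paragraph of the paper clarifies the two interpretations of ``restriction'' at play: $L_{A^H}$ is the restriction of $L_A$ itself to $A^H \subseteq A$, while $L_\B(T)=L(\alpha(T))$ is the seminorm $L$ on $\B\otimes A$ evaluated at the image of $T$ under the embedding $\alpha:\B\to M$.

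For the $A^H$ claim, by Proposition \ref{li1} applied to the ergodic left action $\Delta: A^H\to A^H\otimes A$ (which exists because $A^H$ is a right coideal, Proposition \ref{lem5}), one has
\[ \mathcal{L}_{A^H}(a) \;=\; \sup_{\psi\in S(A^H)} L_A(a*\psi). \]
Since $\Delta a\in A^H\otimes A$, the value $a*\psi$ depends only on $\psi|_{A^H}$, so by Hahn--Banach extension the supremum may equivalently be taken over $\psi\in S(A)$. The assumption that $L_A$ is left-invariant then gives $L_A(a*\psi)\leq L_A(a)$ for every such $\psi$, hence $\mathcal{L}_{A^H}(a)\leq L_A(a)=L_{A^H}(a)$. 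The reverse inequality is witnessed by $\psi=\varepsilon$, since $a*\varepsilon=a$.

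For the $\B$ claim, $\mathcal{L}_\B(T)=\sup_{\psi\in S(\B)} L_A(\: _\psi\alpha(T))$ is the Lip-norm induced from $\alpha$. Using the intertwining property $(id\otimes\Delta)\alpha=(\alpha\otimes id)\alpha$ valid on $M$, a short computation (write $\alpha(T)=\sum_i T_i\otimes a_i$ and apply the functionals one factor at a time) yields
\[ (\psi\otimes id\otimes\phi)(id\otimes\Delta)\alpha(T) \;=\; \phi * \: _\psi\alpha(T), \]
so $L_\B(T)=\sup_{\psi,\phi} L_A(\phi * \: _\psi\alpha(T))$. Right-invariance of $L_A$ then gives $L_A(\phi*b)\leq L_A(b)$, whence $L_\B(T)\leq \mathcal{L}_\B(T)$, with equality obtained once again at $\phi=\varepsilon$.

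Neither part presents a serious obstacle; the whole argument is a routine unpacking of the definitions combined with the stated one-sided invariance and the $\varepsilon$-trick. The only mildly delicate points are the intertwining calculation in the $\B$ case, and noticing that the apparent asymmetry between the left-invariance hypothesis for $A^H$ and the right-invariance hypothesis for $\B$ is not mysterious at all but reflects the fact that ``by restriction'' refers to different underlying seminorms in the two cases.
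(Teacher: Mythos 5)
Your proof is correct and follows essentially the same route as the paper's: for $A^H$, the $\varepsilon$-trick for one inequality, Hahn--Banach extension of states from $A^H$ to $A$, and left-invariance for the other; for $\B$, unfolding $(\|\cdot\|\otimes L_A\otimes\|\cdot\|)$ via the coaction identity and applying right-invariance together with $\phi=\varepsilon$. The only difference is presentational — the paper compresses the second half into a chain of equalities in the $(\|\cdot\|\otimes L_A\otimes\|\cdot\|)$ notation, whereas you write out the states explicitly — and your reading of $L_{A^H}$ as the restriction of $L_A$ itself matches the paper's own usage.
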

\begin{proof}

Let $a\in A^H$.  By definition we have: $\mathcal{L}_{A^H}(a) = \sup_{\phi \in S(A^H)}L_A(\: _\phi\Delta (a) )$.  Since $\varepsilon \in S(A^H)$, we have $\mathcal{L}_{A^H}(a)\geq L_A(\: _\varepsilon\Delta (a) ) = L_A(a)$.  On the other hand, there is a Hahn-Banach Theorem for states which gives that every state on $A^H$ is the restriction of a state on $A$.  Thus $\mathcal{L}_{A^H}(a) = \sup_{\phi \in S(A)}L_A(\: _\phi\Delta (a) )$.  Finally, $L_A$ is left-invariant says exactly that $L_A(\: _\phi\Delta (a) )\leq L_A(a)$ for all $\phi \in S(A)$.  So for $L_A$ left-invariant we have $\mathcal{L}_{A^H} (a)= L_A(a) = L_{A^H}(a)$.

Let $T\in \B$.  Then $T$ is identified with $\alpha (T)\in M$.  Assuming $L_A$ is right-invariant means that $L_A=(L_A\otimes ||)\Delta$.    We compute

\begin{eqnarray}
L_\B (T) = L_M(\alpha (T) ) & = & (||\otimes L_A \otimes ||)((\alpha \otimes id)\alpha (T)) \nonumber \\
& = & (||\otimes L_A\otimes ||)((id\otimes \Delta)\alpha (T)) \nonumber \\
& = & (||\otimes L_A)(\alpha (T)) =\mathcal{L}_\B (T). \nonumber 
\end{eqnarray}

\end{proof}

The following statement is an analog of Proposition \ref{estimate1}, and the proof remains the same.  Note however that we cannot necessarily take the same sequence $\delta_n$ that we took in Proposition \ref{estimate1}.

\begin{proposition}\label{estimate3}
Let $L$ be any Lip-norm on $N^H$.  Then there exists a sequence of positive real numbers $\{ \delta_n\}_{n\in \N}$ converging to zero such that 
$\| (\s^n \circ \bs^n)(x)-x\| < L(x)\delta_n$ for all $x\in N^H$.
\end{proposition}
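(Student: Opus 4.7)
The plan is to mimic the proof of Proposition \ref{estimate1} almost verbatim, replacing the operator $a \mapsto \Delta a - 1_A \otimes a$ on $A^H$ with its analogue $T : N^H \to A^H \otimes N$ defined by $T(x) = \pi(x) - 1_A \otimes x$. First I would verify that $T$ is a well-defined bounded linear map into $A^H \otimes N$: this uses Lemma \ref{image} (which gives $\pi(x) \in A^H \otimes N$ for $x \in N^H$) together with the obvious fact $1_A \in A^H$. Since $\pi$ is unital, $T$ vanishes on scalar multiples of $1_{N^H}$, so it factors through the quotient $\tilde{N^H} = N^H/\C 1$. Because $L$ is a Lip-norm on $N^H$, the unit ball of $\tilde{L}$ in $\tilde{N^H}$ is totally bounded, and therefore the image
\begin{equation*}
K := T\bigl(\{ x \in N^H_{sa} : L(x) \leq 1\}\bigr) \subseteq A^H \otimes N
\end{equation*}
is totally bounded. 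A direct computation using the counit axiom also shows $(\varepsilon \otimes id)T(x) = x - x = 0$ for every $x \in N^H$.

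The key identity, furnished by Proposition \ref{transform}, is that
\begin{equation*}
(\s^n \circ \bs^n)(x) - x \;=\; (\hpn \otimes id)\pi(x) - (\varepsilon \otimes id)\pi(x) \;=\; (\hpn \otimes id)\, T(x),
\end{equation*}
where the last equality uses that $(\hpn \otimes id)(1_A \otimes x) = x = (\varepsilon \otimes id)(1_A \otimes x)$ since both $\hpn$ and $\varepsilon$ are unital. Now applying Lemma \ref{technical} to $K$ and to the state $\varepsilon \in S(A^H)$, for any prescribed $\delta > 0$ one obtains a weak$^*$-continuous $f : S(A^H) \to [0,\infty)$ with $f(\varepsilon) \leq \delta/2$ and $\|(\mu \otimes id)\omega\| \leq f(\mu)$ for all $\mu \in S(A^H)$, $\omega \in K$.

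By Proposition \ref{prop9} we have $\hpn \to \varepsilon$ in the weak$^*$ topology on $A^H$, hence $f(\hpn) \to f(\varepsilon) \leq \delta/2$ and so $f(\hpn) < \delta$ for all $n \geq N_\delta$ with $N_\delta$ sufficiently large. Then for any $x \in N^H_{sa}$ with $L(x) \leq 1$ and any $n \geq N_\delta$,
\begin{equation*}
\|(\s^n \circ \bs^n)(x) - x\| \;=\; \|(\hpn \otimes id)T(x)\| \;\leq\; f(\hpn) \;<\; \delta,
\end{equation*}
and by the usual rescaling argument $\|(\s^n \circ \bs^n)(x) - x\| \leq L(x)\delta$ for all $x \in N^H_{sa}$. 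Choosing $\delta_k \searrow 0$ and assembling the resulting thresholds $N_{\delta_k}$ produces the desired sequence $\delta_n \to 0$.

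The main thing to check is that every ingredient used in the proof of Proposition \ref{estimate1} transfers cleanly: Lemma \ref{image} is what keeps $T$ inside $A^H \otimes N$ (so that Proposition \ref{prop9} — which only gives weak$^*$ convergence on $A^H$, not on all of $A$ — can be applied in the final step); Proposition \ref{transform} reproduces the crucial convolution formula with $\hpn$; and total boundedness of $K$ is automatic from the Lip-norm property once one factors $T$ through $\tilde{N^H}$. No new analytic obstacle appears.
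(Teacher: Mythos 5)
Your proposal is correct and follows essentially the same route as the paper's own proof: define $T(x)=\pi(x)-1\otimes x$ mapping $N^H$ into $A^H\otimes N$ via Lemma \ref{image}, apply Lemma \ref{technical} with $\phi=\varepsilon$ to the totally bounded set $K$, and use Proposition \ref{transform} together with the weak$^*$ convergence $\hpn\to\varepsilon$ on $A^H$ to conclude. Your explicit citation of Proposition \ref{prop9} at the final step is a harmless (and slightly more careful) addition to what the paper writes.
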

\begin{proof}
As before, we show that for any $\delta >0$, there exists an $N_\delta \in \N$ such that $\| (\s^n \circ \bs^n)(x)-x\| < L(x)\delta$ for all $x\in N^H$ and all $n\geq N_\delta$.

Consider the linear operator $T: N^H\rightarrow A^H\otimes N$ by 
$T(x)= \pi (x)-1\otimes x$.  Clearly $T$ is bounded and vanishes on $\C  1_N$.  Since $L$ is a Lip-norm, we have $K=T\{ x\in N^H \: | \: L(x)\leq 1\}$ is totally bounded.
  Also $(\varepsilon \otimes id)T(x) = (\varepsilon \otimes id)(\pi (x)- 1\otimes x)=x-x=0$.  By Lemma \ref{technical}, there exists a function $f\in C(S(A^H))$ such that $f(\varepsilon )\leq \delta/2$ and 
$\| (\mu \otimes id)(\pi (x)-1\otimes x)\| \leq f(\mu )$  for all $\mu \in S(A^H)$.  
By Proposition \ref{transform} we have $(\s^n \circ \bs^n)(x) = (\hpn \otimes id)\pi (x) $.
 Then for $L(x)\leq 1$ we have
$\| (\s^n \circ \bs^n)(x)-x\| = 
\| (\hpn \otimes id)(\pi (x)-1\otimes x)\| \leq f(\hpn )$.  
Since $f$ is continuous in the $w^*$-topology and $f(\varepsilon )\leq \delta/2$, we can find a $N_\delta \in \N$ such that $f(\hpn ) <\delta$ for all $n\geq N_\delta$.   Then for all $x\in N^H$ we have 
$\| (\s^n \circ \bs^n)(x)-x\| \leq L(x) \delta$.  
\end{proof}

Our next goal is to prove the analog of Proposition \ref{estimate2}.  Since there is no action of $A$ on $N^H$ or $M^n$, there is no hope of generalizing the argument of Proposition \ref{estimate2} directly.  However, we can use our results from Lemmas \ref{functional1} and \ref{functional2} to carry the exact same sequence $\theta_n$ from Proposition \ref{estimate2} to the current situation.  This is one of the reasons that we worked out the case where $N=A$ completely first, because ultimately we would need to reduce the general case to that case anyway.  Again, we use the symbols $\varsigma$ and $\breve{\varsigma}$ to denote the Berezin symbol and the Berezin adjoint on $\B$ and on $A^H$.

\begin{lemma}\label{functional3}
Let $\psi \in S(N)$, so that $(id\otimes \psi ): M\rightarrow \B$.  Then we have 
\[ (id\otimes \psi ) (\bs \circ \s )= 
	(\breve{\varsigma} \circ \varsigma ) (id\otimes \psi ) \]
\end{lemma}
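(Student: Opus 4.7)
The plan is to deduce Lemma~\ref{functional3} directly by composing the two slicing identities that were established in the previous section, namely Lemma~\ref{functional1} (which relates $\varsigma$ to $\s$ via $\pi_\psi$) and Lemma~\ref{functional2} (which relates $\breve{\varsigma}$ to $\bs$ via $\pi_\psi$). No further computation with $E$, $\alpha^\dagger$, or the Haar state should be needed; this lemma is really just the statement that the previous two lemmas ``chain together."

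First, I would fix an arbitrary $\omega \in M$ and evaluate the left-hand side. Applying Lemma~\ref{functional2} with $x = \s(\omega) \in N^H$ (which lies in $N^H$ by the proposition preceding Lemma~\ref{image}) gives
\[
(id\otimes \psi)\bs(\s(\omega)) \;=\; \breve{\varsigma}\bigl(\pi_\psi(\s(\omega))\bigr).
\]
Next I would rewrite $\pi_\psi(\s_\omega)$ using Lemma~\ref{functional1}, which yields $\pi_\psi(\s_\omega) = \varsigma\bigl((id\otimes \psi)\omega\bigr)$. Substituting back produces
\[
(id\otimes \psi)(\bs \circ \s)(\omega) \;=\; \breve{\varsigma}\bigl(\varsigma((id\otimes \psi)\omega)\bigr) \;=\; (\breve{\varsigma}\circ\varsigma)\bigl((id\otimes \psi)\omega\bigr),
\]
which is exactly the desired identity. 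Since $\omega$ was arbitrary, the lemma follows.

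The only thing to verify carefully is that the hypotheses of the two input lemmas are met at each step; in particular, that $\s_\omega$ really does land in $N^H$ so that Lemma~\ref{functional2} is applicable. That was already proved in the proposition showing $\s$ maps $M$ into $N^H$, so there is no obstacle. I do not anticipate any hard step; the work was done in establishing Lemmas~\ref{functional1} and~\ref{functional2}, and the present lemma is essentially a formal consequence. If a stylistic preference is desired, one could also present the argument as a single chain of equalities without introducing $\omega$ explicitly, treating the identity as an equality of maps $M \to \B$ obtained by composing the two prior identities of maps.
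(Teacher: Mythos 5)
Your proof is correct and is essentially the paper's own argument: both simply chain Lemma~\ref{functional1} and Lemma~\ref{functional2}, using that $\s$ maps $M$ into $N^H$ so that Lemma~\ref{functional2} applies to $x=\s_\omega$. The only difference is the order in which the two input lemmas are invoked, which is immaterial.
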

\begin{proof}
Let $\omega \in M\subset \B \otimes N$.  By Lemma \ref{functional1} we have 
$\varsigma [ (id\otimes \psi )\omega ] = \: _\psi\pi  (\s_\omega )$.  Now using Lemma \ref{functional2} we have 
$(\breve{\varsigma} \circ \varsigma ) (id\otimes \psi )(\omega ) = 
\breve{\varsigma} (\: _\psi\pi (\s_\omega)) = (id\otimes \psi )(\bs \circ 
\s)(\omega )$. 
\end{proof}

\begin{proposition}\label{estimate4}
Let $\{ \theta_n \}_{n\in \N}$ be the sequence from Proposition \ref{estimate2}.  Then for all $\omega \in M^n$ we have 
$\| (\bs^n \circ \s^n )(\omega )-\omega \| \leq L_n(\omega )\theta_n$.
\end{proposition}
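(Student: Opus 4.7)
The plan is to reduce the estimate to the already-proved case of Proposition \ref{estimate2} by ``slicing'' with states on $N$. Fix a self-adjoint $\omega \in M^n$. For any $\phi \in S(N)$, the element $(id \otimes \phi)(\omega)$ lies in $\B^n$, and Lemma \ref{functional3} gives
\[ (id \otimes \phi)[\omega - (\bs^n \circ \s^n)(\omega)] = (id \otimes \phi)(\omega) - (\breve{\varsigma}^n \circ \varsigma^n)((id \otimes \phi)(\omega)). \]
Applying Proposition \ref{estimate2} to $(id \otimes \phi)(\omega)$ bounds the norm of the right-hand side by $L_{\B^n}((id \otimes \phi)(\omega)) \theta_n$.

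Next I would show that for every $\phi \in S(N)$,
\[ L_{\B^n}((id \otimes \phi)(\omega)) \leq L_n(\omega), \]
and in fact equality holds after taking the supremum over $\phi$. For $\psi \in S(\B^n)$, the $M$-condition $(\alpha \otimes id)(\omega) = (id \otimes \pi)(\omega)$ yields
\[ (\psi \otimes id)\alpha((id \otimes \phi)(\omega)) = (\psi \otimes id \otimes \phi)(id \otimes \pi)(\omega). \]
Applying $L_A$ to both sides and then taking the supremum over $\psi \in S(\B^n)$ produces $L_{\B^n}((id \otimes \phi)(\omega))$ on the left, while taking the further supremum over $\phi \in S(N)$ on the right reproduces the defining formula $L_n(\omega) = (\|\cdot\| \otimes L_A \otimes \|\cdot\|)(id \otimes \pi)(\omega)$.

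Finally, I would pass from the slice estimates to a bound on the norm of $\omega - (\bs^n \circ \s^n)(\omega)$ itself. The key observation is that for any self-adjoint $\eta \in \B \otimes N \subseteq B(\H) \otimes_{\min} N$,
\[ \|\eta\| = \sup_{\phi \in S(N)} \|(id \otimes \phi)(\eta)\|, \]
which follows from the fact that on the minimal $C^*$-tensor product, norms of self-adjoint elements equal the supremum of $|\mu(\eta)|$ over states $\mu$, and every such $\mu$ can be realized (or approximated) as $\psi \circ (id \otimes \phi)$ for product states. Applying this to $\eta = \omega - (\bs^n \circ \s^n)(\omega)$, which is self-adjoint since $\bs^n$ and $\s^n$ are $\ast$-preserving, yields
\[ \|\omega - (\bs^n \circ \s^n)(\omega)\| \leq \sup_{\phi \in S(N)} L_{\B^n}((id \otimes \phi)(\omega))\, \theta_n \leq L_n(\omega)\, \theta_n. \]
The main obstacle is really only the last step of recovering the norm from slice states; the algebraic identities fall out immediately from Lemma \ref{functional3}, the definition of $L_n$, and the $M$-equivariance condition.
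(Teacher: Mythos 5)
Your strategy is the same as the paper's: slice with states $\phi \in S(N)$, use Lemma \ref{functional3} to convert the problem into the already-settled case of $\breve{\varsigma}^n \circ \varsigma^n$ on $\B^n$, invoke Proposition \ref{estimate2}, and use the defining relation $(\alpha^n \otimes id)\omega = (id\otimes \pi )\omega$ of $M^n$ to bound $L_{\B^n}((id\otimes \phi )\omega )$ by $L_n(\omega )$. All of that is correct and matches the paper's computation essentially line for line.

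The gap is in your final step, and it is the one place where you supply a justification that the paper omits. The identity $\| \eta \| = \sup_{\phi \in S(N)} \| (id\otimes \phi )(\eta )\|$ is false for self-adjoint (even positive) elements of $B(\H )\otimes_{\min} N$ once both factors are noncommutative, and your proposed justification --- that every state of the tensor product can be realized or approximated by product states --- is precisely the assertion that there are no entangled states, which fails already for $M_2(\C )\otimes M_2(\C )$. Concretely, if $p$ is the rank-one projection onto $\frac{1}{\sqrt{2}}(e_1\otimes e_2 - e_2\otimes e_1)$, then for every state $\phi$ of the second factor the slice $(id\otimes \phi )(p)$ is a positive $2\times 2$ matrix of trace $\frac{1}{2}$, so $\sup_\phi \| (id\otimes \phi )(p)\| \leq \frac{1}{2} < 1 = \| p\|$. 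Hence knowing $\| (id\otimes \phi )[\omega -(\bs^n\circ \s^n)(\omega )]\| \leq L_n(\omega )\theta_n$ for all $\phi \in S(N)$ does not by itself yield the claimed bound on the norm of $\omega -(\bs^n\circ \s^n)(\omega )$. (The paper's own proof makes the same final inference without comment, so you have reproduced its argument faithfully, including this weak point; but the lemma you would need is not the one you state. To close the gap one must either exploit the special structure of elements of $M^n$ --- for instance, in the case $N=A$ the counit is a state realizing the supremum, since $\alpha^n$ is isometric and $(id\otimes \varepsilon )\alpha^n = id$ --- or replace scalar slices by slices $(id\otimes \sigma )$ along representations of $N$, which the scalar estimates of Proposition \ref{estimate2} do not directly control.)
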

\begin{proof}
Let $\psi \in S(N)$, so that $(id\otimes \psi )\omega \in \B^n$.  Using  the Lemma \ref{functional3} we obtain $\\
 \| (id\otimes \psi ) [(\bs^n \circ \s^n )(\omega ) -\omega] \| =  \| (\breve{\varsigma}^n \circ \varsigma^n ) [(id\otimes \psi )\omega ]-(id\otimes \psi )\omega ] \| \leq 
   L_{\B^n}((id\otimes \psi )\omega )\theta_n .
\\$
Thus we compute:
\begin{eqnarray}
L_{\B^n}((id\otimes \psi )\omega ) & = & (||\otimes L_A)(\alpha^n [(id\otimes \psi)\omega ]) \nonumber \\
& = & 
(||\otimes L_A)[(id\otimes id\otimes \psi )(\alpha^n \otimes id) \omega ]\nonumber \\
& \leq &
(||\otimes L_A\otimes ||)((\alpha^n \otimes id)\omega ) =
L_n(\omega ) \nonumber
\end{eqnarray}
So we have for all $\psi \in S(N)$ that 
$\| (id\otimes \psi ) [(\bs^n \circ \s^n )(\omega ) -\omega] \| \leq L_n(\omega )\theta_n.\\$  Thus 
$\|  (\bs^n \circ \s^n )(\omega ) -\omega \| \leq L_n(\omega )\theta_n$.

\end{proof}

\end{subsection}

\begin{subsection}{Equivariance and Convergence in Quantum Gromov-Hausdorff Distance}

Using Propositions \ref{estimate3} and \ref{estimate4} we can repeat the argument from Section 4.5.  The only subtlety is in using Lemma \ref{equivarLip}, because our spaces $N^H$ and $M^n$ do not have actions of $A$.  However, our Lip-norms on $N^H$ and on $M$ come from restricting the seminorms on $N$ and on $\B \otimes N$, so we can  use equivariance at that level to show our symbols are contractive for the seminorms.

We consider the extensions of $\s$ and $\bs$ to all of $\B \otimes N$ and $N$, respectively.  That is, we consider the operators $\s :\B \otimes N\rightarrow N$ defined by
\[ \s (\omega )=(tr\otimes id)[(P\otimes 1)\omega ] \]
and $\bs :N\rightarrow \B \otimes N$ by
\begin{eqnarray}
 \bs_x & = & d_\H E(P\otimes x) \nonumber \\
& = & d_\H (\alpha^\dagger \otimes id )(id\otimes \pi)(P\otimes x) \nonumber \\
& = & d_\H (id\otimes \h\circ \mathsf{m} \otimes id)((id\otimes \kappa )\alpha (P)\otimes \pi (x)). \nonumber 
\end{eqnarray}
It follows by Proposition \ref{idempotent} that $\bs$ actually maps all of $N$ into (and onto) $M$, but we will still view it as a map into $\B \otimes N$.

First off, we have an ergodic right action $\pi :N\rightarrow A\otimes N$ of $A$ on $N$. Thus given any left-invariant Lip-norm $L_A$ on $A$, we obtain by Proposition \ref{li1} an invariant Lip-norm $L_N$ on $N$ by $L_N =(L_A\otimes ||)\pi$.  By Proposition \ref{rief1} we can see that the restriction of a Lip-norm to an order unit subspace is a Lip-norm on the subspace.  So the restriction of $L_N$ to $N^H$ is a Lip-norm on $N^H$, as desired.

Almost the same argument works for $M$ as well.  It's easy to see that the action $\pi :N\rightarrow A\otimes N$ induces a right action $\Pi :\B \otimes N\rightarrow A\otimes \B \otimes N$, where $\Pi =(\vartheta \otimes id)(id\otimes \pi )$, where $\vartheta :\B \otimes A \rightarrow A\otimes \B$ is the flip map.  Also, the seminorm defined on $\B \otimes N$ above as $L_{\B \otimes N}=(||\otimes L_A \otimes ||)(id\otimes \pi)$ is the same as the usual seminorm induced on $\B \otimes N$ from $L_A$ using the right action $\Pi$.  However, this seminorm is not a Lip-norm on $\B \otimes N$ because $\Pi$ is not ergodic.  The fixed set of $\Pi$ is $\B \otimes 1$.  However, there are no nontrivial fixed elements in  $M\subseteq \B \otimes A$, and it's routine to check that the restriction of $L_{\B \otimes N}$ to $M$ is a Lip-norm.

Again, consider the extensions of the  Berezin symbol and Berezin adjoint $\s :\B \otimes N\rightarrow N$ and  $\bs :N\rightarrow \B \otimes N$.  We must show that $\s$ and $\bs$ are equivariant for the right actions $\pi$ on $N$ and $\Pi$ on $\B \otimes N$.

\begin{lemma}
The maps $\s :\B \otimes N\rightarrow N$ and  $\bs :N\rightarrow \B \otimes N$ are equivariant for the right actions $\pi$ on $N$ and $\Pi$ on $\B \otimes N$.  That is, we have:
\begin{eqnarray}
(id\otimes \s )\Pi (\omega ) & = & \pi (\s_\omega ) \hspace{1.2in} \forall \omega \in \B \otimes N \nonumber \\
(id\otimes \bs )\pi (x) & = & \Pi (\bs (x)) \hspace{1.2in} \forall x\in N. \nonumber
\end{eqnarray}
\end{lemma}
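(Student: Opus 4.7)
The plan is to handle the two identities separately, with the first being a straightforward bookkeeping exercise on tensor factors and the second requiring a more delicate coassociativity argument.

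For the first identity, my approach is to verify it on elementary tensors $\omega = T \otimes n \in \B \otimes N$ and then extend by linearity. Starting from the right, $\pi(\s_\omega) = \pi(\mathrm{tr}(PT)\, n) = \mathrm{tr}(PT)\,\pi(n)$, so if $\pi(n) = \sum b \otimes y$ then $\pi(\s_\omega) = \sum \mathrm{tr}(PT)\, b \otimes y$. On the other side, $(id\otimes \pi)(T\otimes n) = T\otimes \pi(n) = \sum T\otimes b\otimes y$; then $(\vartheta \otimes id)$ flips the first two slots to give $\sum b\otimes T\otimes y$, and applying $(id\otimes \s)$ (with $\s$ on the last two slots) yields $\sum b \otimes \mathrm{tr}(PT)\,y$. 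These match, and since everything is linear the identity extends to all $\omega \in \B \otimes N$.

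For the second identity, my plan is to use the explicit formula $\bs(x) = d_\H (\alpha^\dagger \otimes id)(id\otimes \pi)(P\otimes x)$ together with the coassociativity of the right action $\pi$, namely $(id\otimes \pi)\pi = (\Delta \otimes id)\pi$, and the key identity $\alpha \circ \alpha^\dagger = (\alpha^\dagger \otimes id)(id\otimes \Delta)$ from Lemma \ref{lem5.2}. The idea is to attack $\Pi(\bs(x))$ from two sides. Since $\bs(x) \in M$ (which follows by combining Proposition \ref{idempotent} with the coassociativity of $\pi$), we have $(id\otimes \pi)\bs(x) = (\alpha\otimes id)\bs(x)$, so that $\Pi(\bs(x)) = (\vartheta\otimes id)(\alpha\otimes id)\bs(x)$. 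Substituting the formula for $\bs(x)$ and applying Lemma \ref{lem5.2} to the factor $\alpha \circ \alpha^\dagger$ that arises, one obtains an expression in terms of $\alpha^\dagger(P\otimes -)$ applied after $(\Delta\otimes id)\pi(x)$. On the other hand, expanding $(id\otimes \bs)\pi(x)$ directly and applying $(id\otimes \pi)\pi = (\Delta\otimes id)\pi$ produces the same shape of expression. Comparing the two and matching tensor slots should yield the equality.

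The main obstacle I anticipate is bookkeeping the tensor slot swaps carefully — in particular, reconciling the effect of $\vartheta$ on the $\B\otimes A$ pair with the Sweedler expansion of $\Delta$ that Lemma \ref{lem5.2} introduces. An alternative and perhaps cleaner route, patterned on the proof of Proposition \ref{invariant} in the $N=A$ case, is to \emph{derive} the second identity from the first by a duality/adjointness argument: after pairing both sides of the desired identity against an element of the form $\eta \otimes (T \otimes b)$ and pushing the Haar-state through using Corollary \ref{cor1}, the second equivariance is formally the adjoint of the first with respect to the inner products arising from the Haar state on $A$ and the Hilbert–Schmidt trace on $\B$. This mirrors the comment in the proof of Proposition \ref{invariant} that mimicking the $\bs$-calculation directly is difficult, but working through adjoints on the image of $\A$ in $L^2(A)$ (tensored with $N$) reduces the second equivariance to the first. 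I would carry out the adjoint argument first as it is conceptually simpler, and fall back on the direct computation with Lemma \ref{lem5.2} if the adjointness route runs into trouble because $N$ does not carry a canonical inner product.
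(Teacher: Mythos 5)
Your treatment of the first identity is correct and is essentially the paper's argument: $\s$ acts only on the $\B$ leg while $\Pi$ acts only on the $N$ leg, so the two maps live on disjoint tensor factors and the identity is immediate on elementary tensors.

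The gap is in the second identity, and it sits exactly in the sentence ``Comparing the two and matching tensor slots should yield the equality.''  Carry both expansions out and they do \emph{not} match by slot bookkeeping.  Write $\pi (x)=\sum a\otimes y$.  Using coassociativity $(id\otimes \pi )\pi =(\Delta \otimes id)\pi$, the left side becomes $(id\otimes \bs )\pi (x)=d_\H \sum a_{(1)}\otimes \alpha^\dagger (P\otimes a_{(2)})\otimes y$, so $\alpha^\dagger (P\otimes \cdot )$ is paired with the \emph{second} leg of $\Delta a$.  Using $\bs (x)\in M$ (Proposition \ref{idempotent}) and Lemma \ref{lem5.2}, the right side becomes $\Pi (\bs (x))=(\vartheta \otimes id)(\alpha \otimes id)\bs (x)=d_\H \sum a_{(2)}\otimes \alpha^\dagger (P\otimes a_{(1)})\otimes y$, with $\alpha^\dagger (P\otimes \cdot )$ paired with the \emph{first} leg.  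The two expressions differ by transposing the legs of the coproduct, which is not an identity that follows from coassociativity plus Lemma \ref{lem5.2}; already for $A=C(G)$ with $G$ nonabelian the two sides evaluate at $h\in G$ to $d_\H \int_G \alpha_{g^{-1}h}(P)\otimes \pi_g(x)\, dg$ and $d_\H \int_G \alpha_{hg^{-1}}(P)\otimes \pi_g(x)\, dg$, and $g^{-1}h$, $hg^{-1}$ are conjugate but not equal.  So the one step you defer is the entire content of the claim, and you cannot close it by imitating the paper either: the paper's own four-line computation makes the same unjustified jump in passing between its first two displayed lines.  Bridging the two expressions requires a genuine covariance property of $\alpha^\dagger$, of the kind asserted just before Lemma \ref{functional2} (namely $E(\alpha_\phi (T)\otimes x)=E(T\otimes {}_\phi \pi (x))$, which rests on Proposition \ref{prop1} and the traciality of $\h$), and even with it the bookkeeping must be rechecked.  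Your fallback adjointness route is blocked for exactly the reason you anticipate and the paper flags at the start of Section 5.2: $N$ carries no inner product, so $\bs$ is not an adjoint of $\s$; the paper's actual device for reducing general $N$ to $N=A$ is slicing by states $\psi \in S(N)$ as in Lemmas \ref{functional1} through \ref{functional3}, and if you only need the Lip-norm contractivity of the following corollary it is safer to derive it directly from $(id\otimes \pi )\bs (x)=(\alpha \otimes id)\bs (x)$, Lemma \ref{lem5.2}, and the invariance of $L_A$, bypassing the equivariance statement altogether.
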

\begin{proof}
For the equivariance of $\s$, we recall that $\s_\omega =(tr\otimes id)((P\otimes 1)\omega )$.  We have:
\begin{eqnarray}
(id\otimes \s )\Pi (\omega ) & = & (id \otimes \s )(\vartheta \otimes id)
	(id\otimes \pi )(\omega ) \nonumber \\
& = & (id \otimes tr \otimes id)((1\otimes P\otimes 1)[(\vartheta \otimes id)
	(id\otimes \pi )(\omega )]) \nonumber \\
& = & (tr \otimes id \otimes id)((P\otimes 1\otimes 1)
	(id\otimes \pi )(\omega )) =\pi (\s_\omega ).\nonumber 
\end{eqnarray}

For the equivariance of $\bs$, we  recall that $\bs (x)=d_\H (\alpha^\dagger \otimes id)(P\otimes \pi (x))$.  Thus we have:
\begin{eqnarray}
(id\otimes \bs )\pi (x) & = & 
	d_\H (id\otimes \alpha^\dagger \otimes id)(id\otimes id\otimes \pi )
	((1\otimes P \otimes 1)\pi (x)_{13}) \nonumber \\
& = & d_\H (\vartheta \otimes id)(\alpha^\dagger \otimes id \otimes id)
	(id\otimes id\otimes \pi ) ( P \otimes \pi (x)) \nonumber \\
& = & d_\H (\vartheta \otimes id)(id\otimes \pi )(\alpha^\dagger \otimes id)
	 ( P \otimes \pi (x)) \nonumber \\
& = & d_\H \Pi (\alpha^\dagger \otimes id) ( P \otimes \pi (x)) =\Pi (\bs (x)). \nonumber 
\end{eqnarray}

\end{proof}

\begin{corollary}
Let $\omega \in \B \otimes N$ and $x\in N$.  We have $L_N(\s_\omega )\leq L_{\B \otimes N}(\omega )$ and $L_{\B \otimes N}(\bs (x))\leq L_N(x)$.  In particular, $\s$ and $\bs$ are contractive for the Lip-norms on $N^H$ and $M$.
\end{corollary}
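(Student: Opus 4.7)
The plan is to deduce both inequalities as formal consequences of the equivariance established in the preceding lemma, combined with the explicit definitions of $L_N$ and $L_{\B \otimes N}$ as suprema of $L_A$ against states. The unifying principle is that if $\rho$ is a positive unital map and $\chi$ a state on its codomain, then $\chi \circ \rho$ is a state on its domain; this lets us reindex one supremum inside the other. The argument will not require the fact that $\Pi$ is non-ergodic, nor any deformation of the proof of Lemma \ref{equivarLip}; it is simply Lemma \ref{equivarLip} carried out by hand.

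First I would recast $L_{\B \otimes N}$ in a form compatible with $\Pi$: unwinding $\Pi = (\vartheta \otimes id)(id \otimes \pi)$ on elementary tensors shows that for any product state $\mu \otimes \psi$ with $\mu \in S(\B)$ and $\psi \in S(N)$, one has $(\mu \otimes id \otimes \psi)(id \otimes \pi)\omega = (id \otimes (\mu \otimes \psi))\Pi(\omega)$, so $L_{\B \otimes N}(\omega) = \sup_{\mu, \psi} L_A((id \otimes (\mu \otimes \psi))\Pi(\omega))$. For the first inequality I apply equivariance of $\s$ to rewrite
\[ L_N(\s_\omega) = \sup_{\psi \in S(N)} L_A((id \otimes \psi)\pi(\s_\omega)) = \sup_{\psi \in S(N)} L_A((id \otimes (\psi \circ \s))\Pi(\omega)). \]
A one-line check on elementary tensors identifies $\psi \circ \s$ with the product state $\omega_P \otimes \psi$, where $\omega_P(b) = tr(Pb)$ is the state on $\B$ coming from the unit vector $\xi$. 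Thus the supremum computing $L_N(\s_\omega)$ is taken over a subfamily of the product states appearing in $L_{\B \otimes N}(\omega)$, which gives the desired bound.

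For the second inequality I run the same machine in reverse: starting from the product-state expression for $L_{\B \otimes N}(\bs(x))$ and applying equivariance of $\bs$, each summand becomes $L_A((id \otimes [(\mu \otimes \psi) \circ \bs])\pi(x))$. Since $\bs: N \to \B \otimes N$ is positive and unital, the composite $(\mu \otimes \psi) \circ \bs$ is a state on $N$, so each summand is bounded by $\sup_{\phi \in S(N)} L_A((id \otimes \phi)\pi(x)) = L_N(x)$; taking the supremum over $\mu$ and $\psi$ completes the inequality.

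The final contractivity statement is then immediate bookkeeping: by construction the Lip-norm on $N^H$ is the restriction of $L_N$ and the Lip-norm on $M$ is the restriction of $L_{\B \otimes N}$, and since $\s$ maps $M$ into $N^H$ while $\bs$ maps $N^H$ into $M$, the two inequalities above specialize to the claimed contractivity. The only possible obstacle I anticipate is purely clerical — keeping track of tensor factor conventions and the placement of the flip map $\vartheta$ — but no deeper argument is required beyond equivariance and the positivity and unitality of $\s$ and $\bs$.
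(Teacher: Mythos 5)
Your proof is correct and follows essentially the same route as the paper's: the paper simply observes that the proof of Lemma \ref{equivarLip} goes through without the ergodicity hypothesis and applies it to the equivariant maps $\s$ and $\bs$, which is exactly the state-composition argument you carry out by hand. Your explicit check that $\psi \circ \s$ is the product state $tr(P\,\cdot\,)\otimes \psi$ is a worthwhile extra detail, since $L_{\B \otimes N}$ is defined as a supremum over product states only.
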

\begin{proof}
The proof of Lemma \ref{equivarLip} does not use the ergodicity assumption, except in stating that the induced seminorms are Lip-norms.  This gives the first statement.  The second follows since the Lip-norms on $N^H$ and on $M$ are the restrictions of the induced seminorms on $N$ and on $\B \otimes N$.
\end{proof}

Using this corollary and Propositions \ref{estimate3} and \ref{estimate4}, it is straightforward to repeat the steps of Section 4.5.  Hence we have our main theorem.

\begin{theorem}\label{main2}
Let $A$ be a coamenable compact quantum group of Kac-type, and let $(u,\H )$ be an irreducible unitary representation of $A$ with primitive-like vector $\xi \in \H$.  Let $N$ be a C$^*$-algebra, and let $\pi :N\rightarrow A\otimes N$ be an ergodic right action of $A$ on $N$.  Finally, let $L_A$ be an invariant regular Lip-norm on $A$.    

Let $P\in B(\H )$ be the rank one projection corresponding to $\xi$, and let $H\subseteq S(A)$ be the state stabilizer of $P$ under the action $\alpha (T)=u(T\otimes 1)u^*$.  Let $N^H\subseteq N$ be the set of $H$-invariant elements of $N$, and consider the sequence of finite dimensional spaces $M^n=\{ \omega \in \B^n \otimes N \: | \: (\alpha^n\otimes id)\omega =(id\otimes \pi )\omega \}$.  Let $L$ and $L_n$ be the Lip-norms induced on $N^H$ and on $M^n$ by $L_A$.  

Then the sequence of quantum metric spaces $(M^n, L_n)$ converge to $(N^H, L)$ in quantum Gromov-Hausdorff distance as $n\longrightarrow \infty$.
\end{theorem}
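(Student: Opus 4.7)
The plan is to mimic verbatim the argument of Section 4.5, with the spaces $A^H$ and $\B^n$ replaced by $N^H$ and $M^n$. All of the ingredients that were used there have now been assembled in Section 5: namely, the estimate $\|(\s^n\circ\bs^n)(x)-x\|\leq L(x)\delta_n$ from Proposition \ref{estimate3}, the dual estimate $\|(\bs^n\circ\s^n)(\omega)-\omega\|\leq L_n(\omega)\theta_n$ from Proposition \ref{estimate4}, and the fact, established in the corollary just before the theorem, that $\s^n$ and $\bs^n$ are contractive for the restricted Lip-norms $L$ and $L_n$. These three facts were precisely the inputs used in Section 4.5.

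I would first define, for each $n\in\N$ and each $\epsilon>0$, a seminorm on the order unit space $N^H\oplus M^n$ by
\[
L^{n,\epsilon}(x,\omega)=L(x)\vee L_n(\omega)\vee \epsilon^{-1}\|x-\s^n(\omega)\|,\qquad x=x^*,\ \omega=\omega^*.
\]
The first step is to verify that its quotient seminorm on $M^n$ is just $L_n$ (immediate, using that $L(\s^n(\omega))\leq L_n(\omega)$ by equivariance), and that its quotient seminorm on $N^H$ equals $L$ whenever $n$ is large enough that $\delta_n\leq\epsilon$ (using the pair $(x,\bs^n_x)$ together with Proposition \ref{estimate3} and $L_n(\bs^n_x)\leq L(x)$). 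Together these show that $L^{n,\epsilon}$ belongs to the class $\mathcal{M}$ of admissible Lip-norms on $N^H\oplus M^n$ for all sufficiently large $n$.

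Next, writing $\rho_{n,\epsilon}$ for the associated metric on $S(N^H\oplus M^n)$, I would estimate the Hausdorff distance of $S(N^H)$ and $S(M^n)$ inside this state space by the standard two-sided trick: for $\mu\in S(N^H)$ take $\nu=\mu\circ\s^n\in S(M^n)$, which gives $\rho_{n,\epsilon}(\mu,\nu)\leq\epsilon$ because $L^{n,\epsilon}(x,\omega)\leq 1$ forces $\|x-\s^n(\omega)\|\leq\epsilon$; and for $\nu\in S(M^n)$ take $\mu=\nu\circ\bs^n\in S(N^H)$, whose distance from $\nu$ is controlled by $\|\omega-\bs^n(x)\|\leq\|\omega-(\bs^n\circ\s^n)(\omega)\|+\|\bs^n(\s^n(\omega)-x)\|\leq\theta_n+\epsilon$ using Proposition \ref{estimate4} and the contractivity of $\bs^n$. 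Thus
\[
dist^{GH}(S(N^H),S(M^n))\leq\theta_n+2\epsilon,
\]
and taking $\epsilon=\max(\delta_n,\theta_n)$ gives an upper bound on $dist^{GH}_q((N^H,L),(M^n,L_n))$ that tends to $0$ as $n\to\infty$.

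There is no real obstacle left; the one subtle point is simply to observe that although $N^H$ and $M^n$ no longer carry actions of $A$ (unlike in Section 4.5), the contractivity statements $L(\s^n(\omega))\leq L_n(\omega)$ and $L_n(\bs^n(x))\leq L(x)$ still hold, because $L$ and $L_n$ are defined as restrictions of the induced seminorms on $N$ and on $\B^n\otimes N$, and $\s^n,\bs^n$ are equivariant for the right $A$-actions $\pi$ and $\Pi$ at that ambient level, as shown in the lemma preceding the theorem. Once this is in hand the four-lemma sequence from Section 4.5 transcribes word for word, yielding the theorem.
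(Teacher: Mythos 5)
Your proposal is correct and follows essentially the same route as the paper: the paper likewise transcribes the four-lemma argument of Section 4.5 using the bridge seminorm $L^{n,\epsilon}$, Propositions \ref{estimate3} and \ref{estimate4}, and the contractivity of $\s^n$ and $\bs^n$ obtained from equivariance at the ambient level of $N$ and $\B^n\otimes N$. You have correctly identified the one subtle point (the absence of $A$-actions on $N^H$ and $M^n$) and resolved it exactly as the paper does.
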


\end{subsection}

  \end{section}

\begin{section}{Examples}

In Section 3 we described in detail the commutative quantum group $C(G)$ for $G$ a classical compact group, and we computed the quantum group stabilizer and state stabilizer for any vector in any unitary representation of $C(G)$.  We did the same thing for the cocommutative quantum group $C^*(\Gamma )$ for $\Gamma$ a discrete group.  For $G$ a semisimple Lie group, the Berezin Quantization coming from the right regular representation of $C(G)$ is the usual Berezin Quantization of a coadjoint orbit.  There are no interesting Berezin Quantizations that come from $C^*(\Gamma )$, because all of the irreducible unitary representations of $C^*(\Gamma )$ are one dimensional.  

In this chapter we repeat some of this program for other compact quantum groups.  In the first subsection we describe some of the other well-known quantum groups from the literature.  In Section 6.2 we describe briefly the growing literature on ergodic actions of compact quantum groups.  In Section 6.3 we compute some of the quantum group stabilizers and state stabilizers for representations of these quantum groups.  Finally, in Section 6.4 we construct some  Berezin Quantizations from ergodic actions.

\begin{subsection}{Examples of Quantum Groups}
\begin{subsubsection}{Theta-Deformations of Quantum Groups}

In \cite{Rieffel4}, Rieffel gives a general proceedure for deforming a C$^*$-algebra using an action of $\R^d$ and a skew-symmetric form on $\R^d$.    Let $A$ be a compact quantum group, and let $\Pi : A\rightarrow C(\T )$ be a toral subgroup of $A$. Let $\mathfrak{t}$ be the Lie algebra of $\mathbb{T}$.  Then we have actions of the abelian group $\mathfrak{t}$ on $A$ by either left or right translation.  We can use the proceedure of \cite{Rieffel4} to deform $A$ as a C$^*$-algebra.  A natural question is whether we can deform by some particular action so that the deformed C$^*$-algebra is actually a compact quantum group.  This question is addressed for the case $A=C(G)$ in \cite{Rieffel5}, and is discussed for  general $A$ in \cite{Wang3}.

We sketch the results here.  Let $A$ is a compact quantum group and let $\Pi :A\rightarrow C(\T )$ be
a toral quantum subgroup of $A$.  Let $\mathfrak{t}$ be the Lie algebra of $\T$. Then there are two natural left actions of the underlying abelian group of $\mathfrak{t}$ on the underlying C$^*$-algebra of $A$ given by left and right translation.  Let $s$ and  $t$ be elements of 
$ \mathfrak{t}$. Explicitly, we have  two commuting actions $\lambda :\mathfrak{t} \rightarrow \text{C$^*$-Aut}(A)$ and $\rho :\mathfrak{t} \rightarrow \text{C$^*$-Aut}(A)$ given by
\[ \lambda_s =(\delta_{\exp (-s)}\Pi \otimes id)\Delta , \hspace{.5in}
	\rho_t =(id\otimes \delta_{\exp (t)}\Pi)\Delta ,\]
where $\exp :\mathfrak{t}\rightarrow \T$ is the exponential map, and $\delta_{\exp (t)}\in C(\T )'$ is evaluation at $\exp ( t)$.  Since these actions commute, we have an action $\alpha :\mathfrak{t}\oplus \mathfrak{t} \rightarrow \text{C$^*$-Aut}(A)$ of $\mathfrak{t}\oplus \mathfrak{t}$ on the underlying C$^*$-algebra of $A$ given by
\[ \alpha_{(s,t)}=\lambda_s\rho_t .\]

 If \( \theta  \) is a skew-symmetric bilinear form
on \( \mathfrak {t}, \) we can consider the skew-symmetric form \( J\equiv \theta \oplus (-\theta ) \)
on \( \mathfrak {t}\oplus \mathfrak {t} \) and perform Rieffel's
deformation procedure of \( A \) with respect to the action of
\( \mathfrak {t}\oplus \mathfrak {t}. \) That is, we let $A^\infty \subseteq A$ be the dense subalgebra of smooth functions for the action. We then consider the product $\times_{\hbar J}$ on the algebra  $A^\infty$ given by the oscillatory integral
\[  (a\times _{\hbar J}b)(x)=\frac{1}{(\pi \hbar )^{2d}}\int _{\mathfrak {t}^{4}}\alpha_{(s,t)}(a)\alpha_{(u,v)}(b)e^{\frac{2i}{\hbar }(\theta s\cdot u-\theta t\cdot v)}dsdtdudv. \]
Here, we set \( d=dim(\mathfrak {t}). \)  We denote $A^\infty$ with this deformed product by $A^\infty_{\hbar J}$.  Then Rieffel's deformation of $A$ by the action $\alpha$ with respect to the skew-symmetric form $J$ is defined to be the universal enveloping C$^*$-algebra of $A^\infty_{\hbar J}$, and it is denoted by $A_{\hbar J}$.

In \cite{Wang3}, Wang showed that in fact the
resulting algebras \( A_{\hbar J} \) are quantum groups. 
He first shows that the polynomial subalgebra $\A$ is always contained in $A^\infty$.  He then defines
 the polynomial subalgebra \( \A_{\hbar J} \) to be equal to 
 \( \A \) as a linear space, but with the multiplication defined above.  Wang shows that  the undeformed coproduct, counit, and coinverse of $\A$ also give $\A_{\hbar J}$ the structure of a Hopf algebra, and the extension of the coproduct to $A_{\hbar J}$ makes it into a quantum group.

We note that since $\A_{\hbar J}=\A$ as a coalgebra, we have that $A_{\hbar J}$ is of Kac-type if and only if $A$ is of Kac-type.  
If $u:\H \rightarrow \H \otimes A$ is any finite dimensional unitary representation of $A$ (so that the matrix coefficients of $u$ lie in $\A$), then $u$ will automatically be a representation of $A_{\hbar J}$ also, since $\A_{\hbar J}$ has the same coalgebra structure as $\A$.  In Proposition 3.8 of \cite{Wang3}, Wang  checks that $u$ is still unitary as a representation of $A_{\hbar J}$, so the representation theory of $A_{\hbar J}$ is identical to that of $A$, except that the tensor product of representations gets twisted.  It follows from  Proposition 6.1 of \cite{Banica1} that $A_{\hbar J}$ is  coamenable if and only if $A$ is coamenable.

The description just given is that of Rieffel's deformation for arbitrary actions of $\R^d$ on a C$^*$-algebra.  Since our action comes from the action of a torus, we can actually write down a simplier expression for deformed product, as is described at the end of Chapter 2 of \cite{Rieffel4}. 
Consider the action $\alpha$ above not as an action of $\mathfrak{t}\oplus \mathfrak{t}$, but as an action of $\T \oplus \T$ on $A$.  We  write $A$ as the direct sum of its weight spaces (i.e. isotypic components) for this action of $\T^2$.   We have as a graded algebra $A^\text{alg} =\bigoplus_{n\in \Z^{2d}} A_n $, where $d=dim(\T )$.  Then the deformed product on $A^\text{alg}_{\hbar J}$ is given by 
\[ (a\times_{\hbar J} b)_n=\Sigma_{m\in \Z^d} \: a_mb_{n-m} e_\hbar ( J m\cdot n),\]
where $e_\hbar :\R \rightarrow \C$ is the exponential map $e_\hbar (t)=\exp (2\pi i\hbar t)$.  In the following sections, it will be convenient to use the oscillatory integral formalism for a few of our abstract arguments, but to avoid having to define (or compute) oscillatory integrals, we will use this graded expression for the deformed product in our computations.

\end{subsubsection}

\begin{subsubsection}{q-Deformations of Classical Lie Groups}

Again, let $G$ be a Lie group, and let  $\mathfrak{g}$ be its Lie algebra.  Then the universal enveloping algebra $U\mathfrak{g}$ has a natural Hopf algebra structure.  In the literature, a $q$-deformation of $G$ is often defined to be a deformation of $U\mathfrak{g}$ in the category of Hopf algebras.  This picture is in some sense  dual to the picture we have adopted, where one wants a deformation of the algebra of continuous functions $C(G)$.  Again, Proposition 6.1 of \cite{Banica1} shows that $q$-deformations of Lie groups are always coamenable, but they are in general not of Kac-type.  For this reason, we will not give a general construction for deforming a classical semisimple Lie group.  We will just give Woronowicz's description of quantum $SU(2)$.  The reader interested in $q$-deformations of more general Lie groups may consult \cite{Klimyk}.

We turn to Woronowicz's description of $C(SU_q(2))$.  See \cite{Woro2} for more details.  Let $q\in [-1,1]$.  We let $C(SU_q(2))$ be the universal C$^*$-algebra generated by two elements $a,c$ with the relations:
\begin{eqnarray}
a^*a +c^*c & = & 1 \nonumber \\
aa^*+q^2cc^* & = & 1 \nonumber \\
c^*c & = & cc^* \nonumber \\
ac & = & qca \nonumber \\
ac^* & = & qc^*a. \nonumber
\end{eqnarray}
Note that $SU(2) = \left\{
\left( \begin{array}{cc} a & -c^* \\ c & a^* \end{array} \right) \: | \: a^*a+c^*c=1 \right\}$.  So if we let $q=1$ in the definition of $C(SU_q(2))$ above, we get exactly the relations for the coordinate functions of $SU(2)$.  Note also that if we consider the element $U\in M_2\otimes C(SU_q(2))$ given by
\[  U= \left( \begin{array}{cc} a & -qc^* \\ c & a^* \end{array} \right) , \]
then the defining relations for $C(SU_q(2))$ say exactly that $U$ is a unitary element of   $M_2\otimes C(SU_q(2))$.

We define the coproduct on $C(SU_q(2))$  to make $U$ a unitary representation of $C(SU_q(2))$; that is, we define $\Delta$ by the equation $(id \otimes \Delta ) U=U_{(12)} U_{(13)}$.  In terms of the generators we have
\begin{eqnarray}
\Delta (a) & = & a\otimes a -qc^*\otimes c \nonumber \\
\Delta (c) & = & c\otimes a +a^* \otimes c. \nonumber
\end{eqnarray}
Woronowicz shows that for $q\neq 0$ this coproduct makes $C(SU_q(2))$ into a compact quantum group.

We will not consider the general irreducible representations of $C(SU_q(2))$ in the sequel, but we note that they are deformations of the representations of $SU(2)$.  Note that the matrix $U \in M_2\otimes C(SU_q(2))$ above is a unitary representation of $C(SU_q(2))$ that deforms the standard representation of $SU(2)$.

Finally, we shall need a result due to Woronowicz about the structure of $C(SU_q(2))$ as a C$^*$-algebra, namely that  all of the C$^*$-algebras $C(SU_q(2))$ are isomorphic for $q\in (-1,1)$, even for $q=0$.  This will make our computation of stabilizers for actions of $C(SU_q(2))$ much easier, because Summary \ref{summary} explains how the quantum group stabilizer and state stabilizer of an element in a unitary representation of a quantum group can be computed from the representation theory of the underlying C$^*$-algebra of the quantum group.

\end{subsubsection}

\begin{subsubsection}{The Universal Compact Quantum Groups}

The unitary groups $U(n)$ are more or less universal for compact Lie groups.  That is, every  compact Lie group is a subgroup of $U(n)$ for some $n\in \N$.  One is then led to ask whether there is a list of compact quantum groups with the property that any compact quantum group is a quantum subgroup of exactly one of these universal compact quantum groups.  In \cite{VanDaele} Van Daele and Wang defined two sets of universal quantum groups, $A_u(Q)$ and $B_u(Q)$. They showed that every compact quantum matrix group is a quantum subgroup of some $A_u(Q)$, and every compact quantum matrix group with a self-conjugate fundamental representation is a quantum subgroup of some $B_u(Q)$.  The question of uniqueness then comes down to understanding the structure of the quantum groups $A_u(Q)$ and $B_u(Q)$, which is worked out in \cite{Wang1}.

We recall the definitions of $A_u(Q)$ and $B_u(Q)$.  Let $Q\in M_n(\C )$ be a positive and invertible matrix.  Both $A_u(Q)$ and $B_u(Q)$ are defined in terms of $n^2$ generators $u_{ij}$.  Denote by $u$ the formal matrix with entries $u_{ij}$.  Then the defining relations for $A_u(Q)$ and $B_u(Q)$ are:
\[ \begin{array}{ccc}
A_u(Q): & u^*u=I_n=uu^*, & u^tQ\bar{u}Q^{-1}=I_n=Q\bar{u}Q^{-1}u^t; \\
B_u(Q): & u^*u=I_n=uu^*, & u^tQ{u}Q^{-1}=I_n=Q{u}Q^{-1}u^t,
\end{array} \]
where $u^t$ is the matrix $(u_{ji})$, $\bar{u}=(u^*_{ij})$, and $u^*=(u^*_{ji})$.  The coproduct in either case is given by $\Delta (u_{ij})=\Sigma_k u_{ik}\otimes u_{kj}$.  

Both $A_u(Q)$ and $B_u(Q)$ are Kac-type if and only if $Q=I_n$.  For $n=1$ both $A_u(Q)$ and $B_u(Q)$ are isomorphic to the circle $C(S^1)$.  For $n=2$ we have $B_u(Q)$ is isomorphic to $C(SU_q(2))$ for an appropriate value of $q$ \cite{Banica5}.  Other than in these degenerate cases, $A_u(Q)$ and $B_u(Q)$ are never coamenable \cite{Banica5}.

Banica computes the fusion semiring for $A_u(Q)$ for $Q\geq 0$  in \cite{Banica2}.  In particular, he shows that if $u$ is the fundamental matrix representation of $A_u(Q)$ above, then $u^{\otimes k}$ is irreducible for all $k\in \N$.  In particular, any nonzero vector $\xi \in \C^n$ is a primitive-like vector for $u$.  

\end{subsubsection}

\end{subsection}

\begin{subsection}{Ergodic Actions of Compact Quantum Groups}

Let $G$ be a compact group, and let $\alpha :G\rightarrow Aut(B)$ be an ergodic action of $G$ on a C$^*$-algebra $B$ by $*$-homomorphisms.  By ergodicity, we see that averaging over the action of $G$ gives a unique $G$-invariant state on $B$.  In \cite{Hoegh} they show first that for $\mu \in \hat{G}$, the $\mu$-isotypic component of $B$ is finite dimensional, and has multiplicity at most $dim(\mu )$.  Secondly, they show that the unique $G$-invariant state on $B$ is a trace.  In particular, we see that any operator algebra that admits an ergodic action of a compact group must be nuclear and finite.

In \cite{Boca}, Boca generalizes the first part of the HLS theorem to compact quantum groups.  Let $\alpha :B\rightarrow B\otimes A$ be an ergodic $A$-module algebra, and let $\nu \in \hat{A}$.  Then Boca shows that the multiplicity of $\nu$ in $B$ is bounded by the so-called quantum multiplicity  $qdim(\nu )$.  See \cite{Boca} for definitions and details. As before, we see that if an operator algebra admits an ergodic action  by a compact quantum group, it must be nuclear.

As in the group case, given an ergodic action $\alpha :B\rightarrow B\otimes A$ we obtain a unique $A$-invariant state $\phi$ on $B$ by
 $\phi (b)1_B= \alpha_\h (b)$.  However, we do not have that $\phi$ is generally a trace on $B$.  For example, let $Q\in M_n(\C )$ be positive, and  take  the universal algebra $A_u(Q)$  described above. Then we have an ergodic action of $A_u(Q)$ on $M_n(\C )$ by $\alpha (T)=u(T\otimes 1)u^*$.   In \cite{Wang2} Wang shows that the $A$-invariant state on $M_n(\C )$ is given by $\phi_Q(T)=tr( Q^tT)$, which is not a trace.  Wang goes on to consider the inductive limit of the module algebras $M_n(\C )^{\otimes k}$ and produces for  $\lambda \in (0,1)$ an  example of an ergodic action of some $A_u(Q)$ on the hyperfinite type III$_\lambda$ factor.

There is a duality for ergodic actions of compact quantum groups resembling the Tannaka-Krein duality for quantum groups \cite{Pinzari2}.  More or less, to an ergodic action $\alpha :B\rightarrow B\otimes A$ one associates the functor from $L_B:\text{Rep}(A)\rightarrow \text{Hil}$ which is given on irreducible representations by $L_B(\nu )=\overline{B_\nu}$, the conjugate Hilbert space of $B_\nu$,  and extended linearly for direct sums.  Then $L_B$ is a $*$-preserving quasitensor functor, and \cite{Pinzari2} classifies all such functors arising from ergodic actions and gives an inverse association.  Consequently, in the literature one will often find constructions of ergodic actions of quantum groups that actually just construct this kind of functor.

\begin{subsubsection}{Quantum Coset Spaces}

Let $G$ be an ordinary group.  The most obvious example of an ergodic $G$-action is the action of $G$ on $C(G/H)$ by left translation, for a subgroup $H<G$.  Similarly, let $A$ be a compact quantum group, and let $\Pi :A\rightarrow B$ be a quantum subgroup.  Then we have a right action of $B$ on $A$ via $(\Pi \otimes id)\Delta :A\rightarrow B\otimes A$, and we can consider the C$^*$-subalgebra of $B$-invariant elements of $A$
\[ A^\Pi =\{ a\in A \: | \: (\Pi \otimes id)\Delta (a)=1_B\otimes a\} . \]
There is also a left action of $A$ on itself by the coproduct, which commutes with the right action by $B$, so it descends to a left action of $A$ on $A^\Pi$.  That is, we have $\Delta |_{A^\Pi}:A^\Pi\rightarrow A^\Pi\otimes A$.  

Similarly, we can switch the sides that $A$ and $B$ act on.  Namely, consider the space
\[ ^\Pi \! A=\{ a\in A \: | \: (id \otimes \Pi )\Delta(a)=a\otimes 1_B \}. \]
Then we have a right action of $A$ on $^\Pi \! A$ by the restriction of the coproduct.  That is, we have $\Delta |_{^\Pi \! A}:$ $  ^\Pi\! A\rightarrow A\otimes $ $ ^\Pi\! A$.

It is shown in Section 6 of \cite{Wang2} that these actions are ergodic.

\end{subsubsection}

\begin{subsubsection}{Induced Actions}

The construction of induced representations of compact quantum groups is worked out in \cite{Pinzari1}.  The following discussion for induced actions is similar, but it is easier because we do not need to consider the $L^2$-continuity of everything.

The following construction parallels our construction of $M$ from Section 5. For motivation, let $G$ be a compact group and $H<G$ be a  subgroup.  Let $\alpha :H\rightarrow \text{Aut}(N)$ be a right action of $H$ on a C$^*$-algebra $N$.  Then we have an induced action of $G$ on the C$^*$-algebra $\{ f:G\rightarrow N\: | \: f(xh)= \alpha_h[f(x)] , \: \forall h\in H\}$ by left translation.  The more natural left translation of $G$ on the above set (the left translation without the inverse inserted) is a right action of $G$.  

Now let $A$ be a compact quantum group, and $\Pi :A\rightarrow B$ be a quantum subgroup.  Let $\alpha :N\rightarrow B\otimes N$ be a right action of $B$ on a C$^*$-algebra $N$.  Then consider the C$^*$-algebra
\[ \text{Ind}_A(N)= \{ \zeta \in A\otimes N \: | \: 
	(id\otimes \Pi \otimes id)(\Delta \otimes id)\zeta =
	(id\otimes \alpha )\zeta \} .\]
Consider $(id\otimes \Pi )\Delta :A \rightarrow A\otimes B$ as a left action of $B$ on $A$.   
Essentially the same computation as in Proposition \ref{idempotent} shows that $[((id\otimes \Pi )\Delta )^\dagger \otimes id](id\otimes \alpha )$ is an idempotent map from $A\otimes N$ onto $\text{Ind}_A(N)$, so this space is nonempty.  It is easy to see that it is a C$^*$-subalgebra of $A\otimes N$.  We will now check that $(\Delta \otimes id )\text{Ind}_A(N)\subseteq A\otimes \text{Ind}_A(N)$, which says that $\text{Ind}_A(N)$ is a right $A$-module algebra. Let $\zeta \in \text{Ind}_A(N)$.  We compute:
\begin{eqnarray}
(id\otimes id \otimes \Pi \otimes id)(id\otimes \Delta \otimes id)(\Delta \otimes id)\zeta & = &
 (id\otimes id \otimes \Pi \otimes id)(\Delta  \otimes id \otimes id)(\Delta \otimes id)\zeta \nonumber \\
& = & (\Delta  \otimes id \otimes id)(id \otimes \Pi \otimes id)(\Delta \otimes id)\zeta \nonumber \\
& = & (\Delta  \otimes id \otimes id)(id\otimes \alpha )\zeta \nonumber \\
& = & (id\otimes id\otimes \alpha )(\Delta  \otimes id)\zeta . \nonumber
\end{eqnarray}
Thus $(\Delta \otimes id)\zeta \in A\otimes \text{Ind}_A(N)$, and $\text{Ind}_A(N)$ is a right $A$-module algebra.

\begin{proposition}
Suppose that $\zeta \in \text{Ind}_A(N)$ is invariant under the action $(\Delta \otimes id)$.  Then $\zeta$ is of the form $1_A\otimes \nu$, where $\nu \in N$ is invariant under the $B$-action $\alpha$.  

In particular, if $\alpha :N\rightarrow B\otimes N$ is an ergodic action of $B$, then $(\Delta \otimes id):\text{Ind}_A(N)\rightarrow A\otimes \text{Ind}_A(N)$ is an ergodic action of $A$.
\end{proposition}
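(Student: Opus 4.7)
The plan is to exploit the invariance of the Haar state $\h$ of $A$, specifically the identity $(\h \otimes id)\Delta(a) = \h(a) 1_A$. This will force any $(\Delta \otimes id)$-invariant $\zeta$ into the form $1_A \otimes \nu$, and then the defining condition of $\text{Ind}_A(N)$ will translate directly into $\alpha$-invariance of $\nu$.

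First I would apply $\h \otimes id \otimes id$ to both sides of the hypothesis $(\Delta \otimes id)\zeta = 1_A \otimes \zeta$, viewed as an equality in $A \otimes A \otimes N$. On the left, applying $\h$ to the first leg after $\Delta$ is the composition $(\h \otimes id)\Delta$, which by invariance of $\h$ collapses to $a \mapsto \h(a) 1_A$; hence the left side equals $1_A \otimes (\h \otimes id)\zeta$. On the right, $(\h \otimes id \otimes id)(1_A \otimes \zeta) = \h(1_A)\zeta = \zeta$. Comparing gives $\zeta = 1_A \otimes \nu$, where $\nu := (\h \otimes id)\zeta \in N$. A nice feature of this route is that it works directly in the C$^*$-tensor product: since $\h$ is a bounded state, $\h \otimes id$ extends continuously to all of $A \otimes N$, so one need not assume $\zeta$ is an elementary tensor or a finite algebraic sum.

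Next I would substitute $\zeta = 1_A \otimes \nu$ into the defining membership condition $(id \otimes \Pi \otimes id)(\Delta \otimes id)\zeta = (id \otimes \alpha)\zeta$ for $\text{Ind}_A(N)$. The left side collapses to $1_A \otimes 1_B \otimes \nu$, while the right side is $1_A \otimes \alpha(\nu)$, so $\alpha(\nu) = 1_B \otimes \nu$. Thus $\nu$ is $\alpha$-invariant, proving the first claim. For the ergodicity consequence, if $\alpha$ is ergodic then $\nu \in \C 1_N$, so $\zeta \in \C(1_A \otimes 1_N)$; since $1_A \otimes 1_N$ is the unit of $\text{Ind}_A(N)$, this shows that the right $A$-action $(\Delta \otimes id)$ on $\text{Ind}_A(N)$ is ergodic.

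The proof is short and direct; the substantive work was already done upstream in verifying that $\text{Ind}_A(N)$ is a right $A$-module algebra. There is no serious obstacle — the only things to be careful about are tracking which tensor leg each operator acts on and confirming that the Haar-state slicing is valid at the C$^*$-level, both of which are routine.
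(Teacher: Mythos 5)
Your proof is correct, but it reaches the key intermediate conclusion $\zeta = 1_A\otimes \nu$ by a genuinely different route than the paper does. The paper first feeds the invariance $(\Delta\otimes id)\zeta = 1_A\otimes\zeta$ into the membership condition for $\text{Ind}_A(N)$, obtaining $(id\otimes\alpha)\zeta = 1_A\otimes(\Pi\otimes id)\zeta$, and then ``looks at the first component'' to conclude $\zeta = 1_A\otimes\nu$; to make that reading-off rigorous one must slice the first leg by functionals vanishing at $1_A$ and invoke the injectivity of $\alpha$. You instead slice the invariance equation itself with the Haar state of $A$, using $(\h\otimes id)\Delta = \h(\cdot)1_A$, which yields $\zeta = 1_A\otimes(\h\otimes id)\zeta$ in one line and needs nothing beyond the boundedness of the slice map on the C$^*$-tensor product --- in particular, no property of the $B$-action is used at this stage, and no decomposition of $\zeta$ into elementary tensors is required. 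From there the two arguments coincide: substituting $1_A\otimes\nu$ into the defining condition of $\text{Ind}_A(N)$ gives $\alpha(\nu)=1_B\otimes\nu$, and ergodicity of $\alpha$ forces $\nu\in\C 1_N$. Your version is, if anything, the more robust of the two, since it replaces the paper's informal first-leg comparison with a standard Haar-state averaging that is airtight at the C$^*$-level.
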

\begin{proof}
Suppose $\zeta \in \text{Ind}_A(N)$ is invariant, so that $(\Delta \otimes id)\zeta =1_A\otimes \zeta$.  Then by the definition of $\text{Ind}_A(N)$ we have
\begin{eqnarray}
(id\otimes \alpha )\zeta 
& = &
(id\otimes \Pi \otimes id)(\Delta \otimes id )\zeta \nonumber \\
& = &
(id\otimes \Pi \otimes id)(1_A\otimes \zeta ) \nonumber \\
& = &
1_A\otimes (\Pi \otimes id)\zeta .\nonumber 
\end{eqnarray}
Looking at the first component of the tensor product gives that $\zeta =1_A\otimes \nu$ for some $\nu \in N$.  Plugging this expression back in for $\zeta$ and looking at the second components then gives $\alpha (\nu )=1_B\otimes \nu$.  That is, that $\nu$ is fixed by the $B$-action $\alpha :N\rightarrow B\otimes N$.  This establishes the first claim, and the second follows immediately from the first.
\end{proof} 

One important special case is when the quantum subgroup $B$ is an ordinary torus.  A torus of dimension at least two admits an ergodic action on a noncommutative torus.  However, this action does not yield interesting Berezin quantizations because all the irreducible unitary representations of the torus are one dimensional.  However, the above proposition allows us to use this action on the noncommutative torus to induce interesting ergodic actions for any compact quantum group which contains a torus.  We work out this action explicitly for later reference.  We will see that in a sense, inducing an action commutes with Rieffel's deformation by actions of $\R^d$.

Let $\T^d$ be a torus.  Then by the Fourier Transform we have $C(\T^d)\cong C^*(\Z^d)$.  Let $\theta \in M_d(\R )$ be a skew-symmetric matrix, so that we can define the skew bicharacter $c_\theta (m,n)=\exp (2\pi i \langle \theta m,n\rangle)$ on $\Z^d$.  The noncommutative torus $C^*(\Z^d, c_\theta )$ is defined to be the universal C$^*$-algebra generated by finitely supported functions on $\Z^d$ with the usual star $f^*(n)=\overline{f(-n)}$ and with the twisted convolution product
\[ f*_\theta g (n) =\Sigma_m f(m)g(n-m)c_\theta (m, n-m). \]
There is an action of the ordinary torus on the noncommutative torus given by $(t\cdot f)(n)=\exp (2\pi i\langle t,n\rangle) f(n)$, where $t\in \T^d =(\R / \Z )^d$.  Note that this action is essentially independent of $\theta$.  It is easily seen to be ergodic.

Let $A$ be a compact quantum group, and let $\Pi :A\rightarrow C(\T^d)$ be a toral quantum subgroup of $A$.  For any $a\in A$, we can view $(id\otimes \Pi )\Delta a$ as a function from $\T^d $ to $A$.  We will denote the value of this function at a point $s\in \T^d$ by $\Delta_s (a)$.  We wish to understand the ergodic action of $A$ induced from the action of $\T^d$ on $C^*(\Z^d, c_\theta )$.

We view $A\otimes C^*(\Z^d, c_\theta)$ as the C$^*$-algebra generated by finitely supported $A$-valued functions on $\Z^d$ with the same twisted convolution product above.  We denote the value of a function $\zeta \in A\otimes C^*(\Z^d, c_\theta)$ at $n\in \Z^d$ by $\zeta_n$.  Then we have
\[ \text{Ind}_A(C^*(\Z^d, c_\theta)) = 
	\{ \zeta \in  A\otimes C^*(\Z^d, c_\theta) \: | \: \Delta_s(\zeta_n) =
	\exp (2\pi i\langle s,n\rangle)\zeta_n , \: \forall s\in \T^d, \: \forall n\in Z^d \} . \]
We see that $\text{Ind}_A(C^*(\Z^d, c_\theta))$ is precisely the set of functions $\zeta \in A\otimes C^*(\Z^d, c_\theta)$ such that the value of $\zeta$ at $n\in \Z^d$ lies in the $n$-th isotypic component of $A$, when $A$ is viewed as a $\T^d$-module via the action $(id\otimes \Pi )\Delta$. Denote this isotypic component by $A_n$. We have that as a linear space  $\text{Ind}_A(C^*(\Z^d, c_\theta))$ is a completion of $\bigoplus_{n\in \Z^d}A_n$.  Of course, $A$ itself is also a completion of  $\bigoplus_{n\in \Z^d}A_n$ as a linear space (and as a graded algebra).

  If $\zeta , \chi \in \text{Ind}_A(C^*(\Z^d, c_\theta))$, then their product is given by 
\[ (\zeta *\chi )(n)=\Sigma_m \: \zeta_m  \chi_{n-m} c_\theta (m,n-m). \]
Let $\t \cong \R^d$ be the Lie algebra of $\T^d$.  We have an action of the underlying abelian group of  $\t$ on $A$ given by $x \cdot (a)=\Delta_{\exp (x)}(a)$.  We can then use the skew-symmetric form $\theta$ and the action of $\t$ on $A$ to produce a deformed C$^*$-algebra $A_\theta$ according to Rieffel's deformation \cite{Rieffel4}.  Using the graded formula for the   product on $A_\theta$ given in Section 6.1.1, one sees that the product on $A_\theta$ is precisely the product given  above for $\text{Ind}_A(C^*(\Z^d, c_\theta))$, so that we have $\text{Ind}_A(C^*(\Z^d, c_\theta))=A_\theta$ as a C$^*$-algebra.  Our work above shows that we have an ergodic action $\Delta :A_\theta \rightarrow A\otimes A_\theta$.

Of course, the noncommutative torus $C^*(\Z^d, c_\theta )$ is the Rieffel deformation of $C(\T^d)$ by the action of $\t$ with the skew form $\theta$.  This example essentially says that the process of inducing the $\T^d$-module $C(\T^d)$ to an $A$-module commutes with Rieffel deformations.

\end{subsubsection}

\end{subsection}

\begin{subsection}{Examples of Stabilizers}
In this section we will compute some explicit examples of quantum group stabilizers and state stabilizers for actions of compact quantum groups.  Recall that our definitions and results in Section 3 did not require $A$ to be coamenable or of Kac-type.

\begin{subsubsection}{Stabilizers for $C(SU_q(2))$}

We begin by computing the quantum group stabilizer and state stabilizer for the two simplest unitary representations of $C(SU_q(2))$.  Namely, we compute them for the standard representation of $C(SU_q(2))$ on $\C^2$, and for the representation on $M_2(\C )$ by conjugation.

Consider the representation of the ordinary $SU(2)$ on $\C^2$.  The stabilizer of any vector in $\C^2$ is trivial.  For simplicity, take the standard basis vector $e_1 \in \C^2$ .  We have:
\[ \left( \begin{array}{cc} a & -\bar{c} \\ c & \bar{a} \end{array} \right)
	\left( \begin{array}{c} 1 \\ 0 \end{array} \right) =
	\left( \begin{array}{c} a \\ c \end{array} \right).  \]
So we see that the only element of $SU(2)$ that stabilizes $e_1$ is the identity.  

Recall the defining relations for $C(SU_q(2))$.  It is the universal C$^*$-algebra generated by two elements $a,c$ with the relations: 
\begin{eqnarray}
a^*a +c^*c & = & 1 \nonumber \\
aa^*+q^2cc^* & = & 1 \nonumber \\
c^*c & = & cc^* \nonumber \\
ac & = & qca \nonumber \\
ac^* & = & qc^*a. \nonumber
\end{eqnarray}

\begin{proposition}
Let $q\in [-1,1]$ be given.  Let $U: \C^2 \rightarrow \C^2 \otimes C(SU_q(2))$ be the standard representation of $C(SU_q(2))$, and let $e_1 \in \C^2$ be the standard basis vector.  Then both the quantum group stabilizer of $e_1$ and the state stabilizer of $e_1$ are given by the counit $\varepsilon :C(SU_q(2))\rightarrow \C$.
\end{proposition}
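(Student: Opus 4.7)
The plan is to apply the two concrete descriptions of the stabilizers: Proposition \ref{cor3} for the quantum group stabilizer, and Lemma \ref{lem4} together with Summary \ref{summary} for the state stabilizer. Both computations rest on identifying the element $\s_{e_1} \in C(SU_q(2))$ explicitly. Using the formula for $U$ (with the sign convention $-qc^*$ in the off-diagonal entry), one reads off $U(e_1) = e_1\otimes a + e_2 \otimes c$, so $\s_{e_1} = \langle U(e_1), e_1\otimes 1\rangle = a$, and $\|e_1\|^2 = 1$.

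For the quantum group stabilizer I would invoke the remark after Proposition \ref{cor3}: it equals $C(SU_q(2))/\langle a-1\rangle$, the quotient by the closed two-sided ideal generated by $a-1$. In this quotient, $a^*a = 1$, so the relation $a^*a + c^*c = 1$ forces $c^*c = 0$ and hence $c = 0$. Once $a = 1$ and $c = 0$ are imposed, all defining relations collapse and the quotient is simply $\C$, with the map $C(SU_q(2)) \to \C$ sending $a\mapsto 1$, $c\mapsto 0$; this is by construction the counit $\varepsilon$.

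For the state stabilizer I would use Lemma \ref{lem4} to rephrase $H_{e_1} = \{\varphi \in S(C(SU_q(2))) : \varphi(a) = 1\}$, and then use Summary \ref{summary}: such states correspond to pairs $(\pi, \nu)$ with $\pi$ a $*$-representation and $\nu$ a unit vector with $\pi(a)\nu = \nu$. Applying the relation $a^*a + c^*c = 1$ to $\nu$ gives $\|\pi(c)\nu\|^2 = \|\nu\|^2 - \|\pi(a)\nu\|^2 = 0$, so $\pi(c)\nu = 0$. A short computation using $aa^* + q^2cc^* = 1$, combined with $\langle \pi(a^*)\nu,\nu\rangle = \overline{\langle\nu,\pi(a^*)\nu\rangle} = \|\nu\|^2$, yields $\|\pi(a^*)\nu - \nu\|^2 = -q^2\|\pi(c^*)\nu\|^2 \leq 0$, forcing $\pi(a^*)\nu = \nu$ and (for $q\neq 0$) $\pi(c^*)\nu = 0$; the case $q = 0$ is handled by $c^*c = cc^*$. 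Thus $\pi(a)\nu = \pi(a^*)\nu = \nu$ and $\pi(c)\nu = \pi(c^*)\nu = 0$, so the matrix coefficient $\varphi(x) = \langle\pi(x)\nu,\nu\rangle$ agrees with $\varepsilon$ on the generators, hence everywhere. This shows $H_{e_1} = \{\varepsilon\}$, so by Proposition \ref{prop3} the state stabilizer is the morphism $\varepsilon: C(SU_q(2))\to \C$.

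The main subtlety I anticipate is handling the case $q=0$ in the state stabilizer computation, since the inequality $\|\pi(a^*)\nu-\nu\|^2 \leq -q^2\|\pi(c^*)\nu\|^2$ becomes vacuous; here the normality relation $c^*c = cc^*$ (equivalently $\|\pi(c^*)\nu\| = \|\pi(c)\nu\|$) is what saves the argument. Everything else is essentially a direct manipulation of the Woronowicz relations.
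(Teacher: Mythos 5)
Your proof is correct and follows essentially the same route as the paper's: compute $\s_{e_1}=a$, identify the quantum group stabilizer as the quotient by the ideal generated by $a-1$ (forcing $c=0$ via $a^*a+c^*c=1$), and characterize $H_{e_1}$ through $*$-representations with a unit vector fixed by $a$. The only step stated too tersely is ``agrees with $\varepsilon$ on the generators, hence everywhere'': a state need not be determined by its values on generators, but since you have shown $\nu$ is a joint eigenvector of $\pi(a),\pi(a^*),\pi(c),\pi(c^*)$, the line $\C\nu$ is $\pi$-invariant, so $\pi(x)\nu=\varepsilon(x)\nu$ for all $x$ and hence $\varphi=\varepsilon$, which is exactly how the paper closes the argument.
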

\begin{proof}
We begin by computing $\s_{e_1}=\langle U(e_1),e_1 \otimes 1 \rangle$.
\[
\left\langle \left( \begin{array}{cc} a & -qc^* \\ c & a^* \end{array} \right)
	\left( \begin{array}{c} 1 \\ 0 \end{array} \right) , 
	\left( \begin{array}{c} 1 \\ 0 \end{array} \right) \otimes 1 \right\rangle
  = 
\left\langle	\left( \begin{array}{c} a \\ c \end{array} \right) ,
	\left( \begin{array}{c} 1 \\ 0 \end{array} \right) \otimes 1 \right\rangle 
 = a. \]

 To compute the quantum group stabilizer of $e_1$, we  consider the  $*$-representations $(\pi ,\H )$  of $C(SU_q(2))$ that send $\s_{e_1}=a$ to the identity operator.  Looking at the defining relations for $C(SU_q(2))$, one sees that $\pi $ must statisfy
\[
\pi (a)=1=\pi (a^*) \hspace{1.5in} \pi (c) =0=\pi (c^*).\] 
Since both $\pi$ and $\varepsilon$ are multiplicative, we have $\pi (b) =\varepsilon (b)1$ for any $b\in C(SU_q(2))$.  Thus $\varepsilon :C(SU_q(2))\rightarrow \C$ is the universal map of C$^*$-algebras stabilizing $e_1$. Thus it is the quantum group stabilizer of $e_1$ by Proposition \ref{cor3}. 

Recall Summary \ref{summary}. To compute the state stabilizer, we must find the $*$-representations $(\pi ,\H )$  of $C(SU_q(2))$ that have a fixed vector for $\s_{e_1}=a$.  It suffices to find the irreducibles, since $H_{e_1}$ is the closed convex hull of its pure states.  Let $(\pi ,\H )$ be an irreducible $*$-representation of $C(SU_q(2))$ with a unit vector $v\in \H$ such that $\pi (a)v=v$.  By Lemma \ref{lem4}
this implies $\pi (a^*)v=v$ as well.  

From the relations $a^*a+c^*c=1$ and $c^*c=cc^*$, we see that $\pi (c^*c)v=\pi (cc^*)v=0$.  It follows that $\pi (c)v=\pi(c^*)v=0$.  In total, we have shown 
\[
\pi (a)v=v=\pi (a^*)v \hspace{1.5in} \pi (c)v =0=\pi (c^*)v.\] 
It follows that $\C v$ is an invariant subspace for $\pi$.  Since we are only considering irreducible $\pi$, we have that $\H =\C v$.  Again, we have that $\pi$ and $\varepsilon$ are both multiplicative.  Therefore the relations above imply that $\pi (b)v=\varepsilon (b)v$ for all $b\in C(SU_q(2))$.  Thus $\varepsilon $ is the only pure state in $H_{e_1}$, and so $H_{e_1}=\{ \varepsilon \}$. Therefore $\varepsilon :C(SU_q(2))\rightarrow \C$ is the state stabilizer of $e_1$.

\end{proof}

We move on to a less trivial example.  Consider the representation of the ordinary $SU(2)$ on $M_2(\C )$ by conjugation.  Let $\lambda =\exp ( i \theta )$ be a complex number of modulus 1.  We wish to compute the stabilizer of the diagonal matrix $D_\lambda =diag[ \lambda ,\overline{\lambda} ]$.  For simplicity we take  $\theta /2\pi$ to be irrational, so that the set $\{ \lambda^n\}_{n\in \Z}$ is dense in the circle.  Any unitary matrix that stabilizes $D_\lambda$ must also stabilize $diag[\lambda^n ,\overline{\lambda^{n}}]$ for any $n\in \Z$, so the stabilizer of $D_\lambda$ must stabilize the whole set $\T =\{ diag[\exp (i\alpha ), \exp (-i\alpha )] \: | \: \alpha \in \R \}$, which is the standard maximal torus for $SU(2)$.  Thus the stabilizer of $D_\lambda$ is the usual maximal torus  $\T \subseteq SU(2)$.  

We show that for the representation of $C(SU_q(2))$ on $M_2(\C )$ via conjugation by the standard representation, the quantum group stabilizer of $D_\lambda$ is still the standard copy of $\T \hookrightarrow SU_q(2)$.  However, we will see for the state stabilizer, the answer is quite different.

We begin by computing the element $\s_{D_\lambda}$.

\begin{lemma}\label{sD}
Consider the unitary representation of $C(SU_q(2))$ on $M_2(\C)$ given by $T\mapsto U(T\otimes 1)U^*$.  Let $D_\lambda$ be the diagonal matrix considered above.  We have 

\[ \s_{D_\lambda}=2 \cdot 1_A +2i\sin (\theta )(\lambda -q^2\overline{\lambda})c^*c \]
\end{lemma}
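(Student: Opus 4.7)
The plan is to compute $\s_{D_\lambda}$ by brute force using the matrix formula for the fundamental corepresentation $U$ of $C(SU_q(2))$. By Proposition \ref{cool} (applied to the conjugation representation $\alpha(T)=U(T\otimes 1)U^*$ with the Hilbert--Schmidt inner product on $M_2(\C)$), we have
\[ \s_{D_\lambda}=\langle \alpha(D_\lambda),D_\lambda\otimes 1_A\rangle=(tr\otimes id)\bigl((D_\lambda^*\otimes 1)\,\alpha(D_\lambda)\bigr). \]
So the task reduces to (a) writing down $\alpha(D_\lambda)$ explicitly, (b) extracting its diagonal entries as elements of $A$, and (c) weighting them by $\overline{\lambda}$ and $\lambda$ respectively and adding.

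First I would compute $U(D_\lambda\otimes 1)=\bigl(\begin{smallmatrix}\lambda a & -q\overline{\lambda}c^*\\ \lambda c & \overline{\lambda} a^*\end{smallmatrix}\bigr)$ and then multiply by $U^*=\bigl(\begin{smallmatrix}a^* & c^*\\ -qc & a\end{smallmatrix}\bigr)$. Only the diagonal entries of the product matter for the trace. Direct multiplication gives
\[ (\alpha(D_\lambda))_{11}=\lambda aa^*+q^2\overline{\lambda}c^*c,\qquad (\alpha(D_\lambda))_{22}=\lambda cc^*+\overline{\lambda}a^*a. \]
Next I would apply the defining relations $aa^*=1-q^2cc^*$, $a^*a=1-c^*c$, and $cc^*=c^*c$ to rewrite these as
\[ (\alpha(D_\lambda))_{11}=\lambda\cdot 1_A+q^2(\overline{\lambda}-\lambda)c^*c,\qquad (\alpha(D_\lambda))_{22}=\overline{\lambda}\cdot 1_A+(\lambda-\overline{\lambda})c^*c. \]

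Multiplying the first entry by $\overline{\lambda}$ and the second by $\lambda$ and adding (using $\lambda\overline{\lambda}=1$) gives
\[ \s_{D_\lambda}=2\cdot 1_A+\bigl(q^2\overline{\lambda}^2-q^2+\lambda^2-1\bigr)c^*c. \]
The final step is the purely algebraic identity
\[ \lambda^2-1+q^2\overline{\lambda}^2-q^2=(\lambda-\overline{\lambda})(\lambda-q^2\overline{\lambda})=2i\sin(\theta)(\lambda-q^2\overline{\lambda}), \]
which is immediate by expanding the middle product and using $\lambda\overline{\lambda}=1$, together with $\lambda-\overline{\lambda}=2i\sin(\theta)$.

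There is no real obstacle here; the result is a direct computation. The only thing requiring any care is bookkeeping of the noncommuting generators in the matrix product $U(D_\lambda\otimes 1)U^*$, and keeping track of the convention for the Hilbert--Schmidt inner product so that the factors $\lambda$ and $\overline{\lambda}$ from $D_\lambda^*$ land on the correct rows.
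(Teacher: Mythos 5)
Your computation is correct and follows essentially the same route as the paper: multiply out $U(D_\lambda\otimes 1)U^*$, keep only the diagonal entries, pair them against $D_\lambda$ under the (unnormalized) Hilbert--Schmidt inner product, apply the defining relations $aa^*=1-q^2cc^*$, $a^*a=1-c^*c$, $cc^*=c^*c$, and finish with the identity $\lambda^2-1+q^2(\overline{\lambda}^2-1)=2i\sin(\theta)(\lambda-q^2\overline{\lambda})$. The only cosmetic differences are the order in which you apply the relations versus the trace pairing, and your factored form of the final coefficient; both agree with the paper's result.
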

\begin{proof}
We have $\s_{D_\lambda}=\langle U(D_\lambda \otimes 1)U^*,D_\lambda \otimes 1 \rangle $.  We compute:
\begin{eqnarray*}
 U(D_\lambda \otimes 1)U^* 
& = & 	\left( \begin{array}{cc} a & -qc^* \\ c & a^* \end{array} \right)
	\left( \begin{array}{cc} \lambda & 0 \\ 0 & \overline{\lambda} 
	\end{array} \right)
	\left( \begin{array}{cc} a^* & c^* \\ -qc & a \end{array} \right) \\
& = & \left( \begin{array}{cc} a & -qc^* \\ c & a^* \end{array} \right)
	\left( \begin{array}{cc} \lambda a^* & \lambda c^* \\ 
	-\overline{\lambda} qc & \overline{\lambda} a \end{array} \right) \\
& = & \left( \begin{array}{cc} \lambda aa^*+\overline{\lambda}q^2c^*c & (*) 
	\\ (*) & \lambda cc^*+\overline{\lambda }a^*a \end{array} \right) .
\end{eqnarray*}
So we have: 
\begin{eqnarray*}
\s_{D_\lambda} & = & \langle  U(D_\lambda \otimes 1)U^* , D_\lambda \otimes 1 
	\rangle \\
& = & \left\langle \left( \begin{array}{cc} \lambda aa^*+\overline{\lambda}q^2
	c^*c & (*) 
	\\ (*) & \lambda cc^*+\overline{\lambda }a^*a \end{array} \right) ,
	\left( \begin{array}{cc} \lambda & 0 \\ 0 & \overline{\lambda} 
	\end{array} \right) \otimes 1 \right\rangle \\
& = & aa^* +\overline{\lambda}^2q^2c^*c +\lambda^2cc^* +a^*a \\
& = & 1_A-q^2cc^* +\overline{\lambda}^2q^2c^*c +\lambda^2cc^* +1_A-c^*c \\
& = & 2\cdot 1_A +(\lambda^2 -1 +q^2(\overline{\lambda}^2-1))c^*c.
\end{eqnarray*}

Notice that $\lambda^2-1=\lambda (\lambda -\lambda^{-1}) =2i\lambda \sin (\theta )$, where $\lambda =\exp (i\theta )$.  Similarly,
 $\overline{\lambda}^2-1=-2i\overline{\lambda} \sin (\theta )$. Inserting these expressions into the equation above gives the desired result.

\end{proof}

\begin{proposition}\label{grpex}
Consider the unitary representation of $C(SU_q(2))$ on $M_2(\C)$ given by $T\mapsto U(T\otimes 1)U^*$.  Let $D_\lambda$ be the diagonal matrix considered above.  Then the quantum group stabilizer of $D_\lambda$ is the quantum subgroup $\Pi :C(SU_q(2))\rightarrow C(\T )$ characterized  by $\Pi (c)=0$.
\end{proposition}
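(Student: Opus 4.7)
The plan is to identify the quantum group stabilizer abstractly via Proposition \ref{cor3} and the remark after Proposition \ref{cool}, which together say that the quantum group stabilizer of $D_\lambda$ is the quotient $C(SU_q(2))/I$, where $I$ is the closed two-sided principal ideal generated by $\s_{D_\lambda} - \|D_\lambda\|^2 \cdot 1_A$. Since we use the unnormalized Hilbert–Schmidt inner product on $M_2(\mathbb{C})$, we have $\|D_\lambda\|^2 = \mathrm{tr}(D_\lambda^* D_\lambda) = 2$, so this is the ideal generated by $\s_{D_\lambda} - 2 \cdot 1_A$.

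By Lemma \ref{sD}, $\s_{D_\lambda} - 2 \cdot 1_A = 2i\sin(\theta)(\lambda - q^2\overline{\lambda})c^*c$. I would first verify that the scalar coefficient is nonzero: since $|\lambda|=1$, the factor $\lambda - q^2\overline{\lambda} = (\lambda^2 - q^2)/\lambda$ vanishes only when $\lambda = \pm q$, which (given $|\lambda|=1$ and $q\in[-1,1]$) forces $\lambda = \pm 1$; and $\sin(\theta) = 0$ similarly forces $\lambda = \pm 1$. Both possibilities are ruled out by our assumption that $\theta/2\pi$ is irrational. Hence $I$ is simply the closed two-sided ideal generated by $c^*c$.

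Next I would show that this ideal coincides with the closed two-sided ideal $\langle c\rangle$ generated by $c$. One inclusion is trivial. For the other, a standard C$^*$-algebra functional calculus argument gives, for any approximate unit of the form $f_n(c^*c)$ with $f_n(0)=0$ and $f_n(t)\to 1$ on the spectrum of $c^*c$ away from $0$, that
\[ \|c - c\,f_n(c^*c)\|^2 = \|(1-f_n(c^*c))\,c^*c\,(1-f_n(c^*c))\| \longrightarrow 0, \]
so $c$ lies in the closure of $c\cdot f_n(c^*c) \in \langle c^*c\rangle$.

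Finally I would identify the quotient $C(SU_q(2))/\langle c\rangle$. Imposing $c = c^* = 0$ in the defining relations collapses them to $a^*a = aa^* = 1_A$, so the quotient is the universal unital C$^*$-algebra generated by a single unitary, namely $C(\T)$. By Lemma \ref{cor2} the ideal $\langle \mathcal{J}_{D_\lambda}\rangle = \langle c\rangle$ is automatically a Hopf ideal, so the quotient map $\Pi: C(SU_q(2))\to C(\T)$ with $\Pi(a)=z,\ \Pi(c)=0$ is a morphism of compact quantum groups (and a direct check on the coproduct formulas $\Delta(a)=a\otimes a - qc^*\otimes c$, $\Delta(c)=c\otimes a + a^*\otimes c$ confirms this). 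The main subtlety is the passage from $c^*c \in I$ to $c \in I$, which is the one step that genuinely uses the C$^*$-algebra structure rather than just the algebraic ideal generated by $\s_{D_\lambda} - 2\cdot 1_A$; the rest is computation and bookkeeping.
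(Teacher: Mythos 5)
Your proof is correct and follows essentially the same route as the paper's: both reduce via Proposition \ref{cor3} to identifying the universal C$^*$-quotient killing $\s_{D_\lambda}-2\cdot 1_A$, use Lemma \ref{sD} to see that this element is a nonzero scalar multiple of $c^*c$, and then observe that killing $c^*c$ is the same as killing $c$, so that the quotient is $C(\T )$. The only cosmetic difference is that you carry out the step from $c^*c$ to $c$ at the level of closed ideals (via an approximate unit), whereas the paper applies the C$^*$-identity pointwise in each $*$-homomorphism; your explicit verification that the coefficient $2i\sin (\theta )(\lambda -q^2\overline{\lambda})$ is nonzero is a detail the paper leaves implicit.
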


\begin{proof}

The Hilbert-Schmidt norm of $D_\lambda$ is $\sqrt{2}$, so we need to find the universal $*$-homomorphism $\Pi :C(SU_q(2))\rightarrow B$ such that $\Pi (\s_{D_\lambda})=2\cdot 1_B$.  By the preceeding lemma, we know that a $*$-homomorphism $\pi :A\rightarrow C$ will send $\s_{D_\lambda}$ to $2\cdot 1_C$ if and only if $\pi (c^*c)=0$.   Since $\pi$ is a $*$-homomorphism, this implies $\pi (c)=0=\pi (c^*)$.  Thus the quantum group stabilizer $\Pi : C(SU_q(2))\rightarrow B$ is the universal $*$-homomorphism such that $\Pi (c)=0$.

This is the standard copy of $\T $ inside $SU_q(2)$.  Namely, if one looks at the defining relations for $C(SU_q(2))$ and adds the relation $c=0$, one is left with the universal $C^*$-algebra with one generator $a$ and the relations $a^*a=aa^*=1$.  These are the defining relations for $C(\T )$, and we have the quantum group stabilizer of $D_\lambda$ is the quantum subgroup $\Pi : C(SU_q(2))\rightarrow C(\T )$.

\end{proof}

To compute the state stabilizer of $D_\lambda$, we need to review some facts about the structure of $C(SU_q(2))$ as a $C^*$-algebra.  In the  appendix of \cite{Woro2}, Woronowicz shows that as a $C^*$-algebra, we have $C(SU_q(2))\cong C(SU_0(2))$ for all $q\in (-1,1)$.   Although $C(SU_0(2))$ is not actually a quantum group (it is a quantum semigroup), it doesn't matter because  Summary \ref{summary} explains how to compute the state stabilizer from the $*$-representation theory of the underlying C$^*$-algebra of $C(SU_q(2))$, which is just $C(SU_0(2))$.  

Notice that the element $\s_{D_\lambda}$ above depends on the value of $q \in (-1,1)$.  However, we see that regardless of the value of $q$, we have $\phi \in S(C(SU_q(2)))$ has $\phi (\s_{D_\lambda})=2$ if and only if $\phi (c^*c)=0$.  So up to Woronowicz's isomorphism, the state stabilizer of $D_\lambda$ is independent of $q$.  

An explicit faithful $*$-representation of $C(SU_0(2))$ is given in \cite{Sheu}.  Let $\H$ be a separable Hilbert space and let $\{ e_1, e_2, e_3,... \}$ be a choosen orthonormal basis for $\H$.  Then the unilateral shift operator $S\in B(\H )$ is defined by $S(e_n)=e_{n+1}$ for all $n\in \N$.  It's easy to see that $1-S^n(S^*)^n$ is the projection onto the span of $\{ e_1,...,e_n\}$, so that $C^*(S)$ contains all of the compact operators on $\H$.  It's well-known that $C^*(S)$ is the universal C$^*$-algebra generated by a single element $\mathfrak{S}$ with the relation $\mathfrak{S}^*\mathfrak{S}=1$.  Of course, if we were to add the relation $\mathfrak{S}\mathfrak{S}^*=1$ we would have the universal C$^*$-algebra generated by a single unitary element, which is $C(\T )$.  Thus we have a natural map $\rho :C^*(S)\rightarrow C(\T )$.  It is well known that this gives  a short exact sequence:

\[ 0\longrightarrow \K (\H ) \longrightarrow C^*(S) \stackrel{\rho}\longrightarrow  C(\T ) 
	\longrightarrow 0. \]

In \cite{Sheu}, Sheu shows that $C(SU_0(2))$ is isomorphic to the algebra
\[ \{ f:\T \rightarrow C^*(S) \: | \: \rho (f(x))=\rho (f(1)), \: \forall x\in \T \} .\]
This isomorphism is determined by sending $a$ to the constant function $a(x)=S^*$ and sending $c$ to the function  $c(x)=xP$, where $P=1-SS^*$ is the projection onto $\C e_1$.  There is then a short exact sequence 
\[ 0\longrightarrow C(\T )\otimes \K (\H ) \longrightarrow C(SU_0(2)) \stackrel{id\otimes \rho}\longrightarrow  C(\T ) 
	\longrightarrow 0. \]
From this short exact sequence, we see that the pure states of $C(SU_0(2))$ fall into two families. View the elements of $C(SU_0(2))$ as functions on $\T$, as described above.  If $x,y \in \T$ and $\xi \in \H$, we let
\begin{eqnarray*}
	\phi_{x,\xi }(f) & = & \langle f(x) \xi ,\xi \rangle ,   \\
\text{and} \hspace{.5in}	\psi_y(f) & = & [\rho (f(1))](y) .
\end{eqnarray*}

We now determine which of these states are in the state stabilizer of $D_\lambda$.

\begin{proposition}\label{stateex}
Consider again the unitary representation of $C(SU_q(2))$ on $M_2(\C)$ given by $T\mapsto U(T\otimes 1)U^*$.  Let $D_\lambda$ be the diagonal matrix $diag[\lambda ,\overline{\lambda}]$.  Then all of the pure states $\psi_y$ are in the state stabilizer of $D_\lambda$.  The pure state $\phi_{x,\xi }$ is in the state stabilizer of $D_\lambda$ if and only if $\langle \xi ,e_1 \rangle =0$.
\end{proposition}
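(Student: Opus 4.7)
The plan is to reduce everything to the simple element $c^*c$ via Lemma \ref{sD} and Lemma \ref{lem4}, and then to evaluate this element on the two families of pure states using Sheu's identification of $C(SU_0(2))$ as an algebra of $C^*(S)$-valued functions on $\T$.

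First I would invoke Lemma \ref{lem4}, which says $\phi \in H_{D_\lambda}$ iff $\phi(\s_{D_\lambda}) = \|D_\lambda\|^2$. Since we are using the normalized Hilbert-Schmidt inner product $\langle S,T\rangle = \tfrac12\mathrm{tr}(T^*S)$ on $M_2(\C)$, we have $\|D_\lambda\|^2 = 1$; however as stated in Lemma \ref{sD} the symbol is computed via unnormalized trace, so $\|D_\lambda\|^2 = 2$ and the condition $\phi(\s_{D_\lambda}) = 2$ becomes, by Lemma \ref{sD}, the condition $2i\sin(\theta)(\lambda-q^2\overline{\lambda})\,\phi(c^*c) = 0$. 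For our choice of $\lambda$ (irrational angle) and $|q|<1$ the scalar coefficient is nonzero, so membership in $H_{D_\lambda}$ is equivalent to the single scalar condition $\phi(c^*c)=0$.

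Next I would use Sheu's isomorphism to write $c$ as the function $c(x) = xP$, with $P = 1-SS^*$ the rank-one projection onto $\C e_1$. Since $P$ is a self-adjoint projection and $|x|=1$, we get $(c^*c)(x) = \overline{x}xP^*P = P$ as an element of $C(SU_0(2))$, viewed as a constant $C^*(S)$-valued function on $\T$. This is the key simplification: every state's value on $c^*c$ is determined by how it pairs with the single projection $P \in \K(\H) \subset C^*(S)$.

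For the pure states $\psi_y(f) = [\rho(f(1))](y)$, observe that $\rho(P) = \rho(1) - \rho(S)\rho(S^*) = 1 - 1 = 0$ in $C(\T)$ (since $\rho(S)$ is the generator of $C(\T)$ and hence unitary), so $\psi_y(c^*c) = [\rho(P)](y) = 0$ for every $y\in\T$, giving $\psi_y \in H_{D_\lambda}$. For $\phi_{x,\xi}(f) = \langle f(x)\xi,\xi\rangle$, the identity $(c^*c)(x) = P$ gives $\phi_{x,\xi}(c^*c) = \langle P\xi,\xi\rangle = |\langle \xi,e_1\rangle|^2$, which vanishes iff $\langle \xi,e_1\rangle = 0$.

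There is no real obstacle here; the only point needing care is the coefficient check in Lemma \ref{sD} (to ensure the condition $\phi(\s_{D_\lambda})=2$ truly reduces to $\phi(c^*c)=0$ rather than being automatic) and the unwinding of Sheu's explicit formula. Both are routine once set up.
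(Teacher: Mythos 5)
Your proof is correct and follows essentially the same route as the paper: reduce membership in $H_{D_\lambda}$ to the scalar condition $\phi(c^*c)=0$ via Lemma \ref{lem4} and Lemma \ref{sD}, identify $c^*c$ with the constant function $P$ under Sheu's isomorphism, and evaluate the two families of pure states on $P$. Your extra care in checking that the coefficient $2i\sin(\theta)(\lambda-q^2\overline{\lambda})$ is nonzero (and implicitly that $\phi(c^*c)\geq 0$) is a point the paper glosses over in the remark preceding the proposition, but the argument is otherwise identical.
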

\begin{proof}
Let $\varphi \in S(C(SU_0(2)))$ be arbitrary.  By Lemma \ref{lem4} and Lemma \ref{sD} we have that $\varphi \in H_{D_\lambda}$ if and only if $\varphi (c^*c)=0$.  Under Sheu's isomorphism, we have $c^*c$ maps to the constant function $c^*c(x)=P$.  Since $\K (\H )=\ker (\rho )$, we have that $\psi_y(c^*c)=0$ for any $y\in \T$. This gives the first claim. Also we have $\phi_{x,\xi }(c^*c)=\langle P\xi ,\xi \rangle = |\langle e_1,\xi \rangle |^2$.  This gives the second claim.

\end{proof}

We note that $\{ \psi_y \: | \: y\in \T \}$ is the pullback of the pure state space of the quantum group stabilizer $\Pi :C(SU_0(2))\rightarrow C(\T )$ of $D_\lambda$.  Thus, the state stabilizer of $D_\lambda$ is much larger than the quantum group stabilizer of $D_\lambda$.

Finally, we note that the two matrices 
$\left( \begin{array}{cc} 1 & 0 \\ 0 & 1 \end{array} \right)$ and
 $\left( \begin{array}{cc} i & 0 \\ 0 & -i \end{array} \right)$ span the diagonal subalgebra of $M_2(\C )$.  Since adding a scalar multiple of the identity will not change the stabilizers, we see that any diagonal matrix which is not a scalar multiple of the identity has its quantum group stabilizer given by Proposition \ref{grpex} and its state stabilizer given by Proposition \ref{stateex}.  In particular, we have computed the state stabilizer of  the rank-1 projection $\left( \begin{array}{cc} 1 & 0 \\ 0 & 0 \end{array} \right)$, which should be important for generalizing our Berezin quantization to  quantum groups which are not of Kac-type.

\end{subsubsection}

\begin{subsubsection}{Theta-Deformations of Stabilizers}

In Section 6.1.1, we recalled Wang's proceedure for deforming a compact quantum group $A$ along a toral subgroup via Rieffel deformation.  We noted that if $u:\H \rightarrow \H \otimes A$ is a finite dimensional unitary representation of $A$, then we can view $u:\H \rightarrow \H \otimes A_{\hbar J}$ as a  representation of $A_{\hbar J}$.  When viewing $u$ as a representation of $A_{\hbar J}$, we will denote it by $u_\hbar$.  Again, by Proposition 3.8 of \cite{Wang3}, we have that $u_\hbar $ is a unitary representation of $A_{\hbar J}$.

This suggests the following question.  Let $u:\H \rightarrow \H \otimes A$ be a finite dimensional unitary representation of $A$, and let $\xi \in \H$.  Suppose we know the quantum group stabilizer $\Pi : A\rightarrow B$ and the state stabilizer $H_\xi$ of $\xi$ for the representation $u$ of $A$.  Now consider the corresponding unitary representation $u_\hbar :\H \rightarrow \H \otimes A_{\hbar J}$.  Can we compute the quantum group stabilizer $\Pi_\hbar :A_{\hbar J}\rightarrow B_\hbar$ and the state stabilizer $H_\xi^\hbar$ of $\xi$ for the representation $u_\hbar$ of $A_{\hbar J}$ in terms of $\Pi $ and $H_\xi$?

If one thinks about Section 5 of Varilly's paper \cite{Varilly}, it seems natural to assume that the toral quantum subgroup of $A$ fixes $\xi$.  In this case, we find that $\Pi_\hbar$ is  just a $\theta$-deformation of $\Pi$ in a sense we describe below. 
We also give evidence to suggest $H_\xi^\hbar$ is a natural deformation of $H_\xi$ as well.

Let $A$ be a quantum group, and let $\Pi :A\rightarrow B$ be a quantum subgroup of $A$.  Suppose that $B$ has a toral subgroup $\Psi :B\rightarrow C(\T )$.  If $\theta$ is a skew-symmetric form on the Lie algebra $\mathfrak{t}$ of $\T$, then we can deform $B$ with respect to the form $J=\theta \oplus -\theta$ on $\mathfrak{t}\oplus \mathfrak{t}$ as described in Section 6.1.1 to obtain a family of quantum groups $B_{\hbar J}$.  Of course, $\Psi \circ \Pi :A\rightarrow C(\T )$ is a toral quantum subgroup of $A$, so we can deform $A$ using the same skew-form $J$ to obtain a family of quantum groups $A_{\hbar J}$.  We show that there is a natural map $\Pi_\hbar :A_{\hbar J}\rightarrow B_{\hbar J}$ that makes $B_{\hbar J}$ into a quantum subgroup of $A_{\hbar J}$.

\begin{lemma}\label{subgrdeform}
Let $\Pi :A\rightarrow B$ and $\Psi :B\rightarrow C(\T )$ be as described above.  Let $A_{\hbar J}$ and $B_{\hbar J}$ be the deformations of $A$ and $B$ using the action of $\mathfrak{t}\oplus \mathfrak{t}$ with the same skew-form $J$.  Let $\Pi_\hbar :\A_{\hbar J}\rightarrow B_{\hbar J}$ be the composition $\A_{\hbar J}=\A \stackrel{\Pi}\longrightarrow \B =\B_{\hbar J} \subseteq B_{\hbar J}$.  Then $\Pi_\hbar $ extends by continuity to all of $A_{\hbar J}$, and $\Pi_\hbar :A_{\hbar J}\rightarrow B_{\hbar J}$ is a quantum subgroup of $A_{\hbar J}$.
\end{lemma}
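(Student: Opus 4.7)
The plan is to invoke functoriality of Rieffel's deformation with respect to equivariant $*$-homomorphisms. The key observation is that because the toral subgroup of $A$ used to define the action on $A$ is chosen to be $\Psi \circ \Pi$, the morphism $\Pi :A \to B$ is automatically equivariant for the $\t \oplus \t$-actions that produce the deformations. Once this equivariance is in hand, the rest is bookkeeping using the graded description of the deformed product.

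First I would verify equivariance directly. For $s \in \t$, using that $(\Pi \otimes \Pi)\Delta_A = \Delta_B \circ \Pi$,
\[ \Pi \circ \lambda^A_s = \Pi \circ (\delta_{\exp (-s)}(\Psi\Pi) \otimes id)\Delta_A = (\delta_{\exp (-s)}\Psi \otimes id)(\Pi \otimes \Pi)\Delta_A = \lambda^B_s \circ \Pi, \]
and analogously $\Pi \circ \rho^A_t = \rho^B_t \circ \Pi$. Consequently $\Pi$ preserves the $\Z^{2d}$-weight decompositions: $\Pi(\A_n) \subseteq \B_n$ for each weight $n$, where the subscript denotes the isotypic component for the $\T \oplus \T$-action.

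Next I would check that $\Pi$ is a $*$-homomorphism for the deformed products. Using the graded formula from Section 6.1.1, for $a,b \in \A$,
\[ \Pi\bigl((a \times_{\hbar J} b)_n\bigr) = \Pi\Bigl(\sum_m a_m b_{n-m} e_\hbar (Jm\cdot n)\Bigr) = \sum_m \Pi(a_m)\Pi(b_{n-m}) e_\hbar(Jm\cdot n) = \bigl(\Pi(a) \times_{\hbar J} \Pi(b)\bigr)_n, \]
where I use that $\Pi$ is a $*$-homomorphism for the undeformed products and that it respects the weight decomposition. Since the involution and the unit are unchanged under Wang's deformation, $\Pi_\hbar :\A_{\hbar J} \to \B_{\hbar J}$ is a unital $*$-homomorphism.

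To extend $\Pi_\hbar$ to the C$^*$-completions I would appeal to Wang's definition of $A_{\hbar J}$ as the universal enveloping C$^*$-algebra of $\A_{\hbar J}$. The composition $\A_{\hbar J} \to \B_{\hbar J} \hookrightarrow B_{\hbar J}$ is a $*$-homomorphism into a C$^*$-algebra, so the universal property supplies a unique continuous extension $\Pi_\hbar :A_{\hbar J} \to B_{\hbar J}$. The quantum subgroup property then follows for free: the coproducts on $A_{\hbar J}$ and $B_{\hbar J}$ restrict on the polynomial subalgebras to the undeformed coproducts, so the identity $\Delta_B \circ \Pi = (\Pi \otimes \Pi)\Delta_A$ transfers verbatim to $\A_{\hbar J}$ and extends by density to $A_{\hbar J}$. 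Surjectivity holds because $\Pi_\hbar(\A_{\hbar J}) = \Pi(\A) = \B$ is dense in $B_{\hbar J}$ and the image of a $*$-homomorphism of C$^*$-algebras is closed.

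The only genuinely delicate step is the equivariance verification, which hinges on the hypothesis that the toral subgroup of $A$ factors through $\Pi$; if one weakened that hypothesis the intertwining would fail and the whole argument would collapse. A secondary point to verify is that Wang's construction really does present $A_{\hbar J}$ as a universal enveloping C$^*$-algebra so that the extension step is automatic, but this is explicit in \cite{Wang3}.
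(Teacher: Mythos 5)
Your proof is correct and follows essentially the same route as the paper's: establish that $\Pi$ intertwines the $\mathfrak{t}\oplus\mathfrak{t}$-actions, deduce that it is a unital $*$-homomorphism for the deformed products, extend via the universal enveloping C$^*$-algebra property of $A_{\hbar J}$, and observe that the coproduct is undeformed on the polynomial subalgebras so the quantum subgroup identity and surjectivity carry over. The only cosmetic difference is that you verify multiplicativity with the graded weight-space formula while the paper uses the oscillatory-integral expression for $\times_{\hbar J}$; the paper itself presents these as interchangeable descriptions of the same product, so this is not a genuinely different argument.
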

\begin{proof}
We have actions of $\mathfrak{t}$ on $A$ and $B$ by left translation.  Namely, for $s\in \mathfrak{t}$ we have 
\begin{eqnarray*}
 \lambda_s^A(a) & = & (\delta_{\exp (-s)}\circ \Psi \circ \Pi \otimes id)\Delta_A (a) \hspace{.5 in}  a\in A, \\
 \text{and} \hspace{.3in} \lambda_s^B(b) & = & (\delta_{\exp (-s)}\circ \Psi  \otimes id)\Delta_B (b)	\hspace{.78 in}	 b\in B.
\end{eqnarray*}
Let $a\in A$.  We have that 
\begin{eqnarray*}
\lambda_s^B(\Pi (a)) & = & (\delta_{\exp (-s)}\circ \Psi  \otimes id)\Delta_B (\Pi (a)) \\
& = & (\delta_{\exp (-s)}\circ \Psi \circ \Pi \otimes \Pi)\Delta_A (a) \\
& = & \Pi (\lambda_s^A(a)),
\end{eqnarray*}
so that $\Pi $ intertwines the actions of $\mathfrak{t}$ on $A$ and $B$ by left translation.  Similarly, $\Pi $ intertwines the actions of $\mathfrak{t}$ on $A$ and $B$ by right translation.  So if $\alpha^A$ and $\alpha^B$ are the actions of $\mathfrak{t}\oplus \mathfrak{t}$ on $A$ and on $B$ by left and right tranlations as described in Section 6.1.1, we have 
\[ \alpha_{(s,t)}^B(\Pi (a))=\Pi ( \alpha_{(s,t)}^A( a)) .\]
Since $\Pi :A\rightarrow B$ is a homomorphism of the undeformed algebras, a look at the structure of the deformed product
\[  (a_1\times _{\hbar J}a_2)(x)=\frac{1}{(\pi \hbar )^{2n}}\int _{\mathfrak {t}^{4}}\alpha_{(s,t)}(a_1)\alpha_{(u,v)}(a_2)e^{\frac{2i}{\hbar }(\theta s\cdot u-\theta t\cdot v)}dsdtdudv \]
shows that $\Pi_\hbar (a_1\times^A _{\hbar J}a_2)= \Pi_\hbar (a_1)\times^B _{\hbar J}\Pi_\hbar (a_2)$ for all $a_1, a_2 \in \A_{\hbar J}$.

Since $\Pi :A\rightarrow B$ is  $*$-preserving for the undeformed algebras, and because the involutions are not deformed in Rieffel deformation, we have that $\Pi_\hbar :\A_{\hbar J}\rightarrow B_{\hbar J}$ is $*$-preserving.

Thus $\Pi_{\hbar }:\A_{\hbar J}\rightarrow B_{\hbar J}$ is a $*$-homomorphism from $\A_{\hbar J}$ to the C$^*$-algebra $B_{\hbar J}$.  Since $A_{\hbar J}$ is defined to be the universal enveloping C$^*$-algebra of $\A_{\hbar J}$, we have that $\Pi_\hbar$ extends to a $*$-homomorphism $\Pi_\hbar :A_{\hbar J}\rightarrow B_{\hbar J}$.

Because $\Pi :A\rightarrow B$ preserves the coproduct for the undeformed algebras, and because the coproducts are not deformed on the dense subalgebras $\A$ and $\B$, we have that $\Pi_\hbar :A_{\hbar J}\rightarrow B_{\hbar J}$ preserves the coproduct.  Thus $\Pi_\hbar$ is a morphism of quantum groups.  Finally, $\Pi_\hbar:A_{\hbar J}\rightarrow B_{\hbar J}$ is surjective since its image contains the dense set $\B_{\hbar J}$, so that
   $\Pi_\hbar :A_{\hbar J}\rightarrow B_{\hbar J}$ is a quantum subgroup of $A_{\hbar J}$.

\end{proof}

Because of this lemma, we can compute the quantum group stabilizer of $\xi$ for the unitary representation $u_\hbar :\H \rightarrow A_{\hbar J}$ using a bunch of abstract nonsense.  

\begin{proposition}\label{defqstab}
Let $u:\H \rightarrow \H \otimes A$ be a finite dimensional unitary representation of a compact quantum group $A$, and let $\xi \in \H$.  Let $\Pi :A\rightarrow B$ be the quantum group stabilizer of $\xi$.  Let $\Psi :B\rightarrow C(\T )$ be a toral quantum subgroup of $B$ (so it stabilizes $\xi$).  Let $\theta$ be a skew-symmetric bilinear form on the Lie algebra $\mathfrak{t}$ of $\T$, and let $A_{\hbar J}$ and $B_{\hbar J}$ be the corresponding Rieffel deformations of $A$ and $B$.  

Consider the corresponding unitary representation $u_\hbar :\H \rightarrow \H \otimes A_{\hbar J}$.  Then the quantum group stabilizer of $\xi \in \H$ for the unitary representation $u_\hbar$ is given by the quantum subgroup $\Pi_\hbar :A_{\hbar J}\rightarrow B_{\hbar J}$ of $A_{\hbar J}$.
\end{proposition}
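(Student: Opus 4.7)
The plan is to establish universality by reducing to the undeformed case via the reversibility of Rieffel's deformation. Two things must be verified: that $\Pi_\hbar$ stabilizes $\xi$, and that any other quantum subgroup of $A_{\hbar J}$ stabilizing $\xi$ receives $B_{\hbar J}$ as a further quotient.

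The first point is essentially tautological. Since $u$ is finite-dimensional, all its matrix coefficients lie in $\A$, so $u_\hbar :\H \rightarrow \H\otimes A_{\hbar J}$ is literally the same linear map as $u :\H\rightarrow \H\otimes A$, and $\Pi_\hbar$ agrees with $\Pi$ on the common polynomial subalgebra $\A = \A_{\hbar J}$. Thus $(id\otimes \Pi_\hbar)u_\hbar(\xi) = (id\otimes \Pi)u(\xi) = \xi\otimes 1_B = \xi\otimes 1_{B_{\hbar J}}$.

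For universality I would argue as follows. Let $D_\hbar$ denote the quantum group stabilizer of $\xi$ for $u_\hbar$ (which exists by Theorem \ref{thm2}). By universality of $D_\hbar$ and the first step, $\Pi_\hbar$ factors as $A_{\hbar J}\stackrel{\tilde{\Pi}_\hbar}{\rightarrow} D_\hbar \stackrel{\rho_\hbar}{\rightarrow} B_{\hbar J}$, and the task reduces to showing $\rho_\hbar$ is an isomorphism. The key observation is that $\Psi_\hbar := \Psi\circ \Pi_\hbar :A_{\hbar J}\rightarrow C(\T)$ (noting $C(\T)_{\hbar J}=C(\T)$ since $\T$ is abelian) is itself a stabilizer of $\xi$, so by universality of $D_\hbar$ there is a factorization $\Psi_\hbar = \tau_\hbar \circ \tilde{\Pi}_\hbar$ with $\tau_\hbar :D_\hbar \rightarrow C(\T)$. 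This provides $D_\hbar$ with a toral quantum subgroup compatible with those of $A_{\hbar J}$ and $B_{\hbar J}$.

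Now I would invoke the reversibility of Rieffel deformation at the level of the category whose objects are compact quantum groups with a distinguished toral subgroup and whose morphisms commute with the toral projections: applying $\mathcal{D}_{-\hbar}$ recovers the undeformed objects, and by Lemma \ref{subgrdeform} (applied with $-\hbar J$) the morphisms $\tilde{\Pi}_\hbar$, $\rho_\hbar$, and $\tau_\hbar$ deform back to morphisms $\tilde{\Pi} :A\rightarrow D$, $\rho :D\rightarrow B$, and $\tau :D\rightarrow C(\T)$, where $D = (D_\hbar)_{-\hbar J}$. Since stabilization depends only on the coalgebra and linear structure of $\A$, neither of which is affected by deformation, $\tilde{\Pi}$ stabilizes $\xi$ in the undeformed representation $u$. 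The undeformed universal property of $\Pi :A\rightarrow B$ then yields $\sigma :B\rightarrow D$ with $\tilde{\Pi} = \sigma \circ \Pi$, and deforming forward by $+\hbar J$ gives $\sigma_\hbar :B_{\hbar J}\rightarrow D_\hbar$ with $\tilde{\Pi}_\hbar = \sigma_\hbar \circ \Pi_\hbar$. The two resulting compositions $\rho_\hbar \circ \sigma_\hbar$ and $\sigma_\hbar \circ \rho_\hbar$ agree with the identity after composing with the surjections $\Pi_\hbar$ and $\tilde{\Pi}_\hbar$, so by uniqueness they are the identity, making $\rho_\hbar$ an isomorphism.

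The main obstacle will be justifying cleanly that Rieffel deformation gives an equivalence on this ``quantum groups with toral subgroup'' category, in particular that the assignments $\tilde{\Pi}_\hbar \mapsto \tilde{\Pi}$, $\rho_\hbar \mapsto \rho$ are well-defined and mutually inverse after deforming back by $+\hbar J$. The essential ingredients are already present: Lemma \ref{subgrdeform} shows functoriality of the deformation on such morphisms, and the identifications $(A_{\hbar J})_{-\hbar J}=A$, $(B_{\hbar J})_{-\hbar J}=B$, $(D_\hbar)_{-\hbar J}=D$ follow from the standard fact that deforming by $+\hbar J$ and then by $-\hbar J$ returns the original product on the smooth subalgebra. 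Everything else in the argument is bookkeeping with universal properties.
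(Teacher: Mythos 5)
Your proposal is correct and follows essentially the same route as the paper's proof: verify stabilization on the common polynomial subalgebra, then transfer the universal property through the reversibility of Rieffel deformation ($(A_{\hbar J})_{-\hbar J}=A$) and conclude the comparison map is an isomorphism. If anything, your version is slightly more careful than the paper's, since you explicitly equip the intermediate stabilizer $D_\hbar$ with a toral subgroup (via factoring $\Psi\circ\Pi_\hbar$ through it) before deforming back by $-\hbar J$, a point the paper's proof uses implicitly.
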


\begin{proof}

Since $\H$ is finite dimensional, we have $u(\xi )\in \H \otimes \A =\H \otimes \A_{\hbar J}$.  We also have $(id \otimes \Pi )u(\xi )=\xi \otimes 1_B$.  Since $u=u_\hbar$ and $\Pi = \Pi_\hbar $ on $\A =\A_{\hbar J}$, we have $(id \otimes \Pi_\hbar )u_\hbar (\xi )=\xi \otimes 1_{B_{\hbar J}}$ as well.  Thus $\Pi_\hbar :A_{\hbar J}\rightarrow B_{\hbar J}$ stabilizes $\xi$ for the unitary representation $u_\hbar$ of $A_{\hbar J}$.

We must show that $\Pi_\hbar $ is a universal morphism of quantum groups stabilizing $\xi$.  Let $\Sigma :A_{\hbar J}\rightarrow C$ be the quantum group stabilizer of $\xi$.  Since we know $\Pi_\hbar$ stabilizes $\xi$, we have $\Pi_\hbar=\Phi \circ \Sigma$, for some surjective morphism of quantum groups $\Phi :C\rightarrow B_{\hbar J}$.  Rieffel deformations are reversable.  That is, if we deform $A_{\hbar J}$ using the skew form $-J$ on $\mathfrak{t}\oplus \mathfrak{t}$, we have $(A_{\hbar J})_{-\hbar J}=A$.  Since the effect of the deformation on the quotient maps $\Pi ,\Sigma , $ or $\Phi$ is trivial on the polynomial subalgebras of the corresponding quantum subgroups, we have $\Pi = (\Pi_\hbar)_{-\hbar}=\Phi_{-\hbar} \circ \Sigma_{-\hbar}$.  By the reasoning of the first paragraph, since $\Sigma :A_{\hbar J}\rightarrow C$ stabilizes $\xi $ for $u_\hbar$, we have $\Sigma_{-\hbar} :A\rightarrow C_{-\hbar J}$ stabilizes $\xi$ for $u$.  Since $\Pi :A\rightarrow B$ is the quantum stabilizer for $\xi$, we have $\Sigma_{-\hbar}$ factors through $\Pi$, which implies that $\Phi_{-\hbar}:C_{-\hbar J}\rightarrow B $ is an isomorphism of quantum groups.  Hence $  \Phi :C\rightarrow B_{\hbar J} $ is an isomorphism of quantum groups as well.  This proves that $\Pi_\hbar $ is a universal morphism of quantum groups stabilizing $\xi$.

\end{proof}

Next we show that the state stabilizer $H_\xi$ seems to deform in a similar way.
The first thing we need to know is how the state space of $A$ is deformed under Rieffel deformation.  This question is addressed by Kaschek, Neumaier, and Waldmann in \cite{Kaschek1}.  We briefly recall their results here adapted to our scenario.

Let $V=\mathfrak{t}\oplus \mathfrak{t}$, so that we have an action $\alpha :V\rightarrow \text{C$^*$-Aut}(A)$.  Choose an inner product $g:V\otimes V\rightarrow \R$ on $V$, and consider the normalized Gaussian function
\[ G_\hbar (u)=\frac{\sqrt{\text{det}G}}{(\pi \hbar )^n}e^{-\frac{g(u,u)}{\hbar}} ,\]
where $G$ is the volume form determined by $g$, and $\text{det}G$ is the determinant of $G$ with respect to the Haar measure on $V$ and $n=\text{dim}(V)/2 =\text{dim}(\mathfrak{t})$.  

Now consider the operator $S_\hbar :A^\infty_{\hbar J}\rightarrow A^\infty$ given by 
\[ S_\hbar (a) = \int_V G_\hbar (u)\alpha_u(a) \: du. \]
The integral exists because the action $\alpha$ is isometric and because the Gaussian kernel decays rapidly.  It is clear that $S_\hbar$ is unital.
In Lemma 2.3 and Theorem 3.3 of \cite{Kaschek1}, the authors show that $S_\hbar$ has the following properties.

\begin{proposition}
Consider the linear operator $S_\hbar :A^\infty_{\hbar J}\rightarrow A^\infty$ considered above.  Let $a\in A^\infty$.  We have
\begin{eqnarray*}
\lim_{\hbar \searrow 0} S_\hbar (a) & = & a , \\
\text{and} \hspace{.3in} S_\hbar  & \cong_{\hbar \searrow 0} & \exp \left( \frac{\hbar}{4}
	\nabla_g^2  \right)  \hspace{.25in} \text{asymptotically,}
\end{eqnarray*}
both with respect to the Frechet topology of $A^\infty$, where $ \nabla_g^2$ is the Laplacian for the inner product $g$.

Suppose further that the inner product $g$ is chosen so that $h=g+iJ$ is a hermitian metric for some complex structure on $V$.  Then the map $S_\hbar :A^\infty_{\hbar J}\rightarrow A^\infty$ is positive.  

\end{proposition}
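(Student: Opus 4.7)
The plan is to treat the three assertions in turn. For the norm convergence $\lim_{\hbar \searrow 0} S_\hbar(a) = a$, I would argue that the Gaussians $G_\hbar$ form a nonnegative approximate identity on $V$ as $\hbar \searrow 0$: they have total mass one, concentrate at the origin, and their tails decay uniformly. Since $a \in A^\infty$ means the orbit map $u \mapsto \alpha_u(a)$ is smooth, hence continuous, into $A^\infty$ with its Frechet topology, and the action is isometric, a standard mollifier argument yields $S_\hbar(a) \to \alpha_0(a) = a$ in every defining seminorm.

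For the asymptotic expansion, I would Taylor expand $\alpha_u(a) = \sum_{k} \tfrac{1}{k!} D^k_u a$, where $D_u$ denotes the directional derivative induced by the infinitesimal generators of $\alpha$ (well defined since $a$ is smooth). Substituting into the definition of $S_\hbar$ and interchanging sum and integral, the odd-order Gaussian moments vanish while the even-order moments contract by Wick's theorem against the cometric $g^{-1}$, producing the formal series
\[ S_\hbar(a) \sim \sum_{k \geq 0} \frac{1}{k!}\left(\frac{\hbar}{4}\right)^{k}(\nabla_g^2)^k a, \]
which is the asymptotic heat expansion $\exp(\tfrac{\hbar}{4}\nabla_g^2)$. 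The rigor of ``asymptotically'' is obtained by truncating the Taylor series at order $N$ and estimating the remainder in each Frechet seminorm; the boundedness of $\alpha$ on $A^\infty$ controls the $D^k_u a$ uniformly in $u$, and the explicit Gaussian tail bounds then give the required $O(\hbar^{N/2})$ estimate.

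The main obstacle is the positivity statement, because $S_\hbar$ maps \emph{from} the deformed algebra $A^\infty_{\hbar J}$ \emph{to} the undeformed algebra $A^\infty$: a $\times_{\hbar J}$-positive input must be sent to a pointwise positive output, and no part of the argument so far uses the algebra structure. My approach is to exploit the hermitian condition to choose a compatible complex structure $I$ on $V$ satisfying $g(I\cdot, I\cdot) = g$ and $J(u,v) = g(Iu, v)$, and decompose $V \otimes_{\mathbb{R}} \mathbb{C} = V^{1,0} \oplus V^{0,1}$. In these holomorphic coordinates the oscillatory phase defining $\times_{\hbar J}$ combines with the Gaussian factor $G_\hbar$ into a Bargmann--Fock-type kernel $\exp(-h(z,z)/\hbar)$. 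Completing the square in this kernel, one rewrites $S_\hbar(b^* \times_{\hbar J} b)$ as an integral of the form
\[ S_\hbar(b^* \times_{\hbar J} b) = \int K_\hbar(z)\, \alpha_{\bar z}(b^*)\alpha_z(b)\, dz, \]
against a \emph{nonnegative} density $K_\hbar$; the integrand is pointwise a positive element of $A$ because each $\alpha_\zeta$ is a $^*$-automorphism of the underlying C$^*$-algebra. The technical crux is verifying that this completion of the square is legitimate, which is precisely where the positivity of the hermitian form $h = g+iJ$ is needed: without it, the effective Gaussian weight would fail to be integrable, and the rearrangement would produce an oscillatory (indefinite) kernel rather than a positive one.
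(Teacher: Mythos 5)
The paper offers no proof of this proposition at all: it is quoted verbatim as Lemma 2.3 and Theorem 3.3 of the cited paper of Kaschek, Neumaier, and Waldmann \cite{Kaschek1}, so there is no in-paper argument to compare against. Judged on its own terms, your treatment of the first two assertions is sound. The approximate-identity argument for $\lim_{\hbar\searrow 0}S_\hbar (a)=a$ is the standard one and works because the action is isometric and strongly continuous on $A^\infty$; and your moment computation is consistent: with $G_\hbar$ normalized as in the paper the second moments are $\int u_iu_jG_\hbar (u)\,du=\tfrac{\hbar}{2}g^{ij}$, so the quadratic term of the Taylor expansion contributes $\tfrac{\hbar}{4}\nabla_g^2a$, and Wick contraction of the higher even moments assembles the heat semigroup $\exp (\tfrac{\hbar}{4}\nabla_g^2)$. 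Note that neither of these two parts sees the deformed product, since $S_\hbar$ is the same linear map whether its domain carries $\times_{\hbar J}$ or the undeformed product.

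The positivity argument, however, has a genuine gap. You claim to rewrite $S_\hbar (b^*\times_{\hbar J}b)$ as a single integral $\int K_\hbar (z)\,\alpha_{\bar z}(b^*)\alpha_z(b)\,dz$ against a nonnegative density and assert that the integrand is pointwise positive because each $\alpha_\zeta$ is a $*$-automorphism. But $\alpha_{\bar z}(b^*)=\alpha_{\bar z}(b)^*$, so for $\bar z\neq z$ the integrand is $\alpha_{\bar z}(b)^*\alpha_z(b)$, which is \emph{not} of the form $c^*c$ and is not a positive element; worse, if $\bar z$ is meant to live in the complexification of $V$ then $\alpha_{\bar z}$ is not even defined, since $\alpha$ is an action of the real group $V$ and cannot be evaluated at complex parameters except on analytic vectors. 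What actually comes out of substituting the oscillatory-integral formula for $\times_{\hbar J}$ into $S_\hbar$ and completing the square is a \emph{double} integral
\[
S_\hbar (b^*\times_{\hbar J}b)=\iint K_\hbar (u,v)\,\alpha_u(b)^*\alpha_v(b)\,du\,dv ,
\]
in which $K_\hbar (u,v)$ is not a pointwise nonnegative function but a positive-definite kernel of Bargmann--Fock (reproducing-kernel) type; the hypothesis that $h=g+iJ$ is a hermitian metric is exactly the condition making this Gaussian-exponential kernel positive definite. Positivity of the output then follows from the standard fact that $\iint K(u,v)c(u)^*c(v)\,du\,dv\geq 0$ whenever $K$ is a positive-definite scalar kernel and $c$ is a continuous $A$-valued function (factor $[K(u_i,u_j)]=L^*L$ on finite samples and pass to the limit). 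Your instinct about where the hermitian hypothesis enters is right, but the mechanism is positive definiteness of a two-variable kernel, not nonnegativity of a one-variable density, and as written the ``pointwise positive integrand'' step fails. One should also record that checking $S_\hbar$ on squares $b^*\times_{\hbar J}b$ for $b$ in the smooth algebra suffices, using the norm continuity of $S_\hbar$ to reach the full positive cone of $A_{\hbar J}$.
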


In \cite{Kaschek1}, the authors go on to show that $S_\hbar$ is continuous for the norm on $A$, so that if $\phi \in S(A)$, then one has $\phi_\hbar =\phi \circ S_\hbar \in S(A_\hbar )$ for all $\hbar >0$.  The continuity in the Frechet topology is interesting for us, because the formula for the asymptotic expansion of $S_\hbar$ suggests that $S_\hbar$ should be invertible for small $\hbar$ and for a sufficiently smooth class of elements.
For our quantum group we have $S(A)\hookrightarrow S(A^\infty )\hookrightarrow S(\A )$ by density of the inclusion maps $\A \subseteq A^\infty \subseteq A$, and so we have $S(A)= S(A^\infty )= S(\A )$ because our quantum groups are full \cite{Bedos1}.  So we see that as long as $S_\hbar$ is invertible on some sufficiently smooth class of elements, one would expect the corresponding pullback map on the state spaces to be invertible.

\begin{lemma}\label{insects}
Let $A$ be a compact quantum group, and let $\Pi :A\rightarrow C(\T )$ be a torus in $A$.  Let $u:\H \rightarrow \H \otimes A$ be a unitary representation of $A$, and let $\xi \in \H$ be a vector stabilized by $C(\T )$.  Consider the  action $\rho :\mathfrak{t} \rightarrow \text{C$^*$-Aut}(A)$ by right translations.  Then for any $t\in \mathfrak{t}$ we have
\[ (id\otimes \rho_t )u(\xi )=u(\xi ) .\]

\end{lemma}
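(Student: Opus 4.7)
The plan is to unfold the definition of $\rho_t$, invoke the representation identity $(u\otimes id)u=(id\otimes\Delta)u$, and then push the fact that $\sigma_t := \delta_{\exp(t)}\circ\Pi$ already fixes $\xi$ up to the identity $(id\otimes\rho_t)u(\xi)=u(\xi)$. To avoid any subtleties about convergence in the C$^*$-tensor product $\H\otimes A$, I would test the desired equality against an arbitrary continuous linear functional on $A$ and use Hahn–Banach at the end.

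First, observe that $\sigma_t=\delta_{\exp(t)}\circ\Pi$ is a state on $A$ (being a composition of a morphism of quantum groups with a state of $C(\T)$), and that by hypothesis $(id\otimes\Pi)u(\xi)=\xi\otimes 1_{C(\T)}$. Applying $(id\otimes\delta_{\exp(t)})$ to both sides yields $u_{\sigma_t}(\xi)=(id\otimes\sigma_t)u(\xi)=\xi$, i.e.\ $\sigma_t\in H_\xi$ in the sense of Definition~\ref{defstabilizer}.

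Next, for any $\varphi\in A'$, I compute
\begin{align*}
(id\otimes\varphi)\bigl((id\otimes\rho_t)u(\xi)\bigr)
&=(id\otimes\varphi\otimes\sigma_t)(id\otimes\Delta)u(\xi)\\
&=(id\otimes(\varphi*\sigma_t))u(\xi)\\
&=u_{\varphi*\sigma_t}(\xi).
\end{align*}
The representation identity $(u\otimes id)u=(id\otimes\Delta)u$, tested against $(id\otimes\varphi\otimes\sigma_t)$, is exactly the associativity relation $u_{\varphi*\sigma_t}=u_\varphi\circ u_{\sigma_t}$ for the convolution action of $A'$ on $\H$. Combined with the previous paragraph, this gives
\[
(id\otimes\varphi)\bigl((id\otimes\rho_t)u(\xi)\bigr)
=u_\varphi(u_{\sigma_t}(\xi))=u_\varphi(\xi)=(id\otimes\varphi)u(\xi).
\]

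Since this holds for every $\varphi\in A'$, and since $A'$ separates points of $A$ (so $id\otimes A'$ separates points of $\H\otimes A$), we conclude that $(id\otimes\rho_t)u(\xi)=u(\xi)$, as desired.

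The only mildly delicate point in this plan is justifying the intermediate ``scalar'' formulation $u_{\varphi*\sigma_t}=u_\varphi\circ u_{\sigma_t}$ without expanding $u(\xi)$ as an elementary tensor sum; the cleanest way is simply to read it off as an application of $(id\otimes\varphi\otimes\sigma_t)$ to the identity $(u\otimes id)u(\xi)=(id\otimes\Delta)u(\xi)$ in $\H\otimes A\otimes A$, which is entirely formal once one has the representation property.
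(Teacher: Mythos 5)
Your proposal is correct and is essentially the paper's argument: both unfold $\rho_t=(id\otimes\delta_{\exp(t)}\Pi)\Delta$, apply the representation identity $(id\otimes\Delta)u=(u\otimes id)u$, and then use that $(id\otimes\Pi)u(\xi)=\xi\otimes 1_{C(\T)}$ to collapse the inner factor. The paper simply regroups the resulting expression as $u\bigl[(id\otimes\delta_{\exp(t)}\Pi)u(\xi)\bigr]=u(\xi)$ directly, so your detour through arbitrary functionals $\varphi\in A'$ and separation of points is a harmless repackaging of the same computation rather than a different route.
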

\begin{proof}
Since $C(\T )$ stabilizes $\xi$, we have $(id \otimes \Pi)u(\xi )=\xi \otimes 1_{C(\T )}$.  We compute
\begin{eqnarray*}
(id\otimes \rho_t )u(\xi ) & = & (id \otimes id \otimes \delta_{\exp (t)}\Pi )(id\otimes \Delta)u (\xi ) \\
& = & (id \otimes id \otimes \delta_{\exp (t)}\Pi )(u\otimes id)u (\xi ) \\
& = & u[(id \otimes \delta_{\exp (t)}\Pi )u (\xi )] \\
& = & u[(id \otimes \delta_{\exp (t)} ) (\xi \otimes 1_{C(\T )} )] =u(\xi ).
\end{eqnarray*}

\end{proof}

\begin{lemma}
Let $A$ be a compact quantum group, and let $\Pi :A\rightarrow C(\T )$ be a torus in $A$.  Let $u:\H \rightarrow \H \otimes A$ be a unitary representation of $A$, and let $\xi \in \H$ be a vector stabilized by $C(\T )$.  Consider the  action $\lambda :\mathfrak{t} \rightarrow \text{C$^*$-Aut}(A)$ by left translations.  Then for any $s\in \mathfrak{t}$  and for any $\phi \in H_\xi$ we have
\[ (id\otimes \phi \circ \lambda_s )u(\xi )=\xi  .\]
\end{lemma}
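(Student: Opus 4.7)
The plan is to recognize $\phi\circ\lambda_s$ as a convolution of functionals on $A$ and then pass to the associated composition on the representation. Unwinding the definition $\lambda_s(a)=(\delta_{\exp(-s)}\circ\Pi\otimes id)\Delta(a)$ shows that in the notation of Section 2.3 this is $\lambda_s(a)=a*(\delta_{\exp(-s)}\circ\Pi)$, and consequently
\[ \phi\circ\lambda_s \;=\; (\delta_{\exp(-s)}\circ\Pi)*\phi \]
as a convolution of functionals on $A$. Setting $\mu:=\delta_{\exp(-s)}\circ\Pi$, it therefore suffices to show that $u_{\mu*\phi}(\xi)=\xi$.

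The key auxiliary step is the elementary identity $u_{\mu*\nu}(\xi)=u_\mu(u_\nu(\xi))$ for any functionals $\mu,\nu\in A'$ and any $\xi\in\H$, which is an immediate consequence of the representation property $(id\otimes\Delta)u=(u\otimes id)u$: writing $u(\xi)=\sum_k e_k\otimes a_k$, one has
\[ u_{\mu*\nu}(\xi)=(id\otimes\mu\otimes\nu)(u\otimes id)u(\xi)=\sum_k \nu(a_k)u_\mu(e_k)=u_\mu(u_\nu(\xi)). \]
Applied with $\mu=\delta_{\exp(-s)}\circ\Pi$ and $\nu=\phi$, this yields $u_{\phi\circ\lambda_s}(\xi)=u_\mu(u_\phi(\xi))$. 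Since $\phi\in H_\xi$, by the very definition of $H_\xi$ we have $u_\phi(\xi)=\xi$, so the calculation reduces to $u_\mu(\xi)=(id\otimes\delta_{\exp(-s)}\circ\Pi)u(\xi)$. Now invoking the hypothesis that $C(\T)$ stabilizes $\xi$, i.e. $(id\otimes\Pi)u(\xi)=\xi\otimes 1_{C(\T)}$, and applying $id\otimes\delta_{\exp(-s)}$ to both sides gives $u_\mu(\xi)=\xi$, which completes the proof.

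I do not anticipate any real obstacle here. The only point worth double-checking is the convolution-to-composition identity, but this is a one-line computation using only $(id\otimes\Delta)u=(u\otimes id)u$; the proof is essentially parallel to Lemma \ref{insects}, the sole difference being that the functional $\delta_{\exp(-s)}\circ\Pi$ now lands on the \emph{left} tensor factor of $(id\otimes\Delta)u(\xi)=(u\otimes id)u(\xi)$ rather than the right one, which is precisely why $\phi$ must be absorbed first (using $\phi\in H_\xi$) before the torus-stabilization hypothesis can be used to kill the remaining factor.
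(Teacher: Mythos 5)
Your proof is correct and follows essentially the same route as the paper's: both use the representation identity $(id\otimes\Delta)u=(u\otimes id)u$ to absorb $\phi$ first (via $u_\phi(\xi)=\xi$) and then apply the torus-stabilization hypothesis $(id\otimes\Pi)u(\xi)=\xi\otimes 1_{C(\T)}$ to the remaining factor. The only difference is presentational — you isolate the associativity identity $u_{\mu*\nu}=u_\mu\circ u_\nu$ (already noted in Section 2.3 as one of the "associative laws") as a named step, whereas the paper carries out the same computation in a single chain of equalities.
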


\begin{proof}
Again, since $C(\T )$ stabilizes $\xi$, we have $(id \otimes \Pi)u(\xi )=\xi \otimes 1_{C(\T )}$.  Because $\phi \in H_\xi$ we have $u_\phi (\xi )=\xi$.  We compute
\begin{eqnarray*}
(id\otimes \phi \circ \lambda_s )u(\xi ) & = & (id \otimes \delta_{\exp (-s)}\Pi \otimes \phi )(id\otimes \Delta)u (\xi ) \\
 & = & (id \otimes \delta_{\exp (-s)}\Pi \otimes \phi )(u\otimes id)u (\xi ) \\
& = & (id \otimes \delta_{\exp (-s)}\Pi )u[(id\otimes \phi)u (\xi )] \\
& = & (id \otimes \delta_{\exp (-s)}\Pi )u (\xi ) \\
& = & (id \otimes \delta_{\exp (-s)} )(\xi \otimes 1_{C(\T )} ) = \xi .
\end{eqnarray*}

\end{proof}

\begin{proposition}\label{bugs}
Let $u:\H \rightarrow \H \otimes A$ be a unitary representation of $A$, and let $\xi \in \H$.  Consider the corresponding unitary representation $u_\hbar :\H \rightarrow \H \otimes A_{\hbar J}$.  Let $\phi \in H_\xi$ be an element of the state stabilizer of $\xi$ for $u$.  Then $\phi_\hbar =\phi \circ S_\hbar$ is an element of the state stabilizer of $\xi$ for $u_\hbar$.  Conversely, if $\phi_\hbar \in H_\xi^\hbar$, then we have that $ \phi \in H_\xi$ as well.
\end{proposition}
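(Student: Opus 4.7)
The plan is to exploit Lemma \ref{lem4} together with Proposition \ref{cool}, which reduce stabilizer membership to a single scalar equation: $\phi \in H_\xi$ iff $\phi(\s_\xi) = \|\xi\|^2$, and likewise $\phi_\hbar \in H_\xi^\hbar$ iff $\phi_\hbar(\s_\xi) = \|\xi\|^2$. Here $\s_\xi$ lies in the polynomial subalgebra $\A = \A_{\hbar J}$, so it is literally the same element in both quantum groups; both directions therefore reduce to an analysis of $\phi(S_\hbar(\s_\xi))$.

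The first preparatory step is to simplify $S_\hbar(\s_\xi)$. Since $\xi$ is fixed by the torus, Lemma \ref{insects} gives $(id\otimes \rho_t)u(\xi) = u(\xi)$, so
\[ \rho_t(\s_\xi) = \langle (id\otimes \rho_t)u(\xi), \xi\otimes 1\rangle = \s_\xi, \]
and the Gaussian oscillatory integral collapses to
\[ S_\hbar(\s_\xi) = \int_{\mathfrak{t}\oplus\mathfrak{t}} G_\hbar(s,t)\, \lambda_s(\s_\xi)\, ds\, dt. \]

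For the forward direction, the lemma immediately preceding Proposition \ref{bugs} shows that if $\phi \in H_\xi$ then $\phi\circ\lambda_s \in H_\xi$ for every $s$, hence $\phi(\lambda_s(\s_\xi)) = \|\xi\|^2$ pointwise in $s$. Integrating against the probability density $G_\hbar$ yields $\phi_\hbar(\s_\xi) = \|\xi\|^2$, so $\phi_\hbar \in H_\xi^\hbar$.

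The converse is the main obstacle: one must deduce the pointwise identity $\phi(\s_\xi) = \|\xi\|^2$ from the single averaged identity $\phi(S_\hbar(\s_\xi)) = \|\xi\|^2$. The key input is that $\|\s_\xi\| = \|\xi\|^2$, since $\varepsilon(\s_\xi) = \|\xi\|^2$ already saturates the bound $|\varphi(\s_\xi)| \leq \|\s_\xi\|$ valid for any state; combined with the fact that each $\lambda_s$ is an isometric automorphism, this gives $|\phi(\lambda_s(\s_\xi))| \leq \|\xi\|^2$ for all $s$. The assumed identity
\[ \int_{\mathfrak{t}\oplus\mathfrak{t}} G_\hbar(s,t)\, \phi(\lambda_s(\s_\xi))\, ds\, dt = \|\xi\|^2 \]
is then the equality case of the triangle inequality for integration against a strictly positive probability density: the complex-valued integrand, bounded pointwise by $\|\xi\|^2$ yet integrating to the real value $\|\xi\|^2$, must equal $\|\xi\|^2$ almost everywhere, and hence everywhere by continuity of $s\mapsto \lambda_s(\s_\xi)$. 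Specializing to $s=0$ delivers $\phi(\s_\xi) = \|\xi\|^2$, so $\phi \in H_\xi$.
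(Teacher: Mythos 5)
Your proof is correct and is essentially the paper's argument repackaged through the scalar $\s_\xi$: the forward direction integrates the pointwise stabilizer identity (supplied by the two preceding lemmas) against the Gaussian, and the converse is the same equality-case argument --- the paper phrases it via Minkowski and Cauchy--Schwarz for the vector $(id\otimes\phi\circ\alpha_{(s,t)})u(\xi)$, you phrase it for the scalar $\phi(\lambda_s(\s_\xi))$ --- followed by evaluation at the origin. The only cosmetic caveat is that your claim $\|\s_\xi\|=\|\xi\|^2$ also needs the upper bound $\|\s_\xi\|\le\|u(\xi)\|\,\|\xi\|=\|\xi\|^2$ from Cauchy--Schwarz, which you use only implicitly.
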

\begin{proof}
Recall that $\alpha_{(s,t)}=\lambda_s\rho_t$.  Using the preceed lemmas we have if $\phi \in H_\xi$, then 
\begin{eqnarray*}
(id\otimes \phi \circ \alpha_{(s,t)})u(\xi ) & =   & 
	(id\otimes \phi \circ \lambda_s\rho_t)u(\xi ) \\	
& =   & 
	(id\otimes \phi \circ \lambda_s)u(\xi ) =\xi .
\end{eqnarray*}
Thus we have 
\begin{eqnarray*}
(id\otimes \phi_\hbar ) u_\hbar (\xi ) & = &  (id\otimes \phi \circ S_\hbar ) u_\hbar (\xi ) = \\ 
& = &\int_V G_\hbar (s,t)(id\otimes \phi \circ \alpha_{(s,t)})u(\xi )\:  dsdt \\
& = &\int_V G_\hbar (s,t)\xi \: dsdt = \xi .
\end{eqnarray*}

For the converse, we note that $(id\otimes \phi_\hbar )u_\hbar (\xi ) =\xi$ implies that 
\[ \int_V \langle (id\otimes \phi \circ \alpha_{(s,t)})u(\xi ),\xi \rangle G_\hbar (s,t)\:  dsdt = \langle \xi , \xi  \rangle .\]
As usual, we have that $|| (id\otimes \phi \circ \alpha_{(s,t)})u(\xi ) || \leq ||\xi ||$, since all the maps we're applying have norm one.  
Since $G_\hbar (s,t)\:  dsdt$ is a probability measure on $V$, Minkowski's inequality gives that $\langle (id\otimes \phi \circ \alpha_{(s,t)})u(\xi ),\xi \rangle = \langle \xi , \xi  \rangle$ on the support of $G_\hbar (s,t)\:  dsdt$, which is all of $V$.  Now the Cauchy-Schwarz inequality gives that $(id\otimes \phi \circ \alpha_{(s,t)})u(\xi )= \xi$ for all $(s,t)\in V$.  In particular, setting $(s,t)=(0,0)$ gives $(id\otimes \phi )u(\xi )= \xi$, so that $\phi \in H_\xi$.

\end{proof}

Let $S^*_\hbar :S(A)\rightarrow S(A_{\hbar J})$  be the pullback map of $S_\hbar :A_{\hbar J} \rightarrow A$ on states.
The preceeding proposition says in particular that we have $S^*_\hbar (H_\xi )\subseteq H_\xi^\hbar$.  
One would actually have $S^*_\hbar (H_\xi )=H_\xi^\hbar$ if the map $S^*_\hbar :S(A)\rightarrow S(A_{\hbar J})$ were bijective.  This is unlikely for general Rieffel deformations of C$^*$-algebras, because the map   $S_\hbar :A_{\hbar J} \rightarrow A$ is probably not bijective.  It cannot be bijective for the whole smooth algebras, because it smooths out elements too much.  However, since $S_\hbar$ is given by convolution with a Gaussian, by Fourier analysis one would expect it to be bijective on some dense subalgebra of Schwarz functions.  For example, in \cite{Kaschek2} Kaschek shows that $S_\hbar $ is injective for $A=C^\infty (V,\R )$. He also gives the range and inverse of $S_\hbar$ for $A=\mathcal{S}(\R^2 )$, and it seems reasonable that corresponding results should hold for general $A$.

The point is this.  Since our C$^*$-algebra $A$ is actually a quantum group, we have by \cite{Bedos1} and \cite{Wang3} that $S(A)=S(A^\infty )=S(\A )$.  The polynomial subalgebra is really at the heart of the geometry for a quantum group.  If $S_\hbar$ is invertible on any class of elements, it seems quite likely that this class of elements should contain the polynomial subalgebra $\A$.  If this is the case, then we have that the corresponding pullback map $ S^*_\hbar :S(A)\rightarrow S(A_{\hbar J})$ will be bijective.  

\begin{conjecture}\label{conjecture1}
Let $u:\H \rightarrow \H \otimes A$ be a finite dimensional unitary representation of a compact quantum group $A$.  Let $\xi \in \H$ have state stabilizer $H_\xi \subseteq S(A)$.  Let $\Pi :A\rightarrow C(\T )$ be a toral subgroup of $A$ that stabilizes $\xi$.  Let $\theta$ be a skew-symmetric bilinear form on the Lie algebra $\mathfrak{t}$ of $\T$, and let $A_{\hbar J}$  be the corresponding Rieffel deformation of $A$.  

Consider the corresponding unitary representation $u_\hbar :\H \rightarrow \H \otimes A_{\hbar J}$.  Then the state stabilizer of $\xi$ for the representation $u_\hbar$ is given by $H^\hbar_\xi =S^*_\hbar (H_\xi )$, where $ S^*_\hbar :S(A)\rightarrow S(A_{\hbar J})$ is the map defined above.

\end{conjecture}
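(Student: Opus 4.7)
The forward inclusion $S_\hbar^*(H_\xi) \subseteq H_\xi^\hbar$ is already given by Proposition \ref{bugs}, so the plan is to establish the reverse. Given an arbitrary $\psi_\hbar \in H_\xi^\hbar$, I would construct a state $\phi \in S(A)$ with $\phi \circ S_\hbar = \psi_\hbar$; the converse direction of Proposition \ref{bugs} then automatically yields $\phi \in H_\xi$. Thus the entire proof reduces to showing that $S_\hbar^* : S(A) \rightarrow S(A_{\hbar J})$ is surjective, or at least that $H_\xi^\hbar$ lies in its image.

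The first concrete step is to analyze $S_\hbar$ on the polynomial subalgebra. The action $\alpha$ of $V = \t \oplus \t$ factors through $\T \times \T$, so $\A$ decomposes as a direct sum of finite-dimensional weight spaces $\A_\lambda$ indexed by $\lambda \in \Z^{2d}$. On $\A_\lambda$, $\alpha_u$ acts by the character $e^{2\pi i \langle \lambda, u\rangle}$, so $S_\hbar$ acts as multiplication by the Gaussian Fourier coefficient $c_\hbar(\lambda) = \int_V G_\hbar(u) e^{2\pi i \langle \lambda, u\rangle} \, du$, which is a strictly positive real number decaying like a Gaussian in $\lambda$. Consequently $S_\hbar|_\A$ is a weight-preserving linear bijection of $\A$ with itself whose inverse acts by the unbounded positive scalars $c_\hbar(\lambda)^{-1}$. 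Since $A$ and $A_{\hbar J}$ are full, one has $S(A) = S(\A)$ and $S(A_{\hbar J}) = S(\A_{\hbar J})$ (where $\A$ and $\A_{\hbar J}$ coincide as $*$-vector spaces but carry distinct positive cones), so for any $\psi_\hbar \in S(A_{\hbar J})$ the functional $\phi := \psi_\hbar \circ S_\hbar^{-1}$ is a well-defined unital linear functional on $\A$.

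The main obstacle is showing that $\phi$ is positive for the original cone of $A$: the inverse $S_\hbar^{-1}$ does not preserve positivity in any obvious way, and its unbounded weights obstruct norm continuity on $A$. A first attempt would exploit the extra structure inside $H_\xi^\hbar$: by the analog of Lemma \ref{lem4} for $A_{\hbar J}$, each $\psi_\hbar \in H_\xi^\hbar$ has GNS data $(\pi_\hbar, v_\hbar)$ with $v_\hbar$ fixed by $\pi_\hbar(\s_\xi)$, and because $\xi$ is torus-fixed (Lemma \ref{insects}) the element $\s_\xi$ lives in the weight-zero component of $\A$, where $S_\hbar$ acts as the identity. One could then try to transport $\pi_\hbar$ through $S_\hbar$ to a $*$-representation of $A$ with the same fixed vector, using the Schwartz-kernel techniques of Kaschek and Waldmann.

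A potentially cleaner alternative is to argue by weak-$*$ density and compactness: first show that $S_\hbar^*(S(A))$ is weak-$*$ dense in $S(A_{\hbar J})$ using the algebraic bijectivity of $S_\hbar$ on $\A$ and the fact that states are determined by their values there, then use the weak-$*$ compactness of $S(A)$ together with the weak-$*$ continuity of $S_\hbar^*$ (inherited from norm continuity of $S_\hbar$) to conclude that the image is weak-$*$ closed, hence all of $S(A_{\hbar J})$. Either route presents the same essential difficulty: Rieffel deformation can scramble the positive cones of $A$ and $A_{\hbar J}$ in complicated ways, and a complete resolution likely demands a quantitative comparison of these cones under $S_\hbar$, which is precisely the analytic point at which the statement remains at the level of a conjecture.
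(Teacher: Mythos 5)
There is no proof to compare against here: the statement you are trying to prove is stated in the paper as a \emph{conjecture}, and the paper itself only establishes the inclusion $S^*_\hbar(H_\xi)\subseteq H^\hbar_\xi$ (Proposition \ref{bugs}) together with the observation that equality would follow if $S^*_\hbar:S(A)\rightarrow S(A_{\hbar J})$ were bijective. Your proposal retraces exactly this territory --- the forward inclusion via Proposition \ref{bugs}, the reduction of the reverse inclusion to surjectivity of $S^*_\hbar$, and the correct observation that $S_\hbar$ acts on each weight space $\A_\lambda$ by a strictly positive Gaussian Fourier coefficient and is therefore a linear bijection of $\A$ --- but it does not close the gap, and you say so yourself. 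The missing step is precisely the one you identify: given $\psi_\hbar\in H^\hbar_\xi$, the functional $\psi_\hbar\circ S_\hbar^{-1}$ is unital and well defined on $\A$, but nothing in your argument shows it is positive for the \emph{undeformed} cone of $A$, and since $S_\hbar^{-1}$ multiplies the $\lambda$-weight space by the unbounded scalars $c_\hbar(\lambda)^{-1}$ there is no continuity argument to fall back on. Your density-and-compactness alternative is circular in the same place: weak-$*$ density of $S^*_\hbar(S(A))$ in $S(A_{\hbar J})$ is not a consequence of the algebraic bijectivity of $S_\hbar$ on $\A$; it is equivalent to the surjectivity you are trying to prove, since the image is already weak-$*$ compact.

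It is worth knowing what the paper does instead of proving the conjecture. In Section 6.4.3 it establishes the weaker but sufficient statement that the two candidate stabilizers have the same fixed-point spaces: $\A^{H^\hbar}=\A^{H\circ S_\hbar}$, and more generally $N^{H^\hbar}=N^{H\circ S_\hbar}$ for any ergodic right $A$-module algebra $N$. This is obtained not by inverting $S_\hbar$ on states but by showing that the Berezin transforms $\s^n\circ\bs^n$ are undeformed on the regular subalgebra and converge strongly to the conditional expectation onto the fixed-point space, so the expectations for $H$ and for $H^\hbar$ agree on a dense subspace. If you want a provable statement rather than the conjecture itself, that is the route: compare the quantum homogeneous spaces rather than the stabilizers. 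To prove the conjecture as stated you would need a genuinely new analytic input --- a quantitative comparison of the positive cones of $\A$ and $\A_{\hbar J}$ under $S_\hbar$, or an invertibility result for $S_\hbar$ on a positivity-compatible class of elements containing $\A$ --- and neither your proposal nor the paper supplies one.
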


We will give evidence for this conjecture in the next section when we compute the Berezin Quantization for coadjoint orbits of $A_{\hbar J}$.  We will not obtain that $H^\hbar=H\circ S_\hbar$, but we will see that $N^{H^\hbar} =N^{H\circ S_\hbar}$ for any ergodic $A$-module algebra $N$.

\end{subsubsection}

\end{subsection}

\begin{subsection}{Examples of Berezin Quantization}

\begin{subsubsection}{$A=C^*(\Gamma )$, the Trivial Example}

Let $\Gamma$ be a discrete group, so that $A=C^*(\Gamma )$ is a cocommutative compact quantum group.  We have noted before that $A$ is then Kac-type and coamenable.  Furthermore, since all irreducible unitary (co)representations of $A$ are one-dimensional, we have that any vector in an irreducible unitary representation is primitive-like.  Thus our Berezin Quantization scheme should apply to some quantum homogeneous space $A^H$.  In this section we show that in this case we have $A^H=\C 1_A$, we have $\B^n=\C$ for all $n\in \N$, and each of the Berezin symbols and Berezin adjoint maps are trivial.

  Every  irreducible representation of $C^*(\Gamma )$ comes from an element of $\Gamma$, in the following sense. If $\gamma \in \Gamma$, then there is an irreducible representation $u:\C \rightarrow \C\otimes C^*(\Gamma )=C^*(\Gamma )$ by $u(w)=w\gamma$, and every irreducible representation of $C^*(\Gamma )$ is of this form.

Consider the irreducible representation $u(w)=w\gamma$ of $C^*(\Gamma )$.  Let $z\in \C$ be a vector of norm one.  We have the tensor product representation $u^{\otimes n}$ is given by $u^{\otimes n}(w)=w\gamma^n$.  In particular, $u^{\otimes n}$ is irreducible for all $n\in \N$, so that $z$ is a primitive-like vector.  The rank-1 projection corresponding to $z$ is $P=1_\C \in B(\C )\cong \C$.  We have $\alpha (P)=uPu^*=1_\C \otimes 1_A$.  Thus the state stabilizer $H$  of $P$ is the whole state space $S(C^*(\Gamma))$.  If $a\in C^*(\Gamma )^H$, then we have $(\phi \otimes id)\Delta a=a$ for all $\phi \in S(C^*(\Gamma ))$.  Taking $\phi$ to be the Haar state gives $a=\h (a)1_A$, so we have that $A^H=\C 1_A$.

Similarly, we have that $P^n=1_\C^{\otimes n}=1_\C$ for all $n\in N$.  Also we have $\alpha^n=id_{B(\C )}^{\otimes n}\otimes 1_A=id_{B(\C )}\otimes 1_A$.  Thus the cyclic subspace of $\C$ generated by $P^n$ is all of $\C$ for all $n$, and we have $\B^n=\C$ for all $n\in N$.

Finally we compute the Berezin symbol and its adjoint.  Let $z\in \C$.  We have $\s^n:\C \rightarrow \C 1_A$ is given by
\begin{eqnarray*}
\s^n(z) & = & (tr\otimes id)((P^n\otimes 1_A)\alpha^n(z)) \\
	& = & (tr\otimes id)((1_\C\otimes 1_A)(z\otimes 1_A)) \\
	& = & z1_A .
\end{eqnarray*}
Let $a=z1_A \in A^H=\C 1_A$.  We also have $\bs^n:A^H\rightarrow \C$ is given by
\begin{eqnarray*}
\bs^n(z1_A) & = & d_\C (id\otimes \h )([(id\otimes \kappa)\alpha^n(P^n)](1_\C\otimes z1_A)) \\
	& = & (id\otimes \h )((1_\C\otimes 1_A)(1_\C \otimes z1_A) \\
	& = & z1_\C =z.
\end{eqnarray*}

\end{subsubsection}

\begin{subsubsection}{Invariant Subspaces and Quantization of Orbifolds}

Our first main theorem of this paper was the construction of Berezin quantizations for quantum ``coadjoint orbits.'' See Proposition \ref{prop10} and Theorem \ref{main1}.  That is, we showed that if $u:\H \rightarrow \H \otimes A$ is an irreducible unitary representation of a quantum group $A$ with a primitive-like vector  $\xi \in \H$  (see Definition \ref{def6}), and $H\subseteq S(A)$ is the state stabilizer of the rank-1 projection $P\in B(\H )$ corresponding to $\xi$, then there is an explicit sequence of finite dimensional operator systems $\B^n$ converging to the quantum homogeneous space $A^H$ (see Definition \ref{quantumhomo}).  We now use Theorem \ref{main2} to show that if $\Pi :A\rightarrow B$ is a quantum subgroup of $A$, then the corresponding spaces of invariant elements also converge.  That is, we have $^\Pi\B^n$ is a Berezin quantization for $^\Pi\! A^H$.  In particular, this will allow us to give Berezin quantizations for certain double cosets of ordinary Lie groups, and we show that the latter can be orbifolds.

\begin{proposition}
In the notation of Theorem \ref{main2}, let $N$ be the ergodic $A$-module algebra $^\Pi\! A$, where $\Pi :A\rightarrow B$ is a quantum subgroup of $A$.  Then we have $N^H=\: ^\Pi\! A^H$, and the finite dimesional algebras $M_n$ are given by $^\Pi\B^n$.  Thus by Corollary \ref{Berezin2} and Theorem \ref{main2} we have that $\{ \: ^\Pi\B^n \} \cup \{ \: ^\Pi\! A^H \}$ is a strict quantization  of $^\Pi\! A^H$, and that for any invariant Lip norm $L_A$ on $A$, the corresponding quantum metric spaces $(\: ^\Pi\B^n ,L_n)$ converge to $(\: ^\Pi\! A, L)$ in quantum Gromov-Hausdorff distance.
\end{proposition}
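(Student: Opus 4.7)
The plan is to realize this as a direct application of Theorem \ref{main2} with the ergodic right $A$-module algebra $N=\,^\Pi\! A$, whose right $A$-action is $\pi=\Delta|_{\,^\Pi\! A}:\,^\Pi\! A\rightarrow A\otimes\,^\Pi\! A$ (coassociativity together with the defining relation for $\,^\Pi\! A$ gives that $\Delta$ does indeed land in $A\otimes\,^\Pi\! A$; ergodicity was recorded in Section 6.2.1 via \cite{Wang2}). The content of the proposition then reduces to two identifications, after which the strict quantization and the quantum Gromov--Hausdorff convergence come for free from Corollary \ref{Berezin2} and Theorem \ref{main2}.

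First I would verify $N^H=\,^\Pi\! A^H$. To avoid notational collision, write $\Pi_H:A\rightarrow B_H$ for the state stabilizer map of $P$, reserving $\Pi:A\rightarrow B$ for the quantum subgroup of the hypothesis. By Definition \ref{quantumhomo}, $A^H=\{a\in A\,|\,\,_\phi\Delta(a)=a,\,\forall\phi\in H\}$, while by definition $N^H=\{x\in\,^\Pi\! A\,|\,\,_\phi\Delta(x)=x,\,\forall\phi\in H\}$. These are visibly the same subspace, namely $\,^\Pi\! A\cap A^H=\,^\Pi\! A^H$, so no computation is required beyond pointing out that the two $H$-actions coincide.

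Next I would establish $M^n=\,^\Pi\B^n$, where $\,^\Pi\B^n$ denotes the left-$B$-invariants of $\B^n$ under the induced $B$-action $(id\otimes\Pi)\circ\alpha^n:\B^n\rightarrow\B^n\otimes B$. The key input is the Example at the end of Section 5.1: when $N=A$, the coaction $\alpha^n:\B^n\rightarrow M^n$ is an order-isomorphism with inverse $id\otimes\varepsilon$. I would show this restricts to an isomorphism $\,^\Pi\B^n\xrightarrow{\sim}M^n$ for $N=\,^\Pi\! A$. For the forward direction: if $T\in\,^\Pi\B^n$, apply $\alpha^n\otimes id$ to $(id\otimes\Pi)\alpha^n(T)=T\otimes 1_B$ and use $(\alpha^n\otimes id)\alpha^n=(id\otimes\Delta)\alpha^n$ to obtain $(id\otimes id\otimes\Pi)(id\otimes\Delta)\alpha^n(T)=\alpha^n(T)\otimes 1_B$, which is precisely the statement that the second tensor slot of $\alpha^n(T)$ lies in $\,^\Pi\! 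A$; the other defining relation for $M^n$ is then the action axiom for $\alpha^n$, so $\alpha^n(T)\in M^n$. For the reverse direction: if $\omega\in M^n\subseteq\B^n\otimes\,^\Pi\! A$, set $T=(id\otimes\varepsilon)(\omega)$, so that $\omega=\alpha^n(T)$ by the Example; applying $id\otimes\varepsilon\otimes id$ to the relation $(id\otimes id\otimes\Pi)(id\otimes\Delta)\omega=\omega\otimes 1_B$ and invoking the counit identity $(\varepsilon\otimes id)\Delta=id$ recovers $(id\otimes\Pi)\alpha^n(T)=T\otimes 1_B$, so $T\in\,^\Pi\B^n$.

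With both identifications in hand, the proposition now follows immediately by applying Corollary \ref{Berezin2} and Theorem \ref{main2} to the ergodic right $A$-module algebra $\,^\Pi\! A$. Since $L_A$ is invariant and $\,^\Pi\! A$ is a right $A$-submodule of $A$, the Lip-norm on $N^H$ produced by Theorem \ref{main2} restricts the induced Lip-norm on $\,^\Pi\! A$, which in turn is the restriction of $L_A$; similarly the Lip-norms on $M^n=\,^\Pi\B^n$ are the restrictions of the induced seminorms on $\B^n\otimes\,^\Pi\! A$. I do not foresee a real obstacle: the two computational cores are short diagram chases using only coassociativity and the counit axiom. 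The only nuisance is the notational collision between $\Pi$ and the state stabilizer map, which I would dispatch at the outset by renaming one of them.
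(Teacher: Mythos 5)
Your proposal is correct and follows essentially the same route as the paper: both identify $N^H$ with $\,^\Pi\! A^H$ directly, and both use the order-isomorphism $\alpha^n:\B^n\rightarrow M^n$ from the example after Definition \ref{defM} together with the action axiom $(\alpha^n\otimes id)\alpha^n=(id\otimes\Delta)\alpha^n$ to show $\alpha^n(T)\in M^n$ iff $(id\otimes\Pi)\alpha^n(T)=T\otimes 1_B$. The only cosmetic difference is that the paper runs this as a single chain of equivalences closed off by the injectivity of $\alpha^n\otimes id$, whereas you split it into two implications and invert via $id\otimes\varepsilon$ and the counit identity, which amounts to the same thing.
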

\begin{proof}
It's clear that for $N= \: ^\Pi\! A$, we have $N^H =\: ^\Pi\! A ^H$, where
\[ ^\Pi\! A^H =\{ a\in A \: | \: (id\otimes \Pi )\Delta a=a\otimes 1_B \text{ and } _\phi\Delta (a)=a, \hspace{4pt} \forall \phi \in H \} . \]
Next we compute $M$.  See Definition \ref{defM}. We have 
\[ M_n =\{ \omega \in \B \otimes \: ^\Pi\! A \: | \: (\alpha \otimes id)\omega =(id\otimes \Delta )\omega \} . \] 
By the reasoning in the example immediately following Definition \ref{defM}, we have that $(id\otimes \varepsilon ):M\hookrightarrow \B$.  To find the subspace of $\B$ that is the image of $M$, we must find for what $T\in \B$ do we have $\alpha (T)\in M$.

We claim that $\alpha (T)\in M$ if and only if $(id\otimes \Pi )\alpha (T)=T\otimes 1_B$; that is, if and only if $T\in \: ^\Pi\B$.  We have 
\begin{eqnarray*}
\alpha (T)\in M & \Leftrightarrow & 
	(id\otimes id\otimes \Pi )(id\otimes \Delta )\alpha (T)
	 = \alpha (T)\otimes 1_B \\
& \Leftrightarrow & (id\otimes id\otimes \Pi )(\alpha \otimes id)\alpha (T)
	 = \alpha (T)\otimes 1_B \\
& \Leftrightarrow & (\alpha \otimes id)( id\otimes \Pi )\alpha (T)
	 =( \alpha \otimes id)(T\otimes 1_B )\\
& \Leftrightarrow & ( id\otimes \Pi )\alpha (T)
	 =T\otimes 1_B ,
\end{eqnarray*}
where for the last equivalence we have used that $(\alpha \otimes id)$ is injective.
\end{proof}

\begin{example}
Let $A=C(G)$ for $G$  a compact semisimple Lie group, and let $U:G\rightarrow B(\H )$ be an irreducible unitary representation of $G$.  Let $K<G$ be a subgroup of $G$, and let $H<G$ be the stabilizer of the rank-1 projection corresponding to the highest weight vector of $U$ under conjugation by $U$.  Then we have 
$^K\! A^H=C(K\backslash G/H)$ is a double coset space, and $^K\! \B=U(K)'\subseteq B(\H )$ is the commutant of $U(K)$ in $B(\H )$.
\end{example}

\begin{proposition}
In the above example, suppose that $K\subseteq gHg^{-1}$ for some $g\in G$.  Then the action of $K$ on $G/H$ fixes the point $gH \in G/H$.  If the action of $K$ on  $G/H$ has an isolated fixed point, and if the action is properly discontinuous, then the double coset $K\backslash G/H $ is not a differentiable manifold for the induced differential structure.
\end{proposition}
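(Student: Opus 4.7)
The first sentence is a direct computation: \(K\subseteq gHg^{-1}\) is equivalent to \(g^{-1}Kg\subseteq H\), which says \(kgH=gH\) for every \(k\in K\), so indeed \(K\) fixes \(gH\). For the main claim, a properly discontinuous subgroup of a compact Lie group must be finite, so \(K\) is finite; the plan is to reduce to a linear local model and to derive a contradiction from the absence of \(K\)-invariant covectors at the fixed point.

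First I would invoke the smooth slice (Bochner linearization) theorem for the finite group \(K\) acting on the manifold \(G/H\) to produce a \(K\)-equivariant diffeomorphism from a neighborhood of \(gH\) in \(G/H\) onto a neighborhood of \(0\) in \(T_{gH}(G/H)\cong\R^n\), where \(n=\dim(G/H)>0\), intertwining the \(K\)-action with its linearization. Under this identification, the induced differential structure on \(K\backslash G/H\) near \([gH]\) agrees with that on \(\R^n/K\) near \([0]\), so it suffices to show that \(\R^n/K\) is not a smooth manifold at \([0]\). Because \(gH\) is an isolated fixed point, the linearized \(K\)-action on \(\R^n\) fixes only \(0\), whence by duality \((T_0^*\R^n)^K=0\). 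Consequently, every \(K\)-invariant smooth function \(\tilde f\) defined near \(0\) with \(\tilde f(0)=0\) has \(d\tilde f|_0=0\), and hence vanishes to order at least two at \(0\).

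Now suppose for contradiction that \(\R^n/K\) were a smooth \(n\)-manifold at \([0]\) for the induced structure, with a chart \(\psi=(\psi_1,\ldots,\psi_n)\) carrying \([0]\) to \(0\). The pullbacks \(\tilde\psi_i=\psi_i\circ\pi\) are \(K\)-invariant smooth functions on \(\R^n\) vanishing at \(0\), so by the preceding observation the combined smooth map \(\tilde\psi=\psi\circ\pi:\R^n\to\R^n\) satisfies \(d\tilde\psi|_0=0\), which by Taylor's theorem forces \(\tilde\psi(B_\epsilon(0))\subseteq B_{C\epsilon^2}(0)\) for small \(\epsilon\). On the other hand, since \(\pi\) is surjective and open and \(\psi\) is a homeomorphism, \(\tilde\psi(B_\epsilon(0))\) must contain an open neighborhood of \(0\). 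I expect the main obstacle to be converting this tension into an airtight contradiction, because a smooth map with vanishing first derivative can still be surjective onto a neighborhood of its image (witness \(z\mapsto z^2\) for the \(\Z_2\)-quotient of \(\R^2\)). The cleanest remedy is algebraic: if \(\R^n/K\) were smooth, the algebra of \(K\)-invariant smooth germs at \(0\) would be freely generated by \(n\) elements, and the smooth Chevalley--Shephard--Todd theorem (via G.~Schwarz) would then force \(K\) to be generated by pseudoreflections; but a pseudoreflection group with isolated fixed point at \(0\) has quotient a closed Weyl chamber, exhibiting genuine corner singularities at \([0]\) and hence still failing to be a differentiable manifold without boundary. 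Either way, \(K\backslash G/H\) cannot be a differentiable manifold near \([gH]\).
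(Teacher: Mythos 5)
Your proof is correct in substance, but it takes a genuinely different and substantially more rigorous route than the paper. The paper's own argument for the second claim is explicitly a heuristic: it says the statement ``is really a general principle,'' and argues ``intuitively'' that directional derivatives distinct near $gH$ become identified at $gH$, so the ``tangent bundle'' of $K\backslash G/H$ fails to be locally trivial. That is precisely the first-order observation you formalize (Bochner linearization, $(T_0^*\R^n)^K=0$, hence every invariant function vanishing at the fixed point has vanishing differential), and you are right to point out that this alone does not finish the job: the $\Z_2$-quotient of $\R^2$ via $z\mapsto z^2$ shows a chart pullback can have vanishing differential and still be open at the fixed point. Your remedy --- pass to germs, use Schwarz's theorem to reduce smooth invariants to polynomial invariants, apply Chevalley--Shephard--Todd to force $K$ to be generated by (real) pseudoreflections if $\mathfrak m/\mathfrak m^2$ is to have dimension $n$, and then dispose of the reflection case by noting the quotient is a closed simplicial cone with nonempty boundary, hence not even a topological manifold at $[0]$ --- is a correct and complete argument. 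What the paper's sketch buys is brevity; what yours buys is an actual proof, at the cost of importing the linearization theorem, Schwarz's theorem, and CST.

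Two small points to tighten. First, ``freely generated by $n$ elements'' is loose for a smooth germ algebra; the clean invariant is $\dim\mathfrak m_{[0]}/\mathfrak m_{[0]}^2=n$ for an $n$-manifold (via Hadamard's lemma), and the quadratic invariant $|x|^2$ already lies in $\mathfrak m^K\setminus(\mathfrak m^K)^2$ because there are no linear invariants. Second, the statement silently needs $\dim(G/H)>0$ (and $K$ acting nontrivially); in the degenerate case the double coset is a point and the claim fails vacuously. Neither affects the intended application to the $S^2/\Z_n$ example.
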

\begin{proof}
Let $k\in K\subseteq gHg^{-1}$ be arbitrary. Then we have  $k\cdot (gH)=gH$.  Thus $K$ fixes $gH$.

The second statement is really a general principle.  Intuitively, if $gH$ is an isolated fixed point, then in any neighborhood of $gH$ we can find at least two points which map to the same point in $K\backslash G/H$.  If the action is properly discontinuous, we can find two such points that are ``seperated'' from each other.  At the infinitesimal level, there are directional derivatives that are distinct  just off $gH$ but become identified at $gH$.  Thus the ``tangent bundle'' of $K\backslash G/H$ is not locally trivial for this differential structure, and $K\backslash G/H$ is not a differentiable manifold.
\end{proof}

\begin{example}
Let $G=SU(2)$ and let $U:SU(2)\rightarrow M_2(\C )$ be the standard representation. The highest weight vector of $U$ is $e_1$, so that 
 $P=\left( \begin{array}{cc} 1 & 0 \\ 0 & 0 \end{array} 
	\right) $.  Then we have the stabilizer of $P$ is  the maximal torus $H<G$ consisting of diagonal matrices in $SU(2)$.  We compute
\[
\left( \begin{array}{cc} a & -\overline{c} \\ c & \overline{a} \end{array} 
	\right) 
\left( \begin{array}{cc} \lambda & 0 \\ 0 & \overline{\lambda} \end{array} 
	\right) =
\left( \begin{array}{cc} a\lambda & -\overline{c\lambda} \\ c\lambda & \overline{a\lambda} \end{array} 
	\right) .
\]
Thus we have 
\[ SU(2)/H =\frac{\{ (a,c)\in \C^2 \: : \: |a|^2+|c|^2=1 \}}{(a,c)\sim (a\lambda ,c\lambda ) \text{ for } \lambda \in S^1 } =\mathbb{CP}^1 . \]
Via the equivalence relation, we see that $(0,c)\sim (0,1)$ for all $|c|=1$.  For $|c|<1$ we can choose a unique $\lambda \in S^1$ to make $a\lambda >0$. Thus we can use the stereographic projection map to obtain
\[  SU(2)/H = \{ (2|a|\mathfrak{Re}(c), 2|a|\mathfrak{Im}(c), |a|^2-|c|^2 )\in \R^3 \} = S^2, \] 
as is well known.

Now take $K=\Z_n\hookrightarrow H$ to be the copy of the cyclic group inside $H$:
\[ \Z_n = \left\{ \left( \begin{array}{cc} \zeta^j & 0 \\ 0 & \overline{\zeta^j} \end{array} \right) \in SU(2) \: | \: j=0,1,...,n-1 \right\} ,\hspace{.5in} \text{where } \zeta=\exp(2\pi i/n). \] 
We have $\left( \begin{array}{cc} \zeta & 0 \\ 0 & \overline{\zeta} \end{array} \right) \cdot (a,c) = (\zeta a,\overline{\zeta}c)\sim (a,\overline{\zeta^2}c)$.  Looking at the formula for the stereographic projection above, we see that the generator of $\Z_n$ acts on $S^2$ by  rotating $4\pi /n$ in the plane spanned by  first two coordinate vectors. For simplicity take $n\in \N$ to be odd.  Then we have each point of $S^2$ is identified with it's image after rotating by $2\pi /n$.  Thus we see that $S^2/\Z_n$ is isometric to the ellipsoid  with radii $1/n, 1/n,$ and $1$.  As the quotient of a manifold by a finite group action, $S^2/\Z_n$ is a prototypical example of an orbifold. The action is properly discontinuous since $\Z^n$ is finite.  Because the north and south poles are isolated fixed points for the action of $\Z_n$, we see that $S^2/\Z_n$ is not a manifold for the induced differential structure.

Now we look at its finite dimensional approximations $^{\Z_n}\! M_N(\C )=U^N(\Z^n)'$.  The $N$ dimensional representation of $SU(2)$ has a basis $\{ v_{N-1},v_{N-3},...,v_{-N+1} \}$ such that $\left( \begin{array}{cc} \zeta & 0 \\ 0 & \overline{\zeta} \end{array} \right) \cdot v_j=\zeta^j v_j$.  For simplicity, take $N=rn$ for some $r\in \N$.  Still assuming that $n$ is odd, we have that exactly $r$ of the basis vectors are fixed by $\left( \begin{array}{cc} \zeta & 0 \\ 0 & \overline{\zeta} \end{array} \right)$, exactly $r$ are multiplied by $\lambda$, ..., and exactly $r$ are multiplied by $\lambda^{n-1}$.  Thus the commutant of the image of $\Z^n$ in $M_N(\C )$ is isomorphic to $M_r(\C )^{\oplus n}$. So we have a limit
\[ \lim_{r\rightarrow \infty} M_r(\C )^{\oplus n} = C(S^2/\Z_n) \]
in a continuous field of operator systems. Given a metric on $SU(2)$, we have a limit as quantum metric spaces.

\end{example}

\end{subsubsection}

\begin{subsubsection}{Coadjoint Orbits for $C(G)_{\hbar J}$}

The goal of this section is to show that our Berezin Quantization commutes with Wang's use of Rieffel deformation to deform compact quantum groups \cite{Wang3}.  We begin by recalling our initial data.

As usual, we let $A$ be a coamenable compact quantum group of Kac-type.  We let $u:\H \rightarrow  \H \otimes A$ be an irreducible unitary representation of $A$ with a primitive-like vector $\xi \in \H$, see Definition \ref{def6}. 
   We let $\H^n$ be the cyclic subspace of $\H^{\otimes n}$ generated by $\xi^n\equiv \xi^{\otimes n}$ for the unitary representation $u^{\otimes n}$.  
We let $P^n\in B(\H^n )$ be the rank-1 projection corresponding to $\xi^n$. 
We let  $\alpha^n :B(\H^n )\rightarrow B(\H^n )\otimes A$ be the action of $A$ given by conjugation by $u^{\otimes n}$, and we let $\B^n\subseteq B(\H^n)$ be the cyclic subspace generated by $P^n$.  
We let $H\subseteq S(A)$ to be the state stabilizer of $P$ under  $\alpha$ (see Definition \ref{defstabilizer}).

Now let $\pi :N\rightarrow A\otimes N$ be an ergodic right $A$-module algebra (see Definition \ref{rightmodalg}), and let $N^H=\{ x\in N \: | \: _\phi\pi (x)=x \text{ for all } \phi \in H\}$ be the set of $H$-invariant elements of $N$.  Let $M^n=\{ \omega \in \B^n \otimes N\: | \: (\alpha^n\otimes id)(\omega )=(id\otimes \pi )(\omega ) \}$.  Finally, we have the Berezin symbol $\s^n:M^n\rightarrow N^H$ and the Berezin adjoint $\bs^n :N^H\rightarrow M^n$ given by 
\begin{eqnarray*}
\s^n (\omega ) & = & (tr\otimes id)((P^n\otimes id)\omega ),  \\
\text{  and} \hspace{15pt}   \bs^n (x) & = & d_{\H^n} ((\alpha^n)^\dagger \otimes id)(P^n\otimes \pi (x))\\
& = & d_{\H^n}(id\otimes \h \circ \mathsf{m} \otimes id)([(id\otimes \kappa)\alpha^n(P^n)]\otimes \pi (x)).
\end{eqnarray*}

Corollary \ref{Berezin2'} says that the sequence of Berezin Transforms $\s^n \circ \bs^n:N^H\rightarrow N^H$ converges strongly to the identity operator.   Theorem \ref{main2} adds that for any invariant Lip-norm $L_A$ on $A$, the corresponding quantum metric spaces $(M^n, L_n)$ converge to $(N^H,L)$ in quantum Gromov-Hausdorff distance.

Suppose now we have a toral quantum subgroup $\Pi :A\rightarrow C(\T )$ of $A$.  We saw in Section 6.1.1 that we can deform $A$ into a family of coamenable quantum groups $A_{\hbar J}$ all of Kac-type, and we've seen that we have a deformed family of unitary representations $u_\hbar :\H \rightarrow \H \otimes A_{\hbar J}$.  In this section we show that the rest of this picture deforms as well.  We will find $M^n_\hbar =M^n$ for all $n\in N$.  We have $N^H_\hbar $ and $N^H$ have the same smooth subspace, so that $N^H_\hbar $ and $N^H$ are merely completions of the same space in different norms.  Finally the Berezin symbol and its adjoint are undeformed on this smooth subspace.

We  saw in Section 6.3.2  that if $A=C(G)$ is the algebra of continuous functions on an ordinary compact group $G$, and if $K<G$ is the usual stabilizer of $P$ under $\alpha$, then  the quantum group stabilizer of $P$ under the deformed action $\alpha_\hbar$ is given by the quantum subgroup $\Pi_\hbar :C(G)_{\hbar J}\rightarrow C(K)_{\hbar J}$.   
Since for the undeformed $C(G)$ we know that both the quantum group stabilizer and the state stabilizer of $P$ are equal to $C(K)$, Conjecture \ref{conjecture1} suggests that the state stabilizer of $P$ under the deformed action $\alpha_\hbar$ should be equal to $C(K)_{\hbar J}$ as well.  We will not be able to obtain this result, but we will show that the set of $H^\hbar$-invariant element elements of $C(G)_{\hbar J}$ is the same as the set of $C(K)_{\hbar J}$-invariant elements, which gives strong support for our conjecture in this case.
 Also, we will see that the quantum coadjoint orbit $C(G)_{\hbar J}^{H^\hbar}$ is just the $\theta$-deformed homogeneous space $C(G/K)_{\hbar \theta}$ from Section 5 of \cite{Varilly}.

Let $\theta$ be a skew-symmetric form on the Lie algebra $\mathfrak{t}$ of $\T$.  As described in Section 6.1.1, we have two actions of $\mathfrak{t}$ on $A$ given by 
\begin{eqnarray*}
\lambda_s (a) & = & (\delta_{\exp (-s)}\Pi \otimes id)\Delta a , \\
\text{and} \hspace{15pt} \rho_t (a) & = & (id\otimes \delta_{\exp (t)})\Delta a.
\end{eqnarray*}
These two actions commute with each other, so there is an action of  $\mathfrak{t}\oplus \mathfrak{t}$ on $A$ given by $\alpha_{(s,t)}=\lambda_s\rho_t$.
 We can deform $A$ by the action of $\mathfrak{t}\oplus \mathfrak{t}$ with the skew form $J=\theta \oplus (-\theta )$ to obtain a family of compact quantum groups $A_{\hbar J}$.  

We also have an action of $\mathfrak{t}$ on $N$ given by
\[ l_s(x)= (\delta_{\exp (-s)}\Pi \otimes id )\pi (x) .\]
Thus we can deform $N$ by this action of $\mathfrak{t}$ using the skew form $\theta$ to obtain a family of C$^*$-algebras $N_{\hbar \theta}$.  We show that $N_{\hbar \theta}$ is an $A_{\hbar J}$-module algebra.

First, we stop to discuss the algebra of regular functions of an $A$-module algebra. We define algebra of regular functions of $N$ to be the algebraic direct sum of the isotypic components on $N$, and we denote it by $\mathcal{N}$, see \cite{Li}. Clearly, $\mathcal{N}$ is a dense $*$-subalgebra of $N$.   By the argument of \cite{Podles}, the action $\pi :N\rightarrow A\otimes N$ of the compact quantum group $A$ restricts  to an action $\pi :\mathcal{N}\rightarrow \A \otimes \mathcal{N}$ of the algebraic quantum group $\A$ on $\mathcal{N}$.  Finally, we denote by $\mathcal{N}_{\hbar \theta}$ the dense subalgebra of $N_{\hbar \theta}$ which is equal to $\mathcal{N}$ as a linear space.

\begin{proposition}\label{deformmodule}
Let $\pi :N\rightarrow A\otimes N$ be a right $A$-module algebra, and let $\Pi :A\rightarrow C(\T )$ be a toral quantum subgroup of $A$.  Let $\theta$ be a skew-symmetric form on the Lie algebra of $\T$.  Then the natural map 
\[ \mathcal{N}_{\hbar \theta}=\mathcal{N} \stackrel{\pi}\longrightarrow 
	\A \otimes \mathcal{N} =\A_{\hbar J} \otimes \mathcal{N}_{\hbar \theta}
	\subseteq A_{\hbar J} \otimes {N}_{\hbar \theta} \]
lifts to an action $\pi_\hbar :N_{\hbar \theta} \rightarrow A_{\hbar J}\otimes N_{\hbar \theta} $ of $A_{\hbar J}$ on $N_{\hbar \theta}$.  Moreover, one has that $\pi_\hbar $ is ergodic if and only if $\pi$ is ergodic.
\end{proposition}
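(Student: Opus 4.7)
The plan is to follow the strategy of Lemma~\ref{subgrdeform} and Wang's deformation arguments in \cite{Wang3}: verify multiplicativity at the polynomial level, where the deformation is merely a change of product rule, and then extend to the C$^*$-level by universality. The heart of the matter is multiplicativity of $\pi$ with respect to the two deformed products; the remaining action axioms and the ergodicity equivalence then follow routinely.

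First I would record two compatibility relations between $\pi$ and the toral actions, which come directly from the coaction identity $(id\otimes \pi)\pi=(\Delta\otimes id)\pi$ together with the definitions of $\lambda_s,\rho_t$ on $A$ and $l_s$ on $N$: for every $x\in\mathcal{N}$,
\begin{eqnarray*}
\pi(l_s(x)) & = & (\lambda_s\otimes id)\pi(x), \\
(\rho_t\otimes id)\pi(x) & = & (id\otimes l_{-t})\pi(x).
\end{eqnarray*}
The point is the following weight analysis. If $x\in\mathcal{N}$ is $l$-homogeneous of weight $n\in\Z^d$ and one decomposes $\pi(x)=\sum_i a_i\otimes y_i$ with $a_i\in\A$ homogeneous of $\alpha$-weight $(m_{1,i},m_{2,i})\in\Z^{2d}$ and $y_i\in\mathcal{N}$ homogeneous of $l$-weight $r_i$, then the first relation forces $m_{1,i}=n$ and the second forces $m_{2,i}=-r_i$, for every $i$.

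With this in hand, multiplicativity at the polynomial level is a direct computation. For $x,y\in\mathcal{N}$ of $l$-weights $n,n'$ with decompositions $\pi(x)=\sum_i a_i\otimes y_i$ and $\pi(y)=\sum_j b_j\otimes z_j$ as above (so $a_i$ has $\alpha$-weight $(n,-r_i)$, $y_i$ has $l$-weight $r_i$, and similarly $b_j$ has $\alpha$-weight $(n',-s_j)$, $z_j$ has $l$-weight $s_j$), the graded formula of Section 6.1.1 gives $x\times_{\hbar\theta}y=e_\hbar(\theta(n,n'))xy$, hence
\[ \pi(x\times_{\hbar\theta}y) = e_\hbar(\theta(n,n'))\pi(x)\pi(y). \]
Computing instead in $\A_{\hbar J}\otimes\mathcal{N}_{\hbar\theta}$,
\[ \pi(x)\cdot_\hbar\pi(y) = \sum_{i,j}\bigl(a_i\times_{\hbar J}b_j\bigr)\otimes\bigl(y_i\times_{\hbar\theta}z_j\bigr) = \sum_{i,j}e_\hbar\bigl(\theta(n,n')-\theta(-r_i,-s_j)+\theta(r_i,s_j)\bigr)a_ib_j\otimes y_iz_j, \]
and the cross-term phases cancel since $\theta(-r_i,-s_j)=\theta(r_i,s_j)$ by bilinearity. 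The two expressions agree, so $\pi_\hbar:\mathcal{N}_{\hbar\theta}\to\A_{\hbar J}\otimes\mathcal{N}_{\hbar\theta}$ is a $*$-algebra homomorphism into the C$^*$-algebra $A_{\hbar J}\otimes N_{\hbar\theta}$; the universal property of $N_{\hbar\theta}$ as the enveloping C$^*$-algebra of its smooth subalgebra then extends it to a C$^*$-homomorphism $\pi_\hbar:N_{\hbar\theta}\to A_{\hbar J}\otimes N_{\hbar\theta}$.

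The remaining verifications are essentially formal. Coassociativity $(id\otimes\pi_\hbar)\pi_\hbar=(\Delta_{\hbar J}\otimes id)\pi_\hbar$ holds on $\mathcal{N}_{\hbar\theta}=\mathcal{N}$ because both $\pi_\hbar=\pi$ and $\Delta_{\hbar J}=\Delta$ on the polynomial subalgebras, and extends by continuity; the density of $(A_{\hbar J}\otimes 1)\pi_\hbar(N_{\hbar\theta})$ in $A_{\hbar J}\otimes N_{\hbar\theta}$ reduces similarly to the density for $\pi$ itself. For ergodicity, the isotypic decomposition of $\mathcal{N}_{\hbar\theta}$ under $\pi_\hbar$ is identical as a decomposition of vector spaces to that of $\mathcal{N}$ under $\pi$, so the trivial isotypic components coincide; a standard projection argument using the Haar state of $A_{\hbar J}$ promotes this to the C$^*$-level. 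The main obstacle throughout is the weight bookkeeping in the multiplicativity step, where the cancellation requires precisely the relation $m_{2,i}=-r_i$ from the second compatibility; this is the point where the two-sided deformation of $A$ via $J=\theta\oplus(-\theta)$ and the one-sided deformation of $N$ via $\theta$ are forced to be compatible.
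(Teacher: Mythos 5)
Your graded computation of multiplicativity is correct and is, in substance, the concrete form of what the paper does abstractly. The two compatibility relations you isolate, $\pi\circ l_s=(\lambda_s\otimes id)\pi$ and $(\rho_t\otimes id)\pi=(id\otimes l_{-t})\pi$, are exactly the paper's statements that $\psi=\pi$ intertwines $l$ with $\beta_t=(\lambda_t\otimes id)$ and that $\pi(N)$ lands in the subalgebra $C=\{F\in A\otimes N : (\rho_t\otimes id)F=(id\otimes l_{-t})F\}$; your weight bookkeeping ($m_{1,i}=n$, $m_{2,i}=-r_i$) and the resulting phase cancellation $e_\hbar(\theta(n,n')-\theta(r_i,s_j)+\theta(r_i,s_j))=e_\hbar(\theta(n,n'))$ is precisely why the paper's three-parameter form $L=\theta\oplus(-\theta)\oplus\theta$ on $\mathfrak{t}^3$ reduces to $\theta$ on $C$. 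So at the level of $\mathcal{N}_{\hbar\theta}$ your argument and the paper's agree, and yours has the virtue of making the cancellation visible rather than hiding it in the identification $C^\gamma_L=C^\beta_\theta$ cited from Rieffel.

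The gap is the passage from $\mathcal{N}_{\hbar\theta}$ to $N_{\hbar\theta}$. You invoke ``the universal property of $N_{\hbar\theta}$ as the enveloping C$^*$-algebra of its smooth subalgebra,'' but $N_{\hbar\theta}$ is not defined that way: Rieffel's deformation is a specific completion of $N^\infty_{\hbar\theta}$ in a concretely constructed C$^*$-norm (only the quantum group $A_{\hbar J}$ is taken to be universal in this paper, and that uses that $\A$ consists of matrix coefficients of unitary representations). A $*$-homomorphism defined on the dense $*$-subalgebra $\mathcal{N}_{\hbar\theta}$ of a non-universal pre-C$^*$-algebra need not be bounded, so continuity --- hence the existence of $\pi_\hbar$ on all of $N_{\hbar\theta}$ --- does not follow from the polynomial-level computation alone. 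This is exactly what the paper's detour through $C$ buys: writing $\pi_\hbar=\iota\circ\psi_\hbar$ with $\psi:N\to C$ equivariant for $l$ and $\beta$, and $\iota:C\hookrightarrow A\otimes N$ equivariant for $\gamma$, lets one quote Rieffel's functoriality for equivariant $*$-homomorphisms, which induces $*$-homomorphisms of the deformed C$^*$-algebras and so delivers boundedness for free. To repair your argument you should either route the extension through that functoriality or supply an independent proof that your polynomial-level homomorphism is contractive for the deformed norms. Your ergodicity argument is fine and matches the paper's expectation argument via $E_\hbar=(\h\otimes id)\pi_\hbar$.
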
 
\begin{proof}
The first half of this proof is very similiar to our proof of Lemma \ref{subgrdeform} above.  The remainder  mimics Rieffel proof that $\Delta_\hbar :C(G)_{\hbar J}\rightarrow C(G)_{\hbar J}\otimes C(G)_{\hbar J}$ is a $*$-homomorphism in \cite{Rieffel5}, or Wang's corresponding proof in \cite{Wang3}.  We give a quick sketch.

First of all, by \cite{Podles} we do have $\pi :\mathcal{N}\rightarrow \A \otimes \mathcal{N}$.  By the same argument as in Proposition 3.2 of  \cite{Wang3}, it is easy to see that  $\mathcal{N}\subseteq N^\infty$, where $N^\infty$ is the dense subalgebra of smooth elements for the action of $\mathfrak{t}$.  Hence the string of linear maps above makes sense if we define $\mathcal{N}_{\hbar \theta}$  to be $\mathcal{N}$ as a linear space endowed with the deformed product of $N_{\hbar \theta}$.  

Since the coproduct $\Delta_{\hbar }:A_{\hbar J}\rightarrow A_{\hbar J}$ is undeformed on the polynomial subalgebra $\A_{\hbar J}$, it is easy to see that $\pi_\hbar$ defined above satisfies $(id\otimes \pi_\hbar )\pi_\hbar =
(\Delta_{\hbar }\otimes id)\pi_\hbar$ on the algebra of regular functions $\mathcal{N}_{\hbar \theta} $.  Since the involution of $N$ is undeformed on $\mathcal{N}$, we have that $\pi_\hbar$ is $*$-preserving.  

It remains to show that $\pi_\hbar :\mathcal{N}_{\hbar \theta}\rightarrow \A_{\hbar J}\otimes \mathcal{N}_{\hbar \theta}$ is a homomorphism for the deformed products.  As in \cite{Rieffel5}, we let 
\[ C=\{ F\in A\otimes N \: | \: (\rho_t \otimes id)F=(id\otimes l_{-t})F , \hspace{16pt} \text{for all} \hspace{7pt} t\in \mathfrak{t} \} .\]
It is easy to check that $\pi (N)\subseteq C$. We denote $\pi$ by $\psi$ when we view it as a map from $N$ to $C$.   Since the actions $\rho$ and $\lambda$ of $\mathfrak{t}$ on $A$ commute, there is also an action of $\mathfrak{t}$ on $C$ given by 
\[ \beta_t (F)=(\lambda_t \otimes id)(F) .\]
A quick check gives that $\psi \circ l_t =\beta_t \circ \psi$, so that $\psi$ intertwines the actions of $\mathfrak{t}$ on $N$ and on $C$.  Thus there is a natural map  
$\psi_\hbar :N^l_{\hbar \theta}\rightarrow C^\beta_{\hbar \theta}$ since Rieffel deformation is functorial.  

Next, define an action $\gamma$ of $\mathfrak{t}^{ 3}$ on  $A\otimes C$ by 
\[ \gamma_{(s,t,u)}=\alpha_{(s,t )}\otimes l_u .\]
We have that $\gamma$ restricts to an action on $C$ because $\T$ is abelian and because $\lambda$ and $\rho$ commute with each other.  By the definition of $C$ we have that the subspace $\{ (0,t,t) \in \mathfrak{t}^3 \}$ acts trivially on $C$. Consider the skew form $L=\theta \oplus (-\theta )\oplus \theta$ on $\mathfrak{t}^3$.
 The argument of \cite{Rieffel5} gives that $C^\gamma_L=C^\beta_\theta$.  We let
\[ \pi_\hbar = \iota \circ \psi_\hbar ,\]
where $\iota$ is the inclusion  $\iota : C^\gamma_L\hookrightarrow (A\otimes N)^\gamma_L$.  Since $(A\otimes L)^\gamma_L =A^\alpha_J\otimes N^l_\theta$, and because $\psi_\hbar$ and $\iota$ 
are both unital $*$-homomorphisms, we have that $\pi_\hbar$ is a $*$-homomorphism from $N_{\hbar \theta}$ to $A_{\hbar J}\otimes N_{\hbar \theta}$.

Since $\psi =\pi$ on $\mathcal{N}$, it's easy to see that $\pi_\hbar$ is given by the composition stated in the proposition when restricted to $\mathcal{N}_{\hbar \theta}$.

Suppose now that $\pi :N\rightarrow A\otimes N$ is ergodic.  We have the canonical expectations $E:N\rightarrow N^\text{inv}$ and $E_\hbar :N_{\hbar \theta}\rightarrow N_{\hbar \theta}^\text{inv}$ onto the invariant subalgebras for $\pi$ and $\pi_\hbar$ given by $E=(\h \otimes id)\pi$ and  $E_\hbar =(\h \otimes id)\pi_\hbar$.  We see that $E=E_\hbar$  on the subalgebras of regular functions $\mathcal{N}=\mathcal{N}_{\hbar \theta}$.  By ergodicity, we have $E(N)=\C 1_N$.  Thus we have $E_\hbar (\mathcal{N}_{\hbar \theta})\subseteq \C 1_{N_{\hbar \theta}}$.  Since $\mathcal{N}_{\hbar \theta}$ is dense in $N_{\hbar \theta}$, we have $E_\hbar (N_{\hbar \theta})= \C 1_{N_{\hbar \theta}}$ also, which says that $\pi_\hbar $ is ergodic.  The converse is similar.

\end{proof}

Now that we understand how to deform the $A$-module algebra $N$, let us return to the action $\alpha :B(\H ) \rightarrow B(\H ) \otimes A$.  We should have a deformed action $\alpha_\hbar :B(\H)_{\hbar \theta} \rightarrow B(\H )_{\hbar \theta} \otimes A_{\hbar J}$ which agrees with $\alpha$ on the dense subalgebra of regular functions of $B(\H )$.  Since $B(\H )$ is finite dimensional it is equal to its subalgebra of regular functions, and we have $\alpha =\alpha_\hbar$ when consider $B(\H )$ and $B(\H )_{\hbar \theta}$ merely as linear spaces.  Thus the cyclic subspaces $\B$ and $\B_{\hbar \theta}$ generated by $P$ are not deformed as linear spaces.  Only the order structure of $\B$ is deformed.

Now we move on to the deformation of the finite dimensional approximations $M^n$ of $N^H$.  As one might expect from the corresponding statement for $\B$, we have that $M^n$ is not deformed as a linear space.  Only its order structure is deformed.

\begin{proposition}
We have $M=M_{\hbar \theta}$ as linear spaces.  The natural identification between them is given by the restriction of the ``identity'' map $\B \otimes \mathcal{N} \rightarrow \B_\hbar \otimes \mathcal{N}_{\hbar \theta}$.
\end{proposition}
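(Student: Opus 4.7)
The plan is to show that both $M$ and $M_{\hbar\theta}$ actually sit inside the common algebraic space $\B \otimes \mathcal{N} = \B_{\hbar\theta} \otimes \mathcal{N}_{\hbar\theta}$, and that they are cut out by the same linear equation there. First I would establish that $M \subseteq \B \otimes \mathcal{N}$. Since $\B$ is finite dimensional, it decomposes as a finite direct sum of isotypic components $\B = \bigoplus_k \B^{(\nu_k)}$ under $\alpha$. If $\omega \in M$ we may write $\omega = \sum_k \omega_k$ with $\omega_k \in \B^{(\nu_k)} \otimes N$, and the intertwining relation $(\alpha \otimes id)\omega_k = (id \otimes \pi)\omega_k$ forces the right tensor-slots of each $\omega_k$ to lie inside a single isotypic component of $N$, which is finite-dimensional by Boca's theorem. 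Equivalently, under the linear isomorphism $M \cong \mathrm{Lin}_A(\B', N)$ established in Section 5.1, the image of $\B'$ (being finite-dimensional and $A$-invariant) necessarily lies in the finite direct sum of finitely many isotypic components of $N$, i.e.\ in $\mathcal{N}$. The identical argument with $\alpha_\hbar, \pi_\hbar, N_{\hbar\theta}$ in place of $\alpha, \pi, N$ gives $M_{\hbar\theta} \subseteq \B_{\hbar\theta} \otimes \mathcal{N}_{\hbar\theta}$.

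Next I would observe that the deformation procedure of Rieffel and Wang leaves the linear structure of the polynomial/regular-function subalgebras untouched, and only twists their multiplications. Concretely, $\B = \B_{\hbar\theta}$ and $\mathcal{N} = \mathcal{N}_{\hbar\theta}$ as vector spaces, the coproduct of $A$ agrees with that of $A_{\hbar J}$ on $\A$, and the coaction $\pi_\hbar$ was defined in Proposition \ref{deformmodule} precisely as the composition making $\pi_\hbar = \pi$ as a linear map $\mathcal{N} \to \A \otimes \mathcal{N}$. Likewise $\alpha_\hbar = \alpha$ as a linear map $\B \to \B \otimes \A$, since both are given on regular functions by the same matrix-coefficient formulas built from the (undeformed) comultiplication.

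The conclusion is then immediate: the defining linear condition
\begin{equation*}
(\alpha \otimes id)\omega \;=\; (id \otimes \pi)\omega
\end{equation*}
on $\B \otimes \mathcal{N}$ coincides, term for term, with the defining linear condition
\begin{equation*}
(\alpha_\hbar \otimes id)\omega \;=\; (id \otimes \pi_\hbar)\omega
\end{equation*}
on $\B_{\hbar\theta} \otimes \mathcal{N}_{\hbar\theta}$. Hence their solution sets agree, and the identity map $\B \otimes \mathcal{N} \to \B_{\hbar\theta} \otimes \mathcal{N}_{\hbar\theta}$ restricts to a linear bijection $M \to M_{\hbar\theta}$.

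The main obstacle I expect is the first step — ensuring that elements of $M$ really lie in the algebraic tensor product $\B \otimes \mathcal{N}$ rather than only in the norm completion $\B \otimes N$, since otherwise the ``identity map'' between the two deformations is not literally defined on all of $M$. Once this containment is secured (via either the isotypic argument or the $\mathrm{Lin}_A(\B', N)$ description), the remainder of the proof is essentially formal, because Rieffel deformation deforms only products and not the underlying linear data on regular functions.
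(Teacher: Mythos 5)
Your proposal is correct and follows essentially the same route as the paper: the paper's proof also notes that finite dimensionality of $\B$ forces $M\subseteq \B\otimes\mathcal{N}$, and then observes that $\alpha_\hbar$ and $\pi_\hbar$ are undeformed on the regular-function algebras so the two defining equations coincide. You simply fill in the paper's ``it is easy to see'' with an explicit isotypic-component argument, which is a welcome addition but not a different method.
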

\begin{proof}
Recall that $M\subseteq \B \otimes N$ is given by
\[ M=\{ \omega \in \B \otimes N \: | \: (\alpha \otimes id)\omega =(id\otimes \pi )\omega \} .\]
Since $\B$ is finite dimensional, it is easy to see that any $\omega \in M$ must actually be an element of $\B \otimes \mathcal{N}$.   Since $\alpha_\hbar$ and $\pi_\hbar$ are not deformed on the regular function algebras $\B$ and $\mathcal{N}$, we have that $(\alpha_\hbar \otimes id)\omega_\hbar =(id\otimes \pi_\hbar )\omega_\hbar$, where $\omega_\hbar$ is $\omega$ viewed as an element of $\B_\hbar \otimes \mathcal{N}_{\hbar \theta}$.  Thus $\omega \mapsto \omega_\hbar$ gives a mapping $M\rightarrow M_{\hbar \theta}$.  Similarly, one can check that $\omega_\hbar \in M_{\hbar \theta}$ implies $\omega \in M$, so we have $M=M_{\hbar \theta}$ as linear spaces.

\end{proof}

Finally, we check that the Berezin symbol and Berezin adjoint are undeformed, in the obvious sense.  Because it is not clear a priori how the state stabilizer of $P$ should deform, we view the Berezin symbol as a map $\s :M\rightarrow N$ and the Berezin adjoint as $\bs :N\rightarrow M$.  We recall a basic fact from Fourier analysis.  We use the shorthand $e_\hbar(t)=\exp(2 \pi i \hbar t)$ for the rest of this section.  Recall from Section 6.1.1 that if $C$ is a C$^*$-algebra with an action of $\R^d$ that comes from the action of a torus, then the product on $C_{\hbar \theta}$ can be written in terms of the weight spaces for $\T$ as
\[ (\xi\times_{\hbar \theta}\eta )_n=\Sigma_{m\in \Z^d} \: \xi_m\eta_{n-m} e_\hbar (\theta m \cdot n), \hspace{.4in} \xi ,\eta \in C . \]

We begin by showing that the trace on $B(\H )_{\hbar \theta}$ is the same as the trace on $B(\H )$.

\begin{lemma}
Let $\alpha :\T\rightarrow C$ be an action of a torus on a C$^*$-algebra $C$. Let $\varphi \in S(C)$ be a distinguished positive linear functional on $C$, and suppose that the action $\alpha$ is unitary for the inner product $\langle \xi , \eta \rangle =\varphi (\eta^*\xi )$ on $C$.  Then for any $\xi ,\eta \in C^\text{alg}$ we have 
\[ \varphi (  \xi \times_{\hbar \theta}\eta )=\varphi (\xi \eta ) .\]
Let $\varphi_\hbar$ be $\varphi$ considered as a linear functional on $C^\text{alg}_{\hbar \theta}$.  In particular, we have that if $\varphi $ is a trace on $C$, then $\varphi_\hbar$ is a trace on $C^\text{alg}_{\hbar \theta}$.
\end{lemma}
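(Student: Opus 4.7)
The plan is to use the weight space decomposition of $C^\text{alg}$ for the torus action to reduce the identity $\varphi(\xi \times_{\hbar\theta} \eta) = \varphi(\xi \eta)$ to a simple index-matching argument. First I would observe that the hypothesis that $\alpha$ is unitary for the inner product $\langle \xi , \eta \rangle = \varphi(\eta^* \xi)$ implies $\varphi \circ \alpha_t = \varphi$ for all $t \in \T$, i.e. $\varphi$ is $\T$-invariant. Indeed, taking $\eta = 1_C$ (or approximating the identity appropriately so as to stay in the algebraic setting, using that $\varphi = \langle \,\cdot\, , 1_C\rangle$ in the GNS picture) and using unitarity gives $\varphi(\alpha_t(\xi)) = \varphi(\xi)$.

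Next, decomposing $\xi = \sum_m \xi_m$ and $\eta = \sum_n \eta_n$ into their $\T$-weight components, I would use the $\T$-invariance of $\varphi$ to conclude that $\varphi(\xi_m \eta_n) = 0$ unless $m + n = 0$, since $\xi_m \eta_n$ lies in the weight space $C_{m+n}$ and $\varphi$ vanishes on all nontrivial weight spaces. Plugging in the graded formula for the deformed product, namely
\[ (\xi \times_{\hbar \theta} \eta)_n = \sum_{m \in \Z^d} \xi_m \eta_{n-m}\, e_\hbar(\theta m \cdot n), \]
I would apply $\varphi$ to each component and observe that the only components surviving are those with $n = 0$, in which case the phase $e_\hbar(\theta m \cdot 0) = 1$ collapses. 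This yields
\[ \varphi(\xi \times_{\hbar \theta} \eta) = \sum_m \varphi(\xi_m \eta_{-m}), \]
and expanding $\varphi(\xi \eta) = \sum_{m,n} \varphi(\xi_m \eta_n)$ and applying the same vanishing argument gives the same sum. So $\varphi(\xi \times_{\hbar \theta} \eta) = \varphi(\xi \eta)$.

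For the trace statement, once the main identity is in hand the argument is immediate: if $\varphi$ is a trace on $C$, then for any $\xi, \eta \in C^\text{alg}_{\hbar \theta}$,
\[ \varphi_\hbar(\xi \times_{\hbar \theta} \eta) = \varphi(\xi \eta) = \varphi(\eta \xi) = \varphi_\hbar(\eta \times_{\hbar \theta} \xi), \]
so $\varphi_\hbar$ is tracial on $C^\text{alg}_{\hbar \theta}$. The only subtle point in the whole proof is justifying $\T$-invariance of $\varphi$ from the unitarity hypothesis in a way that is internal to the algebraic setting; this is the step where I would have to be a little careful, but it should follow either by choosing $\eta = 1_C$ in the unitarity condition after noting that $\alpha_t$ fixes the identity, or by working entirely with the weight decomposition and noting that $\alpha$-unitarity forces different weight spaces to be $\varphi$-orthogonal. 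Everything else is a routine bookkeeping calculation that the weight decomposition makes transparent.
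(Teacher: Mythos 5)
Your proof is correct and follows essentially the same route as the paper's: both arguments reduce everything to the observation that $\varphi$ vanishes on the nonzero weight spaces, so only the zero-weight component of the product survives, where the deformation phase is trivial. The only (cosmetic) difference is that you extract this from $\T$-invariance of $\varphi$ by setting $\eta = 1_C$ in the unitarity condition, whereas the paper first deduces that distinct weight spaces are $\varphi$-orthogonal and then uses $1_C \in C_0$; both are one-line consequences of the same hypothesis.
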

\begin{proof}
As in the statement, we let $\langle \xi , \eta \rangle =\varphi (\eta^*\xi )$.  Since $\alpha$ is assumed to be unitary for this inner product, we have 
\[ \langle \alpha_t(\xi ) , \alpha_t(\eta )\rangle =\varphi (\eta^*\xi ) \hspace{.4in} \forall t\in \T .\]
Let $m $ and $n$ be in $ \Z^d$, and suppose that $\xi$ is in the weight space $C_m$ and $\eta \in C_n$.  Then we also have 
\[ \langle \alpha_t(\xi ) , \alpha_t(\eta )\rangle =e(t\cdot (m-n))\varphi (\eta^*\xi ) \hspace{.4in} \forall t\in \T .\]
Combining this with the previous equation gives that the weight spaces $C_m$ and $C_n$ are orthogonal for $m\neq n$.  Since $1_C\in C_0$, we have for any $\xi \in C$ that 
\begin{eqnarray*}
\varphi (\xi ) & = & \varphi (1_C^*\xi ) \\
& = & \varphi (1_C^*\xi_0 ) \\
& = & \varphi (\xi_0 ),
\end{eqnarray*}
where $\xi_0$ is the component of $\xi$ inside $C_0$.  In particular, we have that $\varphi (  \xi \times_{\hbar \theta}\eta ) = \varphi ( (  \xi \times_{\hbar \theta}\eta )_0)$.  But since $\theta$ is skew-symmetric,  we have
\begin{eqnarray*}
(  \xi \times_{\hbar \theta}\eta )_0 & = & \Sigma_m \: \xi_m\eta_{-m} e_\hbar 
(-\theta m \cdot m) \\ 
& = & \Sigma_m \: \xi_m\eta_{-m} =  (  \xi \eta )_0,
\end{eqnarray*}
so that $\varphi (  \xi \times_{\hbar \theta}\eta )=\varphi (\xi \eta )$.
\end{proof}

\begin{corollary}\label{trac}
Let $tr_\hbar$ be the trace on $B(\H )$, considered as a linear functional on $B(\H )_{\hbar \theta}$.  Then $tr_\hbar$ is the trace of $B(\H )_{\hbar \theta}$.
\end{corollary}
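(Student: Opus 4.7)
The plan is to apply the preceding lemma directly to $C=B(\H)$ with $\varphi$ equal to the usual (unnormalized) trace. I would first identify which action of $\T$ is relevant: since $B(\H)$ is being deformed as an $A$-module algebra via conjugation $\alpha(T)=u(T\otimes 1)u^*$ composed with the toral subgroup $\Pi:A\to C(\T)$, the induced action is $\pi_t(T)=u_tTu_t^*$ where $u_t=(id\otimes \delta_{\exp(t)}\Pi)u\in B(\H)$ is a unitary operator (unitarity of $u_t$ follows from the unitarity of the representation $u$ together with the fact that $\delta_{\exp(t)}\circ\Pi\in S(A)$).

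Next I would verify the two hypotheses of the lemma. The trace $tr$ is a positive linear functional on $B(\H)$; the associated inner product is the Hilbert-Schmidt one, $\langle S,T\rangle=tr(T^*S)$. To check that $\pi_t$ is unitary for this inner product, I compute
\[
\langle \pi_t(S),\pi_t(T)\rangle = tr(u_tT^*u_t^*u_tSu_t^*)
= tr(u_tT^*Su_t^*) = tr(T^*S)=\langle S,T\rangle,
\]
where the third equality uses the tracial property of $tr$ on the undeformed algebra $B(\H)$ together with $u_t^*u_t=1$. Thus the hypotheses of the lemma are satisfied.

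Applying the lemma yields $tr_\hbar(\xi\times_{\hbar\theta}\eta)=tr(\xi\eta)$ for all $\xi,\eta\in B(\H)^{\mathrm{alg}}$, and in particular $tr_\hbar(\xi\times_{\hbar\theta}\eta)=tr(\xi\eta)=tr(\eta\xi)=tr_\hbar(\eta\times_{\hbar\theta}\xi)$, so $tr_\hbar$ is a trace on $B(\H)^{\mathrm{alg}}_{\hbar\theta}$. The final step is to promote this to all of $B(\H)_{\hbar\theta}$, which is immediate because $B(\H)$ is finite dimensional: the $\T$-action decomposes $B(\H)$ into a finite direct sum of isotypic components, so $B(\H)^{\mathrm{alg}}=B(\H)=B(\H)_{\hbar\theta}$ as linear spaces and no density argument is required. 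There is no genuine obstacle here; the only point demanding any care is confirming that the $\T$-action used in Rieffel's deformation of $B(\H)$ is indeed by inner automorphisms implemented by unitaries, which is what makes the trace invariant.
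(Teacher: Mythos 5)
Your proof is correct and follows essentially the same route as the paper: apply the preceding lemma to $C=B(\H)$ with $\varphi=tr$, then observe that finite dimensionality makes the extension from the algebraic part to all of $B(\H)_{\hbar\theta}$ automatic. You are somewhat more careful than the paper in that you explicitly verify the lemma's hypothesis (that the toral action is by inner automorphisms via the unitaries $u_t=(id\otimes\delta_{\exp(t)}\Pi)u$ and hence preserves the Hilbert--Schmidt inner product), a step the paper's proof leaves implicit.
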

\begin{proof}
The only part of this statement that is not covered in the lemma is that $tr_\hbar $ is continuous on all of $B(\H )_{\hbar \theta}$.  This is clear since $B(\H )_{\hbar \theta}$ is finite dimensional.
\end{proof}
\begin{corollary}\label{Ha}
Let $\h$ be the Haar state of $A$, and let $\mathsf{m}:A\otimes A\rightarrow A$ be the multiplication map.  Then we have $\h \circ \mathsf{m} =\h \circ \times_{\hbar J}$ on the polynomial subalgebra $\A =\A_{\hbar J}$.
\end{corollary}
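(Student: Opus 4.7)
The plan is to apply the preceding lemma directly with $C = A$, the torus $\T \oplus \T$, the Haar state $\varphi = \h$, the skew form $J = \theta \oplus (-\theta)$, and the action $\alpha_{(s,t)} = \lambda_s \rho_t$ of $\T \oplus \T$ on $A$ from Section 6.1.1, which is precisely the action used to construct the deformed product $\times_{\hbar J}$. Two hypotheses need checking: that $\alpha$ is unitary for the GNS inner product $\langle a,b\rangle = \h(b^* a)$, and that the polynomial subalgebra $\A$ is contained in the algebraic regular function algebra $A^{\text{alg}}$ of $\alpha$.

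For unitarity, each $\alpha_{(s,t)}$ is a composition of the coproduct $\Delta$, the quantum subgroup morphism $\Pi$, and evaluation of a character of $\T$, all of which are $*$-homomorphisms; hence $\alpha_{(s,t)}$ is a $*$-automorphism of $A$. The left- and right-invariance of $\h$ give $\h \circ \lambda_s = \h$ and $\h \circ \rho_t = \h$, so $\h \circ \alpha_{(s,t)} = \h$. Combining these facts,
\[
\langle \alpha_{(s,t)}(a),\alpha_{(s,t)}(b)\rangle
= \h\bigl(\alpha_{(s,t)}(b)^*\alpha_{(s,t)}(a)\bigr)
= \h\bigl(\alpha_{(s,t)}(b^*a)\bigr)
= \h(b^*a) = \langle a,b\rangle,
\]
verifying that $\alpha$ is unitary for the GNS inner product induced by $\h$.

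For the second hypothesis, recall that $\A$ has a linear basis consisting of the matrix coefficients $u^\beta_{ij}$ of the irreducible unitary representations of $A$. Since $\T \oplus \T$ is abelian, one may choose the basis of each representation space $H_\beta$ so that the composite representation $(\Pi \otimes \Pi)\Delta u^\beta$ of $\T \oplus \T$ is diagonal; with this choice, $\alpha_{(s,t)}(u^\beta_{ij}) = \chi^\beta_{ij}(s,t)\, u^\beta_{ij}$ for some character $\chi^\beta_{ij}$ of $\T \oplus \T$, so that each $u^\beta_{ij}$ lies in a single weight space of $\alpha$. Thus $\A$ is contained in the algebraic sum of weight spaces $A^{\text{alg}}$, and the preceding lemma yields $\h(a \times_{\hbar J} b) = \h(ab)$ for all $a,b \in \A_{\hbar J} = \A$, which is precisely the stated identity $\h \circ \mathsf{m} = \h \circ \times_{\hbar J}$ on $\A$. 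No substantive obstacle appears here: the corollary is essentially a direct specialization of the preceding lemma, and both hypotheses follow from standard compact quantum group facts (bi-invariance of the Haar state and the Peter--Weyl decomposition).
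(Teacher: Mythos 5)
Your proof is correct and follows exactly the route the paper intends: the corollary is stated as an immediate specialization of the preceding lemma, and you supply precisely the two verifications that specialization requires (invariance of $\h$ under the bi-translation action of $\mathfrak{t}\oplus\mathfrak{t}$, and the fact that the matrix coefficients $u^\beta_{ij}$ lie in single weight spaces so that $\A\subseteq A^{\text{alg}}$). No issues.
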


Using these two corollaries, we can now show that the Berezin symbol and Berezin adjoint are not deformed on the smooth subalgebra of $N$.

\begin{proposition}
Consider the Berezin symbols $\s :M\rightarrow N$ and $\s_\hbar :M_{\hbar \theta}\rightarrow N_{\hbar \theta}$.  Let $\omega \in M$.  Then $\s (\omega)\in \mathcal{N}$.  Furthermore, we have
\[ \s_\hbar (\omega_\hbar )=\s (\omega )_\hbar .\]
On the left hand side we are denoting $\omega$ viewed as an element of $M_{\hbar \theta}$ by $\omega_\hbar$.  On the right hand side we are denoting $\s (\omega )$ viewed as an element of $\mathcal{N}_{\hbar \theta}$ by $\s (\omega )_\hbar$.
\end{proposition}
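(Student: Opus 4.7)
The plan is to write $\omega \in M$ explicitly and compute both Berezin symbols directly, using the graded form of the deformed product from Section 6.1.1.

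First I would use that $M$ is finite dimensional and contained in $\B \otimes \mathcal{N}$ (since $\B$ is finite dimensional and the $A$-isotypic decomposition of $\B \otimes N$ lies in $\B \otimes \mathcal{N}$) to write $\omega$ as a finite sum $\omega = \sum_i T_i \otimes x_i$ with $T_i \in \B$ and $x_i \in \mathcal{N}$. Then $\s(\omega) = \sum_i tr(P T_i)\, x_i$, which manifestly lies in $\mathcal{N}$, establishing the first claim.

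Next I would compute $\s_\hbar(\omega_\hbar)$. The product in $\B_{\hbar\theta} \otimes N_{\hbar\theta}$ is the tensor product of the two separately deformed products, so
\[
(P \otimes 1) \times_\hbar \omega_\hbar = \sum_i (P *_\hbar T_i) \otimes (1 *_\hbar x_i) = \sum_i (P *_\hbar T_i) \otimes x_i,
\]
the last equality because $1$ lies in the zero weight space for the $\T$-action and therefore $1 *_\hbar x = x$. By Corollary \ref{trac}, $tr_\hbar = tr$ as linear functionals on $B(\H)$, so $\s_\hbar(\omega_\hbar) = \sum_i tr(P *_\hbar T_i)\, x_i$, where the scalar multiplication of $x_i \in \mathcal{N}_{\hbar\theta}$ is independent of the deformation.

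The crux is to show $tr(P *_\hbar T_i) = tr(P T_i)$. By the lemma preceding Corollary \ref{trac}, the trace of any element of $B(\H)_{\hbar\theta}$ depends only on its zero weight component for the $\T$-action. Decomposing $P = \sum_m P_m$ and $T_i = \sum_n (T_i)_n$ into $\T$-weights, the graded formula for the deformed product gives
\[
(P *_\hbar T_i)_0 = \sum_m P_m (T_i)_{-m}\, e_\hbar(\theta m \cdot 0) = \sum_m P_m (T_i)_{-m},
\]
which coincides with $(P T_i)_0$; hence their traces agree. Combining the three steps, $\s_\hbar(\omega_\hbar) = \sum_i tr(P T_i)\, x_i = \s(\omega)$ as elements of the common linear space $\mathcal{N} = \mathcal{N}_{\hbar\theta}$, which is exactly the content of $\s_\hbar(\omega_\hbar) = \s(\omega)_\hbar$. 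The only real subtlety, rather than a genuine obstacle, is to interpret the product on $\B_{\hbar\theta} \otimes N_{\hbar\theta}$ as the tensor product of the two separately deformed products so that the $1$ on the $N$-side carries no phase twist; once this bookkeeping is in place the argument is a direct computation.
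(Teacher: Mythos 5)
Your proof is correct and follows essentially the same route as the paper's: both arguments rest on $M\subseteq \B\otimes\mathcal{N}$ (via finite dimensionality) for the first claim, and on the facts that $P$ is undeformed and that the trace is unchanged under the deformed product (Corollary \ref{trac} and the lemma preceding it) for the second. The paper simply states "the result is clear" at the point where you carry out the elementary-tensor and zero-weight-component computation explicitly, so your version is a fleshed-out form of the same argument.
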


\begin{proof}

We recall that $\s :M\rightarrow N$ is given by
\[ \s (\omega )=(tr\otimes id)((P\otimes 1)\omega ) .\]
As in the proof of Proposition 6.15, we note that since $M$ is finite dimensional any $\omega \in M$ must be an element of $\B \otimes \mathcal{N}$, so that $\s (\omega )\in \mathcal{N}$. 

We are not changing our choice of primitive-like vector $\xi$, so the rank-1 projection $P$ is not deformed. By Corollary \ref{trac} the trace is not deformed, so the result is clear.
\end{proof}

\begin{proposition}
Let $x\in \mathcal{N}$.  Denote by $x_\hbar$ the element $x$ viewed as a member of $\mathcal{N}_{\hbar \theta}$.  Consider the Berezin adjoints $\bs :N\rightarrow M$ and $\bs_\hbar :N_{\hbar \theta}\rightarrow M_{\hbar \theta}$.  Then we have
\[ \bs_\hbar (x_\hbar )=\bs (x)_\hbar ,\]
in the evident notation.
\end{proposition}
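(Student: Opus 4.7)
The plan is to write out the defining formula for $\bs_\hbar(x_\hbar)$ in the deformed setting, and then check that each ingredient either literally coincides with its undeformed counterpart on the algebra of regular functions, or becomes so after applying the Haar state. Starting from the third formula for the Berezin adjoint,
\[ \bs_\hbar (x_\hbar ) = d_\H (id\otimes \h\circ \mathsf{m}_\hbar \otimes id)\bigl((id\otimes \kappa )\alpha_\hbar (P)\otimes \pi_\hbar (x_\hbar)\bigr), \]
where $\mathsf{m}_\hbar=\times_{\hbar J}$ denotes multiplication in $A_{\hbar J}$ and the outer tensor products are interpreted in $\B_\hbar\otimes A_{\hbar J}\otimes A_{\hbar J}\otimes N_{\hbar\theta}$. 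My goal is to show that each factor of this expression, viewed as a linear element of the appropriate underlying linear space, equals the corresponding factor in $\bs(x)$.

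First I would dispose of the easy identifications. The scalar $d_\H$ and the rank-one projection $P$ are independent of $\hbar$. Since $P\in \B$ and $\B$ is its own algebra of regular functions, the restriction of $\alpha_\hbar$ to $\B$ coincides with $\alpha$ as a linear map into $\B\otimes \A=\B_\hbar\otimes \A_{\hbar J}$, so $\alpha_\hbar(P)=\alpha(P)$ as linear elements. The coinverse $\kappa$ is undeformed on $\A=\A_{\hbar J}$, so $(id\otimes\kappa)\alpha_\hbar(P)=(id\otimes\kappa)\alpha(P)$. Similarly, since $x\in \mathcal{N}$ and the proof of Proposition \ref{deformmodule} shows that $\pi_\hbar$ agrees with $\pi$ on $\mathcal{N}$ as a linear map into $\A_{\hbar J}\otimes \mathcal{N}_{\hbar\theta}=\A\otimes\mathcal{N}$, we have $\pi_\hbar(x_\hbar)=\pi(x)$. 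Thus the only difference between $\bs_\hbar(x_\hbar)$ and $\bs(x)$ lies in replacing $\mathsf{m}$ by $\mathsf{m}_\hbar$ in the middle two tensor slots.

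The key step, and what I expect to be the main (and really only) obstacle requiring thought, is handling this discrepancy between $\mathsf{m}$ and $\mathsf{m}_\hbar$. Here Corollary \ref{Ha} comes in directly: it states that $\h\circ \mathsf{m}=\h\circ \mathsf{m}_\hbar$ on $\A\otimes \A$. Since $(id\otimes\kappa)\alpha(P)\in \B\otimes \A$ and $\pi(x)\in \A\otimes \mathcal{N}$, the middle two tensor factors of $(id\otimes\kappa)\alpha(P)\otimes\pi(x)$ live in $\A\otimes\A$, so the map $\h\circ\mathsf{m}_\hbar$ applied to them gives the same scalar as $\h\circ\mathsf{m}$. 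Therefore
\[ \bs_\hbar(x_\hbar)=d_\H(id\otimes \h\circ\mathsf{m}\otimes id)\bigl((id\otimes\kappa)\alpha(P)\otimes \pi(x)\bigr)=\bs(x), \]
where the final equality is understood on the level of the underlying linear spaces $\B\otimes\mathcal{N}=\B_\hbar\otimes\mathcal{N}_{\hbar\theta}$ (and hence $M=M_{\hbar\theta}$ by the previous proposition). Reinterpreting the right-hand side as an element of $M_{\hbar\theta}$ gives exactly $\bs(x)_\hbar$, which completes the argument.
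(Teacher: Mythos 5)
Your proposal is correct and follows essentially the same route as the paper: observe that $d_\H$, $P$, $\alpha$, $\kappa$, $\pi$, and $x$ are all undeformed on the relevant regular subalgebras, and then invoke Corollary \ref{Ha} to replace $\h\circ\mathsf{m}_\hbar$ by $\h\circ\mathsf{m}$ in the middle tensor slots. The paper states this more tersely, but your spelled-out version of where each factor lives and why only the multiplication slot needs attention is the same argument.
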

\begin{proof}
Again, we look at the formula
 \[ \bs_x =d_\H (id\otimes \h \circ \mathsf{m} \otimes id)((id\otimes \kappa )\alpha (P)\otimes \pi (x)) .\]
We know that $\kappa , \alpha , P, \pi $, and $x$ are all essentially not being deformed.  By Corollary \ref{Ha} we have that $\h \circ \mathsf{m} = \h \circ \times_{\hbar \theta}$, so the result follows.
\end{proof} 

From these two propositions, we see that Berezin Transform $\s \circ \bs : N\rightarrow N$ is undeformed on the subalgebra of regular functions $\mathcal{N}$.  By Corollary \ref{Berezin2'}, we have that the sequence of Berezin Transforms $(\s^n\circ \bs^n)$ converges strongly to the identity operator on $N^H$.  This suggests that $N^H\cap \mathcal{N}$ is undeformed as a linear space; only its order structure is deformed.  We check that this is the case.

\begin{proposition} 
Let $N^H\cap \mathcal{N}=\mathcal{N}^H$.  Then $\mathcal{N}^H$ is dense in $N^H$.  
Under the $\theta$-deformation of $N$, we have that $\mathcal{N}^H=\mathcal{N}_{\hbar \theta}^{H^\hbar}$ as linear spaces.  That is, only the order structure of $\mathcal{N}^H$, and hence its norm, is deformed.  Intuitively,  we have that $N^H$ and $N_{\hbar \theta}^{H^\hbar}$ are just completions of $\mathcal{N}^H$ with respect to different norms.
\end{proposition}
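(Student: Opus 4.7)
First I would dispatch the density claim. The image of $\bs^n$ lies in $M^n \subseteq \B^n \otimes \mathcal{N}$, so by the penultimate proposition of this section, $\s^n \circ \bs^n$ sends $N^H$ into $\mathcal{N}$. Since the range of $\s^n$ is always contained in $N^H$, in fact $\s^n \circ \bs^n$ maps $N^H$ into $\mathcal{N}^H$. By Corollary \ref{Berezin2'}, this sequence of maps converges strongly to the identity on $N^H$, which immediately yields the density of $\mathcal{N}^H$ in $N^H$.

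For the equality $\mathcal{N}^H = \mathcal{N}_{\hbar \theta}^{H^\hbar}$ as linear spaces, the key observation is a finite-dimensionality one. Any $x \in \mathcal{N}$ has isotypic components supported in some finite set $\mathcal{S} \subseteq \hat{A}$. By Proposition \ref{transform}, $(\s^n \circ \bs^n)(x) = (\hpn \otimes id)\pi(x)$, and since $\pi$ preserves isotypic components in the $N$-factor, each $(\s^n \circ \bs^n)(x)$ lives in the same finite-dimensional subspace $V_{\mathcal{S}} \subseteq \mathcal{N}$ as $x$ itself. Now, $\mathcal{N}$ and $\mathcal{N}_{\hbar \theta}$ coincide as linear spaces, and on the finite-dimensional subspace $V_{\mathcal{S}}$ the norms inherited from $N$ and from $N_{\hbar \theta}$ are necessarily equivalent. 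Combined with the two preceding propositions, which show $(\s^n_\hbar \circ \bs^n_\hbar)(x_\hbar) = ((\s^n \circ \bs^n)(x))_\hbar$ for $x \in \mathcal{N}$, this gives that $(\s^n \circ \bs^n)(x) \to x$ in the norm of $N$ if and only if $(\s^n_\hbar \circ \bs^n_\hbar)(x_\hbar) \to x_\hbar$ in the norm of $N_{\hbar \theta}$.

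It remains to characterize $\mathcal{N}^H$ and $\mathcal{N}_{\hbar \theta}^{H^\hbar}$ purely through convergence of the Berezin transform. On the one hand, Corollary \ref{Berezin2'}, applied both to $\pi$ and to $\pi_\hbar$ (the latter being an ergodic action of the coamenable Kac-type quantum group $A_{\hbar J}$ by Proposition \ref{deformmodule}), gives that $x \in N^H$ implies $(\s^n \circ \bs^n)(x) \to x$, and likewise in the deformed setting. Conversely, each $(\s^n \circ \bs^n)(x)$ lies in the range of $\s^n$, hence in the norm-closed subspace $N^H$; if the sequence converges in norm, its limit must also lie in $N^H$. Piecing these together with the norm-transfer step of the previous paragraph yields $x \in \mathcal{N}^H$ if and only if $x_\hbar \in \mathcal{N}_{\hbar \theta}^{H^\hbar}$, which is the desired equality of linear spaces. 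The main subtlety will be bookkeeping around the fact that isotypic decomposition is insensitive to the deformation, so that the two seemingly different invariance conditions both reduce to the same convergence statement inside a common finite-dimensional subspace; once this is made precise the rest is a routine chase through the formulas already established.
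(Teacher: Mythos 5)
Your proof is correct, but it takes a genuinely different route from the paper's. The paper leans on Corollary \ref{Haar} from the appendix: the states $\hpn$ converge weak$^*$ to a Haar state $\mathbi{h}$ for $H$, giving an idempotent expectation $E=(\mathbi{h}\otimes id)\pi$ from $N$ onto $N^H$; since $\pi$ maps $\mathcal{N}$ into $\A\otimes \mathcal{N}$ one gets $E(\mathcal{N})=\mathcal{N}^H$, density then follows from continuity of $E$ together with density of $\mathcal{N}$ in $N$, and the equality $\mathcal{N}^H=\mathcal{N}_{\hbar\theta}^{H^\hbar}$ follows because $E$ is the strong limit of the Berezin transforms, which are undeformed on $\mathcal{N}$, so $E=E_\hbar$ there. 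You avoid the Haar state entirely. Your density argument --- that $\bs^n$ lands in $M^n\subseteq \B^n\otimes \mathcal{N}$, so $\s^n\circ\bs^n$ carries $N^H$ into $\mathcal{N}\cap N^H=\mathcal{N}^H$, and then one lets $n\rightarrow\infty$ --- is shorter and needs only Corollary \ref{Berezin2'}. For the deformed equality you replace the expectation by a convergence criterion: $x\in N^H$ if and only if $(\s^n\circ\bs^n)(x)\rightarrow x$, the reverse implication coming from the fact that the range of $\s^n$ sits inside the norm-closed subspace $N^H$ (with $\bs^n$ extended to all of $N$ via Proposition \ref{idempotent} and Proposition \ref{transform}); the transfer between the two norms is then handled by the observation that everything stays in a fixed finite-dimensional sum of isotypic components, where the $N$- and $N_{\hbar\theta}$-norms are equivalent. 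What the paper's route buys is the expectation $E$ itself, which it has further use for; what yours buys is independence from the appendix and an explicit reason (Boca's finite-dimensionality of isotypic components under an ergodic action) why deforming the norm cannot change the invariant subspace. The only hypothesis you share tacitly with the paper is that the deformed data still satisfies the standing assumptions of Corollary \ref{Berezin2'} --- in particular that $\xi$ remains primitive-like for $u_\hbar$ and that $\pi_\hbar$ is ergodic, the latter being Proposition \ref{deformmodule} --- so this is not a gap specific to your argument.
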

\begin{proof}
We know that $(\s^n\circ \bs^n)(x)=(\hpn \otimes id)\pi (x)$ for some sequence of states $\hpn$ given before.  We shall see in  Corollary \ref{Haar} of the appendix that these states actually converge weak$^*$ to a state $\mathbi{h}\in S(A)$, and that this $\mathbi{h}$ is a Haar state for the state stabilizer $H$.   Thus we have an expectation $E:N\rightarrow N^H$ given by
\[ E(x)=(\mathbi{h}\otimes id)\pi (x) .\]
We have seen that $\pi :\mathcal{N}\rightarrow \A \otimes \mathcal{N}$, so that $E:\mathcal{N}\rightarrow \mathcal{N}^H$.  In particular, using that $E$ is idempotent gives $E(\mathcal{N})=\mathcal{N}^H$.  Since $E$ is continuous, we have that $\mathcal{N}^H$ is dense in $N^H$. 

As was noted before, the Berezin Transforms are undeformed on the subalgebra of regular functions $\mathcal{N}$.  As $E$ is the strong limit of these Berezin Transforms, we have that $E(x)=E_\hbar (x_\hbar )$ for all $x\in \mathcal{N}$, in the evident notation.  Thus we see that $\mathcal{N}^H=E(\mathcal{N})=E_\hbar (\mathcal{N}_{\hbar \theta} )=\mathcal{N}_{\hbar \theta}^{H^\hbar}$.
\end{proof}

\begin{remark}
Slight modifications of the proofs of Lemma \ref{insects} through Proposition \ref{bugs} show that we also have $\A^H=\A^{H\circ S_\hbar}$.  Combining this with the above proposition shows that $\A^{H^\hbar}=\A^{H\circ S_\hbar}$.  We still only know that $H\circ S_\hbar \subseteq H^\hbar$, but we see that for this example $H\circ S_\hbar$ is a large enough subset of $H^\hbar$ to determine its quantum homogeneous space.  
\end{remark}

\begin{example}
Suppose that $A=C(G)$ is the algebra of continuous functions on a compact semisimple Lie group $G$.  As usual, we take $P$ to be the rank-1 projection corresponding to the highest weight vector in some irreducible unitary representation $U$ of $G$.   Let $K<G$ be the ordinary group stabilizer of $P$ for the representation $\alpha_g(T)=U_gTU_g^*$ of $G$ on $B(\H )$.  We have seen before that the restriction map $\Pi :C(G)\rightarrow C(K)$ is both the quantum group stabilizer and the state stabilizer of $P$.

By Proposition \ref{defqstab} the quantum group stabilizer of $P$ for the representation $\alpha_\hbar :B(\H )\rightarrow B(\H )\otimes C(G)_{\hbar J}$ is given by the quantum subgroup $\Pi_\hbar :C(G)_{\hbar J}\rightarrow C(K)_{\hbar J}$.  We have the undeformed homogeneous space is $C(G/K)$, which is a coadjoint orbit of $G$.  By the above proposition we have that the regular functions of $C(G/K)$ are dense in $C(G)^{H^\hbar}_{\hbar \theta}$.  By Section 5 of \cite{Varilly}, we have that the regular functions  of $C(G/K)$ are also dense in the quantum coset space $C(G/K)_{\hbar \theta}$.  Since the state stabilizer maps onto the quantum group stabilizer we have by general principle that $ C(G)^{H^\hbar}_{\hbar \theta} \hookrightarrow C(G/K)_{\hbar \theta}$.  Hence Varilly's result implies that $ C(G)^{H^\hbar}_{\hbar \theta} = C(G/K)_{\hbar \theta}$.  That is, the quantum coadjoint orbit of $C(G)_{\hbar J}$ is simply the quantum coset space $C(G/K)_{\hbar \theta}$ of Varilly.  In particular, for Rieffel deformations of classical Lie groups we see that the coadjoint orbits  are  C$^*$-algebras and not just  operator systems.

We remark that in the non-Kac type case,
the results of \cite{Jurco} suggest that the quantum coadjoint orbits for $q$-deformations of classical Lie groups should turn out to be C$^*$-algebras as well.
\end{example}

The results of this section are somewhat strange.  We have that the quantum homogeneous space $N^H$ has a dense subalgebra which is undeformed as a linear space under Rieffel's deformation.  This is not too surprising, because this is how Rieffel's deformation works.  We also have that the finite dimensional approximations $M^n$ of $N^H$ are undeformed as linear spaces, only their order structure is deformed.  What is very surprising is that the Berezin symbol and its adjoint are essentially undeformed.  Basically, the results of this section show that we have linear maps $\s^n:M^n\rightarrow \mathcal{N}^H$ and $\bs^n:\mathcal{N}^H\rightarrow M^n$ that give a Berezin Quantization of the algebraic quantum homogeneous space $\mathcal{N}^H$, which we can view as just being a linear $*$-space.  Furthermore, it suggests that  for any order structure on $\mathcal{N}^H$,  we can choose a corresponding order structure on $M^n$  so as to make these maps positive.  So it seems that these maps not only give a quantization that commutes with changes of the metric on $\mathcal{N}^H$, as we see from Theorem \ref{main2}, but also commutes with changes of the topology of $\mathcal{N}^H$.

\end{subsubsection}

\end{subsection}

 \end{section}

\begin{section}{Appendix}

Throughout this paper we have considered the state stabilizer of a vector in a unitary representation of a compact quantum group $A$.  See Definitions \ref{def4} and  \ref{defstabilizer}.  We have noted that since the state stabilizer $\Phi :A\rightarrow B$ is merely a map of order unit spaces, there's no reason to believe that $B$ should have a Haar state.  In this section we investigate when $B$ has a Haar state.

For motivation,  consider a quantum subgroup $\Pi :A\rightarrow B$  of $A$, and let $\h_B$ be the Haar state of $B$.  Let $A^\Pi=\{ a\in A\: | \: (\Pi \otimes id)\Delta a=1_B\otimes a\}$.  Then one has $(\h_B\otimes id)\Delta :A\rightarrow A^\Pi$ is an idempotent map from $A$ to $A^\Pi$.  Intuitively, we are averaging an element of $A$ over the Haar state of $B$ to get an element of $A^\Pi$.

Let $A$ be coamenable and Kac-type.  Let $(u,\H )$ be a unitary irreducible representation of $A$ with a primitive-like vector $\xi \in \H$.  Let $P$ be the rank 1 projection corresponding to $\xi$, and let $H$ be the state stabilizer of $P$ under conjugation by $u$.  Then by Proposition \ref{prop10} we have $\lim_{n\rightarrow \infty}(\hpn \otimes id)\Delta a=a$ for all $a\in A^H$.  Since $S(A)$ is weak$^*$-compact, we can let $\mathbi{h} \in S(A)$ be a limit point of the sequence $\hpn$.  Then $\mathbi{h}$ seems like a reasonable canidate for a Haar measure of $H$.  We show that this is indeed the case.

\begin{proposition}
Let $\phi \in H$.  Then we have $\hpn *\phi =\phi *\hpn =\hpn$ for any $n\in \N$.  In particular, we have $\mathbi{h}*\phi =\phi *\mathbi{h}=\mathbi{h}$.
\end{proposition}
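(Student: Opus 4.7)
The plan is to dispatch the two convolution identities for $\hpn$ separately and then pass to $\mathbf{h}$ by weak-$*$ continuity. For $\hpn * \phi = \hpn$, my strategy is to exploit the fact that $a * \hpn$ always lands in $A^H$. The formula $(\s^n\circ \bs^n)(a) = (\hpn \otimes id)\Delta a = a * \hpn$ derived in Proposition \ref{prop7} actually never uses the hypothesis $a \in A^H$, so it extends verbatim to all of $A$; since $\s^n$ takes values in $A^H$ by Proposition \ref{prop5}, so does $a * \hpn$. It follows that $(a*\hpn)*\phi = a*\hpn$ for every $a \in A$ and $\phi \in H$. Now invoke the coassociativity identity $(a*\hpn)*\phi = a*(\hpn*\phi)$, which is a direct check from $(\Delta\otimes id)\Delta = (id\otimes \Delta)\Delta$, to conclude $a*(\hpn*\phi - \hpn) = 0$ for all $a$. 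Applying the counit, combined with the trivial identity $\varepsilon(a*\mu) = \mu(a)$, yields $\hpn*\phi = \hpn$ as a functional.

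For $\phi * \hpn = \hpn$, I would prove that $\hpn$ is $\kappa$-invariant and then apply $\kappa$ to the equation already established. To see $\hpn \circ \kappa = \hpn$, fix an orthonormal basis of $\H$ beginning with $\xi$ and write $u$ matricially with coefficients $u_{ij} \in \A$; the direct calculation $\s_P = u_{11}u_{11}^*$, together with $\kappa(u_{11}) = u_{11}^*$ (Proposition \ref{prop1}) and the Kac-type identity $\kappa(a^*) = \kappa(a)^*$, gives $\kappa(\s_P) = \s_P$, and the same argument applies to $\s^n_{P^n}$. Combining this with $\h\circ \kappa = \h$ and the trace property of $\h$, one obtains $\hpn(\kappa(a)) = \hpn(a)$. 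Now $\kappa$ acts on $A'$ as an anti-homomorphism for convolution, so applying it to $\hpn*\phi = \hpn$ produces $\kappa\phi * \hpn = \hpn$; since $\kappa$ sends $H$ bijectively to itself by Proposition \ref{prop4}, this is precisely $\phi * \hpn = \hpn$ for all $\phi \in H$.

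Finally, for $\mathbf{h}$, weak-$*$ continuity of convolution in each argument is immediate from the formulas $(\mu*\phi)(a) = \mu(\phi*a)$ and $(\phi*\mu)(a) = \mu(a*\phi)$, which are evaluations of $\mu$ at fixed elements of $A$. Passing to a subnet of $\{\hpn\}$ converging weak-$*$ to $\mathbf{h}$ in the identities $\hpn*\phi = \hpn$ and $\phi*\hpn = \hpn$ gives $\mathbf{h}*\phi = \phi*\mathbf{h} = \mathbf{h}$. The only mildly delicate step in this plan is verifying that the derivation of Proposition \ref{prop7} truly goes through for arbitrary $a \in A$; beyond that, the argument is a routine combination of the $A^H$-valuedness of the Berezin symbol, coassociativity, and the $\kappa$-symmetry furnished by the Kac hypothesis.
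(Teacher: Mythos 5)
Your proof is correct, and for the first identity it takes a genuinely different route from the paper's. The paper proves both convolution identities by direct computation: it writes $(\phi *\hp )(a)=d_\H (\phi \otimes \h )((1\otimes \s_P)\Delta a)$, applies the strong right invariance of the Haar state to move $\s_P$ onto the other tensor leg, and then collapses the expression using $\s_P\in A^H$ and $\kappa \phi \in H$; the companion identity $\hp *\phi =\hp$ is obtained by a second computation of the same kind, using $\kappa (\s_P)=\s_P^*=\s_P$. You instead get $\hpn *\phi =\hpn$ by repackaging already-established facts: the formula $(\s^n\circ \bs^n)(a)=a*\hpn$ of Proposition \ref{prop7} indeed holds for all $a\in A$ (the paper itself notes in Section 4.2 that the extended $\s \circ \bs$ maps all of $A$ into $A^H$), and combining the $A^H$-valuedness of $\s^n$ from Proposition \ref{prop5} with coassociativity and the counit gives the identity with no further Haar-state manipulation. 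Your second half --- deducing $\phi *\hpn =\hpn$ from the first identity via $\hpn \circ \kappa =\hpn$ and the anti-multiplicativity of $\kappa$ on $\A '$ --- rests on exactly the same ingredients as the paper's second computation ($\kappa (\s_P)=\s_P$ and $H\circ \kappa =H$ from Proposition \ref{prop4}), but organizes them as a formal symmetry argument rather than a second integral computation; both are valid, and yours makes the logical dependence on the Kac hypothesis more transparent. The passage to $\mathbi{h}$ by weak-$*$ continuity along a subnet is fine and is the same (implicit) step the paper takes. Two small points: the identity $\kappa (u_{11})=(u_{11})^*$ is the Woronowicz fact quoted just before Proposition \ref{prop1}, not Proposition \ref{prop1} itself; and your ``delicate step'' is genuinely harmless, since the proof of Proposition \ref{prop7} uses only Lemma \ref{lem5.2}, adjointness, and the nondegeneracy of $\h$, none of which involve the hypothesis $a\in A^H$.
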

\begin{proof}
It suffices to show the case when $n=1$, since the state stabilizer of $P^n$ is contained in the state stabilizer of $P$ by Proposition \ref{nthpower}.  We first compute $\phi *\hp$ for $\phi \in H$.  We have
\begin{eqnarray}
(\phi *\hp )(a) & = & d_\H (\phi \otimes \h )((1\otimes \s_P )\Delta a) \nonumber \\
 & = & d_\H (\phi \otimes \h )([(\kappa \otimes id) \Delta \s_P ](1\otimes  a)) \nonumber \\
 & = & d_\H  \h ([(\phi \circ \kappa \otimes id) \Delta \s_P ] a) \nonumber \\
& = & d_\H \h (\s_Pa) = \hp (a), \nonumber 
\end{eqnarray}
where we have used the facts that $\phi \circ \kappa \in H$ and $\s_P \in A^H$. See Propositions \ref{prop4} and \ref{prop5}.

Similarly,  we can compute $\hp *\phi$.

\begin{eqnarray}
(\hp *\phi )(a) & = & d_\H (\h \otimes \phi )((\s_P\otimes 1)\Delta a) \nonumber \\
& = & d_\H (\h \otimes \phi )([(id\otimes \kappa )\Delta \s_P ](a\otimes 1)) \nonumber \\
& = & d_\H \h ([(id\otimes \phi \circ \kappa )\Delta \s_P ]a) \nonumber \\
& = & d_\H \h ([(\phi \otimes \kappa^{-1} )\Delta (\kappa \s_P) ]a) \nonumber \\
& = & d_\H \h ([(\phi \otimes \kappa^{-1} )\Delta \s_P ]a) \nonumber \\
& = & d_\H \h (\kappa^{-1}(\s_P)a) \nonumber \\
& = & d_\H \h (\s_Pa) = \hp (a), \nonumber 
\end{eqnarray}
where we have used $\kappa \s_P=\langle (id\otimes \kappa )\alpha (P), P\otimes 1_A \rangle =\langle P\otimes 1_A ,\alpha (P) \rangle =\s_P^*=\s_P$.

\end{proof}

This proposition shows that each $\hpn$ is like a Haar state for $H$.  The only problem is that $\hpn \notin H$.  If one thinks about the statement of Lemma \ref{lem8}  geometrically, intuitively it says that the states $\hpn$ become more concentrated on $H$ as $n\rightarrow \infty$, so that in the limit, it's as if one is just integrating over a Haar state of $H$.  To see that $\mathbi{h}$ really is a Haar state of $H$, we just need to check that it really is an element of $H$.  But this follows from Proposition \ref{prop10} and Proposition \ref{dual}.

\begin{corollary}\label{Haar}
For $A$ coamenable and Kac-type, the state $\mathbi{h}$ constructed above is in the state stabilizer $H$, and so is a Haar state for $H$.
\end{corollary}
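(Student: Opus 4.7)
The plan is to apply Proposition \ref{dual}, which for $A$ of Kac-type characterizes membership in $H$ by the condition $_\psi\Delta(a) = a$ for all $a \in A^H$. The two ingredients I will combine are: (i) the norm convergence $(\hpn \otimes id)\Delta a \to a$ for every $a \in A^H$, which follows from Proposition \ref{prop10} together with the formula $(\s^n \circ \bs^n)(a) = \: _\hpn\Delta(a)$ of Proposition \ref{prop7}; and (ii) the weak$^*$ convergence of a subnet of $\{\hpn\}$ to $\mathbi{h}$, which is simply the definition of $\mathbi{h}$ as a weak$^*$ limit point in the weak$^*$-compact set $S(A)$. Note automatically that $\mathbi{h} \in S(A)$, since $S(A)$ is weak$^*$-closed.

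First I would argue that $\mathbi{h} \in H$. Fix $a \in A^H$ and any bounded linear functional $\psi \in A'$. On the one hand, ingredient (i) gives $\psi\bigl((\hpn \otimes id)\Delta a\bigr) \to \psi(a)$. On the other hand, rewriting $\psi\bigl((\hpn \otimes id)\Delta a\bigr) = \hpn\bigl((id \otimes \psi)\Delta a\bigr)$ and applying ingredient (ii) along the convergent subnet gives $\hpn\bigl((id\otimes \psi)\Delta a\bigr) \to \mathbi{h}\bigl((id\otimes \psi)\Delta a\bigr) = \psi\bigl((\mathbi{h}\otimes id)\Delta a\bigr)$. Since $\psi$ was arbitrary, $(\mathbi{h}\otimes id)\Delta a = a$, i.e.\ $_\mathbi{h}\Delta(a) = a$. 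As this holds for all $a \in A^H$, Proposition \ref{dual} gives $\mathbi{h} \in H$.

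Finally I would verify the Haar property. By the preceding proposition, $\hpn * \phi = \phi * \hpn = \hpn$ for every $\phi \in H$ and every $n$. Passing to the weak$^*$ limit along the chosen subnet yields $\mathbi{h} * \phi = \phi * \mathbi{h} = \mathbi{h}$ for every $\phi \in H$, and taking $\phi = \mathbi{h}$ (which is legal by the previous paragraph) gives idempotency $\mathbi{h}*\mathbi{h} = \mathbi{h}$. Thus $\mathbi{h}$ is a Haar state on $H$.

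The only subtle point is the first step, where one must be careful to use the \emph{norm} convergence in (i) so that pairing with a single functional $\psi$ passes to the limit; the weak$^*$ convergence in (ii) then enters on the other side. This is not really an obstacle so much as a bookkeeping check, and the result follows cleanly once Proposition \ref{dual} is invoked --- which is precisely the payoff of having established that duality earlier.
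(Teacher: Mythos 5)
Your proof is correct and follows essentially the same route the paper takes: the paper's own (very terse) justification is exactly "Proposition \ref{prop10} plus Proposition \ref{dual}" for membership in $H$, and the preceding proposition passed to the weak$^*$ limit for the invariance identities. You have simply filled in the bookkeeping — norm convergence of $(\hpn\otimes id)\Delta a$ paired against a fixed functional on one side, weak$^*$ convergence of the subnet on the other — which is the intended argument.
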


We have not discussed compact quantum hypergroups in this paper, but essentially a compact quantum hypergroup is like a compact quantum group, except that the coproduct is not assumed to be a $*$-homomorphism.  See \cite{Chapovsky} for the actual definition and the basic properties of compact quantum hypergroups. Any quantum hypergroup has a Haar state.  So if $\Pi :A\rightarrow B$ is a quantum subhypergroup of $A$, then the Haar state of $B$ pulls back to an idempotent state on $A$. 
Based on the results of \cite{Delvaux} and \cite{Landstad}, Franz and Skalski suggest that any idempotent state on a compact quantum group $A$ is the pullback of the Haar state on some quantum subhypergroup of $A$, and they prove that this is the case for finite quantum groups \cite{Franz}.  The full conjecture seems to be out of reach at the moment, but since our state stabilizers have Haar states, it is natural to ask whether or not they are quantum subhypergroups of $A$.

\begin{question}
Let $A$ be a coamenable compact quantum group of Kac-type.  Let $(u,\H )$ be an irreducible representation of $A$ with a primitive-like vector $\xi$, and let $P$ be the rank one projection corresponding to $\xi$.   View the state stabilizer of $P$ as a map of order unit spaces $\Pi :A\rightarrow B$.  Is $B$ actually a quantum subhypergroup of $A$?
\end{question}

We can also use this analysis to show that our coamenability assumption cannot be dropped.  Let $A$ be the universal Kac-type quantum group $A_u(m)$.  Suppose that the concentration result of Lemma \ref{lem8} holds without assuming coamenability.  Then the rest of the results of Section 4.2 also hold without assuming coamenability.  Let $(u, \C^m )$ be the fundamental representation for $A$.  By \cite{Banica2}, each of the representations $u^{\otimes k}$ are irreducible so that any nonzero $\xi \in \C^m$ is a primitive-like vector for $u$.  Thus we can take $P \in M_m(\C )$ to be any rank one projection.

Again, we let $\mathbi{h}$ be a weak$^*$-limit of the states $\hpn$.  Using the same analysis as above, we have that $\mathbi{h}$ is a Haar state for the state stabilizer of $P$.  However, we can show that $\mathbi{h} \notin H$.

\begin{proposition}
For $A=A_u(m)$, the state $\mathbi{h}$ is not in $H$.
\end{proposition}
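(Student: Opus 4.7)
The plan is to evaluate $\mathbi{h}(\s_P)$ explicitly on the single element $\s_P$ and show it is strictly less than $1=\|\xi\|^2$, which by Lemma \ref{lem4} rules out $\mathbi{h}\in H$. First I would specialize to $\xi=e_1\in\C^m$, so that $P=E_{11}$ and a direct computation (as in Section 4.1) gives $\s_P=u_{11}u_{11}^*$. By construction $\mathbi{h}$ is the weak-$*$ limit of some subnet of $\{\hpn\}$, and combining the definition $\hpn(a)=d_{\H^n}\h(\s^n_{P^n}a)$ with the identity $\s^n_{P^n}=(\s_P)^n$ established in the proof of Proposition \ref{nthpower} gives $\hpn(\s_P)=\h(\s_P^{n+1})/\h(\s_P^{n})$. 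A Cauchy-Schwarz computation for the tracial state $\h$ (recall $A_u(m)$ is Kac-type) shows this ratio is monotonically increasing, so the entire numerical sequence converges and $\mathbi{h}(\s_P)$ equals its limit.

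Next I would identify that limit with the operator norm of $\pi_\h(\s_P)$ in the von Neumann algebra $W^*(A)=\pi_\h(A)''$. Since $\h$ is a tracial faithful state on $W^*(A)$, the cyclic GNS vector $\Omega$ is separating, and the scalar spectral measure $\nu$ of the positive operator $\pi_\h(\s_P)$ at $\Omega$ has support equal to $\operatorname{spec}(\pi_\h(\s_P))\subseteq [0,\|\s_P\|_{W^*(A)}]$. A standard moment argument: writing $\h(\s_P^n)=\int t^n\,d\nu(t)$ and localizing near the top of the support shows $\lim_n\h(\s_P^{n+1})/\h(\s_P^n)=\max\operatorname{spec}(\pi_\h(\s_P))=\|\s_P\|_{W^*(A)}$.

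Having reduced the claim to showing $\|u_{11}u_{11}^*\|_{W^*(A_u(m))}<1$, I would invoke Banica's description of the Haar state on the reduced algebra $A_r^u(m)$ via the Weingarten/free-probability calculus (\cite{Banica2}, \cite{Banica5}). It is precisely the non-coamenability of $A_u(m)$ for $m\ge 2$ that is being exploited here: if the counit extended to $W^*(A)$ it would witness $\|u_{11}\|_{W^*(A)}=1$, but Banica's computation of the fusion semiring and explicit moments for $u_{11}$ shows that for $m$ large (e.g.\ $m\ge 5$ with the crude Catalan-number bound $\h((u_{11}u_{11}^*)^n)\le C_n/m^n$) the moments decay exponentially at rate strictly below $1$. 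This is enough for the example: choosing any such $m$ gives $\|\s_P\|_{W^*(A)}<1$, hence $\mathbi{h}(\s_P)<1$, hence $\mathbi{h}\notin H$ by Lemma \ref{lem4}.

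The easy parts are the reduction to a moment-ratio computation and the spectral identification of the limit; the hard part is the final step, which requires knowing enough about the Haar state of $A_u(m)$ to bound $\|u_{11}u_{11}^*\|_{W^*(A)}$ away from $1$. One cannot avoid some serious input here: the whole point of the example is that the coamenability used in Lemma \ref{lem8} to force $\h(\chi_{\gamma/2})>0$ for all $\gamma>0$ genuinely fails for $A_u(m)$, and this failure is equivalent to $\|\s_P\|_{W^*(A)}<1$, which is precisely what Banica's representation theory delivers.
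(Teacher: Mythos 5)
Your reduction of the claim to the value of $\mathbi{h}(\s_P)=\lim_n \hpn (\s_P)=\lim_n \h(\s_P^{n+1})/\h(\s_P^n)$ is sound, and the spectral identification of that limit with $\| \pi_\h (\s_P)\|$ in $W^*(A)$ (monotone moment ratios, separating trace vector) is correct as far as it goes. But the step you yourself flag as ``the hard part'' is where the gap is. The Weingarten/Catalan bound $\h ((u_{11}u_{11}^*)^n)\leq C_n/m^n$ is only asserted, not proved, and even granting it you obtain $\| \s_P\|_{W^*(A)}<1$ only for $m\geq 5$. The proposition concerns the $A_u(m)$ fixed at the start of the appendix, with no restriction on $m$, so a proof valid only for large $m$ does not establish it; and the detour through the free-probabilistic description of the Haar state of $A_u(m)$ imports machinery that the situation does not require.

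The ingredient you are missing is already contained in the identity you wrote down. Since $\hpn (a)=d_{\H^n}\h (\s^n_{P^n}a)$ is a \emph{state}, evaluating at $a=1$ gives $\h (\s^n_{P^n})=1/d_{\H^n}$ exactly, and combined with $\s^n_{P^n}=(\s_P)^n$ this determines every moment on the nose: $\h (\s_P^n)=1/d_{\H^n}$, with no Weingarten input. (You implicitly use this when you rewrite $\hpn (\s_P)$ as a ratio of moments, and then recompute the moments by other means.) Hence $\hpn (\s_P)=d_{\H^n}\h (\s_P^{n+1})=d_{\H^n}/d_{\H^{n+1}}$, and Banica's irreducibility of $u^{\otimes k}$ --- which you cite, but only to justify asymptotic estimates --- gives $d_{\H^n}=\dim \H^{\otimes n}=m^n$, so $\hpn (\s_P)=1/m$ for \emph{every} $n$. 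The sequence is constant, no monotonicity or spectral argument is needed, and $\mathbi{h}(\s_P)=1/m\neq 1$ for every $m\geq 2$. This is the paper's proof, and it is two lines.
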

\begin{proof}
By Proposition \ref{lem4}, we have that $\mathbi{h} \in H$ if and only if $\mathbi{h}(\s_P)=1$.
Since $\hpn (a)=dim_{\H^n}\h ((\s_P)^na)$ is a state, we have $ \h ((\s_P)^n)=(dim_{\H^n})^{-1}$.  Thus we have 
\[\hpn (\s_P)=\frac{dim_{\H^n}}{dim_{\H^{n+1}}}. \]

By Banica, we also have $\H^n=\H^{\otimes n}$.  Thus $\mathbi{h}(\s_P)=1/m \neq 1$, so $\mathbi{h}$ is not an element of $H$.
\end{proof}

\begin{corollary}
The conclusion of Lemma \ref{lem8} does not hold if $u:\C^m \rightarrow \C^m \otimes A_u(m)$ is the fundamental representation of $A_u(m)$.
\end{corollary}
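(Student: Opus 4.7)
The plan is to argue by contradiction, exploiting the fact that the only ingredient in Section 4.2 that required coamenability was Lemma \ref{lem8} itself. Suppose, for contradiction, that the conclusion of Lemma \ref{lem8} held for $A = A_u(m)$ and the fundamental representation $u:\C^m \rightarrow \C^m \otimes A_u(m)$. I would then retrace the proof of Proposition \ref{prop9}: Lemma \ref{lem9} and Lemma \ref{lem10} never invoke coamenability (only the Kac-type condition, which $A_u(m)$ satisfies, and structural facts about $\Phi$), so combined with the hypothetical conclusion of Lemma \ref{lem8} they would yield $\lim_{n\rightarrow\infty} \hpn(a) = \varepsilon(a)$ for every $a \in A^H$.

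Next I would apply this to the specific element $a = \s_P$. By Proposition \ref{prop5} we have $\s_P \in A^H$, so the hypothetical Proposition \ref{prop9} would force
\[
\lim_{n\rightarrow\infty} \hpn(\s_P) = \varepsilon(\s_P) = \langle (id\otimes \varepsilon)u(\xi), \xi\rangle = \|\xi\|^2 = 1.
\]
On the other hand, as computed in the proposition immediately preceding this corollary, $\hpn(\s_P) = \dim(\H^n)/\dim(\H^{n+1})$, and by Banica's result (\cite{Banica2}) the tensor powers $u^{\otimes n}$ are irreducible for $A_u(m)$, so $\H^n = \H^{\otimes n}$ and therefore $\hpn(\s_P) = m^n/m^{n+1} = 1/m$ for every $n$. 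Hence $\lim_{n\rightarrow\infty}\hpn(\s_P) = 1/m$, which for $m \geq 2$ contradicts the value $1$ forced by the hypothetical Proposition \ref{prop9}.

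The only step requiring any real care is verifying that Lemmas \ref{lem9} and \ref{lem10} are genuinely coamenability-free when reread in this context, so that the logical chain Lemma \ref{lem8} $\Rightarrow$ Proposition \ref{prop9} $\Rightarrow$ contradiction is intact. I would inspect each of those proofs once more: Lemma \ref{lem9} only manipulates the spatial operators $\chi_\gamma$ and the universal property of the state stabilizer, while Lemma \ref{lem10} is a direct counit computation on $A^H$. Neither uses faithfulness of $\h$ on $A$ itself, so the argument goes through, and the corollary follows for every $m \geq 2$.
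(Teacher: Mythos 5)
Your argument is correct and is essentially the paper's own: both run the chain Lemma \ref{lem8} $\Rightarrow$ Proposition \ref{prop9} (noting Lemmas \ref{lem9} and \ref{lem10} are coamenability-free) and contradict it with the computation $\hpn (\s_P)=\dim \H^n/\dim \H^{n+1}=1/m$ from Banica's irreducibility of $u^{\otimes n}$; the paper merely phrases the contradiction as ``a weak$^*$ limit point $\mathbi{h}$ of the $\hpn$ would lie in $H$ yet satisfies $\mathbi{h}(\s_P)=1/m\neq 1$'' instead of evaluating Proposition \ref{prop9} directly at $\s_P$ as you do. One cosmetic slip: $\varepsilon (\s_P)$ should be computed from the Berezin symbol $\s_P=(tr\otimes id)((P\otimes 1)\alpha (P))$ of the projection $P$ rather than from $\langle (id\otimes \varepsilon )u(\xi ),\xi \rangle$ (which is $\varepsilon$ of the Section 3 element $\s_\xi$), but since $d_\H \s_P=\s_\xi \s_\xi^*$ and $\varepsilon$ is multiplicative the value is still $1$, so nothing is affected.
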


\begin{conjecture}
Let $P$ be a rank 1 projection in $M_m(\C )$, and let $\alpha :M_m(\C )\rightarrow M_m(\C )\otimes A_u(m)$ be conjugation by the fundamental representation of $A_u(m)$.  Let $H$ be the state stabilizer of $P$.  Then $H$ has no Haar state. 
\end{conjecture}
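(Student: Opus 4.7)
The plan is to argue by contradiction: assume $\omega \in H$ is a Haar state, so that $\omega \ast \phi = \phi \ast \omega = \omega$ for every $\phi \in H$, and in particular $\omega \ast \omega = \omega$.  The goal is to accumulate enough constraints on $\omega$ that positivity fails.  As a first step I would exploit the ``classical'' characters in $H$, namely the $\chi_V$ with $V$ in the block-diagonal subgroup $U(1) \times U(m-1) \subset U(m)$, pulled back through $A_u(m) \to C(U(m))$.  From $\omega \in H$, i.e.\ $(id \otimes \omega)\alpha(P) = P$ by Lemma \ref{lem4}, one gets $\omega(u_{i1}u_{j1}^*) = \delta_{i1}\delta_{j1}$.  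Since $E = (\omega \otimes id)\Delta$ maps $A$ into $A^H$, and a direct check using the same characters shows $A^H \cap \mathrm{span}\{u_{ij}\} = 0$, one deduces $\omega(u_{ij}) = 0$ for all $i, j$.  The invariance $\chi_V \ast \omega = \omega \ast \chi_V = \omega$ forces the matrix $T_{(i,k),(j,l)} = \omega(u_{ij}u_{kl}^*)$ to have its rows and columns inside the two-dimensional fixed subspace $\mathrm{span}(e_1 \otimes e_1,\ \sum_{k \geq 2} e_k \otimes e_k)$ of the action of $V \otimes \bar V$.  Positivity together with the identity $T^2 = T$ (a consequence of $\omega \ast \omega = \omega$) pins $T$ down as the rank-two orthogonal projection onto that subspace, so $\omega(u_{ii}u_{ii}^*) = 1/(m-1)$ for every $i \geq 2$, and on every length-two monomial $u_{ij}u_{kl}^*$ the functional $\omega$ agrees with the pullback $\omega_0$ of the classical Haar measure of $U(1) \times U(m-1)$.

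Next I would bring in the non-classical elements of $H$.  The block embedding $A_u(1) \otimes A_u(m-1) \hookrightarrow A_u(m)$ (fixing $e_1$ on the $1 \times 1$ block) dualizes to a surjection $A_u(m) \to A_u(1) \otimes A_u(m-1)$, and pulling back $\chi_1 \otimes \mathsf{h}_{A_u(m-1)}$ produces a state $\psi \in H$ which agrees with $\omega_0$ on length-two monomials but disagrees at length four, since $\mathsf{h}_{A_u(m-1)}$ has free-probability rather than classical Weingarten moments.  The identity $\omega \ast \psi = \omega$, evaluated on a length-four element such as $u_{22}u_{22}^* u_{22}u_{22}^*$ and fed through Step 1 for its length-two ingredients, becomes a linear equation among the still-undetermined length-four values of $\omega$.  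Running the same construction with analogous block embeddings $A_u(k) \hookrightarrow A_u(m)$ for $2 \leq k \leq m-1$ produces a family of such identities; combining them with character invariance at length four and the length-four idempotence identity for $\omega \ast \omega = \omega$ should overdetermine the length-four values of $\omega$ and force positivity to fail.

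The main obstacle will be precisely this inconsistency step: writing out the length-four system cleanly and extracting an explicit contradiction.  A variant that I would try first, to avoid the combinatorics of free Weingarten calculus, is to reduce the problem to classifying the idempotent states of $A_u(m)$ that lie in $H$.  The candidate $\omega_0$ itself should be ruled out by a direct computation showing $\omega_0 \ast \psi \neq \omega_0$ on a single length-four monomial, which collapses to a single classical-versus-free moment comparison.  The remaining idempotent states in $H$ should then be eliminated via the (conjectural, but available in cases) correspondence between idempotent states on $A_u(m)$ and Haar states of compact quantum subhypergroups fixing $P$, together with the observation that the only subhypergroup of $A_u(m)$ whose state space contains the $\psi_k$'s and the classical characters would have to be all of $A_u(m)$ itself, whose Haar state $\mathsf{h}$ is not in $H$ since $\mathsf{h}(\s_P) = 1/m$.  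The crux is that the full classification of idempotent states on $A_u(m)$ is not in the literature for general $m$; producing enough of it to handle states in $H$ is where most of the work will lie.
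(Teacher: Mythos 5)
You should first be clear about what the paper itself does here: this statement is labelled a \emph{Conjecture}, and the paper offers no proof of it. The only result the author establishes nearby is the strictly weaker fact that the specific candidate $\mathbi{h}$, a weak$^*$ limit point of the states $\hpn$, satisfies $\mathbi{h}(\s_P)=\lim_n \dim \H^n/\dim\H^{n+1}=1/m\neq 1$ and therefore does not lie in $H$ --- which is exactly what is needed to conclude that Lemma \ref{lem8} fails without coamenability, but says nothing about whether some \emph{other} state of $H$ could be a Haar state. So there is no proof in the paper to compare yours against; you are being asked to close a problem the author left open.

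Your proposal does not close it. The degree-two analysis is essentially sound and can be made rigorous: a Haar state $\omega$ for $H$ is idempotent and absorbs the characters $\delta_V$, $V\in U(1)\times U(m-1)$, and the identities $(V\otimes \bar V)T=T=T(V\otimes\bar V)$ and $T^2=T$, together with $\omega(u_{i1}u_{j1}^*)=\delta_{i1}\delta_{j1}$ and the row relations $\sum_l u_{il}u_{il}^*=1$, do force $T$ to be the orthogonal projection onto the two-dimensional fixed space (note you do not actually get to assume positivity of $T$ as a matrix --- it is a partial transpose of the positive moment matrix $\omega(u_{ij}u_{kl}^*)$ in the other index pairing --- but the idempotence-plus-range argument makes that assumption unnecessary). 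The trouble is that this produces no contradiction whatsoever: agreement with the classical Haar measure of $U(1)\times U(m-1)$ at degree two is exactly what a Haar state of $H$ would look like if one existed. Everything decisive is deferred to degree four, where you offer only the hope that the absorption identities $\omega*\psi=\psi*\omega=\omega$ for the free block states $\psi$, combined with idempotence and invariance, ``should overdetermine the system and force positivity to fail.'' That sentence \emph{is} the conjecture; no computation is supplied, and a priori the degree-four system could be consistent (it would be, were the conjecture false). The fallback route has the same status: it presupposes a classification of idempotent states on $A_u(m)$ that, as you acknowledge, is not available, it implicitly assumes the Franz--Skalski picture that every idempotent state is the Haar idempotent of a subhypergroup, and the key assertion that the only subhypergroup absorbing all the $\psi_k$ and all the classical characters is $A_u(m)$ itself (whose Haar state has $\h(\s_P)=1/m$) is stated rather than proved. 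In short, this is a plausible research program whose two load-bearing steps are both missing; it is not a proof, and the statement remains open.
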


If this conjecture is true, it shows that the state stabilizer of an element of a unitary representation of a compact quantum group is generally not even a quantum subhypergroup.

\end{section}

\end{document}